\newcommand{\al}{\alpha}
\newcommand{\la}{\lambda}
\newcommand{\La}{\Lambda}
\DeclareMathOperator{\st}{SST_{\la}(\mu)}
\DeclareMathOperator{\im}{Im}
\DeclareMathOperator{\Sh}{Sh}
\DeclareMathOperator{\Hlm}{H_1(\la,\mu)}
\DeclareMathOperator{\Hlmm}{H_2(\la,\mu)}
\DeclareMathOperator{\Hom}{Hom}
\DeclareMathOperator{\D}{\mathrm{\Delta}}
\DeclareMathOperator{\Sp}{\mathrm{Sp}}
\DeclareMathOperator{\chK}{\mathrm{chK}}
\DeclareMathOperator{\spn}{\mathrm{span}}
\newtheorem{theorem}{Theorem}[section]
\newtheorem{lemma}[theorem]{Lemma}
\newtheorem{proposition}[theorem]{Proposition}
\newtheorem{corollary}[theorem]{Corollary}
\newtheorem{definition}[theorem]{Definition}
\theoremstyle{remark}
\newtheorem{remark}[theorem]{Remark}
\newtheorem{example}[theorem]{Example}
\numberwithin{equation}{section}
\newcommand{\rectangle}[2]{{
\fboxsep=-\fboxrule\sbox0{}\wd0=#1\ht0=#2\relax\fbox{\box0}}}
\newcommand{\bx}{\rectangle{4pt}{8pt}} 
\DeclareMathOperator{\dts}{,... ,}
\newcommand{\sm}{\sum\limits}
\newcommand{\sts}{\cdot \cdot \cdot}
\newcommand{\ots}{\otimes ... \otimes}
\newcommand{\Lnr}{\Lambda^{+}(n,r)}
\newcommand{\Blm}{B(\la,\mu)}
\begin{document}


		
		\begin{abstract}Let $G=GL_n(K)$ be the general linear group defined over an infinite field $K$ of positive characteristic $p$ and let  $\Delta(\lambda)$ be the Weyl module of $G$ which corresponds to a partition $\lambda$. In this paper we classify all homomorphisms $\Delta(\lambda) \to \Delta(\mu)$ when $\lambda=(a,b,1^d)$ and $\mu=(a+d,b)$, $d>1$. In particular, we show that   $\Hom_G(\Delta(\lambda),\Delta(\mu))$ is nonzero if and only if $p=2$ and $a$ is even. In this case, we show that the dimension of the homomorphism space is equal to 1 and we provide an explicit generator whose description depends on binary expansions of various integers. We also show that these generators in general are not compositions of Carter-Payne homomorphisms.
		\end{abstract}


\title[On homomorphisms between Weyl modules a column transposition]{On homomorphisms between Weyl modules: The case of a column transposition}
	
\author[Evangelou]{Charalambos Evangelou}
\address{Department of Mathematics, University of Athens, Greece}
\email{cevangel@math.uoa.gr}


\keywords{Weyl module, Homomorphisms, Schur algebra, Dimension.}

\date{\today}
	

\maketitle

\section{Introduction}  Let K be an infinite field of positive characteristic p and let $G = GL_n(K)$ be the general linear group defined over $K$. We let $\Delta(\lambda)$ denote the Weyl module of  corresponding to a partition $\lambda$. These modules are of central importance
in the polynomial representation theory of $G$ (\cite{green2006polynomial}, \cite{jantzen2003representations}). Since the classical papers of
Carter-Lusztig \cite{CL} and Carter-Payne \cite{CP}, homomorphism spaces $\Hom_G(\Delta(\lambda),\Delta(\mu))$ between Weyl modules have attracted much attention. In those works sufficient arithmetic conditions on $\lambda,\mu$ and p were found so that $
\Hom_G(\Delta(\la),\Delta(\mu))\neq 0$. In general the determination of the dimensions of the homomorphism spaces, or even when they are nonzero, seems a difficult problem.
There seem to be very few general results. Cox and Parker \cite{CoxParkerGL3} classified homomorphisms between Weyl modules for $GL_3(K)$. Ellers and Murray \cite{ellers2010carter} and also
Kulkarni \cite{kulkarni2006ext} showed that the dimension of $\Hom_G(\Delta(\lambda),\Delta(\mu))$ is equal to 1 for $\lambda,\mu$ a pair of Carter-Payne partitions for which the number of raised boxes is 1. Maliakas and Stergiopoulou obtained a classification result for $\Hom_G(\Delta(\lambda),\Delta(\mu))$ when one of $\lambda,\mu$ is a hook. By the work of Dodge \cite{dodge2011large} and Lyle \cite{lyle2013large} it is known that these homomorphism spaces may have arbitrarily large dimensions. A non vanishing result for homomorphisms of Weyl modules was obtained in \cite{evangelou2023stability}. In this paper we classify all homomorphisms $\Delta(\lambda) \longrightarrow \Delta(\mu)$, when $\lambda = (a,b,1^d) $ and $ \mu =(a+d,b)$ (Theorem \ref{Mainresult}). In terms of diagrams, the bottom d boxes of the first column of the diagram of $\lambda$ are raised to the first row. In particular, we prove
that $\Hom_G(\Delta(\lambda),\Delta(\mu))$ is nonzero if and only if $p = 2$ and a is even. In this case, we show that the dimension of $\Hom_G(\Delta(\lambda),\Delta(\mu))$  is equal to 1 and we provide an explicit generator whose description depends on binary expansions of various
integers associated to a,b,d. We also show that these generators in general are not
compositions of Carter-Payne homomorphisms. Our techniques rely on a systematic use of the classical presentation of the Weyl modules $\Delta(\lambda)$ and utilize combinatorics and computations involving tableaux and properties of binomial coefficients mod 2. 

The paper is organized as follows. In Section 2 we establish notation and gather
preliminaries that will be needed later. In Section 3 we state the main result and
introduce notions associated to binary expansions that are required to describe a
generator of the homomorphism space. In Section 4 the strategy of the proof is
outlined and some reductions are made in the resulting linear systems. In sections
5-8 we consider the four cases depending on the parity of a and b. The substantial
case is when a is even. In Section 9 we show by means of example that our maps are
not in general compositions of Carter-Payne homomorphisms, Example \ref{NonCarter-Payne}. Also
we find equivalent conditions so that our generator of $\Delta(\lambda) \longrightarrow \Delta(\mu)$ corresponds to the sum of all semistandard tableaux of shape $\mu$ and weight $\la$, Corollary \ref{M-D2023-C}.

\section{Recollections}
The purpose of this section is to establish notation and gather facts that will be needed in the sequel.
\subsection{Notation}
Let K be an infinite field. We will be working with homogeneous polynomial representations of $G=GL_n(K)$, or equivalently, with modules over the Schur algebra $S = S_K(n,r)$. A standard reference here is \cite{green2006polynomial}. For a positive integer $r$, let $\Lambda(n,r)$ be the set of sequences $\al=(\al_1, \dots, \al_n)$ of length $n$ of nonnegative integers that sum to $r$ and  $\Lambda^+(n,r)$ the subset of $\Lambda(n,r)$ consisting of partitions, that is sequences $\mu=(\mu_1, \dots, \mu_n)$ such that $\mu_1 \ge \mu_2 \ge \dots \ge \mu_n$. The length  of a partition $\mu$ is the number of nonzero parts of $\mu$ and is denoted by $\ell(\mu)$.  

If $\al, \beta \in \Lambda(n,r)$, where $\al=(\al_1,\dots, \al_n), \beta = (\beta_1,\dots,\beta_n)$, we write $\al \unlhd \beta$ if $\al_1+\dots+\al_i \le \beta_1+\dots+\beta_i$ for all $i$.

We denote by $V=K^n$ be the natural $G$-module, $DV=\sum_{i\geq 0}D_iV$ the divided power algebra of $V$ and $\Lambda V =\sum_{i\geq 0}\Lambda_iV$  the exterior hopf algebra. The divided power algebra is defined as the graded dual of the Hopf algebra $S(V^*)$, where $V^*$ is the linear dual of $V$ and $S(V^*)$ is the symmetric algebra of $V^*$ (see [2], I.4). If $\la=(\la_1,\dots, \la_n) \in \Lambda(n,r)$, let $D(\la)$ (respectively $\Lambda(\la)$) be the tensor product $D_{\la_1}V\otimes \dots \otimes D_{\la_n}V$. If $\la \in \Lambda^+(n,r)$ and $\la'$ its conjugate partition, we denote (defined in \cite[Def.II.1.3]{akin1982schur}) by $\Delta(\la)$ the corresponding Weyl module as the image of the map 
\begin{center}
$d_{\la}' : D(\la) \longrightarrow \Lambda(\la')$.   
\end{center}

\subsection{Semistandard tableaux}
Consider the order $e_1 < e_2 <\dots< e_n$ on the natural basis $\{e_1,\dots, e_n\}$ of $V$. In the sequel we will denote each element $e_i$ by its subscript $i$. For a partition $\mu=(\mu_1,\dots,\mu_n) \in \Lambda^+(n,r)$, a tableau of shape $\mu$ is a filling of the diagram of $\mu$ with entries from $\{1,\dots,n\}$. A tableau is called \textit{row semistandard} if the entries are weakly increasing across the rows from left to right.  A row semistandard tableau is called \textit{semistandard} if the entries are strictly increasing in the columns from top to bottom. The set consisting of the semistandard (respectively, row semistandard) tableaux of shape $\mu$ will be denoted by $\mathrm{SST}(\mu)$ (respectively, $\mathrm{RSST}(\mu)$). The \textit{weight} of a tableau $T$ is the tuple $\alpha=(\alpha_1,\dots,\alpha_n)$, where $\alpha_i$ is the number of appearances of the entry $i$ in $T$. The set consisting of the semistandard (respectively, row semistandard) tableaux of shape $\mu$ and weight $\alpha$ will be denoted by $\mathrm{SST}_{\alpha}(\mu)$ (respectively, $\mathrm{RSST}_{\alpha}(\mu)$). For example, the following tableau of shape $\mu=(6,4)$
\[T= \begin{ytableau}
		\ 1&1&2&2&3&4\\
		\ 2&2&4&4 
	\end{ytableau}, \]
is semistandard and has weight $\alpha=(2,4,1,3)$. We will use 'exponential' notation for row semistandard tableaux. For the previous example we may write \[ T=\begin{matrix*}[l]
	1^{(2)} 2^{(2)} 3 4 \\
	2^{(2)}4^{(2)}  \end{matrix*}. \] 
To each row semistandard tableau T of shape $\mu = (\mu_1 \dts \mu_n)$ we may associate the element
\begin{center}
$x_T = x_T(1) \ots x_T(n) \in D(\mu),$     
\end{center}
where $x_T(i) = 1^{(a_{i1})} \sts n^{(a_{in})}$ and $a_{ij}$ is equal to the number of appearances of $j$ in the $i$-th row of $T$. For example the $T$ depicted above yields $x_T = 1^{(2)}2^{(2)}3^{(1)}4^{(1)} \otimes 2^{(2)}4^{(2)}$. \ 

 Henceforth, we define $[T] = d_{\mu}'(x_T) $ for each row semistandard tableau $T$.

\subsection{Relations for Weyl modules} In this subsection we summarise some fundamental results on Weyl modules. We start with two theorems which are contained in \cite{akin1982schur}.\ 

\begin{definition}
Let $\la=(\la_1,\dots,\la_n) \in \Lnr$ and $s, t$ are integers such that $1 \le s \le n-1$ and $1 \le t \le \la_{s+1} $. We denote the sequence $(\la_1,\dots,\la_s+t, \la_{s+1}-t,\dots,\la_n) \in \La(n,r)$ by $\la({s,t})$. The following S-map is defined
\begin{equation}\label{pres1}
\bx_{\la}: \bigoplus_{s=1}^{n-1}\bigoplus_{t=1}^{\lambda_{s+1}}D(\la(s,t)) \longrightarrow D(\lambda) 
\end{equation}
where the restriction of $\bx_{\la}$ to the summand $D(\la(s,t))$ is the composition
\begin{align}
\rectangle{5pt}{9pt}_{\la, s,t}:D(\la(s,t)) &\xrightarrow {1\otimes \cdots \otimes \D \otimes \cdots \otimes 1} D(\lambda_1,\dots,\lambda_s,t,\lambda_{s+1}-t,\dots,\lambda_n) \nonumber \\ 
&\xrightarrow{1\otimes\cdots \otimes \eta \otimes \cdots \otimes 1} D(\lambda), \label{boxmap}    
\end{align}
where $\D:D(\lambda_s+t) \to D(\lambda_s,t)$ and $\eta:D(t,\lambda_{s+1}-t) \to D(\lambda_{s+1})$ are the indicated components of the comultiplication and multiplication respectively of the Hopf algebra $DV$.

\end{definition}

\begin{theorem}\label{BAWbase}\cite[Section II.3]{akin1982schur}
The set $\{[T] : T \in SST(\mu)\}$ is a basis of the K-vector space $\D(\mu)$.   
\end{theorem}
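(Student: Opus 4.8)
We sketch the classical argument of Akin--Buchsbaum--Weyman. Since $\Delta(\mu)$, $D(\mu)$, $\Lambda(\mu')$ and the map $d_\mu'$ are all defined over $\mathbb{Z}$, the plan is to prove the integral statement that $\Delta_{\mathbb{Z}}(\mu)$ is a free $\mathbb{Z}$-module with basis $\{[T] : T \in \mathrm{SST}(\mu)\}$ and that its formation commutes with base change to $K$; the statement over $K$ then follows immediately. There are two things to establish: that the semistandard $[T]$ span, and that they are linearly independent.

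For spanning, note first that $D(\mu)=D_{\mu_1}V \otimes \dots \otimes D_{\mu_n}V$ has as a $\mathbb{Z}$-basis the divided power monomials, equivalently the elements $x_T$ with $T$ ranging over all row semistandard tableaux of shape $\mu$; hence the $[T]=d_\mu'(x_T)$, $T\in\mathrm{RSST}(\mu)$, span $\Delta(\mu)$. It therefore suffices to express each such $[T]$ as a $\mathbb{Z}$-combination of semistandard ones, which is the \emph{straightening algorithm}: whenever $T$ has a column in which the entry in row $i$ fails to be strictly smaller than the entry immediately below it in row $i+1$, one applies the relation obtained by composing a comultiplication on the $i$-th tensor factor of $D(\mu)$ with a multiplication into the $(i+1)$-st -- i.e.\ an instance of the map \eqref{pres1} -- which holds in $\Delta(\mu)$ because $d_\mu'$ annihilates it (immediate from the description of $d_\mu'$ as a composite of comultiplications of $DV$ and multiplications of $\Lambda V$). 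Choosing the split/merge position suitably yields a Garnir-type relation expressing $[T]$, modulo its semistandard leading term, as a combination of $[T']$ with $T'$ strictly smaller than $T$ in a suitable well-founded partial order on $\mathrm{RSST}(\mu)$ (e.g.\ dominance on the sequence of column multiplicities). Induction on that order then shows every $[T]$, $T\in\mathrm{RSST}(\mu)$, lies in the $\mathbb{Z}$-span of $\{[T']:T'\in\mathrm{SST}(\mu)\}$.

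For linear independence it is cleanest to extend scalars to $\mathbb{Q}$. Over $\mathbb{Q}$ the module $\Delta_{\mathbb{Q}}(\mu)$ is the classical Schur module, that is, the irreducible polynomial $GL_n(\mathbb{Q})$-representation of highest weight $\mu$, whose dimension equals $|\mathrm{SST}(\mu)|$ by the Weyl character formula (the character being the Schur polynomial $s_\mu$). Thus the spanning set $\{[T]:T\in\mathrm{SST}(\mu)\}$, having exactly that cardinality, is a $\mathbb{Q}$-basis of $\Delta_{\mathbb{Q}}(\mu)$; in particular these elements are $\mathbb{Z}$-linearly independent in $\Delta_{\mathbb{Z}}(\mu)$. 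Combined with the spanning statement, $\{[T]:T\in\mathrm{SST}(\mu)\}$ is a $\mathbb{Z}$-basis of $\Delta_{\mathbb{Z}}(\mu)$, which is hence $\mathbb{Z}$-free of that rank. Finally, because $\Delta_{\mathbb{Z}}(\mu)$ is a pure $\mathbb{Z}$-submodule of the free module $\Lambda_{\mathbb{Z}}(\mu')$ (the cokernel of $d_\mu'$ being torsion-free, which is part of the ABW analysis), tensoring the inclusion with $K$ remains injective, so $\Delta_{\mathbb{Z}}(\mu)\otimes_{\mathbb{Z}}K \xrightarrow{\sim} \Delta_K(\mu)$ and the images $[T]\otimes 1$ form a $K$-basis, as claimed.

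The main obstacle is the straightening step: one must fix a partial order on $\mathrm{RSST}(\mu)$ for which every non-semistandard $[T]$ genuinely rewrites in terms of strictly smaller tableaux, so that the algorithm terminates, and check that the quadratic relations invoked are all available in $\Delta(\mu)$. A secondary point needing care is the base-change isomorphism $\Delta_{\mathbb{Z}}(\mu)\otimes K\cong\Delta_K(\mu)$, since images of maps do not in general commute with extension of scalars; it rests on the torsion-freeness of $\operatorname{coker} d_\mu'$ over $\mathbb{Z}$, where the bulk of the ABW argument lies.
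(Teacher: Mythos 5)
This statement is not proved in the paper at all: it is imported verbatim from Akin--Buchsbaum--Weyman \cite{akin1982schur}, so there is no internal argument to compare yours against. Judged on its own, your outline is the standard one (straightening for spanning; a characteristic-zero dimension count for independence; universal freeness for base change), and nothing in it is false. The spanning half is fine in shape: the divided-power monomials $x_T$, $T$ row semistandard, do form a basis of $D(\mu)$, the vanishing of $d_\mu'$ on the image of the box maps of Theorem \ref{BAWpresentation} is indeed a direct computation (it reduces to the fact that the composite $D_tV \to V^{\otimes t} \to \Lambda^t V$ is zero for $t\ge 2$ on the overlapping columns), and Garnir-type straightening with a well-founded order is exactly how ABW and Carter--Lusztig argue.

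The genuine weakness is in how you handle linear independence over $K$. You prove independence over $\mathbb{Q}$ by a character count and then transfer it to characteristic $p$ via ``purity'', i.e.\ torsion-freeness of $\operatorname{coker} d_\mu'$ over $\mathbb{Z}$. But torsion-freeness of that cokernel is, prime by prime, \emph{equivalent} to the mod-$p$ linear independence of the elements $[T]$ --- which is precisely the content of the theorem over a field of characteristic $p$. So labelling it ``a secondary point needing care'' inverts the logical weight: as written, the sketch defers the entire difficulty to the fact being proved. The ABW proof does not take the detour through $\mathbb{Q}$ and purity; it establishes a unitriangular expansion: for $T$ semistandard, $[T]$ written in the natural basis of $\Lambda(\mu')$ has a distinguished term with coefficient $\pm 1$, and all other terms are indexed by strictly smaller data in a suitable order. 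This gives linear independence over an arbitrary commutative ring in one stroke, and together with straightening yields universal freeness and base-change compatibility with no separate purity argument. To turn your sketch into a proof you would need either to carry out that triangularity argument (which also settles the termination of straightening that you flag), or to supply an independent proof that the cokernel is $\mathbb{Z}$-torsion-free --- you cannot simply cite it, since it is the theorem in disguise.
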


\begin{theorem}\label{BAWpresentation}\cite[Section II.3]{akin1982schur}

\begin{enumerate}
    
    \item[$1)$]
There is isomorphism  $\theta_{\la}:  D(\la)/\im\bx_{\la} \longrightarrow \D(\la), \ x + Im\bx_{\la} \longmapsto d_{\la}'(x)$.

    \item[$2)$]
There is an exact sequence of $S$-modules    

\begin{equation}\label{pres1}
\bigoplus_{s=1}^{n-1}\bigoplus_{t=1}^{\lambda_{s+1}}D(\la(s,t)) \xlongrightarrow{\bx_{\la}} D(\lambda) \xrightarrow{\pi_{\D(\la)}} \D(\lambda) \to 0,
\end{equation}
where $\pi_{\D(\la)} = \theta_{\la} \circ \pi$ and $\pi: D(\la) \longrightarrow D(\la)/\im\bx_{\la}$ is the natural projection.

\end{enumerate}
\end{theorem}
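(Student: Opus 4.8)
The plan is to reduce everything to part~(1) and then prove~(1) by a straightening argument together with the dimension count furnished by Theorem~\ref{BAWbase}. Write $\phi = d_\la' : D(\la) \to \Lambda(\la')$, so that by definition $\D(\la) = \im\phi$ and $\phi$ is surjective onto $\D(\la)$; recall also that $D(\la) = D_{\la_1}V\otimes\cdots\otimes D_{\la_n}V$ has $K$-basis $\{x_T : T\in\mathrm{RSST}(\la)\}$ (the monomial basis of each divided-power factor being exactly the weakly increasing rows) and that $\phi(x_T)=[T]$. Part~(2) follows formally once part~(1) is known: if $\theta_\la$ is an isomorphism then $\pi_{\D(\la)}=\theta_\la\circ\pi$ is surjective and, $\theta_\la$ being injective, $\ker\pi_{\D(\la)}=\ker\pi=\im\bx_\la$, which is precisely exactness of the three-term sequence in~(2). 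So it suffices to prove that $\phi$ induces an isomorphism $D(\la)/\im\bx_\la\xrightarrow{\sim}\D(\la)$, equivalently that $\ker\phi=\im\bx_\la$.

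First I would establish the inclusion $\im\bx_\la\subseteq\ker\phi$; this is what makes $\theta_\la$ well defined, and it amounts to saying that the classical Garnir/straightening relations hold identically in $\D(\la)$. Since $\rectangle{5pt}{9pt}_{\la,s,t}$ is the identity on every tensor factor except the $s$-th and $(s+1)$-st, a routine reduction brings us to the two-row shape $(\la_s,\la_{s+1})$, where $\phi\circ\rectangle{5pt}{9pt}_{\la,s,t}=0$ is a direct computation with the comultiplication of $DV$ and the multiplication of $\Lambda V$, the vanishing coming from the defining relations of the exterior algebra in the standard way.

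The core of the proof is the reverse inclusion. Here I would run a column-straightening algorithm: fix a well-founded total order on $\mathrm{RSST}(\la)$ (for instance a lexicographic order on the word obtained by reading the columns top-to-bottom and concatenating) for which the semistandard tableaux are the terminal elements, and show that if $T$ is not semistandard then strict increase fails in some column between consecutive rows $s,s+1$, and applying the appropriate relation $\rectangle{5pt}{9pt}_{\la,s,t}$ rewrites $x_T$, modulo $\im\bx_\la$, as a $K$-linear combination of $x_{T'}$ with every $T'$ strictly smaller in the order. Iterating and invoking well-foundedness shows $D(\la)/\im\bx_\la$ is spanned by $\{x_T+\im\bx_\la : T\in\mathrm{SST}(\la)\}$, whence $\dim_K\bigl(D(\la)/\im\bx_\la\bigr)\le|\mathrm{SST}(\la)|$. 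On the other hand, by the first step $\phi$ factors through a surjection $D(\la)/\im\bx_\la\twoheadrightarrow\D(\la)$, and $\dim_K\D(\la)=|\mathrm{SST}(\la)|$ by Theorem~\ref{BAWbase}; a surjection whose target has dimension at least that of its source is an isomorphism, so $\theta_\la:D(\la)/\im\bx_\la\xrightarrow{\sim}\D(\la)$, which gives~(1) and hence~(2).

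The main obstacle is the straightening step. One must choose the order on $\mathrm{RSST}(\la)$ with care, because repairing the "first bad column" naively can create new violations elsewhere, so the delicate point is to argue that the correct choice of $(s,t)$ — essentially a co-Garnir relation built from an over-full column — produces only strictly smaller terms, and that the relations coming from $\bx_\la$ already suffice, no further relations being needed. The dimension comparison with Theorem~\ref{BAWbase} is exactly what certifies this sufficiency a posteriori; alternatively one could avoid Theorem~\ref{BAWbase} and prove both statements simultaneously by checking directly that $\{[T] : T\in\mathrm{SST}(\la)\}$ is linearly independent in $\D(\la)$ (via a weight-space or specialization argument), but the route above is shorter given what is already available.
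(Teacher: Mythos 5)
Your proposal is correct, but note the paper itself gives no proof of this theorem — it is quoted from \cite[Section II.3]{akin1982schur}. Your sketch (reduce (2) to (1), check $\im\bx_{\la}\subseteq\ker d_{\la}'$ by a two-row computation, then straighten modulo $\im\bx_{\la}$ to a semistandard spanning set and compare dimensions with Theorem \ref{BAWbase}) is essentially the standard Akin--Buchsbaum--Weyman/Carter--Lusztig argument underlying the cited result, so it matches the intended proof rather than offering a different route.
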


\begin{definition}
Let $\la \in \La(n,r), \ \mu \in \La^{+}(2,r)$ and $T\in RSS_{\la}(\mu)$. We note that $\la_i = a_i + b_i$ for each i, where  $a_i$ (respectively, $b_i$) is the number of appearances of i in the first
row (respectively, second row) of T. Now we  define the $S$-map $\phi_T : D(\la) \longrightarrow \D(\mu)$ as the following composition
\begin{align*}
D(\la) &\xlongrightarrow{\D_1 \ots \D_n} (D_{a_1}V \otimes D_{b_1}V) \ots  (D_{a_n}V \otimes D_{b_n}V)\\
&\xlongrightarrow{\simeq} ( D_{a_1}V \ots D_{a_n}V) \otimes  (D_{b_1}V \ots D_{b_n}V)
\xlongrightarrow{\eta_1 \otimes \eta_2 } D(\mu) \xlongrightarrow{d_{\mu}'} \D(\mu),
\end{align*}

\noindent where $\eta_{1} :  D_{a_1}V \ots D_{a_n}V \longrightarrow D_{\mu_1}V$ (respectively $\eta_2$), $\D_i : D_{\la_i}V \longrightarrow D_{a_i}V \otimes D_{b_i}V$ is the indicated components of the multiplication, comultiplication, respectively, of Hopf algebra $DV$. 
\end{definition}

\begin{remark}\label{remarkphi}
	We note that $\phi_T(1^{(\la_1)} \otimes \cdots \otimes n^{(\la_n)} )=[T]$ if $T \in \mathrm{RSST}_{\la}(\mu)$, where $\la = (\la_1,\dots,\la_n) \in \Lambda(n,r)$. 
\end{remark}

 According to \cite[Section 2, equation (11)]{akin1985characteristic}, we have the following proposition

\begin{proposition}\label{Base}\cite[Section 2]{akin1985characteristic}
Let $\la \in \Lambda(n,r)$ and $\mu \in \Lambda^+(2,r)$, then  the set $\{\phi_T :T \in \st\}$, constitutes  a basis of $\Hom_S(D(\la), \D(\mu))$.
\end{proposition}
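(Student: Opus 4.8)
The plan is to show that $\{\phi_T : T \in \st\}$ is both linearly independent and spanning in $\Hom_S(D(\la),\D(\mu))$. Since $D(\la)$ is generated as an $S$-module by the single element $v_\la := 1^{(\la_1)} \otimes \cdots \otimes n^{(\la_n)}$ (this is a highest weight vector, and $D(\la)$ is a cyclic module — a standard fact from \cite{green2006polynomial}), any $S$-homomorphism $f : D(\la) \to \D(\mu)$ is determined by the image $f(v_\la)$, and moreover $f(v_\la)$ must lie in the $\la$-weight space $\D(\mu)_\la$ of $\D(\mu)$. Conversely, since $D(\la)$ is (isomorphic to) the direct summand of $V^{\otimes r}$ carved out by the appropriate Young symmetrizer / idempotent, $\Hom_S(D(\la), \D(\mu)) \cong \D(\mu)_\la$ as vector spaces, the isomorphism being $f \mapsto f(v_\la)$. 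Thus the first step is to establish this identification $\Hom_S(D(\la),\D(\mu)) \xrightarrow{\sim} \D(\mu)_\la$, $f \mapsto f(v_\la)$; this is where the hypothesis that $D(\la)$ is a tensor product of divided powers (equivalently that $\La$ has the right combinatorial shape) is used, together with the adjunction between divided and symmetric powers.

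The second step is to compute both sides combinatorially. On the target side, by Theorem \ref{BAWbase} the set $\{[T] : T \in \mathrm{SST}(\mu)\}$ is a $K$-basis of $\D(\mu)$, and $[T]$ has weight equal to the weight of the tableau $T$; hence $\{[T] : T \in \st\}$ is a basis of the weight space $\D(\mu)_\la$. In particular $\dim_K \Hom_S(D(\la),\D(\mu)) = |\st|$. On the source side, Remark \ref{remarkphi} tells us precisely that $\phi_T(v_\la) = [T]$ for $T \in \RS$ (and in particular for $T \in \st \subseteq \RS$). Combining, the map $f \mapsto f(v_\la)$ sends $\phi_T \mapsto [T]$, carrying the proposed set $\{\phi_T : T \in \st\}$ bijectively onto the basis $\{[T] : T \in \st\}$ of $\D(\mu)_\la$.

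The conclusion is then immediate: a linear map between vector spaces of the same finite dimension that carries a given set onto a basis of the target must be an isomorphism, so $\{\phi_T : T \in \st\}$ is linearly independent (because its image under the injection $f \mapsto f(v_\la)$ is), and it spans (because any $f$ is determined by $f(v_\la) \in \D(\mu)_\la = \spn_K\{[T]\}$, so $f = \sum c_T \phi_T$ where $f(v_\la) = \sum c_T [T]$). The one genuine point requiring care — the main obstacle — is the very first step: verifying that $f \mapsto f(v_\la)$ is a well-defined \emph{injection} with image exactly $\D(\mu)_\la$. Well-definedness and injectivity follow from cyclicity of $D(\la)$; the fact that the image is all of $\D(\mu)_\la$ (i.e. that every weight vector in $\D(\mu)_\la$ is hit) is the substantive assertion, and it is exactly the content of the cited formula \cite[Section 2, equation (11)]{akin1985characteristic}, which identifies $\Hom_S(D(\la),\D(\mu))$ with the appropriate weight space via the explicit maps $\phi_T$. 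One should also record why each $\phi_T$ is genuinely an $S$-module homomorphism — this is because it is built entirely out of the structure maps (multiplication, comultiplication) of the Hopf algebra $DV$ together with the natural surjection $d_\mu'$, all of which are $GL_n(K)$-equivariant, hence $S$-linear.
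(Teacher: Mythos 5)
The paper never proves this proposition: it is quoted directly from \cite[Section 2, equation (11)]{akin1985characteristic}, so there is no internal argument to compare yours against. Your route is the standard proof of that quoted fact, and in outline it is sound: evaluation at the cyclic generator $v_\la = 1^{(\la_1)}\otimes\cdots\otimes n^{(\la_n)}$ identifies $\Hom_S(D(\la),\D(\mu))$ with the $\la$-weight space of $\D(\mu)$; Theorem \ref{BAWbase} shows $\{[T] : T \in \st\}$ is a basis of that weight space; and Remark \ref{remarkphi} says $\phi_T(v_\la)=[T]$, so the set $\{\phi_T : T\in\st\}$ is carried bijectively onto that basis, giving both independence and spanning.

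The one justification you offer that is actually wrong sits exactly at the point you yourself flag as substantive. $D(\la)$ is \emph{not} a direct summand of $V^{\otimes r}$ cut out by a Young symmetrizer or idempotent: in characteristic $p$ the divided power $D_{\la_i}V$ is in general not a summand of $V^{\otimes \la_i}$ (the composite of comultiplication $D_{\la_i}V \to V^{\otimes \la_i}$ with multiplication back is $\la_i!$ times the identity), and Young symmetrizers produce Weyl/Schur-type modules, not $D(\la)$. The correct reason that $f \mapsto f(v_\la)$ is surjective onto the weight space is that $D(\la)\cong S\xi_\la$, a direct summand of the Schur algebra $S$ itself, with $v_\la$ corresponding to the weight idempotent $\xi_\la$; then for any $S$-module $M$ one has $\Hom_S(S\xi_\la, M)\cong \xi_\la M = M_\la$, the inverse map sending $m\in M_\la$ to $s\xi_\la\mapsto sm$, which is well defined because $\xi_\la m=m$. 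With that substitution (or by simply invoking equation (11) of Akin--Buchsbaum wholesale, as the paper does), your argument is complete; as written, the step you defer to the citation is the whole content of the proposition, and the auxiliary reason you give for it does not hold in positive characteristic.
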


 A very useful proposition that will make the computations easier later on, is the following

\begin{proposition}\cite[Section 2]{akin1985characteristic}\label{Cycle}
$D(\la)$ is cyclic S-module, generated by the element $1^{(\la_1)} \otimes \cdots \otimes    n^{(\la_n)}$.
\end{proposition}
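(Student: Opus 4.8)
**Proof proposal for Proposition \ref{Cycle} (that $D(\la)$ is cyclic, generated by $1^{(\la_1)} \otimes \cdots \otimes n^{(\la_n)}$).**

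The plan is to show that the Schur algebra $S = S_K(n,r)$, acting on $D(\la) = D_{\la_1}V \otimes \cdots \otimes D_{\la_n}V$, already reaches every monomial basis vector of $D(\la)$ starting from the single vector $\xi_0 = 1^{(\la_1)} \otimes \cdots \otimes n^{(\la_n)}$. First I would recall that $D(\la)$ has a $K$-basis consisting of the tensor monomials $x(1) \otimes \cdots \otimes x(n)$ where $x(i) = 1^{(c_{i1})} \cdots n^{(c_{in})}$ with $\sum_j c_{ij} = \la_i$; equivalently these are indexed by the row-semistandard... more precisely by the row-weakly-increasing tableaux, i.e. by matrices $(c_{ij})$ with row sums $\la$. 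So it suffices to produce each such monomial from $\xi_0$ by the action of $S$.

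The key mechanism is the standard fact that $S_K(n,r)$ contains (the images of) the divided-power elements $X_{ij}^{(m)}$ of the distribution algebra / hyperalgebra of $GL_n$, whose action on $DV$ is by the "raising/lowering" derivations $e_i^{(m)} \mapsto$ the map sending a divided power monomial $\cdots j^{(t)} \cdots$ to $\sum \cdots i^{(m)} j^{(t-m)} \cdots$ (a component of comultiplication on the $j$-factor followed by multiplication into the $i$-factor), extended as a coalgebra map across the tensor factors of $D(\la)$ via the Hopf structure. Concretely, I would verify that applying a suitable product of such elementary operators to $\xi_0$ converts an entry $j^{(t)}$ in tensor slot $i$ into a chosen $i'^{(m)} j^{(t-m)}$, and by iterating one can redistribute all the "letters" to reach an arbitrary monomial $x(1) \otimes \cdots \otimes x(n)$ with the prescribed content (namely $\la_i$ letters in slot $i$, and overall content whatever is forced). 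Since every basis monomial of $D(\la)$ arises this way, $\xi_0$ generates. The cleanest way to organize the induction is by a dominance-type order on the monomials: $\xi_0$ is the maximal one, and each elementary operator moves strictly downward, with the top-degree term of the result being exactly the targeted monomial plus lower terms, so a downward induction clears the lower terms.

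The main obstacle — really the only subtlety — is bookkeeping the combinatorics of which products of elementary operators reach a given monomial, and checking that the "correction terms" (the lower-order monomials produced alongside the desired one) are themselves already in the submodule generated by $\xi_0$ by the inductive hypothesis. One has to be a little careful that the relevant operators genuinely lie in $S_K(n,r)$ (they do: $S_K(n,r)$ is a quotient of the hyperalgebra $U_{\mathbb{Z}}$ of $GL_n$ acting on degree-$r$ polynomial space, and these divided powers survive in that quotient in all characteristics), and that the comultiplication-then-multiplication description of their action on $D(\la)$ is the correct one — this is exactly the description used to define the maps $\bx_{\la, s, t}$ and $\phi_T$ earlier in the paper, so it is available. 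Alternatively, and perhaps most economically, I would invoke the well-known fact (see \cite{green2006polynomial}) that $D(\la)$ is isomorphic as an $S$-module to $S e_{\la}$ for the "weight idempotent" $e_\la \in S$, hence is generated by $e_\la$ itself, and then identify $e_\la$ with $\xi_0$ under this isomorphism; this reduces the whole statement to citing the basic structure theory of the Schur algebra, with the elementary-operator argument above serving as the self-contained verification.
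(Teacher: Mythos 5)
Your closing reduction is the right one, and it is in fact what the paper does: Proposition \ref{Cycle} is not proved in the paper at all but cited from \cite[Section 2]{akin1985characteristic}, where the statement is exactly the observation that $D(\la)\cong S\xi_\la$ as left $S$-modules ($\xi_\la$ the weight idempotent), with $\xi_\la$ corresponding to the monomial $1^{(\la_1)}\otimes\cdots\otimes n^{(\la_n)}$; cyclicity is then immediate, and this is also what underlies Proposition \ref{Base} ($\Hom_S(S\xi_\la,M)\cong \xi_\la M$). So the ``economical'' version of your argument coincides with the cited proof, modulo actually checking that the standard isomorphism sends $\xi_{i,\ell_\la}$ to the monomial basis element of $D(\la)$ indexed by the $G_\la$-orbit of $i$ (so that $\xi_\la=\xi_{\ell_\la,\ell_\la}$ goes to $1^{(\la_1)}\otimes\cdots\otimes n^{(\la_n)}$ and not merely to some vector in the $\la$-weight space, which for $D(\la)$ is usually more than one-dimensional).

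Your primary route, via divided powers $X_{ij}^{(m)}$ in the hyperalgebra, can be made to work but as written has a concrete flaw: the triangularity you invoke is false for the dominance order, because $\xi_0$ is \emph{not} maximal among the monomials of $D(\la)$ --- for instance $1^{(\la_1)}\otimes 1^{(\la_2)}\otimes\cdots\otimes 1^{(\la_n)}$ lies in $D(\la)$ and has weight $(r,0,\dots,0)$, which strictly dominates $\la$; to reach it you must apply raising operators, so ``each elementary operator moves strictly downward'' cannot be the organizing principle, and the correction-term bookkeeping you defer is precisely where the work would be. The cleanest self-contained fix is to bypass the hyperalgebra altogether and act with the basis elements $\xi_{i,\ell_\la}$ of $S$ itself: a direct computation of the $S$-action on $D(\la)$ (equivalently, on the coefficient space) shows that $\xi_{i,\ell_\la}\cdot\big(1^{(\la_1)}\otimes\cdots\otimes n^{(\la_n)}\big)$ is exactly the single monomial basis element determined by $i$, with no lower-order terms, so every basis vector is hit and no induction is needed. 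With that substitution (or simply with your final reduction to $S\xi_\la$) the proposal is correct and agrees with the source the paper cites.
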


\subsection{Binomial coefficients}
In our work, we will heavily rely on Lucas's Theorem from classical number theory.

\begin{theorem}\label{Lucas}(\cite[Section XXI]{lucas1878theorie})
For non-negative integers m and n and a prime p, we have
\begin{center}
$\tbinom{m}{n} \equiv \prod\limits_{i  = 1}^{k} \tbinom{m_i}{n_i}\mod p$,
\end{center}
where $m = \sm_{i = 0}^{k}m_ip^i, \ n = \sm_{i = 0}^{k}n_ip^i$ are the base p representations of m and n respectively. Here we use the convention $\tbinom{x}{y} = 0,$  if $x < y$. \\ 
\end{theorem}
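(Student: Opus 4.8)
The plan is to prove the congruence by passing to the polynomial ring $\mathbb{F}_p[x]$ and exploiting the Frobenius endomorphism, then comparing coefficients. First I would record the elementary fact that $\binom{p}{j}\equiv 0\pmod p$ for $0<j<p$ (the numerator $p$ is not cancelled by anything in the denominator), which gives the "freshman's dream" $(1+x)^p\equiv 1+x^p\pmod p$. Iterating this, $(1+x)^{p^i}\equiv 1+x^{p^i}\pmod p$ for every $i\ge 0$.

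Next, using the base-$p$ expansion $m=\sum_{i=0}^{k}m_ip^i$, I would factor
\[
(1+x)^m=\prod_{i=0}^{k}\bigl((1+x)^{p^i}\bigr)^{m_i}\equiv \prod_{i=0}^{k}(1+x^{p^i})^{m_i}\pmod p .
\]
Expanding each factor by the ordinary binomial theorem, the right-hand side becomes $\prod_{i=0}^{k}\sum_{j_i=0}^{m_i}\binom{m_i}{j_i}x^{j_ip^i}$, so the coefficient of $x^n$ on the right is $\sum\prod_{i=0}^{k}\binom{m_i}{j_i}$, the sum ranging over all tuples $(j_0,\dots,j_k)$ with $0\le j_i\le m_i$ and $\sum_i j_ip^i=n$. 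Since each $j_i$ satisfies $0\le j_i\le m_i\le p-1$, the string $(j_0,\dots,j_k)$ is literally the base-$p$ digit string of $\sum_i j_ip^i$; hence by uniqueness of base-$p$ representations the equality $\sum_i j_ip^i=n$ forces $j_i=n_i$ for all $i$, and such a tuple exists at all exactly when $n_i\le m_i$ for every $i$. When some $n_i>m_i$ there is no admissible tuple and the right-hand coefficient is $0$ — but this is precisely the case in which $\prod_{i}\binom{m_i}{n_i}=0$ under the stated convention $\binom{x}{y}=0$ for $x<y$, so the two sides agree in this degenerate situation as well. Therefore the coefficient of $x^n$ on the right equals $\prod_{i=0}^{k}\binom{m_i}{n_i}$ in $\mathbb{F}_p$, while on the left it is $\binom{m}{n}\bmod p$, and comparing gives the theorem.

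I do not expect a genuine obstacle here: the argument is short and self-contained once the Frobenius identity is in hand. The one point that deserves a little care is the coefficient extraction on the right-hand side — matching the "carry-free" decompositions $n=\sum_i j_ip^i$ with the digits $n_i$ and checking that the cases $n_i>m_i$ are exactly absorbed by the convention $\binom{x}{y}=0$. As an alternative I could run the whole argument one digit at a time, by induction on $k$, peeling off the lowest digit via $\binom{m}{n}\equiv\binom{m_0}{n_0}\binom{\lfloor m/p\rfloor}{\lfloor n/p\rfloor}\pmod p$ (which is itself the $k=1$ instance), but the generating-function version keeps the bookkeeping cleanest and is the one I would write up.
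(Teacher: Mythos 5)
Your proof is correct and complete. The paper does not prove this statement at all --- it simply cites Lucas's original 1878 memoir --- so there is no in-paper argument to compare against; your generating-function proof via the Frobenius identity $(1+x)^{p^i}\equiv 1+x^{p^i}\pmod p$ is the standard modern derivation, and the one delicate step (extracting the coefficient of $x^n$, using that the digits $j_i\le m_i\le p-1$ make $\sum_i j_ip^i$ a genuine base-$p$ string so that uniqueness forces $j_i=n_i$, with the empty case matching the convention $\tbinom{m_i}{n_i}=0$ when $m_i<n_i$) is handled correctly.
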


\begin{corollary}\label{LucasCor1}
Let $m,n$ non-negative integers, then
\begin{enumerate}

    \item 
 $\tbinom{m}{n} \equiv 0\mod2$, for $m = $even and $n = odd$.\\

    \item
$\tbinom{2m + 1}{2n} \equiv \tbinom{m}{n} \mod  2$.

    \item
Consider the base 2 representations $m = \sm_{i = 0}^tm_i2^i, \ n = \sm_{i = 0}^sn_i2^i$. If there exists $r_0 \leq \min\{t,s\}$ such that  $m_{r_0} = n_{r_0} = 1$, then $\tbinom{m+n}{n} \equiv 0\mod 2$.     
\end{enumerate}
\end{corollary}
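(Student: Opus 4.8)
The plan is to derive all three congruences directly from Lucas's Theorem (Theorem~\ref{Lucas}) by reading off base-$2$ digits. Write $m=\sum_{i\ge0}m_i2^i$ and $n=\sum_{i\ge0}n_i2^i$ for the binary expansions. For (1), if $m$ is even then $m_0=0$, and if $n$ is odd then $n_0=1$, so the factor $\tbinom{m_0}{n_0}=\tbinom{0}{1}=0$ occurs in the product furnished by Lucas's Theorem, whence $\tbinom{m}{n}$ is even. For (2), the binary digits of $2m+1$ are $1$ in position $0$ and $m_{j-1}$ in position $j\ge1$, while those of $2n$ are $0$ in position $0$ and $n_{j-1}$ in position $j\ge1$; Lucas's Theorem then gives $\tbinom{2m+1}{2n}\equiv\tbinom{1}{0}\prod_{j\ge1}\tbinom{m_{j-1}}{n_{j-1}}=\prod_{i\ge0}\tbinom{m_i}{n_i}\equiv\tbinom{m}{n}\pmod 2$.

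Part (3) is the one that actually requires an idea, and here the plan is to argue by contradiction. Suppose $\tbinom{m+n}{n}$ is odd. Applying Lucas's Theorem to the pair $(m+n,n)$ forces $\tbinom{(m+n)_i}{n_i}=1$ for every $i$; since every binary digit is $0$ or $1$, this means $(m+n)_i\ge n_i$ for all $i$, i.e.\ the set $S_n=\{i:n_i=1\}$ is contained in $S_{m+n}=\{i:(m+n)_i=1\}$. But then $m=(m+n)-n=\sum_{i\in S_{m+n}\setminus S_n}2^i$, so the binary support $S_m$ of $m$ equals $S_{m+n}\setminus S_n$ and is in particular disjoint from $S_n$. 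This contradicts the hypothesis that $m_{r_0}=n_{r_0}=1$ for some $r_0$, so $\tbinom{m+n}{n}$ must be even.

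If one prefers to avoid this support manipulation, (3) can instead be done by induction on $m+n$, using the digit-shift reductions $\tbinom{2a}{2b}\equiv\tbinom{a}{b}$ and $\tbinom{2a+1}{2b+1}\equiv\tbinom{a}{b}\pmod 2$ (both immediate from Lucas's Theorem, just as in (2)) together with part (1): according to the parities of $m$ and $n$ one either lands in the case ``$m+n$ even, $n$ odd'' and invokes (1) directly, or the common bit — shifted down by one — survives for the strictly smaller reduced pair. The only genuinely non-routine point in the whole corollary is thus part (3); within the inductive variant the one thing needing care is tracking the position of $r_0$ through the reductions, which is elementary, and the contradiction argument above sidesteps it entirely, so that is the route I would present.
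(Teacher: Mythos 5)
Your proof is correct, and it follows exactly the route the paper intends: the paper states this corollary without proof as an immediate consequence of Lucas's Theorem, and your digit-by-digit arguments for (1) and (2), together with the binary-support (no-carry) contradiction for (3), are the standard way to fill that in.
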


\section{Main result}
Unless otherwise stated explicitly, for the rest of the paper we are working with a vector space $V$ of dimension $n$ and $\la = (a,b,1^{d})\in \Lnr, \ \mu = (a+d,b)\in \Lambda^{+}(2,r)$, where $b,d \geq 2$, $r \leq n$. \

To describe the generator of $Hom_S(\Delta(\la), \Delta(\mu))$, we will need a specific type of row semistandard tableaux  which we introduce below

\begin{definition}
Let $n,s$ be two integers such that  $0\leq s \leq n$ and $1\leq n$. We recall the set $\Sh (s,n-s)\subseteq G_n$ of $(s,n-s)-$shuffle permutations  defined as 
\begin{align*}
\Sh (s,n-s) = \{ \sigma \in G_n \ | \ \sigma(1) < \cdots < \sigma(s) \ \textit{and} \  \sigma(s+1) < \cdots < \sigma(n)\},    
\end{align*}
where $\Sh(0,n) = \Sh(n,0) = \{1_{G_n}\}$. Furthermore, $|\Sh(s,n-s)| = \binom{n}{s}$.
\end{definition}

\begin{definition}\label{Maintableaux}
Consider the $d-$tuple $(x_1 \dts x_d) = (3 \dts d+2)$. Then for each $h \leq \min\{b,d\}$ and $\sigma \in \Sh(d-h,h)\subseteq G_d$, we define the row semistandard tableau $T_{h,\sigma} \in RSST_{\la}(\mu)$ as follows
\begin{align*}
T_{h,\sigma} = \begin{bmatrix*}[l]
1^{(a)}  2^{(h)} x_{\sigma(1)} \cdots x_{\sigma(d-h)}\\
2^{(b-h)} x_{\sigma(d-h+1)} \cdots x_{\sigma(d)} 
\end{bmatrix*}, 
\end{align*}    
and the element $F_h = \sm_{\sigma\in \Sh(d-h,h)}\phi_{T_{h,\sigma}} \in \Hom_S(D(\la), \Delta(\mu))$. Moreover,
\begin{align*}
T_{d,1_{G_d}} =\begin{bmatrix*}[l]
1^{(a)}  2^{(d)} \\
2^{(b-d)} 3 \cdots d+2 
\end{bmatrix*}, \   T_{0,1_{G_d}} =\begin{bmatrix*}[l]
1^{(a)}   3 \cdots d+2 \\
2^{(b)} 
\end{bmatrix*}  
\end{align*}
\end{definition}
In other words, a tableau $T_{h,\sigma}$ has a stable part associated with the integers $a,b,h$, and the rest of the tableau is a $(d-h,h)-$shuffle permutation of the set $\{3 \dts d\}$.

\begin{example}
For $d = 3$, $a=2$, $b=4$, $h=2$ we have $(x_1,x_2,x_3) = (3,4,5)$ and  we have that the set $\{T_{h,\sigma} | \sigma\in \Sh(1,2) \subseteq G_3\}$ is equal to
\begin{align*}
&\left\{ \begin{bmatrix*}[l]
1^{(2)}  2^{(2)}  3 4\\
2^{(2)} 5 
\end{bmatrix*}, \begin{bmatrix*}[l]
1^{(2)}  2^{(2)}  35  \\
2^{(2)} 4 
\end{bmatrix*}, \begin{bmatrix*}[l]
1^{(2)} 2^{(2)}  45  \\
2^{(2)} 3 
\end{bmatrix*}\right\}          
\end{align*}
\end{example}
Now, as  the integer $h$ ranges over the interval $[0,\min\{b,d\}]$ only some of elements $F_1 \dts F_{\min\{b,d\}}$ are needed to describe the generator. Those elements are characterized through  the base 2 representation of the integer $h$, which will be subject of the  Definitions \ref{2-complement}, \ref{MainDefinition}.   

\begin{definition}\label{2-complement}
Let $i,k \in \mathbb{N}$. Consider the base 2 representation of $i  = \sm_{\la = 0}^mi_{\la}2^{\la}$ ($i_m = 1$). We say that $k$ is a 2-complement of $i$, if $i+k$ has a base 2 representation of the form $i + k = \sm_{\la \geq m+1}t_{\la}2^{\la} + \sm_{\la = 0}^m2^{\la}$ ($t_{\la} \in \{0,1\})$. Additionally, in the case where  $i=0$, $k =$odd we will have the convention that $k$ is 2-complement of $i$. 
\end{definition}

 In other words, a 2-complement of an integer $i$, is an integer that completes the missing 2-digits below it's maximum 2-digit. 

\begin{example}
For $i = 2^3 + 1$, we have that every $k$ of the form  $\sm_{\la \geq 4}k_{\la}2^{\la} + 2^2 +2$ ($k_{\la} \in \{0,1\}$), is a 2-complement of $i$.  However, $k= 2^2+1$   is not a 2-complement of $i$, since $i + k =  2^3 + 2^2 + 2$, which missing $2^0$. Moreover, every $k \in \{1, 2,2^5+2\}$ is not a 2-complement of $i$.   
\end{example}

When we say an integer $x$ contains the base 2 representation of an integer $y$, we mean that considering their base 2 representations, say $x = \sm_{\la \geq 0}x_{\la}2^{\la}, \ y =\sm_{\la \geq 0} y_{\la}2^{\la}$, then $y_{\la} = 1$ implies $x_{\la} = 1$, $ \forall \la \geq 0$.

\begin{definition}\label{MainDefinition}
Consider the partitions $\la = (a,b,1^d) \in \Lambda^{+}(n,r), \ \mu = (a+d, b) \in \Lambda^{+}(2,r)$, where $b, d \geq 2$.
\begin{enumerate}
    \item[$i)$]
Assume that $a,b$ are  even integers  and hence $a-b = 2k$. Consider all the odd integers in the interval $[1,\min\{b,d\}]$, say $r_0 \dts r_{l}$ ($r_i = 2i+1$) and define  $i_0$ to be the maximum integer in the interval $[0,l]$ such that $k$ is a 2-complement of $i_0$. Define the set $\Hlm := \{r_h \in \mathbb{N}: h\leq l$ and $h$ contains the base 2 representation of $i_0$\}.   

    \item[$ii)$] 
Assume  $a$ even, $b$ odd and set $\gamma:= a-b$. Define $j_0$ to  be the maximum integer in the interval $[0,\min\{b,d\}]$ such that $\gamma$ is a 2-complement of $j_0$.Define the set $\Hlmm  := \{h \in \mathbb{N}: h\leq \min\{b,d\}$ and $h$ contains the base 2 representation of $j_0$\}.

\end{enumerate}
\end{definition}

 In order to determine $i_0,j_0$ or even the sets $\Hlm, \ \Hlmm$ the following remark proves useful
\begin{remark}\label{Remark-inequality}
\begin{enumerate}
    \item 
$\sm_{i =0}^n2^i< 2^m$, for $n<m$. (This is handy for comparing two integers using their base 2 representations).
    \item 
Let $k,n\in \mathbb{N}$. Then there is at most one integer $y$ in the interval $[2^n,2^{n+1}]$ such that $k$ is a 2-complement of $y$.  

    \item

Any integer of the form $2^n + \sm_{\la = 0}^{n-1}a_{\la}2^{\la}$ ($a_{\la} \in \{0,1\}$) does not constitute a 2-complement of any integer in the interval $[2^n,2^{n+1})$.
\end{enumerate}    
\end{remark}

\begin{example}
Let $a$ even, $b$ odd and $d\geq2$, such that $\gamma = a-b = 2^5+2^3+2+1$ and $l:=\min\{b,d\} = 2^5+ 2^4 +2^3$ (for example, $a = 2^7+ 2^5 + 2^3 + 2^2, \ b = 2^7 + 1 , \ d = 2^5 + 2^4 + 2^3$). \ 

Using Remark \ref{Remark-inequality} 2) we obtain that $j_0 = 2^4 + 2^2$ (where by Remark \ref{Remark-inequality} 1),3) the next integer that $\gamma$ constitutes a 2-complement is $2^6 + 2^4 + 2^2  > l$). We present the elements of $\Hlmm = \{j_0 < \dots < j_{11}\}$ in increasing order \ 

$\begin{array}{lll}
j_0 = 2^4 + \rule{4mm}{.3pt} +2^2 + \rule{4mm}{.3pt}+ \rule{4mm}{.3pt}    &j_6 = 2^4 + 2^3 +2^2 +2+  \rule{4mm}{.3pt}         \\
j_1 = 2^4 +  \rule{4mm}{.3pt}+2^2+  \rule{4mm}{.3pt} +1       &j_7 = 2^4 + 2^3 +2^2 + 2 +1           \\     
j_2 = 2^4 +  \rule{4mm}{.3pt} +2^2 + 2 + \rule{4mm}{.3pt}     &j_8 = 2^5 + 2^4 +  \rule{4mm}{.3pt}+2^2 +  \rule{4mm}{.3pt} + \rule{4mm}{.3pt}   \\                         
j_3 = 2^4 +  \rule{4mm}{.3pt} +2^2 + 2 +1       &j_9 =  2^5+ 2^4 +  \rule{4mm}{.3pt} +2^2 + \rule{4mm}{.3pt}+1          \\                    
j_4 = 2^4 + 2^3 + 2^2+ \rule{4mm}{.3pt} + \rule{4mm}{.3pt}   &j_{10} =  2^5+ 2^4 +  \rule{4mm}{.3pt} +2^2 +2+ \rule{4mm}{.3pt}        \\
j_5 = 2^4 + 2^3 +2^2 + \rule{4mm}{.3pt} +1   &j_{11} =  2^5+ 2^4 +  \rule{4mm}{.3pt} +2^2 +2+1                    
\end{array}$, \ 

\noindent where the symbol $"\rule{4mm}{.3pt}"$ indicate  the missing 2-digit. Also observe that the next integer  that contains the  base 2 of $i_0$ is $2^5+ 2^4 + 2^3 +2^2 > l$. 
\end{example}

The main result  of the paper  is the following

\begin{theorem}\label{Mainresult}
Let $K$ be an infinite field of characteristic $p>0$ and consider the partitions $\la = (a,b,1^d) \in \Lambda^{+}(n,r), \ \mu = (a+d, b) \in \Lambda^{+}(2,r)$, where $b,d \geq 2$. Then the following hold

\begin{enumerate}

    \item[$1)$]
If $chK = 2$, then
\begin{enumerate}

    \item[$i)$]
$\dim_K\Hom_S(\D(\la),\D(\mu)) \leq 1$
    
    \item[$ii)$]
$\Hom_S(\D(\la),\D(\mu)) \neq 0$ if and only if $a$ is even integer.
    
    \item[$iii)$] 
\begin{enumerate}
    \item[$a)$] 
In case that $a,b$ are even, then the element $\sm_{r_h\in \Hlm}F_{r_h}$ induces a generator for the space $\Hom_S(\D(\la),\D(\mu))$.
    \item[$b)$] 
In case that $a$ is even and $b$ is odd, then the element $\sm_{h\in \Hlmm}F_h$ induces a generator for the space $\Hom_S(\D(\la),\D(\mu))$.    
\end{enumerate}
\end{enumerate}    
     
    \item[$2)$] 
If $chK = p >2$, then $\Hom_S(\D(\la),\D(\mu)) = 0$.
\end{enumerate}
\end{theorem}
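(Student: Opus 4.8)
The strategy is to reduce the computation of $\Hom_S(\Delta(\la),\Delta(\mu))$ to a linear-algebra problem via the presentation of $\Delta(\la)$ in Theorem~\ref{BAWpresentation}. By Proposition~\ref{Base}, $\Hom_S(D(\la),\Delta(\mu))$ has basis $\{\phi_T : T \in \st\}$, and since $D(\la)$ is cyclic (Proposition~\ref{Cycle}) generated by $e:=1^{(a)}\otimes 2^{(b)}\otimes 3 \otimes \cdots \otimes (d+2)$, a homomorphism $D(\la)\to\Delta(\mu)$ factors through $\Delta(\la) \cong D(\la)/\im\bx_\la$ if and only if it kills $\im\bx_\la$, i.e. kills each image $\rectangle{5pt}{9pt}_{\la,s,t}(D(\la(s,t)))$. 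So first I would write a general element $\Phi = \sum_{T} c_T \phi_T$ and compute $\Phi \circ \rectangle{5pt}{9pt}_{\la,s,t}$ applied to the canonical cyclic generator of $D(\la(s,t))$, expressing the result in the basis $\{[U] : U \in \mathrm{SST}(\mu)\}$ of $\Delta(\mu)$ (Theorem~\ref{BAWbase}). Because $\mu$ has only two rows, the straightening of $[U]$ for row-semistandard $U$ is governed by the classical two-row (Garnir/Young) relations, which in this tensor-comultiplication setting unwind to binomial coefficients; this is where Lucas's Theorem (Theorem~\ref{Lucas}) and Corollary~\ref{LucasCor1} enter.

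Next I would set up the resulting homogeneous linear system on the coefficients $c_T$. The relations $\bx_\la$ split according to the index $s$: the $s=1$ relations move a box from row~2 into row~1 of $\la$ (interaction between the parts $a$ and $b$), the $s=2$ relations move a box from a length-one column up into row~2, and the $s\ge 3$ relations move between the singleton columns. The key reduction — presumably carried out in Section~4 of the paper — is that many of these equations are redundant or automatically satisfied, so the system collapses to a manageable family indexed by a single integer parameter (the ``number of $2$'s promoted'', matching the parameter $h$ in Definition~\ref{Maintableaux}). At that point the tableaux $T_{h,\sigma}$ appear naturally: the shuffle sum $F_h = \sum_\sigma \phi_{T_{h,\sigma}}$ is forced because the $s\ge 3$ relations are exactly symmetrization conditions among the singleton entries $3,\dots,d+2$, so any solution must be constant on shuffle orbits. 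Then I would reduce to a scalar recursion in $h$ whose coefficients are binomial coefficients $\binom{a-b}{\,\cdot\,}$, $\binom{b}{\,\cdot\,}$, etc.

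Then I would analyze this recursion modulo $p$. For $p>2$: I would show the binomial coefficients governing the $s=1$ relations never all vanish, forcing $c_T=0$ for all $T$, hence $\Hom_S(\Delta(\la),\Delta(\mu))=0$ — this is part~$2)$. For $p=2$: the dimension bound $\le 1$ in part~$1)(i)$ should follow because the reduced system has corank at most one (one free parameter surviving after all constraints); the non-vanishing criterion ``$a$ even'' in $(i\,i)$ should come from the $s=1$ relation with $t=1$, whose binomial coefficient is $\binom{a}{1}\equiv a \pmod 2$ (appearing as an obstruction exactly when $a$ is odd), together with Corollary~\ref{LucasCor1}(1). Finally, for part $(i\,i\,i)$, I would verify that $\sum_{r_h\in\Hlm}F_{r_h}$ (resp.\ $\sum_{h\in\Hlmm}F_h$) solves the full system: this amounts to checking, for each remaining relation, that a certain alternating sum of products of binomial coefficients vanishes mod~2, which is precisely where the ``$2$-complement'' and ``contains the base~$2$ representation'' conditions of Definitions~\ref{2-complement} and~\ref{MainDefinition} are engineered to make Lucas/Kummer cancellations occur; Corollary~\ref{LucasCor1}(3) (vanishing of $\binom{m+n}{n}$ when the binary supports of $m$ and $n$ overlap) is the main tool, and Remark~\ref{Remark-inequality} controls which $h$'s are in range.

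\textbf{Main obstacle.} The genuinely hard part is not the setup but the last step: showing that the explicitly described element is a cycle, i.e.\ that the binomial-coefficient identities mod~$2$ hold for \emph{all} relations simultaneously. The two-row straightening produces sums $\sum_j \binom{*}{j}\binom{*}{t-j}$ whose vanishing mod~$2$ is a Vandermonde-type identity only in favorable cases; in general one must exploit the specific arithmetic of $a$, $b$, $d$ through their binary expansions, and the bookkeeping splits into the four parity cases for $(a,b)$ (Sections~5--8), with the $a$ even case being by far the most delicate because the free parameter genuinely survives and the generator is a nontrivial binary-expansion-dependent sum rather than (say) the sum of all semistandard tableaux. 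I expect the proof to devote most of its length to verifying these mod-$2$ cancellations case by case.
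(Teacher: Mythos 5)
Your roadmap is the paper's roadmap: present $\D(\la)$ as $D(\la)/\im\bx_{\la}$, expand a candidate map in the basis $\{\phi_T\}$ of $\Hom_S(D(\la),\D(\mu))$, translate ``kills $\im\bx_{\la}$'' into a linear system of binomial-coefficient equations on the $c_T$, use the relations among the singleton columns to force constancy on shuffle orbits (whence the $F_h$), and then analyze the system mod $p$ with Lucas, splitting into the parity cases for $a,b$. This is exactly what the paper does via Lemma \ref{Equations123}, Proposition \ref{CharacterSystem} and Proposition \ref{odd-CharacterSystem}, so there is no disagreement about strategy.

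The problem is that, as a proof, the attempt stops precisely where the theorem's content begins, at the point you yourself call the ``main obstacle,'' and both halves of that content are asserted rather than argued. First, the bound $\dim_K\Hom_S(\D(\la),\D(\mu))\le 1$ is not automatic from ``corank at most one''; in the paper it requires Lemma \ref{pivotalLemma} (if $k$ is a $2$-complement of $i$, the equations $E_{1,k},\dots,E_{i,k}$ force $c_{r_0}=\cdots=c_{r_{i-1}}=0$) together with an induction exhibiting $l$ independent rows of the matrix $A_k$. Second, verifying that $\sm_{r_h\in\Hlm}F_{r_h}$ (resp.\ $\sm_{h\in\Hlmm}F_h$) actually solves the whole system is not a routine Lucas/Vandermonde cancellation: the paper's mechanism is structural --- it constructs the non-vanishing sequence $i_0<\dots<i_m$ (all $i$ in range containing the base-$2$ representation of $i_0$), reduces each remaining equation to a matrix whose columns are indexed by these integers, and proves that every row of that matrix has an \emph{even} number of units (Proposition \ref{EvenNumberof1}), so that in characteristic $2$ the all-ones vector on $\Hlm$ is a solution; establishing this parity property consumes most of Sections 6--8 via the explicit formula of Lemma \ref{formula} and the binary-digit lemmas \ref{Elementary1}, \ref{Elementary2}, and nothing in your outline supplies or replaces that argument. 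A smaller inaccuracy: the criterion ``$a$ even'' does not come from a single coefficient $\tbinom{a}{1}\equiv a$; the vanishing for $a$ odd (Corollary \ref{Hom,a-odd}) needs the $t=1$ relations combined with the identifications $c_{q,0,b_4,\dots}=c_{q-1,1,b_4,\dots}$ in both parities of $b$, and the non-vanishing for $a$ even is obtained only by producing the explicit generator, which is the unproved part above.
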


We will see that Theorem \ref{Mainresult} follows from Corollary \ref{chk=p}, Corollary \ref{Hom,a-odd}, Proposition \ref{CharacterSystem}, Proposition \ref{EvenNumberof1} and Proposition \ref{odd-CharacterSystem}.

\begin{example}
Let $a=2^6 +2^3, \ b=2^6 + 2 +1$ and $d=2^4 +2^2 +1$. \ 

Since $a$ is $even$ and $b$ is $odd$, from Theorem \ref{Mainresult} we obtain that \ 

$dim_KHom_S(\Delta(\la),\Delta(\mu)) = 1$. We will also give a description of the generator. Initially, $\gamma  = a-b = 2^2 +1$ and $\min\{b,d\} = 2^4 +2^2 +1$ (see Remark \ref{Remark-inequality} 1)).  \ 

The maximum integer $j_0$ in the interval $[0,2^4+2^2+1]$ such that $\gamma$ is a 2-complement of $j_0$ is $j_0 = 2^3 +2$ (notice  that from Remark \ref{Remark-inequality} 2),  the next integer for which $\gamma$ constitutes a 2-complement is $2^4+2^3+2 > \min\{b,d\}$). Now we present the elements of $\Hlmm = \{j_0 <  j_1 < j_2 < j_3\}$ 
$\begin{array}{l}
j_0 = 2^3 + \rule{4mm}{.3pt} +2 + \rule{4mm}{.3pt}             \\
j_1 = 2^3 +  \rule{4mm}{.3pt} +2+ 1              \\     
j_2 = 2^3 +  2^2  + 2 + \rule{4mm}{.3pt}        \\                         
j_3 = 2^3 +  2^2 + 2 +1               \\                    
\end{array}$ \ 

Also observe that the next integer  that contains the  base 2 representation of $j_0$ is $2^4+ 2^3 + 2 > \min\{b,d\}$. Consequently, the  element below 
\begin{align*}
\sm_{k=0}^3\sm_{\sigma \in \Sh(2^4+2^3+2-j_k,j_k)}\phi_{T_{j_k, \sigma}} \in Hom_S(D(\la),\Delta(\mu))   
\end{align*}
induces a generetor for $Hom_S(\Delta(\la),\Delta(\mu))$.    
\end{example}

\begin{example}
Let $a=2^7 +2^6, \ b=2^7 + 2^4 + 2^2$ and $d=2^5 +2^4 +2^2$.  \ 

Since $a,b$ are $even$ integers, from Theorem \ref{Mainresult} we obtain that \ 

$dim_KHom_S(\Delta(\la),\Delta(\mu)) = 1$. We will also give a description of the generator. We start with the following computation
\begin{align*}
\gamma  = a-b &= (2^6-2^2)-2^4 = (2^5 +2^4 +2^3 +2^2) -2^4 \\
              &= 2^5+2^3 +2^2 = 2(2^4 +2^2 +2),
\end{align*}
and set $k = 2^4 +2^2 +2$. Furthermore, from Remark \ref{Remark-inequality} 1), we have that $\min\{b,d\} = 2^5 +2^4 +2^2$. \ 

Consider all the odd integers in the interval $[1,\min\{b,d\}]$, say $r_0 \dts r_l$ ($r_i = 2i+1$). Since $\min\{b,d\}$ is even number, then $l= \frac{\min\{b,d\}}{2} = 2^4 + 2^3 + 2$.
From Remark \ref{Remark-inequality} 2),3), we may compute that the maximum integer $i_0$ in the interval $[0,2^4+2^3+2]$ such that $k$ is a 2-complement of $i_0$ is $i_0 = 2^3+1$ and also       the next integer for which $k$ constitutes a 2-complement is larger than $2^5> l$. Now we present the elements of $\Hlm = \{i_0 <  i_1 < i_2 < i_3 < i_4 \}$, \ 

$\begin{array}{ll}
i_0 = 2^3 + \rule{4mm}{.3pt} + \rule{4mm}{.3pt} +1 ,     & i_3 = 2^3 +  2^2 + 2 +1           \\  
i_1 = 2^3 + \rule{4mm}{.3pt} +2+ 1,      & i_4 = 2^4 +2^3  + \rule{4mm}{.3pt} + \rule{4mm}{.3pt} +1        \\     
i_2 = 2^3 +  2^2 + \rule{4mm}{.3pt} +1.    &      \\                          
\end{array}$ \ 

Also observe that the next integer  that contains the  base 2 of $i_0$ is $2^4+ 2^3 + 2 +1> l$. Consequently, the  element below 
 \begin{align*}
\sm_{k=0}^4\sm_{\sigma \in \Sh(2^5+2^4+2^2-i_k,i_k)}\phi_{T_{r_{i_k }, \sigma}} \in Hom_S(D(\la),\Delta(\mu))   
\end{align*}
induces a generetor for $Hom_S(\Delta(\la),\Delta(\mu))$. \\
\end{example}

In section \ref{Settingthestage} we will see another example (where $d=2$), in which we do not make use of the Theorem \ref{Mainresult}.

\begin{remark}
Here, we provide some general information
\begin{enumerate}
    \item[$i)$]
For the case $d = 1$, we know  from \cite{kulkarni2006ext} that $dim_KHom_S(\Delta(\la),\Delta(\mu))=~1$.

     \item[$ii)$]
The case $b = 1$ follows from \cite[Theorem 3.1]{maliakas2021homomorphisms}.

     \item[$iii)$]
Let $n\geq r$ be positive integers and consider the symmetric group $G_r$ on $r$ symbols. Let $\la,\mu\in \Lambda^{+}(n,r)$ and $\Sp(\la)$ be the corresponding Specht module defined in \cite[Section 6.3]{green2006polynomial} (or equivalently \cite[p.13]{james2006representation}). From \cite[Theorem 3.7]{CL}, we have 
\begin{align*}
\dim_K\Hom_S(\D(\la),\D(\mu)) \leq \dim_K\Hom_{\mathrm{G}_r}(\Sp(\mu),\Sp(\la)).   
\end{align*}
for all $p$ and equality if $p >2$. Hence   Theorem \ref{Mainresult} may be considered as a nonvanishing result for the homomorphisms between Specht modules for $chK = 2$.

\end{enumerate}
\end{remark}

\section{Setting the stage}\label{Settingthestage}

Here we explain the strategy of the proof. We also make some reductions in the involved linear systems.\\ 

For convenience, set $dom\bx_{\la} := \bigoplus_{s=1}^{n-1}\bigoplus_{t=1}^{\lambda_{s+1}}D(\la(s,t)) $. Apply the left exact contravariant functor $\Hom_S(-,\D(\mu))$ on the exact sequence (\ref{pres1}) to obtain
\begin{equation}\label{Relations}
\small 0 \longrightarrow \Hom(\D(\la),\D(\mu)) \xlongrightarrow{\pi^{*}_{\Delta(\la)}} \Hom(D(\la),\D(\mu))  \xlongrightarrow{\bx^{*}_{\la}} \Hom(dom\bx_{\la},\D(\mu)). 
\end{equation}
Hence,  $\Hom_S(\D(\la),\D(\mu)) \simeq \im\pi^{*}_{\Delta(\la)} = \ker\bx^{*}_{\la} \subseteq \Hom_S(D(\la),\D(\mu))$. Using the last relations and Proposition \ref{Base} we will try to explicitly describe the elements of $\Hom_S(\D(\la),\D(\mu))$. Let  $\Phi \in \im\pi_{\D(\la)}^{*}$, then
\begin{enumerate}
    \item 
$\Phi = \sm_{T \in \st}c_T\phi_T$, where  $c_T \in K$.
    \item
$\sm_{T \in \st}c_T(\phi_T\circ \bx_{\la}) = 0$.
\end{enumerate}
Let us analyze condition (2) from above. From the definition of $\bx_{\la}$ (see Theorem \ref{BAWpresentation}), $(2)$  is equivalent  to $ \sm_{T \in \st}c_T(\phi_T\circ \bx_{\la,i,t} ) = 0$, for all $i = 1 \dts d+1, \ t = 1 \dts \la_{i+1}$. From Proposition \ref{Cycle}, the element $y_{i,t} = 1^{(\la_1)} \ots i^{(\la_i + t)} \otimes{(i+1)}^{(\la_{i+1} -t)} \ots n^{(\la_n)}$  is an $S$-generator of $D(\la(i,t))$ and  setting $x_{i,t} = \bx_{\la,i,t}(y_{i,t})  = 1^{(\la_1)} \ots i^{(\la_i)} \otimes i^{(t)}{(i+1)}^{(\la_{i+1} -t)} \ots n^{(\la_n)}$,  condition (2) is equivalent to

\begin{equation}\label{equations}
\sm_{T \in \st}c_T\phi_T(x_{i,t}) = 0, \  \textit{for all} \  i = 1 \dts d+1, \ t = 1 \dts \la_{i+1}.  
\end{equation}

Furthermore, since $\mu_2 \leq \la_1$ we may write the set $ \st$ as follows  \\ 
\begin{equation*}
\begin{aligned}
\{&T=\begin{matrix*}[l]
1^{(\la_1)} 2^{(a_2)} 3^{(a_3)} \sts (d+2)^{(a_{d+2})}  \\
2^{(b_2)}3^{(b_3)} \sts (d+2)^{(b_{d+2})} \end{matrix*} \ : \ 0\leq b_s, a_s,  \ a_s + b_s = \la_s\  (s\geq 2), \\
&\sm_{ i = 2 }^{d+2}b_i = \mu_2, \ \sm_{ i = 2 }^{d+2}a_i = \mu_1 - \la_1  \}. 
\end{aligned}
\end{equation*}
and from  definition of $\la,\mu$ we get
\begin{align}
\st = \{ &T=\begin{matrix*}[l]
1^{(a)} 2^{(b-b_2)} 3^{(1-b_3)} \sts (d+2)^{(1-b_{d+2})}  \nonumber \\
2^{(b_2)}3^{(b_3)} \sts (d+2)^{(b_{d+2})} \end{matrix*} \ : \ 0\leq b_s \leq 1 \ \forall s\geq 3, \label{st} \\ &0 \leq b_2 \leq b, \ \sm_{ i = 2 }^{d+2}b_i = b\}. 
\end{align}

In accordance with the previous expression, we define the set  
\begin{center}
$\Blm := \{(b_2 \dts b_{d+2}): 0 \leq b_s \leq 1 \ \forall s\geq 3, \ 0 \leq b_2 \leq b, \ \sm_{i = 2}^{d+2}b_i = b\}$   
\end{center}
and for each $T \in \st$ as in (\ref{st})  let $c_T = c_{b_2, b_3 \dts b_{d+2}}, \ T= T_{b_2 \dts b_{d+2}}$. For each non-negative integer $h$ such that $h\leq \min\{b,d\}$ it follows that  
\begin{align}\label{F_h}
F_h = \sm_{\sigma\in \Sh(d-h,h)}\phi_{T_{h,\sigma}} = \sm_{b_3 \dts b_{d+2} }\Phi_{T_{b-h,b_3 \dts b_{d+2}}} \in \Hom_S(D(\la), \D(\mu)),    
\end{align}
where the sum ranges over all non negative integers $b_3 \dts b_{d+2}$ such that $ b_s \leq 1 \ \forall s \geq 3$, and $\sm_{i = 3}^{d+2}b_i = h$. 

 In the lemma below  we concisely provide  some preliminary calculations presented  in the proof of Theorem 3.1 in~\cite{maliakas2023homomorphisms}. 

\begin{lemma}\label{MD}
For arbitrary  $\la \in \Lnr, \ \mu \in \Lambda^{+}(2,r)$ and $T  \in \st$ as in (\ref{st}), then

\begin{enumerate}
    
    \item 
Let $i = 1, \ t = 1 \dts \la_{2} =b$. Then  for every $t$ such that $t \leq \min\{b,\mu_1 - \la_1 = d\}$, 
\begin{align*}
\phi_T(x_{1,t}) = &\sum\limits_{k_3 \dts k_{d+2}}(-1)^{k}\tbinom{\la_1 - b_2 +t}{t-k}\prod\limits_{i = 3}^{d+2}\tbinom{b_i + k_i}{k_i} \\ 
&\begin{bmatrix*}[l]
1^{(\la_1 + t)}  2^{(b-b_2 - t + k)} 3^{(1-b_3 - k_3)} \sts (d+2)^{(1-b_{d+2}-k_{d+2})}  \\
2^{(b_2-k)} 3^{(b_3+k_3)} \sts (d+2)^{(b_{d+2} + k_{d+2})} \end{bmatrix*}.    
\end{align*} 
 
 where $k = k_3 + \cdots + k_{d+2}$ and the sum ranges over all non negative integers $k_3 \dts k_{d+2}$ such that $t-(b-b_2) \leq k \leq b_2$ and $0 \leq k_s \leq 1-b_s$, for $s = 3 \dts d+2$. In case,  $t > \min\{b, d\}$, then $\phi_T(x_{1,t}) = 0$.\\

    \item
Let $i \geq 2, \ t = 1 \dts \la_{i+1}$. Then

\begin{align*}
\phi_T(x_{i,t}) = &\sum\limits_{\max\{0,t-b_{i+1}\} \leq j \leq \min\{t,a_{i+1}\}}\tbinom{a_i + j}{j}\tbinom{b_i + t - j}{t-j}\\
&\begin{bmatrix*}[l]
1^{(\la_1)}  2^{(a_2)} \sts i^{(a_i + j)} (i+1)^{(a_{i+1}-j)} \sts (d+2)^{(a_{d+2})}  \\
2^{(b_2)} \sts i^{(b_i+t-j)} (i+1)^{(b_{i+1} - t +j)} \sts (d+2)^{(b_{d+2})} \end{bmatrix*}    
\end{align*}
\noindent where $a_2 = b - b_2$, $a_s = 1 - b_s, \forall s = 3 \dts d+2$.
\end{enumerate}
In both $1),2)$ we make the convention that $\tbinom{y}{x}=0$, if $x>y$.

\end{lemma}

 Below we present an example based on Lemma \ref{MD} which we will use later (see Corollary \ref{chk=p}).

\begin{example}\label{Exampd=2}
Consider the case $d=2\leq b$, i.e. $\la = (a,b,1,1),  \ \mu = (a+2,b)$.  \ 

 Then, the set $\st$ is equal to
\begin{equation*}
\left\{T_1:= \begin{bmatrix*}[l]
1^{(a)}  2^{(2)} \\
2^{(b-2)} 3 4 \end{bmatrix*}, 
T_2 := \begin{bmatrix*}[l]
1^{(a)}  34 \\
2^{(b)} \end{bmatrix*},  
T_3 := \begin{bmatrix*}[l]
1^{(a)}  24 \\
2^{(b-1)}3 \end{bmatrix*}, 
T_4 :=\begin{bmatrix*}[l]
1^{(a)}  23 \\
2^{(b-1)}4 \end{bmatrix*}\right\},    
\end{equation*}
where according to $(\ref{st})$, we have $T_{b-2,1,1} = T_1, \ T_{b,0,0} = T_2, \ T_{b-1,1,0} = T_3, \ T_{b-1,0,1} = T_4$. \ 

 Let $\Phi \in \im\pi_{\D(\la)}^{*} \simeq \Hom(\Delta(\la),\Delta(\mu))$, then according to $(\ref{equations})$ there exists coefficients $\{c_i\}_{i = 1}^4 \subseteq K$ such that $\Phi = \sm_{j=1}^4c_j\phi_{T_j}$, and $\sm_{j = 1}^4c_j\phi_{T_j}(x_{i,t}) = 0$, for all $i = 1 \dts d+1, \  t = 1 \dts \la_{i+1}$. Moreover, by Lemma \ref{MD} the previous equations are equivalent to
\begin{align*}
&\sm_{j = 1}^4c_j\phi_{T_j}(x_{1,1}) = 0,  \sm_{j = 1}^4c_j\phi_{T_j}(x_{1,2}) = 0, \sm_{j = 1}^4c_j\phi_{T_j}(x_{2,1}) = 0,  \sm_{j = 1}^4c_j\phi_{T_j}(x_{3,1}) = 0.
\end{align*}
by using Lemma \ref{MD} we get that
\begin{align*}
&\sm_{j = 1}^4c_j\phi_{T_j}(x_{1,1}) =  ((a-b+3)c_1 - c_4-c_3)\begin{bmatrix*}[l]
1^{(a+1)}  2  \\
2^{(b-2)} 3 4 \end{bmatrix*} +   \\
&  ((a-b+2)c_4-c_2)\begin{bmatrix*}[l]
1^{(a+1)}3  \\
2^{(b-1)}4 \end{bmatrix*} + ((a-b+2)c_3 - c_2)\begin{bmatrix*}[l]
1^{(a+1)}4  \\
2^{(b-1)}3 \end{bmatrix*},
\end{align*}
and by Theorem \ref{BAWbase} the equations $\sm_{j = 1}^4c_j\phi_{T_j}(x_{1,1}) = 0 $ are equivalent to 
\begin{equation}\label{eq(11)}
(a-b+3)c_1 - c_4-c_3 = 0, \ (a-b+2)c_4-c_2 = 0, \ (a-b+2)c_3 - c_2= 0.    
\end{equation}
Similarly we get that equations $\sm_{j = 1}^4c_j\phi_{T_j}(x_{1,2}) = 0 $ are equivalent to 
\begin{equation}\label{eq(12)}
\tbinom{a-b+4}{2}c_1 - (a-b+3)c_3 -(a-b+3)c_4+c_2 = 0.    
\end{equation}
Moreover,
\begin{align}
&\sm_{j = 1}^4c_j\phi_{T_j}(x_{2,1}) = 0 \ \Longleftrightarrow \ (b-1)c_1 + 2c_4 = 0, \ bc_3+c_2 =0\label{eq(21)}\\
&\sm_{j = 1}^4c_j\phi_{T_j}(x_{3,1}) = 0 \ \Longleftrightarrow \ 2c_1 =0,\ 2c_2 = 0, \ c_3+c_4 =0\label{eq(31)}    
\end{align}
In order to continue we need to distinguish cases among the  $\chK = p $ and the parity of $a,b$. \ 

\textbf{Case} $\chK = p >2$. Then by  (\ref{eq(21)}), (\ref{eq(31)})  we obtain $c_1 = c_2 = c_3 = c_4 =0$. Consequently, $\Phi = 0$ and $Hom_S(\Delta(\la),\Delta(\mu)) = 0$. \ 

\textbf{Case} $\chK =  2$, $a$ even, $b$ odd. Then equations (\ref{eq(11)}), (\ref{eq(12)}), (\ref{eq(21)}), (\ref{eq(31)}) are equivalent to $c_2 = c_3 = c_4$ and $\tbinom{a-b+4}{2}c_1 = c_2$. Now depending on the parity of the integer $\tbinom{a-b+4}{2}$, we get that either ($c_2 = c_3 = c_4 = c_1$) or ($c_2 = c_3 = c_4 = 0$ and $c_1 \in K$). Consequently, either $Im\pi_{\D(\la)}^{*} = \spn_K\{\phi_{T_1} + \phi_{T_2} + \phi_{T_3} + \phi_{T_4}\}$ or $ Im\pi_{\D(\la)}^{*} = \spn_K\{\phi_{T_1}\}$. In both cases  we have  $dim_KHom_S(\Delta(\la),\Delta(\mu)) = 1$. Using similar logic, we get that $dim_K\Hom_S(\Delta(\la),\Delta(\mu)) = 1$  for $a,b =$even, and $\Hom_S(\Delta(\la),\Delta(\mu)) = 0$ for $b = odd, a=$arbitrary. \\ 

In case where $a$ is even and $b$ is odd we can also prove that the generators of $Im\pi_{\D(\la)}^*$ are those given in Theorem \ref{Mainresult}. Initially, we have $\min\{b,d\} = 2$ and set $\gamma = a-b$. Define $j_0$ as in Definition \ref{MainDefinition} $ii)$. To   compute $j_0$, we will examine the following cases \ 

\textbf{Case} $\tbinom{a-b+4}{2} = even$. Then $\tbinom{a-b+4}{2} = \tbinom{\gamma+2^2}{2} = 0$. Hence, by Lucas Theorem \ref{Lucas}, $2$ is not a digit in the base 2 representation of $\gamma$. Combining this with the fact that   $\gamma$ is $odd$ implies that  $j_0 = 2$. Thus, $\Hlmm = \{2\}$ and 
\begin{align*}
\sm_{h \in \Hlmm} F_h = \sm_{h\in \Hlmm}\sm_{\sigma\in \Sh(d-h,h)}\phi_{T_{h,\sigma}}= \sm_{\sigma\in \Sh(0,2)}\phi_{T_{2,\sigma}} = \phi_{T_{2,1_{G_2}}} = \phi_{T_1}.   
\end{align*}
\textbf{Case} $\tbinom{a-b+4}{2} = odd$. Then $\tbinom{a-b+4}{2} = \tbinom{\gamma+2^2}{2} = 1$. Hence, by Lucas Theorem \ref{Lucas}, $2$ is a digit in the base 2 representation of $\gamma$. Combining this with the fact that   $\gamma$ is $odd$ implies that  $j_0 = 0$. Thus, $\Hlmm = \{0,1,2\}$. Let $G_2 = \{1,\tau = (12)\}$, then

\begin{align*}
\sm_{h \in \Hlmm} F_h &= \sm_{\sigma\in \Sh(2,0)}\phi_{T_{0,\sigma}} + \sm_{\sigma\in \Sh(1,1)}\phi_{T_{1,\sigma}} + \sm_{\sigma\in \Sh(0,2)}\phi_{T_{2,\sigma}} =  \\
&=\phi_{T_{0,1_{G_2}}} + (\phi_{T_{1,1_{G_2}}} + \phi_{T_{1,\tau}}) + \phi_{T_{2,1_{G_2}}} = \phi_{T_2} + \phi_{T_4} + \phi_{T_3} + \phi_{T_1}.
\end{align*}
\end{example}

\begin{remark}
Some observations coming out of the above example.
\begin{enumerate}
    \item 
Since  Theorem \ref{Mainresult} is not being used, this example verifies Theorem \ref{Mainresult}.    
    \item 
For proof of $Hom_S(\Delta(\la),\Delta(\mu)) =0$ when $\chK = p >0$, we didn't use the equations $\sm_{T\in \st}c_T\phi_T(x_{1,t}) = 0$, and neither will we in the general case.

    \item
For proof of $dim_KHom_S(\Delta(\la),\Delta(\mu)) = 1$ when $\chK =  2$, $a=$ even and $b=$ odd, notice that even though we knew the parity of both $a,b$, it was not enough to compute the parity of $\tbinom{a-b+4}{2}$.  Thus we considered cases. In order to avoid such cases in the sequel,  we will need to examine the base 2 representations of $a,b$. 
    
\end{enumerate}
\end{remark}

\begin{remark}
In the rest of this subsection, we will prove three lemmas involving explicit forms of the equations (\ref{equations}). As a direct consequence of these lemmas, we will also prove the part of the theorem where  $\chK = p>2$.
\end{remark}

\begin{lemma}\label{LemmaEquations1}
Equations (\ref{equations}) for $i = 1, \ t = 1 \dts \la_{2} = b$ are equivalent to the equations
\begin{equation}\label{Equations1}
\sm_{k_3 \dts k_{d+2}}(-1)^k\tbinom{a-b + r + t -k}{t-k}c_{b+k-r, r_3 - k_3 \dts r_{d+2} - k_{d+2}} = 0,    
\end{equation}
for all non negative integers $t, r_3 \dts r_{d+2}$  such that $1 \leq t \leq \min\{b,d\}$ and $0\leq r_s \leq 1 \ \forall s\geq 3$, $t \leq r \leq \min\{b,d\}$. Also, $k = k_3 +\cdots + k_{d+2}$ (respectively $r$) and the sum ranges over all non negative integers $k_3 \dts k_{d+2}$ such that $k_s \leq r_s$, for all $s = 3 \dts d+2$.   
\end{lemma}

\begin{proof}
Let $i = 1, \  t = 1 \dts   \min\{b,d\}$ and $y_t := \sm_{T \in \st}c_T\phi_T(x_{1,t})$ (where $y_t = 0$ for $ t > \min\{b,d\})$, then from Lemma \ref{MD} and (\ref{st}), $y_t$ is equal to 
\begin{align*}
&\sm_{b_2 \dts b_{d+2}} c_{b_2 \dts b_{d+2}}\sm_{k_3 \dts k_{d+2}}(-1)^{k}\tbinom{\la_1 -b_2 +t}{t-k}\prod\limits_{i = 3}^{d+2}\tbinom{b_i + k_i}{k_i}\\
&\begin{bmatrix*}[l]
1^{(\la_1 + t)}  2^{(b-b_2 - t + k)} 3^{(1-b_3 - k_3)} \sts (d+2)^{(1-b_{d+2}-k_{d+2})}  \\
2^{(b_2-k)} 3^{(b_3+k_3)} \sts (d+2)^{(b_{d+2} + k_{d+2})} \end{bmatrix*},
\end{align*}
where $b_2 \dts b_{d+2}$ are subject to (\ref{st}) and $k_3 \dts k_{d+2}$ are as in Lemma \ref{MD} (1). \\ 

 Initially, it is clear that $\prod\limits_{i = 3}^{d+2}\tbinom{b_i + k_i}{k_i} = 1$. Moreover, for each $s \in \{ 3 \dts d+2\}$ set $r_s = b_s + k_s, \ r = r_3 +\cdots + r_{d+2}$ and then define the sets
\begin{enumerate}
    \item 
$I :=$ $\{(b_2 \dts b_{d+2}, k_3 \dts k_{d+2})   :  (b_2 \dts b_{d+2}) \in \Blm$ and $k_3 \dts k_{d+2}$ as in Lemma \ref{MD} (1)\}

    \item
For each $r_3 \dts r_{d+2}$ such that $0\leq r_s \leq 1 \ \forall s\geq 3, \ t \leq r \leq b$ we consider the set \ 
\begin{align*}
I_{r_3 \dts r_{d+2}} = \{ (b_2 \dts b_{d+2}, k_3 \dts k_{d+2}): \ &b_2 = b -r + k, \ b_s = r_s -k_s, \\
&0\leq k_s \leq r_s, \ \forall s\geq 3\}      
\end{align*}

\end{enumerate}
It is clear that we have the disjoint union $I = \bigcup_{r_3 \dts r_{d+2}} I_{r_3 \dts r_{d+2}}$, where $r_3 \dts r_{d+2}$ runs according to (2) from above. Also, notice that $I_{r_3 \dts r_{d+2}} \neq \emptyset$, because it contains $(b-r, r_3 \dts r_{d+2}, 0 \dts 0)$. Hence, 
\begin{align*}
y_t = &\sm_{r_3 \dts r_{d+2}}\sm_{(b_2 \dts b_{d+2}, k_3 \dts k_{d+2})\in I_{r_3 \dts r_{d+2}}}(-1)^{k}\tbinom{\la_1 - b +r -k+ t}{t-k}\\
&c_{b+k-r, r_3-k_3 \dts r_{d+2} - k_{d+2}} \begin{bmatrix*}[l]
1^{(\la_1 + t)}  2^{(r-t)} 3^{(1 - r_3)} \sts (d+2)^{(1-r_{d+2})}  \\
2^{(b-r)} 3^{(r_3)} \sts (d+2)^{r_{d+2})} \end{bmatrix*},
\end{align*}
where the first sum  ranges over all integers $r_3\dts r_{d+2}$ such that $0 \leq r_s \leq 1 \ \forall s \geq 3, \ t\leq r \leq b$ (in fact $r \leq \min\{b,d\}$). One may observe that the variables $b_2,b_3 \dts b_{d+2}$ under the second sum can be omitted. Therefore, $y_t$ is equal to
\begin{align*}
\sm_{r_3 \dts r_{d+2}}&\left(\sm_{k_3 \dts k_{d+2}}(-1)^{k}\tbinom{\la_1 - b +r -k+ t}{t-k}c_{b+k-r, r_3-k_3 \dts r_{d+2} - k_{d+2}}\right) \\
&\begin{bmatrix*}[l]
1^{(\la_1 + t)}  2^{(r-t)} 3^{(1 - r_3)} \sts (d+2)^{(1-r_{d+2})}  \\
2^{(b-r)} 3^{(r_3)} \sts (d+2)^{r_{d+2})} \end{bmatrix*},   
\end{align*}
where the second sum ranges over all integers $k_3 \dts k_{d+2}$ such that $0 \leq k_s \leq r_s, \ \forall s \geq 3$. Since $\mu_2 \leq \la_1$, every tableau on the above expresion is semistandard, and so by 
Theorem \ref{BAWbase}  the equations $y_t = 0$ for $ t = 1 \dts \min\{b,d\}$ are equivalent to the equations (\ref{Equations1}). 
\end{proof}

\begin{lemma}\label{LemmaEquations2}
Equations (\ref{equations}) for $i = 2,\   t = 1 \dts \la_{i+1} = 1$ are equivalent to the equations
\begin{equation}\label{Equations2}
(b-q +1)c_{q, 0,b_4 \dts b_{d+2}} +  qc_{\max\{0,q-1\},1,b_4 \dts b_{d+2}} = 0,   
\end{equation}
for all $q, b_4 \dts b_{d+2}$ such that $q \in \{b-\min\{b,d-1\} \dts b\}, \ 0 \leq b_s \leq 1 \ \forall s \geq 4$,  and $\sm_{i  = 4}^{d+2}b_i  = b -q$. 
\end{lemma}

\begin{proof}
Let $i = 2$, $t = 1$ and $T \in \st$ (as in (\ref{st})), then from Lemma \ref{MD} (2) and definition of $\la, \mu$ we have 
\begin{align*}
\phi_T(x_{2,1}) = \tbinom{b-b_2+1-b_3}{1-b_3}\tbinom{b_2+ b_3}{b_3} \begin{bmatrix*}[l]
	1^{(\la_1)}  2^{(b+1-(b_2+b_3))} 4^{(1- b_4)} \sts  (d+2)^{(1- b_{d+2})}  \\
	2^{(b_2+b_3)}  4^{(b_4)}  \sts (d+2)^{(b_{d+2})} \end{bmatrix*},   
\end{align*}
where we set $\epsilon_{b_2,b_3}= \tbinom{b-b_2+1-b_3}{1-b_3}\tbinom{b_2+ b_3}{b_3}$ and  $\epsilon_{b_2,1} = b_2 + 1, \ \epsilon_{b_2,0} = b-b_2+1$
Furthermore, set $y =  \sm_{T \in \st}c_T\phi_T(x_{2,1})$  and according to (\ref{st}) $y$ is equal to 
\begin{align*}
\sm_{(b_2 \dts b_{d+2}) \in B(\la,\mu)}\epsilon_{b_i,b_{i+1}}c_{b_2 \dts b_{d+2}}\begin{bmatrix*}[l]
1^{(\la_1)}  2^{(b+1-(b_2+b_3))} 4^{(1-b_4)}  \sts (d+2)^{(1- b_{d+2})}  \\
2^{(b_2+b_3)} 4^{(b_4)} \sts (d+2)^{(b_{d+2})} \end{bmatrix*}.     
\end{align*}
 Now  observe that the following disjoint union holds 
\begin{equation}\label{DisUnion}
\Blm = \bigcup\limits_{q = b-\min\{b,d-1\}}^bA_q,   
\end{equation}
where $A_q = \{(b_2 \dts b_{d+2}) \in \Blm : b_2 + b_3 = q\}$. Note also that $A_q \neq \emptyset$. Set $m = b-\min\{b,d-1\}$ and then
\begin{align*}
y &= \sm_{q = m}^b\sm_{(b_2 \dts b_{d+2}) \in A_q}\epsilon_{b_i,b_{i+1}}c_{b_2 \dts b_{d+2}}\begin{bmatrix*}[l]
	1^{(\la_1)}  2^{(b+1 -q)} 4^{(1-b_4)} \sts (d+2)^{(1-b_{d+2})}  \\
	2^{(q)} 4^{(b_4)} \sts (d+2)^{(b_{d+2})} \end{bmatrix*}  \\
& = \sm_{q} \sm_{b_4 \dts b_{d+2}}\left(\sm_{b_2,b_{3}}\epsilon_{b_2,b_3}c_{b_2 \dts b_{d+2}}\right) \begin{bmatrix*}[l]
	1^{(\la_1)}  2^{(b+1 -q)} 4^{(1-b_4)} \sts (d+2)^{(1-b_{d+2})}  \\
	2^{(q)} 4^{(b_4)} \sts (d+2)^{(b_{d+2})} \end{bmatrix*},
\end{align*}
where the second sum ranges all $b_4 \dts b_{d+2}$ such that $0 \leq b_s \leq 1  \ \forall s \in \{4 \dts d+2\}, \  \sm_{k \neq 2, 3}b_k = b-q$, and the third sum ranges all $b_2, b_3$ such that $0 \leq b_2 \leq b, \ 0\leq b_3 \leq 1, \ b_2 + b_3 = q$. Since $\mu_2 \leq \la_1$, every tableau on the above expresion is semistandard, and so by 
Theorem \ref{BAWbase}  the equation $y = 0$  is equivalent to the equations
\begin{equation}\label{equationsLemma4.5}
\sm_{b_2,b_3}\epsilon_{b_2,b_3}c_{b_2 \dts b_{d+2}}= 0,   
\end{equation}
for all $q \in \{b - \min\{b,d-1\} \dts b\}$ and $b_4 \dts b_{d+2}$ such that  $0 \leq b_s \leq 1  \ \forall s \in \{4 \dts d+2\},  \ \sm_{k = 4}b_k = b-q$.  Specifically, from definition of $\epsilon_{b_i,b{i+1}}$, equations (\ref{equationsLemma4.5})  are equivalent to the equations of the statement of the lemma.
\end{proof}

\begin{lemma}\label{LemmaEquations3}
Equations (\ref{equations}) for all $i,t$ such that $3 \leq i \leq d+1, \ t = 1 \dts \la_{i+1} = 1$, depending on $p,d$ are equivalent to the following
\begin{enumerate}
     \item 
($\chK= 2, \  d \geq 2$).    
\begin{equation}\label{Equations3}
c_{b_2, b_3 \dts b_{d+2}} =  c_{b_2, b_3' \dts b_{d+2}'},  
\end{equation}
for all $(b_2, b_3 \dts b_{d+2}), \ (b_2, b_3' \dts b_{d+2}') \in \Blm$.      
    
    \item 
($\chK= p>2, \  d\geq3$).
\begin{equation}
c_{b_2, b_3 \dts b_{d+2}} = 0,  
\end{equation}
for all $(b_2, b_3 \dts b_{d+2})\in \Blm$.   
\end{enumerate}

\end{lemma}

\begin{proof}
Let $i \in \{3 \dts d+1\}$, $t = 1$ and $T \in \st$ (as in (\ref{st})), by setting $y_i =  \sm_{T \in \st}c_T\phi_T(x_{i,1})$ and using similar arguments as in the proof of previous Lemma, we get
\begin{align*}
y_i = &\sm_{(b_2 \dts b_{d+2})\in B(\la,\mu)}\epsilon_{b_i,b_{i+1}}c_{b_2 \dts b_{d+2}}\\
&\begin{bmatrix*}[l]
	1^{(\la_1)}  2^{(b-b_2)} \sts i^{(2- (b_i + b_{i+1}))} (i+2)^{(1-b_{i+2})} \sts (d+2)^{(1- b_{d+2})}  \\
	2^{(b_2)} \sts i^{(b_i+ b_{i+1})} (i+2)^{(b_{i+2})} \sts (d+2)^{(b_{d+2})} \end{bmatrix*},     
\end{align*}
where, $\epsilon_{b_i,b_{i+1}} = \tbinom{2-(b_i + b_{i+1})}{1-b_{i+1}}\tbinom{b_i+ b_{i+1}}{b_{i+1}}$ and $\epsilon_{b_i,b_{i+1}} = 2$, if $b_i = b_{i+1}$ and  $\epsilon_{b_i,b_{i+1}} = 1$, otherwise. \ 

 Now, we may observe that the following disjoint union holds 
\begin{align*}
\Blm = \bigcup\limits_{q = 0}^2A_q,   
\end{align*} 
where $A_q = \{(b_2 \dts b_{d+2}) \in \Blm : b_i + b_{i+1} = q\}$. Note also that $A_q \neq \emptyset$. Indeed, since $b\geq 2$ and $q \in \{0,1,2\}$, we have $0 \leq b-q \leq b$, so for arbitrary $b_i, b_{i+1}$ such that $0\leq b_i, b_{i+1} \leq 1,\ b_i + b_{i+1} = q$, we get that $(b-q, 0 \dts b_i,b_{i+1}\dts 0) \in A_q$. Consequently, 
\begin{align*}
y_i = &\sm_{q = 0}^2\sm_{(b_2 \dts b_{d+2}) \in A_q}\epsilon_{b_i,b_{i+1}}c_{b_2 \dts b_{d+2}}\\
&\begin{bmatrix*}[l]
1^{(\la_1)}  2^{(b-b_2)} \sts i^{(2- q)} (i+2)^{(1-b_{i+2})} \sts (d+2)^{(1- b_{d+2})}  \\
2^{(b_2)} \sts i^{(q)} (i+2)^{(b_{i+2})} \sts (d+2)^{(b_{d+2})} \end{bmatrix*}   
\end{align*}
\begin{align*}
= &\sm_{q = 0}^2\sm_{b_j: j\neq i,i+1}\left(\sm_{b_i,b_{i+1}}\epsilon_{b_i,b_{i+1}}c_{b_2 \dts b_{d+2}}\right) \\
&\begin{bmatrix*}[l]
1^{(\la_1)}  2^{(b-b_2)} \sts i^{(2- q)} (i+2)^{(1-b_{i+2})} \sts (d+2)^{(1- b_{d+2})}  \\
2^{(b_2)} \sts i^{(q)} (i+2)^{(b_{i+2})} \sts (d+2)^{(b_{d+2})} \end{bmatrix*},
\end{align*}
where the second sum ranges all $b_j, \  j \in \{2 \dts d+2\}\setminus \{i, i+1\}$ such that $0 \leq b_s \leq 1  \ \forall s \in \{3 \dts d+2\}\setminus \{i, i+1\}, \ 0 \leq b_2 \leq b, \ \sm_{k \neq i, i+1}b_k = b-q$, and the third sum ranges all $b_i, b_{i+1}$ such that $0 \leq b_i, b_{i+1} \leq 1, \ b_i + b_{i+1} = q$. Since $\mu_2 \leq \la_1$, every tableau on the above expression is semistandard, and so by 
Theorem \ref{BAWbase}  the equations $y_i =  0$ for $i  = 3\dts d+1$ are equivalent to the equations
\begin{equation}\label{eqoflemma}
\sm_{b_i,b_{i+1}}\epsilon_{b_i,b_{i+1}}c_{b_2 \dts b_{d+2}}= 0,   
\end{equation}
for all $(i,q,b_2\dts b_{d+2}) \in C:=\{(i,q,b_2 \dts b_{d+2}): i \in \{3 \dts d+1\}, \ q \in \{0,1,2\}, \ 0 \leq b_s \leq 1  \ \forall s \in \{3 \dts d+2\}\setminus \{i, i+1\}, \ 0 \leq b_2 \leq b, \ \sm_{k \neq i, i+1}b_k = b-q\}$.   Now we distinguish cases, \ 

\textbf{Case} $\chK = 2, \ d\geq 2$. From definition of $\epsilon_{b_i,b_{i+1}}$ and $\chK = 2$, equations (\ref{eqoflemma})  are equivalent to the equations
\begin{equation}
c_{b_2\dts b_{i-1},1 ,0\dts b_{d+2}}   = c_{b_2 \dts b_{i-1}, 0,1\dts b_{d+2}},
\end{equation}
for all $(i,1,b_2\dts b_{d+2}) \in C$. It is easy to check that the last equations are equivalent to the equations of the statement of the lemma. \ 

\textbf{Case} $\chK = p >2, \ d\geq 3$.  From definition of $\epsilon_{b_i,b_{i+1}}$, equations (\ref{eqoflemma})  are equivalent to the equations
\begin{align}
&2c_{b_2\dts b_{i-1},1 ,1\dts b_{d+2}} = 0, \  \forall\   (i,2,b_2\dts b_{d+2}) \in C \  \label{1eqchk>2} \\  
&2c_{b_2 \dts b_{i-1}, 0,0\dts b_{d+2}} = 0, \ \forall \  (i,0,b_2\dts b_{d+2}) \in C\label{2eqchk>2} \\ 
&c_{b_2\dts b_{i-1},1 ,0\dts b_{d+2}}   = -c_{b_2 \dts b_{i-1}, 0,1\dts b_{d+2}}, \ \forall \  (i,1,b_2\dts b_{d+2}) \in C.   \label{3eqchk>2}
\end{align}
Since $\chK = p >2$, then the equations (\ref{1eqchk>2}), (\ref{2eqchk>2}) are equivalent to 
\begin{equation}\label{4eqchk>2}
c_{b_2, b_3 \dts b_{d+2}}= 0,    
\end{equation}
for all $(b_2, b_3 \dts b_{d+2}) \in B(\la,\mu)$ such that the sequence $(b_3 \dts b_{d+2})$ has a repetition (i.e. exists $i$ such that $b_i = b_{i+1}$). Furthermore, equations (\ref{3eqchk>2}) implies that 
\begin{equation}\label{5eqchk>2}
c_{b_2, b_3 \dts b_{d+2}} = \pm c_{b_2,b_3' \dts b_{d+2}'}   
\end{equation}
for all $(b_2,b_3 \dts b_{d+2}), (b_2,b_3'\dts b_{d+2}') \in B(\la,\mu)$. Finally, from (\ref{4eqchk>2}), (\ref{5eqchk>2}) and the fact that $d \geq 3$ we get that $c_{b_2 \dts b_{d+2}} = 0$ for all $(b_2 \dts b_{d_2}) \in B(\la,\mu)$.
\end{proof}

\begin{corollary}\label{chk=p}
If $chK = p >2$, then $\Hom_S(\D(\la),\D(\mu)) = 0$.  
\end{corollary} 

\begin{proof}
It is a direct consequence of Example \ref{Exampd=2} and Lemma \ref{LemmaEquations3}.
    
\end{proof}

\section{The case $\chK =2, a= $ \textit{odd}}

 From equations (\ref{Equations1}), (\ref{Equations2}), (\ref{Equations3}) we determine quickly   $\Hom_S(\D(\la), \D(\mu))$ when $a$ is odd and $b$ arbitrary.

\begin{corollary}\label{Hom,a-odd}
If $a$ is odd, then $\Hom_S(\D(\la), \D(\mu)) = 0$.
\end{corollary}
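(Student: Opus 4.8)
The plan is to exploit the explicit presentation of $\Delta(\la)$ coming from Theorem~\ref{BAWpresentation}: a homomorphism $\Delta(\la)\to\Delta(\mu)$ is the same as an $S$-map $f\colon D(\la)\to\Delta(\mu)$ that kills $\im\bx_\la$. By Proposition~\ref{Base}, any such $f$ can be written uniquely as $f=\sum_{T\in\st}c_T\phi_T$ with $c_T\in K$, so the task reduces to the linear system expressing $f(\im\rectangle{5pt}{9pt}_{\la,s,t})=0$ for all admissible $(s,t)$. These are precisely the equations (\ref{Equations1}), (\ref{Equations2}), (\ref{Equations3}) set up in Section~\ref{Settingthestage}; I would first recall exactly which relations they encode for $\la=(a,b,1^d)$.

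First I would observe that, since $\mu$ has only two rows, Proposition~\ref{Cycle} and Remark~\ref{remarkphi} let me evaluate everything on the cyclic generator $1^{(a)}\otimes 2^{(b)}\otimes 3\otimes\cdots\otimes(d+2)$, so each $\phi_T$ is completely recorded by the tableau $T\in\st$ and the relation-maps $\rectangle{5pt}{9pt}_{\la,s,t}$ translate into straightening-type sums of binomial coefficients. The crucial relation to look at is the one coming from the first column transposition, i.e. $s=1$ with the box between rows $1$ and $2$ (the "$\D$ then $\eta$" on the $D_a V\otimes D_b V$ part): applying $f$ to its image forces a combination of the coefficients $c_T$ weighted by binomials $\binom{a}{j}$, $\binom{b}{\cdot}$, etc. When $a$ is odd, I expect this relation (perhaps together with the $t=1$ instance of another column relation) to contain a term with coefficient $\binom{a}{1}=a\equiv 1\pmod 2$ — or more precisely a factor that is odd exactly when $a$ is odd — which pins one coefficient to a linear combination of others in a way that, propagated through the remaining relations, collapses the whole solution space to zero.

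Concretely, the key step is: show that the relations coming from moving a single box from the second row into the first (the $(s,t)=(1,1)$ piece, which involves a $D_{a}V \otimes D_1 V \to D_{a+1}V$ multiplication acting via $\binom{a+1}{1}=a+1$) force, modulo $2$ and using Lucas (Theorem~\ref{Lucas}) and Corollary~\ref{LucasCor1}(1) (namely $\binom{\text{even}}{\text{odd}}\equiv0$), that every semistandard $T$ of shape $\mu$, weight $\la$ must carry $c_T=0$. The parity of $a$ enters because the coefficient governing whether a $1$ can be "created" in the first row of $\mu$ from the extra column boxes is controlled by binomials of the form $\binom{a+\ast}{\ast}$; when $a$ is odd these are generically nonzero mod $2$ and the system is non-degenerate with only the trivial solution, whereas when $a$ is even (case handled later) the vanishing $\binom{a+1}{1}\equiv1$ but $\binom{a}{1}\equiv 0$ etc. opens up the one-dimensional family. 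I would carry this out by: (1) listing $\st$ for $\la=(a,b,1^d)$, $\mu=(a+d,b)$ — these are indexed by how the $d$ distinct column-entries $3,\dots,d+2$ and the $2$'s distribute between the two rows; (2) writing the relation equations from (\ref{Equations1})–(\ref{Equations3}) restricted to this tableau set; (3) reducing mod $2$ via Lucas and isolating the subsystem where the parity of $a$ appears as a unit coefficient; (4) concluding $c_T=0$ for all $T$, hence $\Hom_S(\Delta(\la),\Delta(\mu))=0$.

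The main obstacle I anticipate is bookkeeping: making sure the relation-maps $\rectangle{5pt}{9pt}_{\la,s,t}$ are evaluated correctly on $1^{(a)}\otimes 2^{(b)}\otimes 3\otimes\cdots$ and re-expressed in the semistandard basis $\{[T]\}$ via straightening, since a careless straightening step could hide or spuriously introduce an odd binomial factor. A secondary subtlety is handling $d>1$ uniformly — the extra singleton columns $1^d$ generate several relations $(s,t)$ with $s\ge 2$, and I need to check none of them conspire (mod $2$) to resurrect a nonzero solution; I expect these extra relations to be either automatically satisfied or to only further constrain the $c_T$, never to relax the constraint, but verifying this cleanly is where most of the work lies. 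Once the $a$-odd obstruction term is identified, the corollary should follow in a couple of lines from equations (\ref{Equations1})–(\ref{Equations3}), exactly as the text promises ("we determine quickly").
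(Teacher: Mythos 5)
Your framework is exactly the paper's: identify $\Hom_S(\D(\la),\D(\mu))$ with the maps $\sum_T c_T\phi_T\colon D(\la)\to\D(\mu)$ that kill $\im\bx_\la$, and analyze the resulting linear system (\ref{Equations1})--(\ref{Equations3}) modulo $2$ via Lucas. The gap is in your key step: it is not true that the $(s,t)=(1,1)$ relations, even after straightening, force all $c_T=0$ when $a$ is odd. In the paper's computation (take $b$ odd, so $a-b$ is even), the $t=1$ relations from the first pair of rows reduce, after using the rearrangement relations (\ref{Equations3}), to $(r+1)\,c_{b-r,r_3,\dots,r_{d+2}}+\tbinom{r}{1}\,c_{b+1-r,r_3-1,r_4,\dots,r_{d+2}}=0$, and modulo $2$ this kills only the coefficients $c_{b_2,\dots,b_{d+2}}$ with $b_2$ odd (the case $b_2=b$ needing the special choice $r_3=1$, $r_4=\cdots=0$). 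The coefficients with $b_2$ even are untouched by these relations; they are killed only by invoking the relations coming from the second and third rows of $\la$ (equations (\ref{Equations2}), i.e.\ the $s\geq 2$, $t=1$ piece), which via $c_{q,0,b_4,\dots}=c_{q-1,1,b_4,\dots}$ and Lemma \ref{LemmaEquations3} identify each even-$b_2$ coefficient with an odd-$b_2$ one already shown to vanish. So the $s\geq 2$ relations are not a consistency check to be verified afterwards (and your worry that they might ``resurrect'' a nonzero solution is vacuous --- adding linear equations can only shrink the solution set); they are an indispensable half of the argument, which your plan does not supply.

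A secondary inaccuracy: the parity mechanism is not that binomials of the form $\tbinom{a+\ast}{\ast}$ are ``generically odd'' when $a$ is odd. The coefficient that actually appears is $\tbinom{a-b+r+1-k}{1-k}$, whose parity depends on $a-b$ and $r$ jointly; consequently the proof necessarily splits according to the parity of $b$ (the paper treats $b$ odd in detail and indicates the $b$ even case is analogous), and the unit coefficient one isolates is $r+1$ (or $r$), not anything of the form $a+1$. Your sketch registers neither this case split nor the essential role of the symmetry relations (\ref{Equations3}) in collapsing the multi-indexed system to coefficients indexed by the second-row content, so the promised ``couple of lines'' conclusion does not yet follow from what you have written.
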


\begin{proof}
Lets start by assuming that $b$ is odd. The case where b is even is slightly different, but it follows a similar line of reasoning.  \

From (\ref{Relations}) it suffices to prove $\im\pi_{\D(\la)}^{*} = 0 $. Consider $\Phi \in \im\pi_{\D(\la)}^{*}$. Based on (1), (2) under the  (\ref{Relations}) we have  $\Phi = \sm_{(b_2 \dts b_{d+2}) \in \Blm}c_{b_2 \dts b_{d+2}}\phi_{T_{b_2 \dts b_{d+2}}}$, following by the equations  $\sm_{T \in \st}c_T\phi_T(x_{i,t}) = 0, \  \textit{for all} \  i = 1 \dts d~+~1, \ t = 1 \dts \la_{i+1}$.  \\
We start by examining the equations $\sm_{T \in \st}c_T\phi_T(x_{1,1}) = 0$, which by  Lemma \ref{LemmaEquations1} are equivalent to
\begin{equation}\label{Eqodd1}
\sm_{k_3 \dts k_{d+2}}(-1)^k\tbinom{a-b + r + 1 -k}{1-k}c_{b+k-r, r_3 - k_3 \dts r_{d+2} - k_{d+2}} = 0,\end{equation}
for all non negative integers $ r_3 \dts r_{d+2}$  such that  $r_s \leq 1 \ \forall s\geq 3$, $ 1 \leq r \leq b$. Also the sum ranges over all non negative integers $k_3 \dts k_{d+2}$ such that $k_s \leq r_s \ \forall s \geq 3$ and $k \leq 1$. Moreover,  Lemma \ref{LemmaEquations3} implies that $ (\ref{Eqodd1})$ is equivalent to
\begin{align*} 
&(a-b+r+1)c_{b-r, r_3 \dts r_{d+2}} + \sm_{k_3 \dts k_{d+2}: \ k = 1}c_{b+k-r, r_3 - k_3 \dts r_{d+2} - k_{d+2}} = 0    \\
\ \Longleftrightarrow\  &(r+1) c_{b-r, r_3 \dts r_{d+2}} + \tbinom{r}{1} c_{b+1 - r, r_3-1, r_4 \dts r_{d+2}}  = 0,
\end{align*}
where $\tbinom{r}{1} =  \left|\{(k_3 \dts k_{d+2}): 0 \leq k_s \leq r_s \ \forall s \geq 3, \ k =1 \}\right|$. As a consequence, we get
\begin{enumerate}
    
    \item
$c_{b-r, r_3 \dts r_{d+2}} = 0$, for all $r_3 \dts r_{d+2}$, such that $0 \leq r_s \leq 1 \ \forall s \geq 3, \ 1 \leq r \leq b, \ r = even$.

    \item
$c_{b, 0 \dts 0} = 0$. 
\end{enumerate}
Note that (2) follows from the last equivalence by choosing $r_3= 1, r_4 = \cdots = r_{d+2} = 0$.

From $(1),(2)$ we deduce $c_{b_2 \dts b_{d+2}} = 0$ for all $(b_2 \dts b_{d+2}) \in \Blm$, such that $b_2 = $odd. Indeed,  let $(b_2' \dts b_{d+2}') \in B(\la,\mu)$, with $b_2' =$odd.  Set $r_s := b_s' \ \forall s \geq 3$, where $r = \sm_{s \geq 3}r_s = b -b_2' = even \leq b$, we may also assume  $1 \leq r$ (else $c_{b_2' \dts b_{d+2}'} = c_{b, 0 \dts 0}= 0)$. Based on (1)  we get $c_{b_2' \dts b_{d+2}'} =  c_{b-r,r_3 \dts r_{d+2}} = 0$. \ 

It remains to prove $c_{b_2 \dts b_{d+2}} = 0$ for all $(b_2 \dts b_{d+2}) \in \Blm$, such that $b_2 = $even. Let $(b_2' \dts b_{d+2}') \in \Blm$ with $b_2' = even$. From   Lemma \ref{LemmaEquations2} (for t = 1) we have 
\begin{equation}\label{Eqodd2}
c_{q, 0,b_4 \dts b_{d+2}} = c_{q-1,1,b_4 \dts b_{d+2}},   
\end{equation}
for all $q, b_4 \dts b_{d+2}$ such that $q \in \{b-\min\{b,d-1\} \dts b\}, \ q =$odd, $ \ 0 \leq b_s \leq 1 \ \forall s \geq 4$,  and $\sm_{i  = 4}^{d+2}b_i  = b -q$. Furthermore, note that not all of the $b_3' \dts b_{d+2}'$ are zero (or else $b_2' = \sm_{k \geq 2}b_k' = b = $odd, which creates contradiction) so from Lemma \ref{LemmaEquations3}, $c_{b_2'\dts b_{d+2}'} = c_{b_2',1,b_4'' \dts b_{d+2}''} =  c_{(b_2'+1) -1,1,b_4'' \dts b_{d+2}''}$ (where, $(1, b_4'' \dts b_{d+2}'')$ is a rearrangement of $(b_3' \dts b_{d+2}')$). Now observe that $b- \min\{b,d-1\}\leq b_2' + 1 \leq b$ (second inequality, comes from the fact that $b_2' \leq b, \ b_2' = $ even and $b = $odd), $b_2' +1 = odd$  and so from  (\ref{Eqodd2}) $c_{b_2'\dts b_{d+2}'} = c_{b_2'+1,0 \dts b_{d+2}'}$, which is zero from before.
\end{proof}

\section{The case $\chK=2, a, b= $ \textit{even}, Part I}
Unless explicitly mentioned otherwise, we assume that both a and b are even. The case where b is an odd is similar, and detailed explanations will be provided in Section \ref{Section,a=even,b=odd}. The situation becomes more intricate when a is even. Therefore, we divide the proof into two parts and present a concise outline of the first part below.\

According to our assumptions, set $\gamma:= a-b = 2k$ for some $k \in \mathbb{N}$. In this section we will focus on the following
\begin{enumerate}
    
    \item 
We will present a system of equations dependent on the parameter k that characterizes the dimension of the space $\Hom_S(\D(\la), \D(\mu))$ (see Proposition \ref{CharacterSystem}).

    \item 
For a deeper understanding, we will solve the system when k=0 (see example \ref{examplek=0}). Specifically, we will take a first look at how base 2 representations become involved in the entire concept. Additionally, the combinatorial tools employed here will prove useful in the sequel.

    \item
A pivotal combinatorial lemma (refer to Lemma \ref{pivotalLemma}) will be developed here. From this lemma, we will not only establish an upper bound on the dimension of  $\Hom_S(\D(\la), \D(\mu)) \leq 1$, but it will also assist us in precisely computing the dimension in Part II.\\ 
\end{enumerate}

Let us start by reminding the situation we are in   (according to (\ref{Relations}) and Lemmas \ref{LemmaEquations1}, \ref{LemmaEquations2}, \ref{LemmaEquations3})

$\Phi \in \im\pi_{\D(\la)}^{*}$ if and only if the following holds
\begin{enumerate}
    \item[$i)$]
$\Phi = \sm_{(b_2 \dts b_{d+2}) \in B(\la,\mu)}c_{b_2 \dts b_{d+2}}\phi_{T_{b_2 \dts b_{d+2}}}$, for some coefficients  \ 

$\{c_{b_2 \dts b_{d+2}}\}~\subseteq~K$.

    \item[$ii)$]
Coefficients $\{c_{b_2 \dts b_{d+2}}\}$ satisfies the equations (\ref{Equations1}), (\ref{Equations2}), (\ref{Equations3}).\\ 
\end{enumerate}
We intend to analyze $i),ii)$ as much as possible. For this reason, we recall the disjoint union
\begin{equation}\label{DisUnion1}
\Blm = \bigcup\limits_{s = b-\min\{b,d\}}^b\Blm_s,     
\end{equation}
where $\Blm_s = \{ (b_2 \dts b_{d+2}) \in \Blm : b_2 = s\}$ and $\Blm_s \neq \emptyset$.

\begin{lemma}\label{Equations123}
The following are equivalent
\begin{enumerate}
    \item 
Coefficients $ \{c_{b_2 \dts b_{d+2}}: (b_2 \dts b_{d+2})  \in B(\la,\mu)\} \subseteq K$ satisfy equations (\ref{Equations1}), (\ref{Equations2}), (\ref{Equations3}).
    \item

\begin{enumerate}

    \item 
For  $s \in \{b-\min\{b,d\} \dts  b\}$, the set $\{c_{b_2 \dts b_{d+2}}: (b_2 \dts b_{d+2}) \in \Blm_s\} $ is a singleton, and we denote its unique element by $c_{b-s}$. In particular,  $\{c_{b_2 \dts b_{d+2}}: (b_2 \dts b_{d+2}) \in \Blm\} = \{c_{b-s}: s \in \{b-\min\{b,d\} \dts  b\}\}$.

    \item 
$ c_{b_2 \dts b_{d+2}} = 0 $  for all \ $(b_2 \dts b_{d+2}) \in \Blm$ such that $b_2 =$ even.

    \item
For all $t, r$  such that $1 \leq t \leq \min\{b,d\}, \  t \leq r \leq \min\{b,d\}$ and $t =$even, $r=$odd, we have
\begin{equation}\label{Equations1-3}
\sm_{\substack{0 \leq j \leq t \\ j = \textit{even}}}\tbinom{\gamma + r + t - j }{t-j}\tbinom{r}{j}c_{r-j} = 0.      
\end{equation}

\end{enumerate}    
\end{enumerate}
\end{lemma}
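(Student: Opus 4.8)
The plan is to prove Lemma \ref{Equations123} by unwinding the three families of equations (\ref{Equations1}), (\ref{Equations2}), (\ref{Equations3}) one at a time, using the explicit forms provided by Lemmas \ref{LemmaEquations1}, \ref{LemmaEquations2}, \ref{LemmaEquations3}, and organizing the argument around the disjoint union (\ref{DisUnion1}). The key structural observation is that the tableaux $T_{b_2,\dots,b_{d+2}}$ with the same value of $b_2 = s$ differ only by a permutation of the ``column'' entries $3,\dots,d+2$ between the two rows, so the content of (a) and (b) is that equations (\ref{Equations2}) and (\ref{Equations3}) force all coefficients in a fixed block $\Blm_s$ to coincide (giving the single scalar $c_{b-s}$) and force those blocks with $b_2$ even to vanish. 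Part (c) then says that the remaining equations (\ref{Equations1}) collapse, after this identification, to the binomial system (\ref{Equations1-3}).

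First I would treat (\ref{Equations3}) — the equations coming from the ``column'' box moves $x_{i,t}$ with $i \ge 2$. Via Lemma \ref{LemmaEquations3} these equations relate $c_T$ for tableaux $T$ that are rearrangements of each other in the entries $3,\dots,d+2$; concretely they should say $c_{b_2,\dots,b_{d+2}}$ depends only on $b_2$ (and hence on $b-s$), because moving a single column-entry from one row to the other is an invertible operation producing a coefficient $1$. This establishes (a), and simultaneously lets me rewrite everything in terms of the reduced family $\{c_{b-s}\}$. Next I would feed this reduction into (\ref{Equations2}), the equations for the move $x_{2,t}$ that shifts boxes between the two copies of the entry $2$; by Lemma \ref{LemmaEquations2} these become relations of the shape $c_{q,0,\ldots} = c_{q-1,1,\ldots}$ as already seen in the proof of Corollary \ref{Hom,a-odd}, and combined with the parity bookkeeping (using that $a-b$ is even now, not odd) they pin down that the even-$b_2$ coefficients are $0$, i.e. (b). Finally, substituting (a) and (b) into (\ref{Equations1}) via Lemma \ref{LemmaEquations1} — exactly the step carried out in (\ref{Eqodd1}) in the odd case — and using Corollary \ref{LucasCor1} to handle the binomial coefficients mod $2$, the surviving equations are indexed by pairs $(t,r)$ with $t$ even and $r$ odd and take the stated form (\ref{Equations1-3}), where the factor $\binom{r}{j}$ arises as the number of ways to distribute the $j$ ``moved'' column-entries, exactly as the cardinality computation $\binom{r}{1}$ in the proof of Corollary \ref{Hom,a-odd}. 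Conversely, any family $\{c_{b-s}\}$ satisfying (\ref{Equations1-3}), extended by (a) and (b), satisfies all of (\ref{Equations1})--(\ref{Equations3}), since each of those three families is equivalent (given (a),(b)) to a subsystem of (\ref{Equations1-3}) or is automatically satisfied.

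I expect the main obstacle to be the bookkeeping in the first step: showing that (\ref{Equations3}) really does force $c_T$ to be constant on each $\Blm_s$ and that no extra relations sneak in, because one must check that the semistandardness constraints on $T \in \st$ do not disconnect the set of rearrangements — i.e. that the graph whose vertices are tableaux in $\Blm_s$ and whose edges are single-box column moves is connected and that each move contributes an invertible scalar. A secondary delicate point is keeping the ranges of the indices $t,r,q$ and the $b_s$ consistent with $b,d \ge 2$ and with $s$ ranging over $\{b-\min\{b,d\},\dots,b\}$, so that the claimed equivalence is exact (no missing or spurious equations); this is the kind of thing the cited Lemmas \ref{LemmaEquations1}--\ref{LemmaEquations3} are presumably set up to make routine, so I would lean on their precise statements rather than rederive the box-move combinatorics. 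The parity manipulations in step two and the mod-$2$ binomial simplifications in step three are, by contrast, essentially the same as in the already-completed proof of Corollary \ref{Hom,a-odd} and should go through with only cosmetic changes coming from $\gamma = a-b$ being even.
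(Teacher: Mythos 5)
Your overall route is the same as the paper's: treat (\ref{Equations3}) first to obtain 2)a), then (\ref{Equations2}) to obtain 2)b), then substitute the block identification into (\ref{Equations1}), where the shuffle count produces the factor $\tbinom{r}{j}$, and use Corollary \ref{LucasCor1} together with $\gamma$ even to kill every term except those with $r$ odd and $t,j$ even; the converse is disposed of exactly as in the paper (``the reverse implication is similar'').

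There is, however, a genuine flaw in your middle step. You assert that equations (\ref{Equations2}) ``become relations of the shape $c_{q,0,\dots}=c_{q-1,1,\dots}$ as in Corollary \ref{Hom,a-odd}'' and that parity bookkeeping, ``using that $a-b$ is even now, not odd,'' then forces the even-$b_2$ coefficients to vanish. Two things go wrong. First, identification relations of that shape cannot, alone or combined with 2)a), yield vanishing: with 2)a) they merely equate the scalars attached to adjacent blocks $\Blm_q$ and $\Blm_{q-1}$, which is precisely the phenomenon of the \emph{odd}-$b$ case (compare Lemma \ref{odd-Equations123} 2)b) and the remark following it, where these relations read $c_s=c_{s+1}$ and are absorbed into the binomial system rather than producing zeros). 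Second, the parity that governs the behaviour of (\ref{Equations2}) is that of $b$, not of $a-b$: in Corollary \ref{Hom,a-odd} both $a$ and $b$ are odd, so $a-b$ is even there too, and yet the identification shape is what one gets. What changes in the present lemma is that $b$ is even, and the paper's proof invokes exactly this (``$b$ is an even integer and $\chK=2$''): when $b$ is even the two coefficients occurring in each equation of (\ref{Equations2}) have opposite parities, so in characteristic $2$ one of them survives and the equation directly reads $c_{b_2\dts b_{d+2}}=0$ for the tableau with $b_2$ even --- no identification step is involved. As written, your step 2 would not deliver 2)b); steps 1 and 3 and the converse, by contrast, agree with the paper's argument.
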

\begin{remark}
The reason for using the symbol $c_{b-s}$ instead of $c_s$, is to technically get rid of index $b$ later on (see below proof). Also, $\{r-j: r,j$ as (c) from above\} $\subseteq $ $\{b-s : s \in \{b-\min\{b,d\} \dts  b\}\}$, and so $c_{r-j}$ makes sense. 
\end{remark}

\begin{proof}
We are going to prove $(1) \Longrightarrow (2)$ (the reverse implication is similar). First of all, using (\ref{DisUnion1}), it is clear that equations (\ref{Equations3}) are equivalent to 2)a). Furthermore, by employing the disjoint union mentioned in (\ref{DisUnion}), along with the information that $b$ is an even integer and $\chK = 2$, it is easy to demonstrate that equations (\ref{Equations2}) are equivalent to 2)b). Now let us remind  equations (\ref{Equations1})
\begin{equation}\label{Equations1-2}
\sm_{k_3 \dts k_{d+2}}(-1)^k\tbinom{\gamma + r + t -k}{t-k}c_{b+k-r, r_3 - k_3 \dts r_{d+2} - k_{d+2}} = 0, \end{equation}
for all non negative integers $t, r_3 \dts r_{d+2}$  such that $1 \leq t \leq \min\{b,d\}$ and $0\leq r_s \leq 1 \ \forall s\geq 3$,  $t \leq r \leq \min\{b,d\}$. Also, the sum ranges over all non negative integers $k_3 \dts k_{d+2}$ such that $k_s \leq r_s$, for all $s = 3 \dts d+2$. Using equations (\ref{Equations3}), the left hand of equation (\ref{Equations1-2}) is equal to
\begin{align*}
&\sm_{j = 0}^t\tbinom{\gamma + r + t -j}{t-j}\sm_{\substack{k_3 \dts k_{d+2}: \\ 0\leq k_s \leq r_s, \ k = j}}c_{b+k-r, r_3 -k_3 \dts r_{d+2} - k_{d+2}} = \\
&\sm_{j = 0}^t\tbinom{\gamma + r + t -j}{t-j} \left|\{(k_3 \dts k_{d+2}):  0\leq k_s \leq r_s \ \forall s \geq 3, \ k = j\}\right| c_{r-j} =  \\
&\sm_{j = 0}^t\tbinom{\gamma + r + t -j}{t-j}\tbinom{r}{j}c_{r-j}.
\end{align*}
Now, considering $2)b)$, Corollary \ref{LucasCor1} (1), and $\gamma =$ even, we can deduce $\tbinom{\gamma + r + t -j}{t-j}\tbinom{r}{j}c_{r-j} = 0$, unless $r =$odd and $t,j = $even. Consequently, 2)c) holds.
\end{proof}

 In fact, we can introduce some additional equivalent technical modifications to equations (\ref{Equations1-3}), which will prove to be useful in the process. \ 

Consider all the odd integers in the set $\{1 \dts \min\{b,d\}\}$, say $r_0 \dts r_l$ ($r_i = 2i +1$). Then equations (\ref{Equations1-3}) are equivalent to
\begin{align*}
\sm_{j = 0}^s\tbinom{\gamma + r_i + 2s-2j}{2s-2j}\tbinom{r_i}{2j}c_{r_i - 2j} = 0, \ \forall  s,i \ \textit{such that} \ 1 \leq i \leq l, \ 1 \leq s \leq i. 
\end{align*}
By Corollary \ref{LucasCor1} and the fact that $\gamma = 2k, \ r_i = 2i+1$, equations (\ref{Equations1-3}) are equivalent to:
\begin{equation}\label{Equation1-4}
\sm_{j = 0}^s\tbinom{k + i + s-j}{s-j}\tbinom{i}{j}c_{r_{i -j}} = 0, \ \forall  s,i \ \textit{such that}  \ 1 \leq i \leq l, \ 1 \leq s \leq i.\     
\end{equation}

 For clarity, we introduce the following notation.\ 

\begin{enumerate}

    \item 
$a_k(x,y) := \tbinom{k + y + x}{x}$ and $\epsilon(x,y):= \tbinom{y}{x}$.
    
    \item
Consider all the odd integers in the set $\{1 \dts \min\{b,d\}\}$, say $r_0 \dts r_l$ ($r_i = 2i +1$). Let us define the linear system of equations $E_k := \{E_{i,k}\}_{i = 1}^l$, with corresponding matrices $A_k$ and $\{A_{i,k}\}_{i = 1}^l$ respectively, where  
\begin{align}
&E_{i,k} := \left\{ \sm_{j = 0}^sa_k(s-j,i)\epsilon(j,i)c_{r_{i -j}} = 0 \right\}_{s = 1 }^i, \label{equation-matrix}\\
&A_k = \begin{pmatrix}
A_{l,k}\\
\vdots \\
A_{1,k}
\end{pmatrix} \in M_{\frac{l(l+1)}{2} \times (l+1)}(K). \nonumber  
\end{align}
\end{enumerate}
Let us provide a clearer representation of the equations  $E_{i,k}$ by presenting the corresponding matrix $A_{i,k}$  
\begin{equation}\label{BaseMatrix}
\begin{psmallmatrix}
c_{r_0} & c_{r_1} &  c_{r_2}  &\dots  &c_{r_{i-1}}  &c_{r_i} &c_{r_{i+1}} &\dots &c_{r_l}\\
1 &\epsilon(i-1,i)a_k(1,i) &\epsilon(i-2,i)a_k(2,i) &\dots &\epsilon(1,i)a_k(i-1,i) &a_k(i,i)        & 0  &\dots &0  \\
0 &  \epsilon(i-1,i) &  \epsilon(i-2,i)a_k(1,i)  &\dots  &\epsilon(1,i)a_k(i-2,i)  &a_k(i-1,i)  &0 &\dots &0            \\
0 & 0 &  \epsilon(i-2,i)   & \dots  &\epsilon(1,i)a_k(i-3,i)  &a_k(i-2,i)  &0 &\dots &0  \\
\vdots & \vdots &  0  &\ddots  &\vdots  &\vdots  &0 &\dots &0 \\
\vdots & \vdots & \vdots  &  &\vdots  &\vdots  &0 &\dots &0  \\
0 & 0 &  0  &\dots  &\epsilon(1,i)  &a_k(1,i) &0 &\dots &0
\end{psmallmatrix}.     
\end{equation}
Henceforth, it is important to note that equations $E_{i,k}$ do not affect the variables $c_{r_{i+1}} \dts c_{r_l}$. Consequently, we will exclude the columns associated with variables $c_{r_{i+1}} \dts c_{r_l}$ in the matrix $A_{i,k}$.

 Now let's summarize the topics we have discussed so far in the following proposition 

\begin{proposition}\label{CharacterSystem}
Consider $a,b$ even. Let $\Phi \in \Hom_S(D(\la),\D(\mu))$, then $\Phi \in \im\pi_{\D(\la)}^{*}$ if and only if there exist  coefficients $c_{r_0} \dts c_{r_l} \in K$ such that 
\begin{enumerate}
    \item 
$\Phi = \sm_{i = 0}^lc_{r_i}F_{r_i}$. 
    \item
The coefficients $c_{r_0} \dts c_{r_l} \in K$ satisfies the equations $E_k = \{E_{i,k}\}_{i = 1}^l$.
\end{enumerate}
Since $\im\pi_{\D(\la)}^{*} \simeq \Hom_S(\D(\la),\D(\mu))$, we get that $\dim_K\Hom_S(\D(\la),\D(\mu)) = l+1 - rank(A_k)$.

\end{proposition}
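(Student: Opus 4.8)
The statement has two halves: an identification of the image of $\pi_{\D(\la)}^{*}$ with a concrete span, and a dimension count. For the first half the plan is to unwind the general relation machinery already assembled in Section~4 (the conditions $i),ii)$ recalled just before Lemma~\ref{Equations123}) together with Lemma~\ref{Equations123} itself. Recall that $\D(\mu)$ is presented as $D(\mu)/\im\bx_{\mu}$, and dualizing the surjection $\pi_{\D(\la)} : D(\la) \to \D(\la)$ we have $\Hom_S(\D(\la),\D(\mu)) \simeq \{\Phi \in \Hom_S(D(\la),\D(\mu)) : \Phi \circ \bx_{\la} = 0\} = \im\pi_{\D(\la)}^{*}$. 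By Proposition~\ref{Base} any such $\Phi$ is a $K$-linear combination $\sum_{T \in \st} c_T \phi_T$, and since $D(\la)$ is cyclic (Proposition~\ref{Cycle}) the condition $\Phi\circ\bx_\la = 0$ is equivalent to the vanishing of $\sum_T c_T \phi_T(x_{i,t})$ on the generators coming from each box-map summand, i.e.\ exactly equations (\ref{Equations1}), (\ref{Equations2}), (\ref{Equations3}). So the first task is to translate ``$\Phi \in \im\pi_{\D(\la)}^{*}$'' into ``the $c_T$ satisfy (\ref{Equations1})--(\ref{Equations3}),'' which is already done in the preamble to Lemma~\ref{Equations123}.

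The core step is then to invoke Lemma~\ref{Equations123}: part 2)(a) says that on each block $\Blm_s$ the coefficient is constant, call it $c_{b-s}$; part 2)(b) kills every coefficient indexed by an even $b_2$; and part 2)(c), after the substitutions $r \mapsto r_i = 2i+1$, $t \mapsto 2s$, $\gamma = 2k$ and an application of Corollary~\ref{LucasCor1}(2), becomes precisely the system (\ref{Equation1-4}), which is by definition the system $E_k = \{E_{i,k}\}_{i=1}^{l}$ with matrix $A_k$. To finish the ``if and only if'' with the stated span statement I must match the block structure $\Blm = \bigcup_{s} \Blm_s$ with the definition of $F_h$: a tableau $T \in \st$ with $b_2 = s$ (i.e.\ with $b - s$ appearances of entries $\ge 3$ in the second row) is exactly a $T_{b-s,\sigma}$ for a unique shuffle $\sigma \in \Sh(d-(b-s), b-s)$ — here one checks that semistandardness forces the entries $2^{(h)}$ in the top row and $2^{(b-h)}$ in the bottom row and the remaining $x_3,\dots,x_{d+2}$ to be distributed as a shuffle — so $\sum_{T \in \Blm_s} \phi_T = F_{b-s}$ once the common coefficient $c_{b-s}$ is factored out. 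Writing $h = b-s$ and recalling $b_2$ even $\iff h$ even $\iff$ (by $b$ even) $h \notin \{r_0,\dots,r_l\}$, part 2)(b) says $c_h = 0$ unless $h = r_i$ for some $i$, so $\Phi = \sum_{i=0}^{l} c_{r_i} F_{r_i}$, which is item (1) of the Proposition; item (2) is the restatement of 2)(c) via (\ref{Equation1-4}).

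The last sentence, $\dim_K \Hom_S(\D(\la),\D(\mu)) = l+1 - \mathrm{rank}(A_k)$, is then pure linear algebra: the isomorphism $\im\pi_{\D(\la)}^{*} \simeq \Hom_S(\D(\la),\D(\mu))$ together with the fact, just proved, that $\im\pi_{\D(\la)}^{*}$ is parametrized bijectively by the solution space in $K^{l+1}$ of the homogeneous linear system with coefficient matrix $A_k$ (the map $(c_{r_0},\dots,c_{r_l}) \mapsto \sum_i c_{r_i} F_{r_i}$ being injective because the $F_{r_i}$ involve pairwise disjoint sets of basis homomorphisms $\phi_T$, hence are linearly independent in $\Hom_S(D(\la),\D(\mu))$). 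The solution space has dimension $l+1 - \mathrm{rank}(A_k)$ by rank--nullity, giving the formula.

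\textbf{Main obstacle.} The genuinely delicate point is the bijection between $\Blm_s$ and $\Sh(d-h,h)$ and the verification that the ``stable part'' of $T_{h,\sigma}$ is forced — i.e.\ that a row-semistandard tableau of shape $\mu$, weight $\la$, with prescribed $b_2 = s$ is uniquely of the form $T_{h,\sigma}$; this is where the hypotheses $b,d \ge 2$ and the precise shape $\la = (a,b,1^d)$ enter, and it is the hinge that lets the abstract coefficients $c_{b_2 \dots b_{d+2}}$ be repackaged as the scalars $c_{r_i}$ multiplying the maps $F_{r_i}$. Everything else is bookkeeping: the arithmetic simplifications feeding into (\ref{Equation1-4}) are Lucas/Corollary~\ref{LucasCor1} applied termwise, and the dimension count is rank--nullity once linear independence of the $F_{r_i}$ is noted.
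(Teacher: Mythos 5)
Your proposal follows essentially the same route as the paper: translate $\Phi\in\im\pi_{\D(\la)}^{*}$ into the coefficient equations (\ref{Equations1})--(\ref{Equations3}), apply Lemma \ref{Equations123} together with the block decomposition $\Blm=\bigcup_s\Blm_s$ and the identification of each block sum with $F_{b-s}$ (so only odd $h=b-s$, i.e.\ the $r_i$, survive and 2)c) becomes $E_k$ via (\ref{Equation1-4})), then conclude the dimension formula by rank--nullity. The only difference is cosmetic: you make the linear independence of the $F_{r_i}$ (disjoint sets of basis maps $\phi_T$) explicit in the dimension count, which the paper leaves implicit here and only records later, while the paper writes out the reverse implication by explicitly defining $c_{b_2\dots b_{d+2}}$ from the $c_{r_i}$ — a step your bijective repackaging covers in substance.
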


\begin{proof}
Consider $\Phi \in \im\pi_{\D(\la)}^{*}$.  Then $\Phi = \sm_{(b_2 \dts b_{d+2}) \in B(\la,\mu)}c_{b_2 \dts b_{d+2}}\phi_{T_{b_2 \dts b_{d+2}}}$, for some coefficients  $\{c_{b_2 \dts b_{d+2}}\} \subseteq K$ which  satisfies the equations (\ref{Equations1}), (\ref{Equations2}), (\ref{Equations3}). Furthermore, we have the following computation
\begin{align}
\Phi &= \sm_{s =  b-\min\{b,d\}}^b c_{b-s}\left( \sm_{(b_2 \dts b_{d+2}) \in \Blm_s } \phi_{T_{s, b_3 \dts b_{d+2}}}\right)  \label{Oppositecomputation}  \\
&= \sm_{i = 0}^l c_{r_i}\left( \sm_{(b_2 \dts b_{d+2}) \in \Blm_{b-r_i}} \phi_{T_{b-r_i, b_3 \dts b_{d+2}}}\right)  =  \sm_{i = 0}^l c_{r_i}F_{r_i}. \notag
\end{align}

 Notice that the first equality follows from the  disjoint union  $\Blm = \bigcup\limits_{s = b-\min\{b,d\}}^b\Blm_s$ $(*)$  and  Lemma \ref{Equations123} 2)a),  the second equality follows from Lemma \ref{Equations123} 2)b), the fact that $b= even$ and $\{b-s: b-\min\{b,d\} \leq s \leq b, \ s=$odd\} = $\{r_0 \dts r_l\}$ $(**)$, and the third equality follows from $(\ref{F_h})$ Additionally, from Lemma \ref{Equations123} and (\ref{Equation1-4}), the coefficients $c_{r_0} \dts c_{r_l}$, satisfy equations $E_k = \{E_{i,k}\}_{i = 1}^l$.  \ 

For the reverse implication,  let $\Phi \in \Hom_S(D(\la),\Delta(\mu))$ and coefficients $c_{r_0} \dts c_{r_l} \in K$ such that conditions $(1), (2)$ in the statement hold $(***)$.  \ 

Let $(b_2 \dts b_{d+2}) \in \Blm$, using the disjoint union $(*)$, we get $b_2 \in \{b-\min\{b,d\} \dts b\}$. Now  define, $c_{b_2 \dts b_{d+2}} = 0 $  if $b_2 =$ even and $c_{b_2 \dts b_{d+2}} =c_{b-b_2}$ if $b_2 =$ odd (this definition makes sense from $(**)$). Utilizing these definitions combined with the $(\ref{F_h})$ we may obtain similarly to (\ref{Oppositecomputation}) that 
\begin{align*}
\Phi = \sm_{i = 0}^lc_{r_i}F_{r_i} = \sm_{(b_2 \dts b_{d+2}) \in B(\la,\mu)}c_{b_2 \dts b_{d+2}}\phi_{T_{b_2 \dts b_{d+2}}}.    
\end{align*}
Furthermore,  the coefficients $c_{b_2 \dts b_{d+2}}$ satisfy conditions  $2)a),b),c)$ of Lemma \ref{Equations123}  (observe that 2)c) follows from   (\ref{Equation1-4}) and assumptions $(***)$), and so coefficients $c_{b_2 \dts b_{d+2}}$ satisfy the equations (\ref{Equations1}), (\ref{Equations2}), (\ref{Equations3}). Hence, $\Phi \in \im\pi_{\D(\la)}^*$.
\end{proof}

Proposition \ref{CharacterSystem} provides a comprehensive characterization of the space   $\Hom_S(\D(\la), \D(\mu))$, that depends on the solution set of the linear system $E_k$ over a field of characteristic 2. In order to study  such a system, a fruitful approach is to attempt its solution for k = 0 (equivalently, a = b). For this, we will require some preliminary combinatorial tools.

\begin{lemma}\label{Combinatorics1}
Consider $1 \leq \la \in \mathbb{N}$. Then under the assumption $\chK$ $= 2$, we get
\begin{enumerate}
    
    \item 
If $\la = \sm_{i = 0}^n 2^i$ for some $n \geq 0$, then $a_0(t,\la) = 0, \ \epsilon(t,\la) = 1$, for all $t \in \{1 \dts \la\}$.
    
    \item 
    If $\la \neq \sm_{i = 0}^n 2^i$ for all $n \geq 0$, then there exist $\la_0 \in \{1 \dts \la\}$ such that $a_0(\la_0,\la) = 1$.

    \item
If $\la = \sm_{i = 0}^n 2^i$  for some $n \geq 0$, then $\epsilon(s, \la +s) = 0$ for all $0\neq s \in \mathbb{N}$ such that $1 \leq \la + s < 2^{n+1} +\cdots + 2 + 1$.

\end{enumerate}
\end{lemma}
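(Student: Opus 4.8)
The three parts are elementary arithmetic statements about the functions $a_0(x,y)=\tbinom{x+y}{x}$ and $\epsilon(x,y)=\tbinom{y}{x}$ reduced mod $2$, so the plan is to reduce everything to Lucas's theorem (Theorem~\ref{Lucas}) and its Corollary~\ref{LucasCor1}, reading off base $2$ digits. For part $(1)$: if $\la=\sm_{i=0}^n2^i$, then the base $2$ representation of $\la$ has a $1$ in each of positions $0,\dots,n$. Then $\epsilon(t,\la)=\tbinom{\la}{t}$ is a product over $i$ of $\tbinom{1}{t_i}$ or $\tbinom{0}{t_i}$; since $t\le\la$ forces $t_i=0$ whenever the $i$-th digit of $\la$ is $0$, and $\tbinom{1}{0}=\tbinom{1}{1}=1$, we get $\epsilon(t,\la)\equiv1$. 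For $a_0(t,\la)=\tbinom{t+\la}{t}$, write $\la+1=2^{n+1}$, so $t+\la=t-1+2^{n+1}$; since $1\le t\le\la<2^{n+1}$ we have $0\le t-1<2^{n+1}$, hence the base $2$ expansion of $t+\la$ is that of $t-1$ together with the digit $1$ in position $n+1$. Comparing digit-by-digit with $t$ (which has all digits in positions $\le n$), I expect exactly one position where $t$ has a $1$ and $t+\la$ has a $0$ — concretely, the lowest $1$ of $t$ becomes a $0$ after subtracting $1$ — giving a factor $\tbinom{0}{1}=0$, so $a_0(t,\la)\equiv0$. I would phrase this via Corollary~\ref{LucasCor1}(3): since $t$ and $\la=(t+\la)-t$ share the lowest set bit of $t$ when $t$ is suitably... actually the cleanest route is to note $t+\la = (t-1)+(\la+1)$ and the binary expansions of $t-1$ and $\la+1=2^{n+1}$ are disjoint, so $\tbinom{t+\la}{t}=\tbinom{(t-1)+2^{n+1}}{t}$, and since $2^{n+1}>t$ this equals $\tbinom{t-1}{t}\cdot(\text{something})$... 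I will instead directly invoke Corollary~\ref{LucasCor1}(3) with $m=\la$, $n=t$: $\la$ has all bits $0,\dots,n$ set and $t\le\la$ has at least one bit set among $0,\dots,n$, so $t$ and $\la$ share a common set bit, whence $\tbinom{\la+t}{t}\equiv0$.

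For part $(2)$: if $\la$ is not of the form $\sm_{i=0}^n2^i$, then its binary expansion has a $0$ in some position below its top bit; let $n$ be the position of its top bit, so $\la<2^{n+1}$ but there is some $j<n$ with the $j$-th digit of $\la$ equal to $0$. The plan is to choose $\la_0$ so that $\tbinom{\la_0+\la}{\la_0}\equiv1$, i.e. so that $\la_0$ and $\la$ have disjoint binary expansions (no common set bit) — by Lucas this makes the binomial $\equiv1$. Concretely take $\la_0=2^{n+1}-1-\la=\sm_{i:\ \la\text{'s }i\text{-th digit is }0,\ i\le n}2^i$, the "bitwise complement of $\la$ within $n+1$ bits"; this is nonzero precisely because $\la$ has a zero digit below position $n$... actually even the complement could be nonzero just from lower zeros, and it satisfies $1\le\la_0\le\la$? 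I need $\la_0\in\{1,\dots,\la\}$: since $\la\ge2^n$ and $\la_0\le2^n-1<2^n\le\la$, and $\la_0\ge1$ because $\la\ne\sm_{i=0}^n2^i$. Then $\la_0+\la=2^{n+1}-1=\sm_{i=0}^n2^i$ and $\la_0,\la$ have disjoint bit patterns, so $a_0(\la_0,\la)=\tbinom{2^{n+1}-1}{\la_0}\equiv1$ by Lucas (every digit of $2^{n+1}-1$ is $1$).

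For part $(3)$: with $\la=\sm_{i=0}^n2^i=2^{n+1}-1$ and $0\ne s$ such that $1\le\la+s<2^{n+1}+\dots+2+1$... I read the upper bound as $2^{n+1}+2^n+\cdots+2+1 = 2^{n+2}-1$, i.e. $\la+s<2^{n+2}-1$, equivalently $s<2^{n+1}$. We want $\epsilon(s,\la+s)=\tbinom{\la+s}{s}\equiv0$. Write $\la+s=(2^{n+1}-1)+s$; since $1\le s<2^{n+1}$, adding $s$ to $2^{n+1}-1$ causes a carry out of the block of the low $n+1$ bits, so $\la+s=2^{n+1}+(s-1)$ with $0\le s-1<2^{n+1}$. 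Thus the binary expansion of $\la+s$ is: digit $1$ in position $n+1$ (possibly higher bits from $s-1$ if $s-1\ge2^{n+1}$, but $s-1<2^{n+1}$ so not), plus the expansion of $s-1$ in positions $\le n$. Now compare with $s$: I claim $s$ has a set bit in some position where $\la+s$ has a $0$. The lowest set bit of $s$, say position $v$, becomes $0$ in $s-1$ (all bits below $v$ flip to $1$, bit $v$ flips to $0$); so $\la+s=2^{n+1}+(s-1)$ has a $0$ in position $v\le n<n+1$, while $s$ has a $1$ there, forcing $\tbinom{\la+s}{s}\equiv\tbinom{\la+s}{s}=\prod\tbinom{(\la+s)_i}{s_i}$ to contain the factor $\tbinom{0}{1}=0$. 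Hence $\epsilon(s,\la+s)\equiv0$.

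\textbf{Main obstacle.} None of the three parts is deep; the only real care needed is bookkeeping of binary digits and carries — in particular, correctly interpreting the range constraint "$1\le\la+s<2^{n+1}+\cdots+2+1$" in part $(3)$ and making sure the carry analysis ($\la+s=2^{n+1}+(s-1)$ when $\la=2^{n+1}-1$) is valid for all admissible $s$, including the edge case $s=2^{n+1}$ if it is allowed (it is not, since then $\la+s=2^{n+2}-1$ is excluded). I would organize the write-up around the single observation that $\tbinom{m+n}{n}\pmod 2$ detects whether the binary expansions of $m$ and $n$ overlap (Corollary~\ref{LucasCor1}(3) for one direction, Lucas directly for the other), and then all three statements fall out by choosing or identifying the relevant digits.
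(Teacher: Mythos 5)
Your proposal is correct and follows exactly the route the paper intends: the paper's proof is a one-line appeal to Theorem \ref{Lucas} and Corollary \ref{LucasCor1}(3), and your digit-by-digit arguments (shared low bit for $a_0(t,\la)=0$, all-ones numerator for $\epsilon(t,\la)=1$ and for the complement choice $\la_0=2^{n+1}-1-\la$, and the carry analysis $\la+s=2^{n+1}+(s-1)$ in part (3)) are precisely the intended elaboration of that citation. No gaps; only minor stylistic point is to trim the abandoned detours in part (1) and commit to the Corollary \ref{LucasCor1}(3) argument you end with.
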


\begin{proof}
All of them can be proved using Theorem \ref{Lucas} and Corollary \ref{LucasCor1} (3).    
\end{proof}

\begin{example}\label{examplek=0}
Assume that $k = 0$ and consider $h \in \{1 \dts l\}$ such that $h = 2^m +\cdots +2 +1$, with $m = $maximum. Then  $\im\pi_{\D(\la)}^* = \spn_K\{F_{r_h}\}$   and  so $\dim_K\Hom_S(\D(\la),\D(\mu)) = 1$.    
\end{example}

\begin{proof}
From Proposition \ref{CharacterSystem} it is sufficient to solve the system $E_0$. Lets start by analyzing equations $E_{h,0}$, according to (\ref{BaseMatrix}) and Lemma \ref{Combinatorics1} we get

\begin{equation}\label{basematrix}
A_{h,0} = \begin{pmatrix}
c_{r_0} & c_{r_1} &  c_{r_2}  &\dots  &c_{r_{h-1}}  &c_{r_h} \\

1       &0 &0 &\dots &0 &0         \\

0 &1    &0  &\dots  &0 &0             \\

0 & 0 &  1   & \dots  &0 &0    \\

\vdots & \vdots &  0  &\ddots  &\vdots  &\vdots   \\

\vdots & \vdots & \vdots  &  &\vdots  &\vdots    \\

0 & 0 &  0  &\dots  &1 &0 
\end{pmatrix}. \\    
\end{equation}
which means $E_{h,0} \ \Longleftrightarrow \ c_{r_0} = \cdots = c_{r_{h-1}} = 0, \ c_{r_h} \in K$. As a consequence, equations $E_{1,0} \dts E_{h-1,0}$ are trivial (means equivalent to 0 = 0). It remains to analyze equations $E_{h+1,0} \dts E_{l,0}$. We will prove by induction to $s$, that $E_{h+s,0} \Longleftrightarrow c_{r_{h+s}} = 0, \ c_{r_h} \in K$, for all $s$ such that $h < h+s \leq l$. \ 
For $s=1$, we present the matrix $A_{h+1,0}$ excluding the zero columns and the columns corresponding to zero variables
\begin{align*}
A_{h+1,0} = &\begin{pmatrix}
&c_{r_h} &c_{r_{h+1}} \\
&\epsilon(1,h+1)a_0(h,h+1) &a_0(h+1,h+1) \\
&\vdots & \vdots \\
&\epsilon(1,h+1)a_0(1,h+1) &a_0(2,h+1)  \\
&\epsilon(1,h+1) &a_0(1,h+1) 
\end{pmatrix} \\
= &\begin{pmatrix}
&c_{r_h} &c_{r_{h+1}} \\
&0 &a_0(h+1,h+1) \\
&\vdots & \vdots \\
&0 &a_0(2,h+1)  \\
&0 &a_0(1,h+1) 
\end{pmatrix}.    
\end{align*}
 
The equality follows from Lemma \ref{Combinatorics1} (3). Furthermore by Lemma \ref{Combinatorics1} (2), the column of variable $c_{r_{h+1}}$ has at least one 1 and so $E_{h+1,0} \Longleftrightarrow c_{r_{h+1}} = 0, \ c_{r_h} \in K$. For the induction step, consider $s \geq 2$ such that $h< h+s \leq l$. Then $E_{h+t,0} \Longleftrightarrow c_{r_{h+t}} = 0, \ c_{r_h} \in K$, for all $t$ such that $h < h+t \leq l$  and $t \leq s-1$. Building on this we present the matrix $A_{h+s,0}$ excluding the zero columns and the columns corresponding to zero variables
$A_{h+s,0} = \begin{pmatrix}
&c_{r_h} &c_{r_{h+s}} \\
&\epsilon(s,h+s)a_0(h,h+s) &a_0(h+s,h+s) \\

&\epsilon(s,h+s)a_0(h-1,h+s) &a_0(h+s-1,h+s)\\

&\vdots & \vdots \\

&\epsilon(s,h+s) &a_0(s,h+s)  \\

&0 &a_0(s-1,h+s) \\ 
&\vdots & \vdots \\
&0 &a_0(1,h+s) 
\end{pmatrix}$.
Now  observe that $1 \leq h+s < 2^{m+1} +\cdots + 2 + 1$ (the strict inequality holds because $h+s \leq l$ and  $h = 2^{m} +\cdots + 2 +1$ with $m =$ maximum such that $h \in \{1 \dts l\}$), thus from Lemma \ref{Combinatorics1} the first column is zero and the second column has at least one 1. Therefore,  $E_{h+s,0} \Longleftrightarrow c_{r_{h+s}} = 0, \ c_{r_h} \in K$.
\end{proof}

 The general case where $k$ is arbitrary, appears to be more demanding. One might view the system 
$E_k$ as a "translation" of system $E_0$, but this analogy is not entirely accurate.
\begin{example}\label{example-for-k-nonzero} We present some examples where $k,l$ are small integers, and so we can solve the system $E_k$  using only the Theorem \ref{Lucas}.
\begin{enumerate}
    \item
For $k = 2^2, \ l = 10,$ we get  $ E_k \ \Longleftrightarrow \ c_{r_3} = c_{r_7}, \ c_{r_i} = 0, \forall i \neq 3,7$.
    \item 
For $k = 2^2 + 2, \ l = 8,$ we get  $ E_k \ \Longleftrightarrow \ c_{r_1} = c_{r_3} = c_{r_5} = c_{r_7}, \ c_{r_i} = 0, \forall i \neq 1,3,5,7$.
     \item 
For $k = 2^3 + 2, \ l = 20,$ we get  $ E_k \ \Longleftrightarrow \ c_{r_5} = c_{r_7} = c_{r_{13}} = c_{r_{15}}, \ c_{r_i} = 0, \forall i \neq 5,7,13,15$.
\end{enumerate}    
\end{example}

One of the important points in the previous example, that we would also like to maintain going forward, is  relation (\ref{basematrix}). However, need some modifications. Lemma \ref{pivotalLemma} is the analogue of relation (\ref{basematrix}).

\begin{lemma}\label{2complement}
Consider $k,i,h \in \mathbb{N}$, then under the assumption $\chK = 2$, we get

\begin{enumerate}
    \item 
If $k = \sm_{\la \geq m+1}k_{\la}2^{\la} + \sm_{\la = 0}^m2^{\la}$, $(k_{\la} \in \{0,1\})$, then $\tbinom{k+t}{t} = 0$, for all $t$ such that $1 \leq t \leq \sm_{\la = 0}^m2^{\la}$.

    \item 
If $h \leq i$ and $k$ is a 2-complement of $i, i-h$, then $\epsilon(h,i) = 1$.

    \item 
If $k$ is not a 2-complement of i, then, there exists $s \in \{1 \dts i\}$ such that $a_k(s,i) = 1$.
\end{enumerate}
\end{lemma}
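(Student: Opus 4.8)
The plan is to prove each of the three parts by a direct appeal to Lucas's Theorem (Theorem \ref{Lucas}) together with Corollary \ref{LucasCor1}, analyzing carrying patterns in base $2$ arithmetic. The definitions $a_k(x,y) = \tbinom{k+y+x}{x}$, $\epsilon(x,y) = \tbinom{y}{x}$, and of ``$2$-complement'' are purely combinatorial statements about binary digits, so each part should reduce to a bookkeeping argument with base $2$ expansions.

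For part (1): write $k = \sm_{\la \geq m+1}k_{\la}2^{\la} + \sm_{\la=0}^m 2^{\la}$, so the bottom $m+1$ binary digits of $k$ are all $1$. If $1 \leq t \leq \sm_{\la=0}^m 2^{\la} = 2^{m+1}-1$, then $t$ has a nonzero digit $t_{r_0}=1$ in some position $r_0 \leq m$, and in that same position $k$ has digit $k_{r_0}=1$. By Corollary \ref{LucasCor1}(3) applied to $k$ and $t$ (they share the digit in position $r_0$), $\tbinom{k+t}{t} \equiv 0 \bmod 2$, which is exactly $a_k(s,i)$-type vanishing; this gives $\tbinom{k+t}{t}=0$ in characteristic $2$. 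I would state this as the immediate consequence of Corollary \ref{LucasCor1}(3).

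For part (2): here $h \leq i$ and $k$ is a $2$-complement of both $i$ and $i-h$. Unpacking the definition, $i+k = \sm_{\la \geq m+1}t_\la 2^\la + \sm_{\la=0}^m 2^\la$ where $2^m$ is the top power in $i$'s binary expansion, i.e.\ $i+k$ has all of positions $0,\dots,m$ set; similarly for $(i-h)+k$ relative to the top power of $i-h$. The key claim to extract is that, under these hypotheses, the binary expansion of $i$ ``contains'' that of $h$ (every digit of $h$ equals $1$ only where the corresponding digit of $i$ is $1$), which by Lucas's Theorem forces $\tbinom{i}{h} = \prod \tbinom{i_\la}{h_\la} = 1 \bmod 2$, i.e.\ $\epsilon(h,i)=1$. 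To see the containment: since $i-h \geq 0$ and subtracting $h$ from $i$ produces $i-h$ whose binary top digit $2^{m'}$ satisfies $m' \leq m$, and since $k$ ``fills in'' both $i$ and $i-h$ up to their respective tops without disturbing the digits above, the subtraction $i \mapsto i-h$ can involve no borrows — otherwise a borrow would alter a digit that $k$ must then not fill, contradicting that $k$ is simultaneously a $2$-complement of both. Borrow-free subtraction in base $2$ is precisely the statement that $h$'s digits sit inside $i$'s digits. I expect the careful handling of the borrow-free deduction — and the edge cases $i=0$ or $i-h=0$ handled by the conventions in Definition \ref{2-complement} — to be the main obstacle, since one must rule out borrows using only the two $2$-complement hypotheses rather than assuming containment outright.

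For part (3): suppose $k$ is \emph{not} a $2$-complement of $i$; I want $s \in \{1,\dots,i\}$ with $a_k(s,i) = \tbinom{k+i+s}{s}=1$. Let $2^m$ be the top binary power appearing in $i$. Since $k$ is not a $2$-complement of $i$, the sum $i+k$ fails to have all of positions $0,\dots,m$ equal to $1$; let position $r_0 \leq m$ be one where $(i+k)_{r_0}=0$. Choosing $s = 2^{r_0}$ (note $1 \leq 2^{r_0} \leq 2^m \leq i$), adding $s$ to $i+k$ fills position $r_0$ and — because that position was $0$ — produces no carry, so $\tbinom{(i+k)+s}{s} = \tbinom{(i+k)+2^{r_0}}{2^{r_0}}$ which by Lucas's Theorem equals $\tbinom{(i+k)_{r_0}+1}{1} \cdot \prod_{\la \neq r_0}\tbinom{*}{0} = \tbinom{1}{1} = 1 \bmod 2$. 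One must check the convention for $i=0$ separately ($k$ not a $2$-complement of $0$ means $k$ is even, and then a suitable $s$ is found analogously). I would present parts (1) and (3) briskly and devote the bulk of the write-up to the borrow-analysis in part (2).
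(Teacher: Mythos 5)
Parts (1) and (3) of your proposal are correct and coincide with the paper's proof: (1) is exactly the observation that $k$ and $t$ share a binary digit below position $m+1$, so Corollary \ref{LucasCor1}(3) applies, and (3) is precisely the paper's argument — pick a position $\la_0\le m$ where $i+k$ has digit $0$ and take $s=2^{\la_0}\le i$, so that $a_k(2^{\la_0},i)=1$ by Lucas.

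For part (2), however, your proposal stops short of a proof at exactly the point you yourself flag as the main obstacle. Reducing $\epsilon(h,i)=1$ to the statement that the binary digits of $h$ sit inside those of $i$ is the right target, but the justification you offer for borrow-freeness (``a borrow would alter a digit that $k$ must then not fill'') is an assertion, not an argument: borrows propagate, and it is not clear how a single borrow directly contradicts the two $2$-complement hypotheses without further work. The paper closes this step without any borrow analysis. Writing $i=\sum_{\la=0}^m i_\la 2^\la$ and $i-h=\sum_{\la=0}^{m'}a_\la 2^\la$ with $i_m=a_{m'}=1$ and $m'\le m$, the two $2$-complement hypotheses pin down the low digits of $k$ twice over: $k=\sum_{\la\ge m+1}t_\la 2^\la+\sum_{\la\le m,\,i_\la=0}2^\la$ and $k=\sum_{\la\ge m'+1}t'_\la 2^\la+\sum_{\la\le m',\,a_\la=0}2^\la$. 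Uniqueness of binary representation then forces $i_\la=a_\la$ for all $\la\le m'$, i.e.\ $i$ and $i-h$ have identical digits in positions $0\dts m'$, whence $h=i-(i-h)=\sum_{\la=m'+1}^{m}i_\la 2^\la$; containment of $h$'s digits in $i$'s is now immediate and Lucas's Theorem gives $\epsilon(h,i)=1$ (the cases $h=0$ and $h=i$ being trivial). You would need to supply an argument of this kind — your sketch as written leaves the central deduction of (2) unproved.
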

 For proof of Lemma \ref{2complement} see appendix.

\begin{lemma}\label{pivotalLemma}
Assuming that $k$ is a 2-complement of $i \in \{1 \dts l\}$, then
\begin{align*}
\{E_{j,k}\}_{j =1}^i \ \Longleftrightarrow \ c_{r_0} = \cdots = c_{r_{i-1}} =0, \ c_{r_i} \in K.    
\end{align*}
\end{lemma}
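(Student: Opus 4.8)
The plan is to induct on $i$, analyzing the equation blocks $E_{j,k}$ one at a time using the explicit matrix form in \eqref{BaseMatrix} together with the divisibility facts from Lemma \ref{2complement}. The key observation is that the hypothesis ``$k$ is a 2-complement of $i$'' forces the relevant binomial coefficients $a_k(s,i) = \tbinom{k+i+s}{s}$ to vanish for the range of $s$ that appears in $A_{i,k}$, while the entries $\epsilon(s,i) = \tbinom{i}{s}$ along the subdiagonal remain equal to $1$; this is exactly what turns $A_{i,k}$ into a matrix with a leading identity block, the analogue of \eqref{basematrix}.

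First I would set up the induction. For the base case and inductive step simultaneously, fix $j \le i$ and consider $A_{j,k}$ as displayed in \eqref{BaseMatrix} (with the columns for $c_{r_{j+1}}, \dots, c_{r_l}$ deleted, as the text notes those equations do not involve those variables). I claim $k$ is a 2-complement of $j$ and of $j - h$ for each $h$ with $1 \le h \le j$: indeed if $k$ is a 2-complement of $i$ then, because $j \le i$ and more relevantly because the 2-digits of $j$ and $j-h$ below their top digit are a subset of those needed — here one must be careful and actually the right statement is that $k$ being a 2-complement of $i$ and $j$ ranging over $\{1,\dots,i\}$ with $j$ itself such that $k$ is a 2-complement (this needs the $j$ in question to have the same ``missing digits'' structure). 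Let me instead argue directly: by Lemma \ref{2complement}(1), writing $k = \sum_{\la \ge m+1} k_\la 2^\la + \sum_{\la=0}^m 2^\la$ where $2^m$ is the top digit of $i$ (this is what ``$k$ is a 2-complement of $i$'' gives, adjusting for the $i=0$ convention), we get $\tbinom{k+t}{t} = 0$ for all $1 \le t \le \sum_{\la=0}^m 2^\la$. Since every entry $a_k(s,j)$ in $A_{j,k}$ for $j \le i$ has $s \le j \le i \le \sum_{\la=0}^m 2^\la$ plus the shift, one checks $k+j+s$ still has all low digits filled so $a_k(s,j)=0$; meanwhile the subdiagonal entries $\epsilon(s,j) = \tbinom{j}{s}$ equal $1$ by Lemma \ref{2complement}(2) since $k$ is a 2-complement of $j$ and $j-s$. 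Hence $A_{j,k}$ reduces to the matrix in \eqref{basematrix} with $h$ replaced by $j$, giving $E_{j,k} \Longleftrightarrow c_{r_0} = \cdots = c_{r_{j-1}} = 0$, $c_{r_j} \in K$. Running $j$ from $1$ to $i$ and noting each successive block only re-imposes (trivially) the vanishing already forced plus the new equation $c_{r_{j-1}} = 0$ yields $\{E_{j,k}\}_{j=1}^i \Longleftrightarrow c_{r_0} = \cdots = c_{r_{i-1}} = 0$, $c_{r_i} \in K$, as claimed.

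The main obstacle I anticipate is pinning down precisely which $2$-digit bookkeeping is needed so that all the $a_k(s,j)$ entries vanish \emph{and} all the $\epsilon(s,j)$ entries equal $1$ simultaneously, for every $j \le i$ and every $s$ appearing in the respective block — the definition of $2$-complement is delicate (note the $i=0$ convention and the ``complete the missing digits below the top digit'' phrasing), and one must verify that being a $2$-complement of $i$ propagates correctly to $j$ and to the differences $j-s$. This is exactly the content packaged into Lemma \ref{2complement}, so the real work of the present lemma is just to feed the matrix \eqref{BaseMatrix} into parts (1), (2), (3) of that lemma in the right order and read off the row-echelon structure; parts (1) and (2) handle the vanishing and the subdiagonal $1$'s respectively, and these combine to give \eqref{basematrix} verbatim with $h \rightsquigarrow i$, whence the conclusion is immediate.
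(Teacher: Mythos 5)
There is a genuine gap: your central claim --- that for every $j \le i$ the block $A_{j,k}$ collapses to the identity-like matrix of \eqref{basematrix}, because all entries $a_k(s,j)$ vanish and all subdiagonal entries $\epsilon(s,j)$ equal $1$ --- is false. The 2-complement property of $i$ does \emph{not} propagate to smaller $j$. Concretely, take $i = 2^3+1 = 9$ and $k = 2^2+2 = 6$, so $k+i = 15$ and $k$ is a 2-complement of $i$; for $j = 4$ one has $a_k(1,4) = \tbinom{k+4+1}{1} = \tbinom{11}{1} \equiv 1 \pmod 2$, so the $a_k(\cdot,j)$ entries do not vanish, and $k$ is not a 2-complement of $4$ (nor of various differences $j-s$), so Lemma \ref{2complement}(2) does not apply to give $\epsilon(s,j)=1$. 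Even in the top block $j=i$ your reduction fails: $\epsilon(5,9) = \tbinom{9}{5} \equiv 0 \pmod 2$, so not all subdiagonal entries of $A_{i,k}$ are $1$, and $E_{i,k}$ alone does not force all of $c_{r_1},\dots,c_{r_{i-1}}$ to vanish. Your hesitation in the middle of the argument (``here one must be careful\dots'') is pointing at exactly this: the step ``one checks $k+j+s$ still has all low digits filled'' is simply not true for general $j<i$.

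The paper's proof assigns the three parts of Lemma \ref{2complement} different roles than you do. Part (1) is applied only to the top block: since $k$ is a 2-complement of $i$, every $a_k(s,i)$ vanishes, so $A_{i,k}$ has zero last column (this also gives the easy direction $\Longleftarrow$) and forces $c_{r_0}=0$ together with $c_{r_j}=0$ for exactly those $j$ such that $k$ is a 2-complement of $j$, via part (2) which yields $\epsilon(i-j,i)=1$ only for such $j$. The remaining variables are killed by an induction on $j = 1,\dots,i-1$ through the lower blocks: assuming $c_{r_0}=\dots=c_{r_{j-1}}=0$, the block $A_{j,k}$ reduces to the single column $(a_k(j,j),\dots,a_k(1,j))^{t}$ of $c_{r_j}$, and when $k$ is \emph{not} a 2-complement of $j$, part (3) guarantees some entry $a_k(s,j)=1$, forcing $c_{r_j}=0$ (when $k$ is a 2-complement of $j$, the vanishing was already obtained from $E_{i,k}$). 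So the nonvanishing of certain $a_k(s,j)$ --- the very thing your argument tries to rule out --- is the mechanism that makes the lemma work; to repair your proof you would need to replace the ``everything reduces to \eqref{basematrix}'' step by this two-case induction.
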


\begin{proof}
Starting with equations $E_{i,k}$ from Lemma \ref{2complement} (1), the corresponding matrix $A_{i,k}$ is of the form
\begin{equation*}
A_{i,k} = \begin{pmatrix}
c_{r_0} & c_{r_1} &  c_{r_2}  &\dots  &c_{r_{i-1}}  &c_{r_i} \\

1       &0 &0 &\dots &0 &0         \\

0 &\epsilon(i-1,i)   &0  &\dots  &0 &0            \\

0 & 0 &  \epsilon(i-2,i)   & \dots  &0 &0    \\

\vdots & \vdots &  0  &\ddots  &\vdots  &\vdots   \\

\vdots & \vdots & \vdots  &  &\vdots  &\vdots    \\

0 & 0 &  0  &\dots  & \epsilon(1,i) &0 
\end{pmatrix}.    
\end{equation*}
Given the aforementioned relation, the implication $(\Longleftarrow)$ is evident. Shifting our attention to the implication $(\Longrightarrow)$, we observe that $E_{i,k}$ enforces $c_{r_0} = 0$ and $c_{r_i}$ ranges through $ K$. Furthermore, in accordance with Lemma \ref{2complement} (2), it follows that $\epsilon(i-j, i) = 1$ (implying $c_{r_j} = 0$) for every $j$ such that $k$ is a 2-complement of $j$.
Now we will prove by induction on $j$ that equations $E_{j,k}$ imply $c_{r_{j}} = 0$ for every $j = 1 \dts i-1$. For j =1 we have $A_{1,k} = \begin{pmatrix}
c_{r_0} & c_{r_1} \\
1   &a_k(1,1)      
\end{pmatrix}$, which means $E_{1,k}  \ \Longleftrightarrow \ c_{r_0} = 0$ and $a_k(1,1)c_{r_1} = 0$. Now, we will examine two cases for $j=1$.   If $k$ is a 2-complement of $1$, then, as discussed earlier, $c_{r_1} = 0$. In the scenario where $k$ is not a 2-complement of $1$, Lemma \ref{2complement} (3) implies $a_k(1,1) = 1$, and once again, we have $c_{r_1} = 0$. For the induction step now, assuming $j\geq 2$ and that equations $E_{j-1,k}$ imply $c_{r_0} = \cdots = c_{r_{j-1}} = 0$, we consider the matrix 
\begin{align*}
A_{j,k} = \begin{pmatrix}
c_{r_j}  \\
a_k(j,j)  \\
\vdots     \\
a_k(1,j)
\end{pmatrix},    
\end{align*}
where we excluded the zero columns and the columns corresponding to zero variables. Similarly as before,  by examining the cases whether $k$ is or is not a 2-complement of $j$, we establish that, $E_{j,k} \ \Longrightarrow c_{r_j} = 0$. 
\end{proof}

\begin{corollary}
If $a,b$ are even, then $\dim_K\Hom_S(\D(\la),\D(\mu)) \leq 1$.
\end{corollary}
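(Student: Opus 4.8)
The statement to prove is that when $a,b$ are even, $\dim_K\Hom_S(\D(\la),\D(\mu)) \leq 1$. The plan is to reduce this to a rank estimate for the linear system $E_k$ and then exploit the pivotal Lemma \ref{pivotalLemma}. By Proposition \ref{CharacterSystem}, we have $\dim_K\Hom_S(\D(\la),\D(\mu)) = l+1 - \operatorname{rank}(A_k)$, so it suffices to show $\operatorname{rank}(A_k) \geq l$, i.e. that the solution space of $E_k = \{E_{i,k}\}_{i=1}^l$ in $K^{l+1}$ is at most one-dimensional.

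First I would split into two cases according to whether $k$ is a 2-complement of some integer $i \in \{1 \dts l\}$. If it is, pick the largest such $i$ (using Remark \ref{Remark-inequality} to see this is well-defined, and noting that $i_0$ from Definition \ref{MainDefinition} is exactly this integer). Then Lemma \ref{pivotalLemma} applied to this $i$ already gives $\{E_{j,k}\}_{j=1}^{i} \Longleftrightarrow c_{r_0} = \cdots = c_{r_{i-1}} = 0$, $c_{r_i} \in K$, which cuts the solution space down to the $(l+1-i)$-dimensional subspace spanned by $c_{r_i}, c_{r_{i+1}}, \dots, c_{r_l}$. It then remains to analyze the equations $E_{i+1,k} \dts E_{l,k}$ on this subspace and show they force $c_{r_{i+1}} = \cdots = c_{r_l} = 0$; this is the analogue of the induction in Example \ref{examplek=0}, carried out with $a_k, \epsilon$ in place of $a_0, \epsilon$. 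The key input is Lemma \ref{2complement} (3): since $i$ is the \emph{maximal} integer in $\{1\dts l\}$ of which $k$ is a 2-complement, for each $j$ with $i < j \leq l$ the integer $k$ is not a 2-complement of $j$, so there exists $s \in \{1\dts j\}$ with $a_k(s,j) = 1$; combined with the vanishing of the relevant $\epsilon$-entries (Lemma \ref{2complement} (2), or rather the failure thereof forcing the needed columns to clear), an induction on $j$ shows $E_{j,k} \Longrightarrow c_{r_j} = 0$. Hence the whole solution space is $\operatorname{span}_K\{\text{the } c_{r_i}\text{-direction}\}$, which is at most one-dimensional.

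In the remaining case, $k$ is not a 2-complement of any $i \in \{1\dts l\}$. Then I would run essentially the same induction from the start: for every $i \in \{1\dts l\}$, Lemma \ref{2complement} (3) yields some $s \in \{1\dts i\}$ with $a_k(s,i) = 1$, and the first row of $A_{i,k}$ (which has leading entry $1$ in the $c_{r_0}$ column) together with this shows inductively, as in the induction step of Lemma \ref{pivotalLemma}, that $E_{i,k} \Longrightarrow c_{r_i} = 0$ once $c_{r_0} = \cdots = c_{r_{i-1}} = 0$ is known. Starting from $E_{1,k}$ which forces $c_{r_0}=0$ (from the row $(1, a_k(1,1))$, noting the $c_{r_0}$-entry is $1$) one gets $c_{r_0} = \cdots = c_{r_l} = 0$, so the homomorphism space is actually $0$ in this subcase, in particular of dimension $\leq 1$.

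The main obstacle I anticipate is making the inductive step of the second case fully rigorous: when peeling off $c_{r_j}$, one must be sure that \emph{after} setting $c_{r_0} = \cdots = c_{r_{j-1}} = 0$ the reduced system of equations $E_{j,k}$ genuinely still contains a row with a unit coefficient on $c_{r_j}$ and zero elsewhere. This requires knowing which entries $\epsilon(h,j)$ in the relevant rows of $A_{j,k}$ vanish — i.e. a careful bookkeeping of the shape of $A_{i,k}$ in \eqref{BaseMatrix} after column deletions — and invoking Lemma \ref{2complement} (3) to guarantee a $1$ survives in the $c_{r_j}$ column below the already-processed rows. This is exactly the kind of bookkeeping done in Example \ref{examplek=0} and in Lemma \ref{pivotalLemma}, so the argument should go through, but the details of the non-2-complement case need to be spelled out since that lemma was stated only under the 2-complement hypothesis.
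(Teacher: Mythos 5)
Your overall frame (reduce to $\operatorname{rank}(A_k)\geq l$ via Proposition \ref{CharacterSystem} and exploit Lemma \ref{2complement}(3)) is the right one, but both of your cases contain genuine errors. In Case 1, the intermediate statement you propose to prove — that after Lemma \ref{pivotalLemma} the equations $E_{i_0+1,k},\dots,E_{l,k}$ force $c_{r_{i_0+1}}=\cdots=c_{r_l}=0$ — is simply false: in Example \ref{example-for-k-nonzero}(1) (take $k=2^2$, $l=10$, so $i_0=3$) the system $E_k$ is equivalent to $c_{r_3}=c_{r_7}$ and the remaining variables zero, so $c_{r_7}$ is not killed. The induction of Example \ref{examplek=0} does not transfer, because there the key vanishing $\epsilon(s,h+s)=0$ came from Lemma \ref{Combinatorics1}(3), which is special to $h=2^m+\cdots+2+1$; for general $i_0$ one has $\epsilon(j-i_0,j)=1$ precisely when $j$ contains the base $2$ representation of $i_0$ (Lemma \ref{2-digits-2}), and for such $j$ the equations tie $c_{r_j}$ to $c_{r_{i_0}}$ instead of annihilating it — sorting this out is exactly the content of the paper's Part II. In Case 2 you misread the row $(1\ \ a_k(1,1))$ of $A_{1,k}$: it encodes $c_{r_0}+a_k(1,1)c_{r_1}=0$, not $c_{r_0}=0$. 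Moreover in your Case 2 the integer $k$ is necessarily odd (every even $k$ is a $2$-complement of $1$), whence $a_k(1,1)=\tbinom{k+2}{1}\equiv 1$; e.g.\ for $k=1$, $l=1$ the whole system is the single equation $c_{r_0}=c_{r_1}$, whose solution space is one–dimensional, so your conclusion $\Hom_S(\D(\la),\D(\mu))=0$ there would contradict Theorem \ref{Mainresult}(1)(ii).

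The bound $\dim\leq 1$ does not require the variables beyond $i_0$ to vanish, only to be \emph{determined} by $c_{r_{i_0}}$, and this is how the argument should be closed. The paper does it as a rank count by induction on $l$: one may assume $k$ is not a $2$-complement of $l$ (otherwise Lemma \ref{pivotalLemma} already gives dimension $1$); the induction hypothesis yields $l-1$ independent rows supported in the first $l$ columns, and Lemma \ref{2complement}(3) supplies a row of $A_{l,k}$ with a unit in the $(l+1)$-st column, giving $l$ independent rows in $A_k$. Your setup can be repaired in the same spirit: for each $j>i_0$, since $k$ is not a $2$-complement of $j$, the row of $A_{j,k}$ furnished by Lemma \ref{2complement}(3) has a unit coefficient on $c_{r_j}$ and otherwise involves only variables $c_{r_{j'}}$ with $j'<j$, so by induction each $c_{r_j}$ is a linear function of $c_{r_{i_0}}$ and the solution space has dimension at most $1$ — no claim of vanishing is needed, and the odd-$k$ situation is covered by the convention that $k$ is then a $2$-complement of $i_0=0$.
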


\begin{proof}
From Proposition  \ref{CharacterSystem} we know that $\dim_K\Hom_S(\D(\la),\D(\mu)) = l+1 - rank(A_k)$. It suffices to prove $rank(A_k) \geq l$. We accomplish this using induction on
$l$. We may also assume that $k$ is not a 2-complement of $l$, for if it were, then from Lemma \ref{pivotalLemma} and Proposition \ref{CharacterSystem} we obtain $\dim_K\Hom_S(\D(\la), \D(\mu)) = 1$. Let us recall that $A_k = \begin{pmatrix}
A_{l,k}\\
\vdots \\
A_{1,k}
\end{pmatrix}$. For $l = 1$, $A_k = (A_{1,k}) = (1 \ a_k(1,1))$, where $rank(A_k) = 1 \geq l$.  Now, assume
$l \geq 2$. From the induction step, we can find at least $l-1$ linearly independent rows in the matrix $\begin{pmatrix}
A_{l-1,k}\\
\vdots \\
A_{1,k}
\end{pmatrix}$, denoted as  $R_1 \dts R_{l-1}$. In fact, by (\ref{BaseMatrix}), these rows have the form $R_i = \begin{pmatrix}
r_{i,1} & \dots& r_{i,l} &0 &\dots &0
\end{pmatrix}$. It remains to examine the matrix $A_{l,k}$, whose $(l+1)-$column is of the form $\begin{pmatrix}
a_k(l,l)\\
\vdots \\
a_k(1,l)
\end{pmatrix}$ and by Lemma \ref{2complement} (3), there exists $l_0 \in \{1 \dts l\}$, such that $a_k(l_0,l) = 1$. Now consider the row $R_l = (l_0-$row of $A_{l,k})$. It is clear that $R_1 \dts R_{l-1}, R_l$ are linear independent rows in the matrix $A_k$, implying that $rank(A_k) \geq l$.
\end{proof}

\section{The case $\chK=2, a, b= $ \textit{even}, Part II}

This section is dedicated to solving the system of equations $E_k$ for all $k\in \mathbb{N}$ where $a-b= 2k$. By Lemma \ref{pivotalLemma}   the system $E_k = \{E_{j,k}\}_{j = 1}^l$ exhibits a periodic behavior. Specifically, whenever $k$ is a 2-complement of an $i \in \{1 \dts l\}$, the equations $\{E_{j,k}\}_{j = 1}^i$ lead to the vanishing of all variables  $c_{r_0} \dts c_{r_{i-1}}$, except for  $c_{r_i}$. We still need to observe what transpires among the equations $E_{i,k}$ until the subsequent $E_{i',k}$ where $k$ completes the base 2 representation of $i'$. In Example \ref{example-for-k-nonzero}, particularly for the instance with $k = 2^3 + 2$ and $l = 20$, we observe that  $k$ completes the base 2 representation of $i = 1, \ i = 5$ and the next such integer is  $21> l$, which implies that the periodic pattern persists up to $i = 5$. Within the the interval $i=5$ to $i = 20$, the variables that do not vanish are associated with the integers $5 = 2^2+1, \ 7 = 2^2 + 2 +1, \ 13 = 2^3 + 2^2 +1, \ 15 = 2^3 + 2^2 + 2 + 1$.  An initial observation is that these preceding integers encompass all values between $5$ and $20$ that contain the base 2 representation of $5$. 

In this section, all binomial coefficients are calculated within our field of characteristic 2.

\subsection{Construction of the non vanishing sequence $i_0 \dts i_m$ and first consequences.}\

Let  $i_0$  be the maximum integer in set  the  $\{0 \dts l\}$, such that $k$  is 2-complement of $i_0$. 

 Now consider the  sequence of all integers $i_1 \dts i_m \in \{1 \dts l\}$, defined by the following property, $i_j$  is the minimum integer in the set $\{1 \dts l\}$ such that $i_j > i_{j-1}$  and at least one of the  binomials coefficients $\epsilon(i_j- i_{j-1}, i_j) \dts \epsilon(i_j-i_0,i_j)$ is equal to 1.  \\ 

\begin{lemma}\label{Equations-E_k-1}
The equations $E_k$ imply that $c_{r_j} = 0$ for all $j \in \{0 \dts l\}\setminus\{i_0 \dts i_m\}$. 
\end{lemma}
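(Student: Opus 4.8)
The plan is to combine the ``periodicity'' established in Lemma \ref{pivotalLemma} with an inductive sweep through the remaining equations $E_{j,k}$ for $j$ between consecutive terms of the sequence $i_0 \dts i_m$. First I would record the base case: by definition $i_0$ is the largest integer in $\{0 \dts l\}$ of which $k$ is a 2-complement, and $k$ \emph{is} a 2-complement of $i_0$, so Lemma \ref{pivotalLemma} applied with $i=i_0$ gives immediately that $\{E_{j,k}\}_{j=1}^{i_0}$ forces $c_{r_0}=\cdots=c_{r_{i_0-1}}=0$ with $c_{r_{i_0}}$ free. This disposes of all indices $j<i_0$, none of which lie in $\{i_0\dts i_m\}$.

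Next I would argue by induction on $t\in\{0\dts m-1\}$ that, assuming $c_{r_j}=0$ for every $j\in\{0\dts i_t\}\setminus\{i_0\dts i_t\}$ (so that among $c_{r_0}\dts c_{r_{i_t}}$ only $c_{r_{i_0}}\dts c_{r_{i_t}}$ survive), the equations $E_{j,k}$ for $i_t<j\le i_{t+1}$ kill $c_{r_j}$ for every such $j$ with $j<i_{t+1}$, i.e.\ for every $j$ strictly between two consecutive members of the non-vanishing sequence. Fix such a $j$ with $i_{t'}<j<i_{t'+1}$ for the appropriate $t'$. By the defining minimality property of the sequence $i_1\dts i_m$, the integer $j$ fails the selection criterion, meaning \emph{all} of the binomials $\epsilon(j-i_{t'},j)\dts\epsilon(j-i_0,j)$ vanish; equivalently, in the row block $A_{j,k}$ of (\ref{BaseMatrix}) every entry in the columns indexed by the currently-surviving variables $c_{r_{i_0}}\dts c_{r_{i_{t'}}}$ other than the $c_{r_j}$-column is $\epsilon(j-i_s,j)a_k(\cdot,j)=0$. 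Hence, after deleting the columns of already-vanished variables, the block $A_{j,k}$ reduces to a single column (the $c_{r_j}$-column), whose entries are $a_k(j,j)\dts a_k(1,j)$. If $k$ is a 2-complement of $j$ this is excluded (it would make $j$ a legitimate term of the sequence, or it would exceed $i_0$'s maximality when $j<i_0$; in the relevant range $i_t<j<i_{t+1}$ the criterion is violated so $k$ cannot 2-complement $j$); therefore by Lemma \ref{2complement}(3) some $a_k(s,j)=1$, and $E_{j,k}\Longrightarrow c_{r_j}=0$. Running the induction up through $j=i_{t'+1}-1$ and then across $t'=0\dts m-1$ yields $c_{r_j}=0$ for all $j\in\{0\dts l\}\setminus\{i_0\dts i_m\}$ with $j\le i_m$, and a final identical application of the argument for $i_m<j\le l$ (using that no such $j$ satisfies the selection criterion, else the sequence would continue) completes the claim.

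The main obstacle I anticipate is the bookkeeping in the inductive step: one must be careful that when the block $A_{j,k}$ is restricted to the surviving variables, \emph{only} the $c_{r_j}$-column can be nonzero, which is exactly the content of the minimality in the construction of $i_1\dts i_m$ — so the key is to translate ``none of $\epsilon(j-i_{t'},j)\dts\epsilon(j-i_0,j)$ equals $1$'' into ``every off-$c_{r_j}$ entry of $A_{j,k}$ in the relevant columns is zero'' via the explicit shape of (\ref{BaseMatrix}), using $\epsilon(x,y)=\tbinom{y}{x}$ and the fact that $\epsilon(x,y)=0$ kills the whole product $\epsilon(x,y)a_k(\cdot,y)$. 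A secondary point to handle cleanly is why $k$ can never be a 2-complement of an index $j$ strictly between $i_{t}$ and $i_{t+1}$: if it were, then $\epsilon(j-i_0,j)=1$ by Lemma \ref{2complement}(2) (since $k$ 2-complements both $j$ and $i_0$ with $i_0\le j$ — using $j\le l$ and $i_0$'s definition), contradicting that $j$ fails the selection criterion; this is the one place the convention in Definition \ref{2-complement} for $i_0=0$ must be checked separately, which is routine.
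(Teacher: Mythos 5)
Your argument is correct and is essentially the paper's own proof: Lemma \ref{pivotalLemma} disposes of all $j<i_0$, and then an inductive sweep through each interval $(i_{t'},i_{t'+1})$ (and past $i_m$) uses the vanishing of $\epsilon(j-i_s,j)$, forced by the minimality in the construction of $i_1\dts i_m$, together with Lemma \ref{2complement}(3) applied to the $c_{r_j}$-column of $A_{j,k}$ to conclude $c_{r_j}=0$. The only cosmetic difference is that you exclude the possibility that $k$ is a 2-complement of such a $j$ via Lemma \ref{2complement}(2), whereas the paper simply invokes the maximality of $i_0$ in its definition; both are valid.
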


\begin{proof}
Starting with Lemma \ref{pivotalLemma}, the equations $\{E_{j,k}\}_{j=1}^{i_0}$ implies $c_{r_0} = \cdots = c_{r_{i_0-1}} =0$. Moving forward, our objective is to demonstrate that $c_{r_j}= 0$, for all $j$ such that $i_0 < j< i_1$. For this purpose we will prove by induction on $s$ that $c_{r_{i_0+s}} = 0$ for all $s$ such that $i_0 < i_0 +s < i_1$. \ 
For  $s= 1$, set $j = i_0 + 1$ and, in accordance with (\ref{BaseMatrix}), consider the matrix $A_{j,k}$ (excluding the columns corresponding to zero variables)
\begin{align*}
A_{j,k} =  \begin{pmatrix}
&c_{r_{i_0}} &c_{r_j = r_{i_0 +1}} \\
&\epsilon(j-i_0,j)a_k(i_0,j) &a_k(j,j) \\
&\epsilon(j-i_0,j)a_k(i_0-1,j) &a_k(j-1,j)\\
&\vdots & \vdots \\
&\epsilon(j-i_0,j) &a_k(1,j)  \\
\end{pmatrix}.
\end{align*}
Since $i_0 < j < i_1$, the definition of $i_0,i_1$ implies $\epsilon(j-i_0,j)= 0$, and so the column of variable $c_{r_{i_0}}$ is zero. Furthermore, $k$ is not a 2-complement of $j$ (because $j> i_0$), and so by Lemma \ref{2complement} (3) column of $c_{r_j}$ is non zero. Consequently, $  c_{r_{i_0 + 1}} = c_{r_j}  = 0$. Let $s \geq 2$, by inductive step we have $c_{r_{i_0+1}} = \cdots = c_{r_{i_0+s}} = 0$. Set $i_0 + (s+1) = j$, and consider the matrix $A_{j,k}$  (excluding the columns corresponding to zero variables)
\begin{align*}
A_{j,k} =  \begin{pmatrix}
&c_{r_{i_0}} &c_{r_j = r_{i_0 +(s+1)}} \\
&\epsilon(j-i_0,j)a_k(i_0,j) &a_k(j,j) \\
&\vdots & \vdots \\
&\epsilon(j-i_0,j) &a_k(j-i_0,j)  \\
&0 & a_k(j-(i_0+1),j) \\
&\vdots & \vdots \\
&0 &a_k(1,j)
\end{pmatrix}.
\end{align*}
Since $i_0 < j < i_1$, the definition of $i_0,i_1$ imply  $\epsilon(j-i_0,j)= 0$ and Lemma \ref{2complement} (3) forces $c_{r_j} = c_{r_{i_0 + (s+1)}} = 0$.  
Now we will prove that $c_{r_j} = 0$, for all $j,t$ such that $i_t < j < i_{t+1}$, which will be accomplished by induction on $t$. The case $t=1$ has already been proved. Now, let $t \geq 2$. By the induction step, $c_{r_j} = 0$ for all integers $j\in (i_0,i_1)\cup\cdots \cup (i_t,i_{t+1})$. Continuing with  the same line of argument  as in case $t=1$, and utilizing  induction on $s$, we can establish that $c_{r_{i_{t+1} +s}} = 0$, for all $s$ such that  $i_{t+1} < i_{t+1} +s < i_{(t+1) +1}$. In other words, $c_{r_j} = 0$ for all $j$ such that $i_{t+1} < j < i_{(t+1) +1}$. It remains to prove $c_{r_j} = 0$ for all $j$ such that $l\geq j > i_m$. Utilizing the way we defined of $i_1 \dts i_m$ and the fact that $c_{r_0} = \dots  = c_{r_{i_0-1}} = 0$, $c_{r_j} = 0$ for all $j \in (i_0,i_1)\cup\cdots \cup (i_{m-1},i_{m})$ we can prove by induction on $s$ that $c_{r_{i_m+s}} = 0$ for all $s$ such that $i_m < i_m +s \leq l$. 
\end{proof}

\begin{corollary}\label{Equations-E_k-2}
The equations $E_k =\{ E_{i,k}\}_{i =1}^l$ are equivalent to the following equations
\begin{equation}\label{equations-(i_1,...,i_m)}
E_{i_1,k}  \dts  E_{i_m,k} \ \textit{and} \ c_{r_j} = 0 \ \textit{for all} \ j \in \{0 \dts l\}\setminus\{i_0 \dts i_m\}.    
\end{equation}
\end{corollary}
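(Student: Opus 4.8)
The plan is to derive Corollary \ref{Equations-E_k-2} as an essentially bookkeeping consequence of Lemma \ref{Equations-E_k-1} together with the structure of the matrices $A_{i,k}$ in (\ref{BaseMatrix}). First I would establish the implication $E_k \Longrightarrow (\ref{equations-(i_1,...,i_m)})$: this is the easy direction. Indeed, if the coefficients $c_{r_0}\dts c_{r_l}$ satisfy $E_k=\{E_{i,k}\}_{i=1}^l$, then trivially the subfamily $E_{i_1,k}\dts E_{i_m,k}$ holds, and by Lemma \ref{Equations-E_k-1} we also get $c_{r_j}=0$ for every $j\in\{0\dts l\}\setminus\{i_0\dts i_m\}$. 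So nothing new is needed here beyond quoting the lemma.

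For the reverse implication $(\ref{equations-(i_1,...,i_m)})\Longrightarrow E_k$, I would fix $i\in\{1\dts l\}$ and show $E_{i,k}$ follows. Recall $E_{i,k}$ is the system $\sum_{j=0}^{s}a_k(s-j,i)\epsilon(j,i)c_{r_{i-j}}=0$ for $s=1\dts i$; by (\ref{BaseMatrix}) each such equation is a $K$-linear combination of the variables $c_{r_0}\dts c_{r_i}$. Now invoke the hypothesis $c_{r_j}=0$ for all $j\notin\{i_0\dts i_m\}$: every variable $c_{r_t}$ appearing with a (possibly) nonzero coefficient in $E_{i,k}$ and with $t\notin\{i_0\dts i_m\}$ vanishes, so $E_{i,k}$ reduces to an equation involving only the $c_{r_{i_\ell}}$ with $i_\ell\le i$. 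If $i\in\{i_1\dts i_m\}$ this reduced equation is exactly one of the equations we have assumed. The remaining cases are $i=i_0$, $i$ strictly between consecutive terms $i_t<i<i_{t+1}$ (or $i<i_0$, or $i>i_m$): here I must show the reduced equation is automatically $0=0$. This is precisely where the defining property of the sequence $i_1\dts i_m$ enters: for such $i$ the only surviving variables in $E_{i,k}$ are $c_{r_{i_t}}$ (the largest $i_\ell\le i$) and $c_{r_i}$ itself if $i\in\{i_0\dts i_m\}$; but for $i\notin\{i_0\dts i_m\}$ minimality of the $i_\ell$ forces every binomial coefficient $\epsilon(i-i_\ell,i)$ with $i_\ell\le i$ (in particular $\epsilon(i-i_0,i)\dts\epsilon(i-i_t,i)$) to be $0$, and $a_k(s-j,i)$ multiplies $\epsilon(j,i)$ with $i-j\in\{i_0\dts i_m\}$, so the coefficient of every surviving variable is $0$. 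For $i=i_0$ one uses Lemma \ref{pivotalLemma} (which already tells us $E_{i_0,k}$ is equivalent to $c_{r_0}=\cdots=c_{r_{i_0-1}}=0$, all of which are among our assumed vanishings). Thus $E_{i,k}$ holds in every case, completing the equivalence.

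The main obstacle I anticipate is the careful verification in the "strictly between" case that the reduced equation truly collapses: one has to track, for a fixed $s$ and a variable $c_{r_{i-j}}$ with $i-j=i_\ell\in\{i_0\dts i_m\}$ and $i_\ell<i$, that the accompanying coefficient $a_k(s-j,i)\epsilon(j,i)=a_k(s-j,i)\epsilon(i-i_\ell,i)$ is forced to vanish — and this is exactly $\epsilon(i-i_\ell,i)=0$, which is guaranteed because $i$ was \emph{not} chosen as an $i_{\ell'}$, i.e.\ none of $\epsilon(i-i_{\ell'},i)$ for $i_{\ell'}\le i$ (equivalently $i_{\ell'}\in\{i_0\dts i_t\}$) equals $1$, by minimality in the construction. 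One should also double-check the boundary sub-ranges ($i<i_0$ is handled by Lemma \ref{pivotalLemma}, and $i>i_m$ by the argument in the proof of Lemma \ref{Equations-E_k-1}). Once these are in place the Corollary follows immediately, and I would phrase the write-up as: the forward direction is Lemma \ref{Equations-E_k-1}; the backward direction is the coefficient-collapse argument above, case by case on the position of $i$ relative to $i_0\dts i_m$.
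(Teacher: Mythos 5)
Your proposal is correct and follows essentially the same route as the paper: the forward direction is Lemma \ref{Equations-E_k-1}, and the backward direction uses exactly the paper's observation that for $j\notin\{i_1\dts i_m\}$ every surviving variable $c_{r_{i_\ell}}$ ($i_\ell<j$) carries the factor $\epsilon(j-i_\ell,j)=0$ by the defining minimality of the sequence, with $j=i_0$ handled by the vanishing of the $c_{r_{i_0}}$-column (Lemma \ref{2complement}(1), equivalently the easy direction of Lemma \ref{pivotalLemma}). Only a cosmetic slip: in the intermediate range the surviving variables are all $c_{r_{i_\ell}}$ with $i_\ell\le i$, not just the largest one, but your coefficient-collapse argument already treats them all, so nothing is missing.
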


\begin{proof}
The implication $(\Longrightarrow)$ follows from Lemma  \ref{Equations-E_k-1}. For the inverse implication it suffices to prove that equations $\{E_{j,k}\}_{j \neq i_1 \dts i_m}$ can be omitted, i.e., are equivalent to $0=0$. Initially, considering the definition of the sequence $i_0 \dts i_m$, it is clear that $\epsilon(j-i_0,j)= \cdots = \epsilon(j-i_m,j) = 0$, for all $j$ satisfying  $i_0 < j \leq l$ and $j \neq i_1 \dts i_m$. Using the previous observation and the assumption $(\Longleftarrow)$, it is straightforward to verify that for every $j\neq i_0 \dts i_m$, the   matrix $A_{j,k}$ according to (\ref{BaseMatrix}) either has columns corresponding to zero variables or variables corresponding to zero columns. Therefore, equations $E_{j,k}$ can be omitted, for $j \neq i_1 \dts i_m$. The only remaining equations to be omitted are $E_{i_0,k}$. Indeed, according to assumption $(\Longleftarrow)$ and the fact that the column of $c_{r_{i_0}}$ (in the matrix $A_{i_0,k}$) is zero since $k$ is a 2-complement of $i_0$ (see Lemma~\ref{2complement}~(1)), we get that equations $E_{i_0,k}$ can be omitted.
\end{proof}

Our next objective, is to represent the sequence $i_0 \dts i_m$ based on the base 2 representation of its terms. For this purpose, we require the following two lemmas (for proofs see appendix). The first lemma will also prove useful in the sequel.

\begin{lemma}\label{2-digits-1}
Let $b_1 <\cdots<  b_m, \ a_1 <\cdots<  a_n$ be two sequences of non-negative integers, such that $2^{b_m}+\cdots + 2^{b_1}  \geq 2^{a_n} +\cdots + 2^{a_1}$. Consider the element $x = (2^{b_m}+\cdots + 2^{b_1})  - (2^{a_n} +\cdots + 2^{a_1})$, then
\begin{enumerate}
    \item 
The base 2 representation of the element x, contains a 2-digit in the set $\{2^{a_1} \dts 2^{a_n}$, $2^{b_1} \dts 2^{b_m}\}$, if and only if $x > 0$. 
    \item
If $x>0$, then $x$ has base 2 representation of the form  $x = \sm_{\la \geq \min\{a_1,b_1\}}x_{\la}2^{\la}$.
    \item 
If $x>0$ and $b_1 > a_1$, then the base 2 representation of the element x, contains $2^{a_1}$ as a 2-digit.    
\end{enumerate}
\end{lemma}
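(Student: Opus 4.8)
\textbf{Plan of proof for Lemma \ref{2-digits-1}.}
The statement is purely about base $2$ representations, so the plan is to argue directly by repeatedly ``collapsing'' the difference $x = (2^{b_m}+\cdots+2^{b_1}) - (2^{a_n}+\cdots+2^{a_1})$. First I would dispose of the trivial direction of (1): if $x = 0$ there is nothing to contain, so I only need the forward implication and the fact that $x>0$ forces a $2$-digit in the prescribed set. For the substantive direction, I would proceed by induction on $n$ (the length of the subtracted sum), or equivalently peel off the subtracted powers $2^{a_1}, 2^{a_2}, \dots$ one at a time. The key mechanism is the schoolbook subtraction/borrowing rule in base $2$: subtracting $2^{a_1}$ from a positive integer $N = \sum_{\la} N_\la 2^\la$ either clears the bit $N_{a_1}$ (if $N_{a_1}=1$) or, if $N_{a_1}=0$, it ``borrows'', turning the lowest $1$-bit of $N$ above position $a_1$, say at position $c > a_1$, into $0$ and filling all positions $a_1, a_1+1, \dots, c-1$ with $1$'s. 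In either case the result stays $\geq 0$ precisely because of the hypothesis $2^{b_m}+\cdots+2^{b_1} \geq 2^{a_n}+\cdots+2^{a_1}$, which I would use to guarantee at each stage that there is always a high enough $1$-bit available to borrow from.

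For part (2), the observation is that none of these borrowing operations can ever create or touch a bit in a position strictly below $\min\{a_1,b_1\}$: the minuend $2^{b_m}+\cdots+2^{b_1}$ has all its bits at positions $\geq b_1 \geq \min\{a_1,b_1\}$, and subtracting $2^{a_i}$ with $a_i \geq a_1 \geq \min\{a_1,b_1\}$ only affects positions $\geq a_i \geq \min\{a_1,b_1\}$ (a borrow propagates \emph{upward}, never downward). Hence after all $n$ subtractions every set bit of $x$ lies at position $\geq \min\{a_1,b_1\}$, which is exactly the claim. Part (3) is the special case where $b_1 > a_1$, so $\min\{a_1,b_1\} = a_1$ and, moreover, the minuend has a $0$ in position $a_1$ while we must subtract $2^{a_1}$ at some point; the first time a subtraction reaches down to position $a_1$, it must be the subtraction of $2^{a_1}$ itself (since all other $a_i > a_1$ and no borrow descends below $a_1$), and because position $a_1$ currently holds a $0$, that subtraction borrows and \emph{sets} the bit at position $a_1$ to $1$; no subsequent operation touches position $a_1$ again (all remaining $a_i>a_1$, borrows go upward), so $2^{a_1}$ survives in the final base $2$ representation of $x$. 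For part (1) in the forward direction: if $x>0$ then $x$ has some nonzero bit, and by part (2) it sits at position $\geq \min\{a_1,b_1\}$; I would then check that this position must actually coincide with one of the $a_i$ or $b_j$ — this follows because any bit position appearing in $x$ that is strictly between consecutive elements of $\{a_1,\dots,a_n,b_1,\dots,b_m\}$ could only have been created by a borrow, and tracking the borrow chain shows its top endpoint is one of the $b_j$'s while a cleared bit is one of the $a_i$'s; alternatively, and more cleanly, observe $x < 2^{b_m+1}$ and $x \geq 2^{\min}$ and use a direct digit comparison argument.

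\textbf{Main obstacle.} The delicate point is bookkeeping the cascade of borrows when several of the $a_i$ coincide in ``reach'' with gaps in the minuend — i.e.\ making the informal ``borrow propagates upward and never below $\min\{a_1,b_1\}$'' rigorous while simultaneously verifying the running difference never goes negative. I expect to handle this by a clean inductive invariant: after subtracting $2^{a_1},\dots,2^{a_j}$ from the minuend, the partial result $N^{(j)}$ is a nonnegative integer all of whose bits lie at positions $\geq \min\{a_1,b_1\}$ and with $N^{(j)} \geq 2^{a_n}+\cdots+2^{a_{j+1}}$, the latter inequality being exactly what the global hypothesis provides and what permits the next borrow. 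Everything else (parts (1)–(3)) then drops out of this invariant together with a careful description of a single subtraction step. I would relegate the purely arithmetical verification of the single-step borrow description to a short sub-lemma so the main induction reads cleanly.
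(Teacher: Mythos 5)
Your overall strategy (peel off the subtracted powers $2^{a_1},2^{a_2},\dots$ one at a time, describe a single subtraction by the base-$2$ borrowing rule, and carry the invariant that the partial difference is nonnegative, at least $2^{a_{j+1}}+\cdots+2^{a_n}$, and supported on positions $\geq\min\{a_1,b_1\}$) is sound and genuinely different from the paper's argument, which runs a simultaneous induction on $\max\{m,n\}$ and, in the cases where the lowest terms of the two sums differ, simply writes out the difference explicitly in base $2$. Your treatment of (2) and (3) is convincing: a borrow triggered by $2^{a_i}$ only modifies positions $\geq a_i$, so nothing ever appears below $\min\{a_1,b_1\}$, and when $b_1>a_1$ the very first subtraction sets the bit at position $a_1$ (the minuend has a $0$ there) and no later subtraction, all of which act at positions $\geq a_2>a_1$, can disturb it.

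The gap is in the forward direction of (1). Neither of the two routes you offer works as stated. The ``clean'' alternative is insufficient: knowing only $2^{\min\{a_1,b_1\}}\leq x<2^{b_m+1}$ cannot force $x$ to have a digit at a position belonging to $\{a_1,\dots,a_n,b_1,\dots,b_m\}$ (an integer in that range can perfectly well have all its digits strictly between elements of that set; the point of (1) is precisely that an actual difference cannot, and that needs an argument). The borrow-chain remark also does not do the job: the top endpoint of a borrow chain is a bit of the minuend that gets \emph{cleared} (some $2^{b_j}$), so locating it tells you nothing about a \emph{surviving} digit of $x$ lying in the set; in the example $B=2^4$, $A=2^0+2^1$, the chain's endpoint $2^4$ is erased and the digit of $x$ that witnesses (1) is $2^{a_1}=2^0$ at the bottom, not anything at the top. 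The correct completion is a trichotomy that your own machinery already supports: if $a_1<b_1$, part (3) gives the digit $2^{a_1}$; if $b_1<a_1$, no subtraction ever touches position $b_1$ (all act at positions $\geq a_1>b_1$), so $2^{b_1}$ survives in $x$; if $a_1=b_1$, the first subtraction just clears that common bit and you are reduced to the shorter sums $2^{b_2}+\cdots+2^{b_m}$ and $2^{a_2}+\cdots+2^{a_n}$, where induction (on $n$, with the convention that an exhausted $a$-list leaves $x$ equal to a sum of remaining $2^{b_j}$'s) finishes the proof. With that replacement, and the single-step borrow description isolated as the sub-lemma you propose, your plan yields a complete proof, essentially parallel in content to the paper's case analysis but organized around sequential subtraction rather than induction on $\max\{m,n\}$.
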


\begin{lemma}\label{2-digits-2}
Let  $1 \leq t\leq s$, and  consider the base 2 representations, $s = \sm_{\la = 0}^ms_{\la}2^{\la}, \ t = \sm_{\la = 0}^nt_{\la}2^{\la}$, $(t_m = s_n = 1)$. Then under the assumption $\chK = 2$, we get 
\begin{center}
$\epsilon(s-t,s) =1  \Longleftrightarrow $ $s$ contains the base 2 representation of $t$ (i.e. $ t_{\la} \leq s_{\la}$, for all $\la = 1 \dts n$).      
\end{center}
\end{lemma}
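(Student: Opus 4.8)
The statement $\epsilon(s-t,s) = \binom{s}{s-t} = \binom{s}{t} \equiv 1 \pmod 2$ is exactly the hypothesis of Kummer's theorem / a direct application of Lucas's Theorem (Theorem \ref{Lucas}), so the plan is to reduce to that. First I would write out the base $2$ expansions $s = \sum_{\la=0}^m s_\la 2^\la$ with $s_m = 1$ and $t = \sum_{\la=0}^n t_\la 2^\la$ with $t_n = 1$, and observe that since $1 \le t \le s$ we have $n \le m$. Since $\binom{s}{s-t} = \binom{s}{t}$, it suffices to compute $\binom{s}{t} \bmod 2$.

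The main step is to apply Lucas's Theorem directly to $\binom{s}{t}$: it gives
\begin{equation*}
\binom{s}{t} \equiv \prod_{\la = 0}^{m} \binom{s_\la}{t_\la} \pmod 2,
\end{equation*}
where we set $t_\la = 0$ for $n < \la \le m$. Now each factor $\binom{s_\la}{t_\la}$ is a binomial coefficient with both entries in $\{0,1\}$, hence equals $1$ unless $s_\la = 0$ and $t_\la = 1$, in which case it is $0$ (using the convention $\binom{x}{y}=0$ for $x<y$ from Theorem \ref{Lucas}). Therefore the product is $\equiv 1 \pmod 2$ if and only if there is no index $\la$ with $t_\la = 1$ and $s_\la = 0$; equivalently, $t_\la \le s_\la$ for all $\la$, which is precisely the statement that $s$ contains the base $2$ representation of $t$. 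Since for $\la > n$ we have $t_\la = 0 \le s_\la$ automatically, the condition only needs to be checked for $\la = 0, \dots, n$, matching the formulation in the lemma.

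This completes the equivalence: $\epsilon(s-t,s) = 1 \iff \binom{s}{t} \equiv 1 \pmod 2 \iff t_\la \le s_\la$ for all $\la$. I do not anticipate a genuine obstacle here — the only thing to be careful about is bookkeeping the padding of $t$'s expansion with zeros up to length $m$ and invoking the $\binom{x}{y}=0$ convention correctly, rather than any real difficulty; the content is entirely Lucas's Theorem applied with $p=2$. (One could alternatively cite Corollary \ref{LucasCor1}, but the cleanest route is the direct appeal to Theorem \ref{Lucas}.)
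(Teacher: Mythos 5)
Your proof is correct, and it takes a genuinely different (and shorter) route than the paper's. You exploit the symmetry $\epsilon(s-t,s)=\binom{s}{s-t}=\binom{s}{t}$ (legitimate since $0\le s-t\le s$) and then apply Lucas's Theorem (Theorem \ref{Lucas}) directly to $\binom{s}{t}$, so both implications fall out at once from the factorization $\binom{s}{t}\equiv\prod_{\la}\binom{s_{\la}}{t_{\la}}\pmod 2$, the product being $1$ exactly when no digit has $t_{\la}=1$, $s_{\la}=0$. The paper instead works with $\binom{s}{s-t}$ itself: the implication ($\Longleftarrow$) is read off from Lucas, but the implication ($\Longrightarrow$) is proved by contradiction through a digit analysis of $s-t$ --- choosing the least index $\la_1$ with $t_{\la_1}=1$, $s_{\la_1}=0$, splitting $s-t$ around $2^{\la_1}$, and invoking Lemma \ref{2-digits-1} to control the high-order part in the cases $y<0$, $y>0$, $y=0$, concluding that $2^{\la_1}$ is a 2-digit of $s-t$ but not of $s$, whence $\epsilon(s-t,s)=0$. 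Your route avoids both the case analysis and the dependence on Lemma \ref{2-digits-1}; the paper's route has the incidental benefit of exhibiting explicitly how the base-2 digits of $s-t$ relate to those of $s$ and $t$, which matches the style of the surrounding digit lemmas used elsewhere in the paper, but for the statement of this lemma your argument is the cleaner one.
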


\begin{corollary}\label{2-base-i_0,...,i_1}
We can explicitly describe the sequence $i_0 \dts i_m$  as follows
\begin{enumerate}
    \item 
$i_0 = \max\{i \in \{0 \dts l\}  :  k$ is $2-$complement of i\}.
    \item
$\{i_1 <\cdots<   i_m\} = \{i \in \{0 \dts l\}: i_0< i $ and $i$ contains the base 2 representation of $i_0\}$.\\ 
\end{enumerate}
\end{corollary}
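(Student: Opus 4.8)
The plan is to prove the two assertions of Corollary~\ref{2-base-i_0,...,i_1} in turn. Assertion (1) is essentially a restatement: $i_0$ was \emph{defined} at the start of the subsection as the maximum integer in $\{0 \dts l\}$ such that $k$ is a $2$-complement of $i_0$, so here there is nothing to prove beyond recalling that definition. The substance is in (2), where we must identify the recursively-constructed non-vanishing sequence $i_1 < \cdots < i_m$ with the set of integers in $(i_0, l]$ that contain the base~$2$ representation of $i_0$. Recall that $i_j$ was defined as the minimum integer $> i_{j-1}$ in $\{1 \dts l\}$ for which at least one of $\epsilon(i_j - i_{j-1}, i_j), \dots, \epsilon(i_j - i_0, i_j)$ equals $1$.

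First I would translate the defining condition on $i_j$ using Lemma~\ref{2-digits-2}: for any candidate integer $i$ with $i_0 < i \le l$ and any $i_t < i$, the coefficient $\epsilon(i - i_t, i)$ equals $1$ if and only if $i$ contains the base~$2$ representation of $i_t$. So the recursion says: $i_j$ is the least integer above $i_{j-1}$ that contains the base~$2$ representation of \emph{some} $i_t$ with $t < j$. The key reduction is to show this is equivalent to: $i_j$ is the least integer above $i_{j-1}$ that contains the base~$2$ representation of $i_0$ specifically. The non-obvious direction is that ``$i$ contains the base~$2$ digits of some earlier $i_t$'' already forces ``$i$ contains the base~$2$ digits of $i_0$.'' I would prove this by induction on $j$: assuming each earlier $i_t$ ($1 \le t \le j-1$) itself contains the base~$2$ representation of $i_0$ (the inductive hypothesis, which is exactly assertion (2) restricted to the first $j-1$ terms), the ``contains'' relation on binary supports is transitive, so if $i$ contains the digits of $i_t$ and $i_t$ contains the digits of $i_0$, then $i$ contains the digits of $i_0$. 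This collapses the disjunction over $t$ to the single condition involving $i_0$.

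Given that reduction, assertion (2) follows: enumerating $\{i \in (i_0, l] : i \text{ contains the base } 2 \text{ representation of } i_0\}$ in increasing order produces exactly a sequence where each term is the least integer above its predecessor satisfying the containment condition, which is precisely the recursion defining $i_1 < \cdots < i_m$ (once we know the recursion only ever ``sees'' $i_0$). I should also check the base case $j = 1$ separately — there the only earlier index is $i_0$ itself, so $i_1$ is by definition the least $i > i_0$ with $\epsilon(i - i_0, i) = 1$, i.e.\ by Lemma~\ref{2-digits-2} the least $i > i_0$ containing the binary digits of $i_0$ — and that matches. The main obstacle is the transitivity/induction bookkeeping in the previous paragraph: one must be careful that the inductive hypothesis is stated for \emph{all} earlier terms simultaneously and that Lemma~\ref{2-digits-2}'s hypothesis $1 \le t \le s$ (here $1 \le i_t < i$) is met, which it is since the sequence is strictly increasing and $i_0 \ge 1$ can be assumed (if $i_0 = 0$ the ``contains'' condition is vacuous and every integer in $(0,l]$ qualifies, consistent with $k$ being a $2$-complement of $0$ meaning $k$ is odd — a case worth noting explicitly). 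Beyond that, invoking Lemma~\ref{2-digits-2} does all the arithmetic work, so no further computation is needed.
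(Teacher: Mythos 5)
Your proposal is correct and matches the paper's intended derivation: the paper states the corollary without a written proof, as a direct consequence of Lemma~\ref{2-digits-2} applied to the recursive definition of $i_1 \dts i_m$, and your induction-plus-transitivity argument (collapsing ``contains the digits of some earlier $i_t$'' to ``contains the digits of $i_0$'') is exactly the bookkeeping needed to make that deduction explicit, including the $i_0=0$ edge case.
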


\subsection{Solution of the system $E_k$ through sequence $i_0 \dts i_m$.}\ 

In light of Corollary  \ref{Equations-E_k-2}, it is sufficient to focus on the equations $E_{i_1,k} \dts$ $E_{i_m,k}$ or equivalently the corresponding matrices $A_{i_1, k} \dts A_{i_m, k}$. Nevertheless, these matrices continue to appear perplexing.   Employing Lemma \ref{2-digits-2}, there are still simplifications to make. Henceforth, fix $s\in \{1 \dts m\}$ and for the remainder of this subsection, we will study the matrix $A_{i_s,k}$. \

Up to this point, the only potentially non-trivial columns of $A_{i_s,k}$, are those of variables $c_{r_{i_0}} \dts c_{r_{i_s}}$. However, upon considering the integers $i_0 = y_0 <\cdots<  y_t = i_s$,  where 
\begin{equation}
\small\{y_0 \dts y_t\} = \{y \in \{i_0 \dts i_s\}: i_s \ \textit{contains the base 2 representation of} \  y\},    
\end{equation}
the application of Lemma \ref{2-digits-2} simplify the matrix $A_{i_s,k}$ as follows (where we have exclude zero columns and columns corresponding to zero variables)
\begin{equation}\label{finalmatrix}
\begin{pmatrix}
&c_{r_{i_0 = y_0}} &\dots &c_{r_{y_{t-1}}} & c_{r_{y_t = i_s}} \\
&a_k(y_0,y_t) &\dots &a_k(y_{t-1},y_t) &a_k(y_t,y_t) \\
&\vdots &\vdots &\vdots &\vdots\\
&a_k(1,y_t)    &\dots    &a_k(y_{t-1}-(y_0-1),y_t)        &a_k(y_{t}-(y_0-1),y_t)   \\
&1 & \dots &a_k(y_{t-1}-y_0,y_t)  &a_k(y_t-y_0,y_t) \\
&0 &\dots  &a_k(y_{t-1}-(y_0+1),y_t)   &a_k(y_{t}-(y_0+1),y_t)  \\
&\vdots & \vdots &\vdots &\vdots  \\
&0 & \dots & a_k(1,y_t)  &a_k(y_{t}-(y_{t-1}-1),y_t)  \\
&\vdots  &\dots &1 &a_k(y_t-y_{t-1},y_t) \\
&\vdots  &\dots &0 &a_k(y_t-(y_{t-1}+1), y_t)  \\
&0 &\dots &\vdots &\vdots \\ 
&0 &\dots &0 &a_k(1,y_t)
\end{pmatrix}.    
\end{equation}

 The key observation is that each row of the matrix (\ref{finalmatrix}) contains an even number of units.

\begin{proposition}\label{EvenNumberof1} With the above notations we have the following
\begin{enumerate}
    \item 
The matrix (\ref{finalmatrix}) contains an even number of units in each row.

    \item
$E_{i_s,k} \ \Longleftrightarrow \ c_{r_{i_0 = y_0}} = c_{r_{ y_1}} = \cdots =  c_{r_{i_s = y_t}}.$ \end{enumerate}
Since $s \in \{1 \dts m\}$ was arbitrary, we get that equations $E_k $ are equivalent to, $c_{r_{i_0}} = \cdots = c_{r_{i_m}}$ and $c_{r_j} = 0 \ \textit{for all} \ j \in \{0 \dts l\}\setminus\{i_0 \dts i_m\}$.

\end{proposition}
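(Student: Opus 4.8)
The plan is to prove part (1) directly by a binomial-coefficient computation mod~2, then derive part~(2) from~(1) by elementary linear algebra over a field of characteristic~2, and finally assemble the global statement from Corollary~\ref{Equations-E_k-2}. The heart of the argument is part~(1): one must show that for each row index of the matrix~(\ref{finalmatrix}), the number of entries $a_k(\cdot, y_t)$ that equal~$1$ is even. A single row, say the one whose entries read $a_k(y_0 - c, y_t), a_k(y_1 - c, y_t), \dots, a_k(y_t - c, y_t)$ for the appropriate shift $c$ (with the understanding that $a_k(x,y_t)=\binom{k+y_t+x}{x}=0$ when the shift makes the argument nonpositive), is a sum of the form $\sum_{u} \binom{k + y_t + (y_u - c)}{y_u - c}$ over those $y_u$ with $y_u \geq c$. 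Using Lucas's Theorem (Theorem~\ref{Lucas}), each such binomial is $1$ iff the base~2 representation of $y_u - c$ is contained in that of $k + y_t + (y_u - c)$, equivalently (after cancelling the lower digits) iff the base~2 digits of $k + y_t$ in the positions at or above the top digit of $y_u - c$, together with a carry analysis, cooperate. The key structural input is that $i_0 = y_0$ and that $k$ is a 2-complement of $i_0$, so $k + i_0$ has all low digits filled up to the top digit of $i_0$; combined with the fact (Corollary~\ref{2-base-i_0,...,i_1}, Lemma~\ref{2-digits-2}) that each $y_u$ contains the base~2 representation of $i_0$ and that $i_s = y_t$ contains that of each $y_u$, the indices $y_u - c$ run over a set closed under ``toggling the lower free digits,'' which forces the count of $1$'s to be even by a pairing/involution argument on those free digits.

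Granting part~(1), part~(2) is pure linear algebra. The matrix~(\ref{finalmatrix}) is lower-triangular-like in the sense that its $(j+1)$-st column (that of $c_{r_{y_j}}$) has a leading $1$ in the row where the shift equals $y_j$ (namely $a_k(y_j - y_j, y_t) = a_k(0,y_t) = \binom{k+y_t}{0} = 1$) and zeros below it, because below that row the argument $y_j - (\text{shift})$ becomes negative. Hence the columns are echelon with pivots in distinct rows, so the system $E_{i_s,k}$ has rank exactly~$t$ and a one-dimensional solution space. To identify that solution space as the diagonal line $c_{r_{y_0}} = \cdots = c_{r_{y_t}}$, I would check that the all-ones vector lies in the kernel: plugging $c_{r_{y_0}} = \cdots = c_{r_{y_t}} = 1$ into any row of~(\ref{finalmatrix}) gives exactly the sum of the $1$'s in that row, which is $0$ in characteristic~$2$ by part~(1). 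Since the solution space is one-dimensional and contains the all-ones vector, it equals $\{(c,\dots,c)\}$, which is precisely the asserted equivalence. (One should also note that variables $c_{r_j}$ for $i_0 < j \le i_s$ with $j \notin \{y_0,\dots,y_t\}$ already vanish, so no information is lost in passing to the reduced matrix; this is exactly what Lemma~\ref{2-digits-2} and the definition of the $y_u$ guarantee, and it is already recorded in the derivation of~(\ref{finalmatrix}).)

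For the final sentence of the statement I would simply run $s$ over $\{1,\dots,m\}$: each $E_{i_s,k}$ contributes the chain $c_{r_{i_0}} = c_{r_{y_1^{(s)}}} = \cdots = c_{r_{i_s}}$ along its own increasing subsequence of $\{i_0,\dots,i_m\}$, and since $i_0$ appears in every such chain (it is always $y_0$) and each $i_s$ appears in its own chain, the union of all these equalities is the single chain $c_{r_{i_0}} = c_{r_{i_1}} = \cdots = c_{r_{i_m}}$. Combining with the vanishing $c_{r_j} = 0$ for $j \in \{0,\dots,l\} \setminus \{i_0,\dots,i_m\}$ from Corollary~\ref{Equations-E_k-2} yields the claimed description of the solution set of $E_k$.

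I expect the main obstacle to be part~(1): making the parity count rigorous requires a careful simultaneous bookkeeping of (a) the ``2-complement'' hypothesis on $k$ relative to $i_0$, (b) the digit-containment relations among $i_0, y_1, \dots, y_t = i_s$, and (c) the carries produced when adding $k + y_t + (y_u - c)$ for varying $u$. The clean way to organize it is probably to fix the row (fix the shift) and exhibit an explicit fixed-point-free involution on the set of indices $u$ with $a_k(y_u - c, y_t) = 1$ — toggling the lowest base~2 digit that is ``free'' to vary given the containment constraints — and then verify via Lucas that this involution preserves the property of the binomial being~$1$. The remaining steps are routine once part~(1) is in hand.
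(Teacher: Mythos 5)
Your reduction of part (2) to part (1), and the assembly of the final statement from Corollary \ref{Equations-E_k-2}, are correct; indeed your linear-algebra step is a slightly more explicit version of the paper's own justification (staggered pivots $a_k(0,y_t)=1$ in the columns of $c_{r_{y_0}},\dots,c_{r_{y_{t-1}}}$ give rank at least $t$, while part (1) and $\chK=2$ put the all-ones vector in the kernel, so the kernel is exactly the diagonal line). The problem is that part (1) -- which is the actual content of the proposition -- is not proved: you only propose a strategy (``an explicit fixed-point-free involution toggling the lowest free digit'') and yourself flag it as the main obstacle. In the paper, the proof of (1) is precisely what occupies subsections \ref{Subsection R1-Ry0+1}, \ref{subsection y0+2-yt-1+1} and \ref{Subsection Ryt-1+2-Ryt}: it needs the closed formula for $a_k(x,y_t)$ (Lemma \ref{formula}, whose proof uses the maximality of $i_0$ to force $\gamma_{n+1}=\cdots=\gamma_{\la_\nu}=1$), the uniqueness statements of Lemmas \ref{Elementary1} and \ref{Elementary2}, and a long case analysis showing that the three row ranges behave quite differently: rows $R_1,\dots,R_{y_0+1}$ contain exactly two units or none, rows $R_{y_0+2},\dots,R_{y_{t-1}+1}$ contain $2|D|$ units for a nontrivial set $D$ (Lemma \ref{x<b(i+1)}, a twelve-case argument) or exactly two (Corollary \ref{x=b(i+1)}), and rows $R_{y_{t-1}+2},\dots,R_{y_t}$ contain none at all.

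Concretely, your sketched involution is not obviously well defined in the middle range: there the entries are of the form $a_k\bigl(2^{a_r}+\cdots+2^{a_1}-x,\,y_t\bigr)$ with a subtraction ($x=h-i_0>0$), so ``toggling a free digit'' of the argument need not land on another admissible index (the shifted sums must still dominate $2^{b^{i+1}_{k_{i+1}}}+\cdots+2^{b^{i+1}_1}$ and keep the binomial nonzero), and the pairing the paper actually exhibits is the decomposition of $B_R$ into the two-element sets indexed by $d\in D$, built from the two completion sequences $\delta_1<\cdots<\delta_m$ and $\epsilon_1<\cdots<\epsilon_n$ of Lemma \ref{2-digit,epsilon,delta}. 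Until a parity argument at this level of detail is carried out, the proposal defers rather than proves the heart of the proposition.
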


\begin{proof}
The proof of property $(1)$ will be  the subject of discussion for subsections \ref{Subsection R1-Ry0+1}, \ref{subsection y0+2-yt-1+1} and \ref{Subsection Ryt-1+2-Ryt}. Regarding property $(2)$, one may deduce it directly from the following three facts,  \ i) $chK = 2$,  ii) property $(1)$ and iii) we guarantee at least one $1$ in the columns corresponding the variables $c_{r_{y_0}} \dts c_{r_{y_{t-1}}}$. 
\end{proof}

 For the proof of Proposition \ref{EvenNumberof1} (1), the plan goes as follows. First, we will provide the base 2 representations of the elements $y_i$. Then, we will present a convenient computational formula for the binomial coefficients of the matrix \ref{finalmatrix}. Lastly, for the cardinality of each row we will distinguish the cases $(1 \longrightarrow y_0 +1), \ (y_0+2 \longrightarrow  y_{t-1} + 1)$ and $(y_{t-1} + 2  \longrightarrow y_t)$. \\

Recall the set $I(\nu,k)$ of all maps $\{1 \dts k\} \longrightarrow \{1 \dts \nu\}$. 
\begin{lemma}\label{2-base of y_1,...,y_t}
Consider the base 2 representation of $y_0 = i_0 = \sm_{\la \geq 0}d_{\la}2^{\la}$.  Then
\begin{enumerate}

    \item 
(base 2 representation of $y_t = i_s$).  $y_t = 2^{\la_{\nu}} +\cdots + 2^{\la_1} + i_0$, for some non negative integers  $\la_1 <\cdots<  \la_{\nu}$ such that, $d_{\la_j} = 0$ for all $j\in \{1 \dts \nu\}$.

    \item
(base 2 representation of $y_1 \dts y_t$). $\{y_1 \dts y_t\} = \bigcup\limits_{k = 1}^{\nu}\{2^{\la_{I_k}} +\cdots + 2^{\la_{I_1}} +i_0 : I \in I(\nu,k), \  I$ is strictly increasing\}. Notice that the union is disjoint.
\end{enumerate}
\end{lemma}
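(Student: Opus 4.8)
The plan is to prove Lemma \ref{2-base of y_1,...,y_t} by unwinding the definitions of the sequence $i_0 \dts i_m$ (via Corollary \ref{2-base-i_0,...,i_1}) and the set $\{y_0 \dts y_t\}$ in purely combinatorial terms of base $2$ digits, and then translating ``containment of base $2$ representations'' into a concrete statement about which digits may be switched on. Write $y_0 = i_0 = \sm_{\la \geq 0}d_\la 2^\la$. By Corollary \ref{2-base-i_0,...,i_1}(2), the indices $i_1 \dts i_m$ in $\{1 \dts l\}$ exceeding $i_0$ are exactly those integers that contain the base $2$ representation of $i_0$; and $i_s = y_t$ is one of them. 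So I would first establish part (1): since $y_t$ contains the base $2$ representation of $i_0$, every digit of $i_0$ is also a digit of $y_t$, hence $y_t - i_0$ has base $2$ representation supported on positions $\la$ where $d_\la = 0$, say $y_t - i_0 = 2^{\la_\nu} + \dots + 2^{\la_1}$ with $\la_1 < \dots < \la_\nu$ and $d_{\la_j}=0$ for all $j$. This is immediate once one observes that adding disjoint powers of two produces no carries, so $y_t = 2^{\la_\nu}+\dots+2^{\la_1}+i_0$ is itself already in base $2$ form.

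Next, for part (2), recall $\{y_0 \dts y_t\} = \{y \in \{i_0 \dts i_s\} : i_s \text{ contains the base 2 representation of } y\}$. I would argue that an integer $y$ with $i_0 \le y \le i_s = y_t$ is contained in this set if and only if $y$ contains the base $2$ representation of $i_0$ \emph{and} $y_t$ contains the base $2$ representation of $y$. The first condition forces the ``$i_0$-part'' $\sm d_\la 2^\la$ of $y$ to be present; the second condition, combined with part (1) and the no-carry observation, forces the remaining digits of $y$ to lie among $\{2^{\la_1} \dts 2^{\la_\nu}\}$. Hence every such $y$ has the form $2^{\la_{I_k}} + \dots + 2^{\la_{I_1}} + i_0$ for some strictly increasing $I=(I_1 < \dots < I_k) \in I(\nu,k)$, $1 \le k \le \nu$ (the case $k=0$ being $y=i_0=y_0$ itself, excluded from the indices $y_1 \dts y_t$). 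Conversely, any such integer lies in $[i_0, y_t]$, contains $i_0$'s digits by construction, and is contained in $y_t$'s digits since $\{\la_1 \dts \la_\nu\} \subseteq \{\text{digit positions of } y_t\}$; one must also check it actually lies in $\{1 \dts l\}$, which follows because it is $\le y_t = i_s \le l$. The union is disjoint because distinct subsets $I$ of $\{\la_1 \dts \la_\nu\}$ give distinct digit sets hence distinct integers. I would verify there is no overlap between the $k$-fold and $k'$-fold pieces for $k \ne k'$ the same way.

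I would organize the write-up so that both parts reduce to the single elementary principle: when a set of powers of two is pairwise distinct, their sum's binary digits are exactly the indicator of that set (no carries), so ``$m$ contains the base $2$ representation of $n$'' is equivalent to ``$\{\text{digits of } n\} \subseteq \{\text{digits of } m\}$'', equivalently ``$m - n$ is a sum of powers of two disjoint from the digits of $n$''. With this in hand, part (1) is the case $n = i_0$, $m = y_t$, and part (2) is the enumeration of all $y$ between them that are ``sandwiched'' in the containment order, which is exactly the Boolean lattice of subsets of $\{\la_1 \dts \la_\nu\}$.

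The main obstacle I anticipate is purely bookkeeping rather than conceptual: one must be careful that the defining property of $i_1 \dts i_m$ from Corollary \ref{2-base-i_0,...,i_1}(2) really does coincide with the digit-containment description used here (this is already granted by the corollary, so it is safe to invoke), and that the constraint $y \le l$ does not exclude any of the claimed elements --- which is fine since all candidates are $\le i_s \le l$. A secondary subtlety is making sure the indexing $\{y_1 \dts y_t\}$ versus $\{y_0 \dts y_t\}$ is handled consistently, i.e. that $y_0 = i_0$ corresponds to the empty subset $k=0$ and is correctly omitted from the union in part (2) while being included in the full set $\{y_0 \dts y_t\}$. No hard inequalities or binomial computations are needed; everything follows from the no-carry addition principle and Corollary \ref{2-base-i_0,...,i_1}.
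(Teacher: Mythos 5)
Your proposal is correct and takes essentially the same route as the paper, whose proof is the one-line observation that the lemma "follows directly by Corollary \ref{2-base-i_0,...,i_1} and the construction of $y_0,\dots,y_t$"; you simply spell out the no-carry principle and the Boolean-lattice description that this citation compresses. The only point to state explicitly is that $\{i_0,\dots,i_s\}$ in the definition of the $y_j$'s is the set of terms of the non-vanishing sequence (not the integer interval $[i_0,i_s]$), so that every admissible $y$ automatically contains the base $2$ representation of $i_0$ by Corollary \ref{2-base-i_0,...,i_1} — exactly the fact your forward implication needs and your converse step already uses.
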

     
\begin{proof}
Lemma follows directly by Corollary \ref{2-base-i_0,...,i_1} and construction of integers $y_0 \dts y_t$.    
\end{proof}

\begin{lemma}\label{formula}
Regarding the sequence $\la_1 <\cdots<  \la_{\nu}$, for any non negative integer $x$ such that  $1 \leq x \leq y_t$, we have the following formula \ 

$a_k(x,y_t) = \begin{cases}
 0,    &\textit{if} \ x \ \textit{has a 2-digit in } \  \{2^{\la_2} \dts 2^{\la_{\nu}}\}\cup\{2^x: x \leq \la_1-1\}\\
 1,    & \textit{otherwise}
\end{cases}.$
\end{lemma}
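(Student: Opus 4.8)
We want the explicit value of $a_k(x,y_t)=\tbinom{k+y_t+x}{x}$ mod $2$, knowing from Lemma \ref{2-base of y_1,...,y_t} that $y_t = 2^{\la_\nu}+\cdots+2^{\la_1}+i_0$ where the $\la_j$ are exactly the positions where $i_0=y_0$ has a $0$-digit among the digits that were ``switched on'' to produce $y_t$, and knowing that $k$ is a $2$-complement of $i_0$, so by Definition \ref{2-complement} the base $2$ expansion of $k+i_0$ has the form $k+i_0=\sm_{\la\geq M+1}t_\la 2^\la + \sm_{\la=0}^{M}2^\la$, where $2^M$ is the top digit of $i_0$. The plan is to rewrite $k+y_t+x = (k+i_0)+(2^{\la_\nu}+\cdots+2^{\la_1})+x$ and apply Lucas's theorem (Theorem \ref{Lucas}) digit by digit, controlling when a carry can occur.

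First I would record the digit structure of $k+i_0$: all positions $0,1,\dots,M$ are $1$, and since $i_0\le l$ forces $i_0\neq 0$ in the relevant range (the case $i_0=0$, $k$ odd is handled by the convention and is analogous), we have $M\ge 0$ and all the $\la_j$ satisfy $\la_j>M$ or $\la_j<M$ with $d_{\la_j}=0$; in fact by Lemma \ref{2-base of y_1,...,y_t}(1) every $\la_j$ is a position where $i_0$ has digit $0$, so $\la_j\neq 0,1,\dots$ only insofar as $d_{\la_j}=0$. The key point is that adding $2^{\la_\nu}+\cdots+2^{\la_1}$ to $k+i_0$ produces \emph{no carries}: each $\la_j$ is a $0$-digit of $i_0$, hence (using the $2$-complement shape of $k+i_0$) a $0$-digit of $k+i_0$ as well, at least for $\la_j\le M$; for $\la_j>M$ one must check against the $t_\la$, but here I would invoke the maximality of $i_0=i_s$... — more carefully, I would use that $y_t=i_s$ itself satisfies $i_s>i_0$ and $i_s$ contains the base $2$ representation of $i_0$ (Corollary \ref{2-base-i_0,...,i_1}), together with the fact that $k$ is \emph{not} a $2$-complement of any integer strictly between consecutive $i$'s, to pin down that $k+y_t=k+i_0+2^{\la_\nu}+\cdots+2^{\la_1}$ has digits: $1$ in positions $0,\dots,M$; $1$ in positions $\la_1,\dots,\la_\nu$; and the $t_\la$ elsewhere above $M$ (with $\la_1>M$ necessarily, since the $\la_j$ are $0$-digits of $i_0$ lying above its $2$-complemented block — this is where Remark \ref{Remark-inequality} and Lemma \ref{2-digits-1} get used).

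Then I would compute $\tbinom{k+y_t+x}{x}\bmod 2$ by Lucas: this is $1$ iff the binary digits of $x$ are $\le$ the corresponding digits of $k+y_t+x$, i.e. (using Corollary \ref{LucasCor1}(3) in the contrapositive form, ``$\tbinom{u+v}{v}\equiv 0$ if $u,v$ share a $1$-digit'') iff adding $x$ to $k+y_t$ causes no carry, which happens iff $x$ has $0$ in every position where $k+y_t$ has a $1$. The positions where $k+y_t$ has a $1$ are: $0,1,\dots,M$, and $\la_1,\dots,\la_\nu$, and the $t_\la=1$ positions. Since $x\le y_t$ and $y_t$'s top digit is $\la_\nu$, the digits of $x$ above $\la_\nu$ are zero, so the $t_\la$ with $\la>\la_\nu$ are irrelevant; and a short argument (again via the shape of $k+i_0$ and $\la_1>M$) shows the only $t_\la=1$ positions below $\la_\nu$ that matter are none — leaving the obstruction exactly: $x$ must avoid positions $0,\dots,\la_1-1$ entirely (this absorbs $0,\dots,M$ since $M<\la_1$) and must avoid $\la_2,\dots,\la_\nu$. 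That is precisely the stated dichotomy: $a_k(x,y_t)=0$ iff $x$ has a $2$-digit in $\{2^{\la_2},\dots,2^{\la_\nu}\}\cup\{2^x:x\le\la_1-1\}$, and $=1$ otherwise. (Note $\la_1$ itself is allowed, consistent with the formula omitting $2^{\la_1}$.)

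\textbf{Main obstacle.} The delicate step is controlling the high digits: showing that the ``extra'' $1$-digits $t_\la$ of $k+i_0$ above $2^M$ do not interfere, i.e. that none of them sits in a position $\le\la_\nu$ that $x$ could occupy. This is exactly where the maximality of $i_0$ as a $2$-complemented integer $\le l$, the construction of the $i_j$ via nonvanishing of the $\epsilon(i_j-i_t,i_j)$, and Lemmas \ref{2-digits-1}--\ref{2-digits-2} must be combined; I expect this to be the bulk of the argument, with the Lucas bookkeeping itself being routine once the carry-free structure of $k+y_t$ is established. I would isolate the carry-free claim as the first step and treat the $i_0=0$ convention case separately at the end.
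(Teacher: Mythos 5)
There is a genuine gap, and it sits exactly at the step you flagged as routine. Your ``no carries'' claim is backwards: since $k$ is a $2$-complement of $i_0$, the number $k+i_0$ has digit $1$ in \emph{every} position $0,\dots,M$ (that is the definition), so whenever some $\la_j\leq M$ the addition of $2^{\la_j}$ necessarily triggers a carry chain; and such $\la_j$ do occur, because your assertion that ``$\la_1>M$ necessarily, since the $\la_j$ are $0$-digits of $i_0$ lying above its $2$-complemented block'' confuses $i_0$ with $k+i_0$ --- the $2$-complement condition says nothing about $i_0$ having all low digits equal to $1$, and in the paper's own running example ($k=2^4+2^2+2$, $i_0=2^3+1$, $i_1=2^3+2+1$) one has $\la_1=1<M=3$. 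Moreover, even granting your digit pattern for $k+y_t$ (ones at $0,\dots,M$ and at $\la_1,\dots,\la_\nu$), Lucas would then forbid $x$ from having a digit at $\la_1$ and would \emph{allow} digits in $(M,\la_1)$, which is the opposite of the stated formula; so the final ``leaving the obstruction exactly\dots'' step is asserted to match the lemma rather than derived. The two missing ingredients are precisely what the paper's proof supplies: (i) when some $\la_j>n$ ($n=M$ the top bit of $i_0$), maximality of $i_0$ forces $\gamma_{n+1}=\cdots=\gamma_{\la_\nu}=1$ in $k+i_0=\sm_{\la\geq n+1}\gamma_\la 2^\la+\sm_{\la=0}^{n}2^\la$ (otherwise $k$ would be a $2$-complement of $2^{\tau_0}+i_0\leq y_t\leq l$), so the solid block of ones extends up to $\la_\nu$; and (ii) the explicit carry computation $\sm_{\la=0}^{\la_\nu}2^\la+2^{\la_1}=2^{\la_\nu+1}+\sm_{\la=0}^{\la_1-1}2^\la$, which is what empties position $\la_1$ and leaves ones exactly at $0,\dots,\la_1-1$ and $\la_2,\dots,\la_\nu$ below the threshold $2^{\la_\nu+1}$ (respectively $2^{n+1}$ in the case $\la_\nu<n$), after which the bound $x\leq y_t<2^{\la_\nu+1}$ lets Lucas and Corollary \ref{LucasCor1}(3) give the dichotomy.

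So the overall strategy (expand $k+y_t$, bound $x$ to kill the high digits, apply Lucas) is the right one and matches the paper, but the carry-free picture you built it on is false and leads to a digit pattern inconsistent with the formula you are proving; to repair it you must split according to the position of the $\la_j$ relative to $n$ (the paper's three cases), prove the filled-block property (i) from the maximality of $i_0$, and carry out the carry bookkeeping (ii) explicitly, treating $i_0=0$ separately as you proposed.
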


\begin{proof}
Lets start by assuming $i_0 \geq 1$ (we will treat case $i_0 =0$ at the end). Consider the base 2 representation $i_0 = \sm_{\la = 0}^nd_{\la}2^{\la}$ $(d_n = 1)$. Since $k$ is a 2-complement of $i_0$ we may write $k+i_0 = \sm_{\la \geq n+1}\gamma_{\la}2^{\la} + \sm_{\la = 0}^n2^{\la}$ and thus $a_k(x,y_t) = \binom{ \sm_{\la \geq n+1}\gamma_{\la}2^{\la} + 2^{\la_{\nu}} +\cdots + 2^{\la_1} + \sm_{\la = 0}^n2^{\la} + x}{x}$. From Lemma \ref{2-base of y_1,...,y_t}, notice that $\la_i \neq n$, for all $i = 1 \dts \nu$. To proceed, we need to distinguish among the following three cases\ 

 \textbf{Case 1}. $\la_1 <\cdots<  \la_{\nu} < n$.  Then,  
\begin{align*}
a_k(x,y_t) = \binom{\sm_{\la \geq n+1}\gamma_{\la}2^{\la} + 2^{n+1} + 2^{\la_{\nu}} +\cdots + 2^{\la_2} + \sm_{\la = 0}^{\la_1-1}2^{\la} + x}{x}.     
\end{align*}
Furthermore, from Lemma \ref{2-base of y_1,...,y_t} and case assumption, we get that $x \leq y_t \leq \sm_{\la = 0}^n2^{\la} < 2^{n+1}$, thus, $x$ has a base 2 representation of the form $x= \sm_{\la =0}^mx_{\la}2^{\la}$, where $m \leq n$, combine this with  Theorem \ref{Lucas} and Corollary \ref{LucasCor1} (3), we may  deduce the required formula. \\ 

 \textbf{Case 2}. There exist $s_0 \in \{1 \dts \nu-1\}$ such that $\la_1 <\cdots<  \la_{s_0} < n < \la_{s_0 +1} <\cdots<  \la_{\nu}$. We start with the following  critical observation, $\gamma_{n+1} = \cdots = \gamma_{\la_{\nu}} = 1$. Indeed, let us assume the opposite; that is, suppose there exists $\tau_{0}$ such that $n+1 \leq \tau_0 \leq \la_{\nu}$ and $\gamma_{\tau_0} = 0$. Moreover, consider $\tau_0$ to be minimum, meaning $\gamma_{\la} = 1$, for every $\la$ such that $n+1 \leq \la < \tau_0$. This lead us to the following computations
\begin{align*}
k + (2^{\tau_0} + i_0) = \sm_{\la \geq n+1}\gamma_{\la}2^{\la} + 2^{\tau_0} + \sm_{\la = 0}^n2^{\la}  = \sm_{\la \geq \tau_0 +1}\gamma_{\la}2^{\la} + \sm_{\la = 0}^{\tau_0}2^{\la}.    
\end{align*}
Additionally, $2^{\tau_0} + i_0 =2^{\tau_0} + \sm_{\la = 0}^nd_{\la}2^{\la}$, where $n < \tau_0$ and so the preceding expression represents the base 2 form of $2^{\tau_0} + i_0$. Therefore, the computations above imply that $k$ is a 2-complement of $2^{\tau_0} + i_0$. On the other hand, $i_0< 2^{\tau_0} + i_0 \leq y_t = i_s \leq l$, which leads to contradiction (this stems from the fact that $i_0$ considered to be the maximum integer in the set $\{0 \dts l\}$ for which $k$ is a 2-complement). Utilizing the property $\gamma_{n+1} = \cdots = \gamma_{\la_{\nu}} = 1$, we obtain 
\begin{align*}
&\sm_{\la \geq n+1}\gamma_{\la}2^{\la} + 2^{\la_{\nu}} +\cdots + 2^{\la_1} + \sm_{\la = 0}^n2^{\la} =\sm_{\la \geq \la_{\nu}+1}\gamma_{\la}2^{\la}+ \sm_{\la = 0}^{\la_{\nu}}2^{\la} + 2^{\la_{\nu}} +\cdots + 2^{\la_1}  \\
&=(\sm_{\la \geq \la_{\nu}+1}\gamma_{\la}2^{\la}+ 2^{\la_{\nu}+1}) + 2^{\la_{\nu}} +\cdots + 2^{\la_2} + \sm_{\la = 0}^{\la_1-1}2^{\la}.    
\end{align*}
Consequently, $a_k(x,y_t) = \binom{\sm_{\la \geq \la_{\nu}+1}\gamma_{\la}2^{\la}+ 2^{\la_{\nu}+1} + 2^{\la_{\nu}} +\cdots + 2^{\la_2} + \sm_{\la = 0}^{\la_1-1}2^{\la} + x}{x}$. Furthermore, from Lemma \ref{2-base of y_1,...,y_t} and case assumption, we get that
\begin{align*}
x &\leq y_t = 2^{\la_{\nu}} +\cdots + 2^{\la_1} + i_0  = (i_0 + 2^{\la_1} +\cdots + 2^{\la_{s_0}}) + 2^{\la_{s_0+1}} +\cdots + 2^{\la_{\nu}} \\
&< 2^{n+1} + 2^{\la_{s_0+1}} +\cdots + 2^{\la_{\nu-1}} + 2^{\la_{\nu}} \leq  2^{\la_{\nu-1}+1} + 2^{\la_{\nu}} \leq 2^{\la_{\nu}} +2^{\la_{\nu}} =  2^{\la_{\nu}+1}.    
\end{align*}
Hence, $x$ has a base 2 representation of the form $x= \sm_{\la =0}^mx_{\la}2^{\la}$, where $m \leq \la_{\nu}$, combine this with  Theorem \ref{Lucas} and Corollary \ref{LucasCor1} (3), we may  deduce the required formula. \ 

 \textbf{Case 3}.  $n< \la_1 <\cdots<  \la_{\nu}$ has similar treatment with Case 2. \ 

Now assume that  $i_0 = 0$ and set $n = 0$. Then $k$ is an odd integer, as it is a 2-complement of $i_0$. In case $\nu = 1$ and $\la_1 = 0$, we get $y_t = 1, \ x \in \{0,1\}$ and  $a_k(1,1) =  a_k(0,1) = 1$, which implies the required formula. In different case there exists $s_0 \in \{1 \dts \nu\}$ such that $n < \la_{s_0}$ and the proof of the required formula is similar to Case 2.
\end{proof}

\subsubsection{Rows $R_1 \dts R_{y_0 + 1}$, contains even number of units.}\label{Subsection R1-Ry0+1}

The following elementary lemma (for proof see appendix) constitutes the heart of the proof that all of the rows of the matrix (\ref{finalmatrix}) contains even number of units.

\begin{lemma}\label{Elementary1}
Let $x\in \mathbb{N}$ and consider a sequence $2^{a_1} <\cdots<  2^{a_k}$, such that some of the terms of this sequence serve as  2-digits of $x$. Then, there exists a unique sequence $b_1 <\cdots<  b_s$, such that $b_i \in \{a_1 \dts a_k\}$ and $x + 2^{b_s} +\cdots + 2^{b_1}$ does not have any 2-digit inside the set $\{2^{a_1} \dts 2^{a_k}\}$.
\end{lemma}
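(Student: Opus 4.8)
The plan is to prove both existence and uniqueness by a greedy carry‑propagation argument, processing the exponents $a_1 < \cdots < a_k$ from smallest to largest. Throughout, the only facts used are the elementary rules of binary addition: adding $2^{a}$ to an integer leaves every binary digit in a position $< a$ unchanged, and turns a digit $1$ in position $a$ into $0$ while sending a carry into positions $> a$.

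For existence, I would construct the sequence stepwise. Put $y_0 = x$, and for $i = 1,\dots,k$ set $y_i = y_{i-1} + 2^{a_i}$ whenever the digit of $y_{i-1}$ in position $a_i$ equals $1$, and $y_i = y_{i-1}$ otherwise; let $\{b_1 < \cdots < b_s\}$ consist of exactly those $a_i$ at which the addition was performed. The key claim, proved by an immediate induction on $i$ using the rules above, is that in $y_i$ every one of the positions $a_1,\dots,a_i$ carries the digit $0$: positions below $a_i$ are untouched by the step that treats $a_i$, so the already‑cleared positions $a_1,\dots,a_{i-1}$ stay cleared, while position $a_i$ is cleared by the step (or was already $0$). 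Taking $i = k$ gives that $y_k = x + 2^{b_s} + \cdots + 2^{b_1}$ has no $2$‑digit in $\{2^{a_1},\dots,2^{a_k}\}$. Finally, if $a_{i_0}$ is the least position in $\{a_1,\dots,a_k\}$ which is a $2$‑digit of $x$ (which exists by hypothesis), then steps $1,\dots,i_0-1$ add nothing, so $y_{i_0-1}=x$ still has digit $1$ in position $a_{i_0}$ and step $i_0$ performs an addition; hence $b_1 = a_{i_0}$ and $s \geq 1$.

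For uniqueness I would argue by induction on $k$ that any subset $B \subseteq \{a_1,\dots,a_k\}$ for which $x + \sum_{b\in B} 2^{b}$ has digit $0$ in each of the positions $a_1,\dots,a_k$ is uniquely determined (the hypothesis that some $2^{a_i}$ is a $2$‑digit of $x$ is irrelevant for this part). Since every element of $B$ is $\geq a_1$, the powers $2^b$ with $b > a_1$ do not affect the digit in position $a_1$, so the $a_1$‑st digit of $x + \sum_{b\in B}2^{b}$ equals the $a_1$‑st digit of $x + [a_1\in B]\,2^{a_1}$, namely the $a_1$‑st digit of $x$ plus $[a_1\in B]$ reduced modulo $2$. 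Requiring this to vanish forces $a_1 \in B$ precisely when the $a_1$‑st digit of $x$ is $1$, so the membership of $a_1$ is the same for any two admissible subsets. Setting $x' = x + [a_1\in B]\,2^{a_1}$, the set $B \setminus \{a_1\}$ is then an admissible subset of $\{a_2,\dots,a_k\}$ relative to $x'$ (adding powers $\geq a_2$ cannot disturb position $a_1$), hence unique by the inductive hypothesis, and therefore $B$ is unique. In particular the greedy output of the existence step is the unique admissible sequence.

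The only genuinely delicate point is the carry bookkeeping: one must be sure that clearing position $a_i$ never re‑introduces a digit in an already‑treated position $< a_i$, which is exactly the "digits below $a_i$ are unchanged" observation, and that carries spilling above $a_k$ do no harm because they land outside $\{2^{a_1},\dots,2^{a_k}\}$. Both are immediate, so I do not expect a real obstacle; the work is just in stating the two inductions cleanly.
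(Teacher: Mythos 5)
Your proof is correct and follows essentially the same route as the paper's: both arguments clear the binary digits at $a_1 < \cdots < a_k$ from the bottom up, relying on the fact that adding $2^{a}$ never disturbs digits in positions below $a$, and both prove uniqueness by forcing the least element of any admissible set. The difference is only organizational — you separate a greedy existence construction from a slightly more general uniqueness induction on $k$, while the paper runs a single induction on $k$ starting from the least $a_{s_0}$ that is a $2$-digit of $x$ with a case split — but the underlying mechanism is identical.
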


\begin{corollary}
The rows $R_1 \dts R_{y_0 +1}$ of   matrix (\ref{finalmatrix}), contains even number of units.
\end{corollary}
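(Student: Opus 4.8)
The plan is to read off the rows $R_1,\dots,R_{y_0+1}$ of matrix (\ref{finalmatrix}) explicitly via the formula of Lemma \ref{formula}, and to count units using Lemma \ref{Elementary1}. Recall from Lemma \ref{2-base of y_1,...,y_t} that $y_t = 2^{\la_\nu}+\cdots+2^{\la_1}+i_0$ and that the columns of (\ref{finalmatrix}) are indexed by $c_{r_{y_0}},\dots,c_{r_{y_t}}$ with $y_0=i_0<\cdots<y_t=i_s$, where $\{y_0,\dots,y_t\}$ consists of those $y\in\{i_0,\dots,i_s\}$ whose base $2$ expansion is contained in that of $i_s$. The entry of (\ref{finalmatrix}) in the row indexed by the exponent $x$ (with $1\le x\le y_0+1$, i.e.\ the first $y_0+1$ rows) and the column $c_{r_{y_j}}$ is $a_k(x-(y_0-y_j),y_t)=a_k(x-y_0+y_j,y_t)$, provided the shift $x-y_0+y_j\ge 1$; otherwise the entry lies above the staircase and is $0$. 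So the nonzero pattern in row $R_x$ is governed entirely by Lemma \ref{formula}: $a_k(z,y_t)=1$ iff $z$ has no $2$-digit in $S:=\{2^{\la_2},\dots,2^{\la_\nu}\}\cup\{2^c:c\le\la_1-1\}$, and $=0$ otherwise.

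First I would fix a row $R_x$, $1\le x\le y_0+1$, and list the column shifts $z_j=x-y_0+y_j$ for which $z_j\ge 1$, i.e.\ for $j$ large enough. Since $y_j-y_0$ runs over exactly the subsets of $\{2^{\la_1},\dots,2^{\la_\nu}\}$ (by Lemma \ref{2-base of y_1,...,y_t}(2)), the set of arguments $\{z_j\}$ that can appear is $\{(x-y_0)+\sigma:\sigma\subseteq\{2^{\la_1},\dots,2^{\la_\nu}\},\ (x-y_0)+\sigma\ge 1\}$. I want to count how many of these have $a_k(\cdot,y_t)=1$, i.e.\ how many avoid all $2$-digits in $S$. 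Here I would split off the contribution of $2^{\la_1}$ from the rest: writing $\sigma=\varepsilon_1 2^{\la_1}+\sigma'$ with $\sigma'\subseteq\{2^{\la_2},\dots,2^{\la_\nu}\}$, note that whether $(x-y_0)+\varepsilon_1 2^{\la_1}+\sigma'$ has a $2$-digit among $\{2^{\la_2},\dots,2^{\la_\nu}\}$ depends (for $x$ small, $x\le y_0+1$, so $x-y_0\le 1$) only on $\sigma'$ together with possible carries out of the low bits — and this is exactly the situation Lemma \ref{Elementary1} is designed for. Applying Lemma \ref{Elementary1} to $x':=(x-y_0)+\varepsilon_1 2^{\la_1}$ (or to $x-y_0$ directly) with the sequence $2^{\la_2}<\cdots<2^{\la_\nu}$ produces a \emph{unique} subset $\sigma'$ "clearing" the high digits; pairing each valid $\sigma'$ with its unique clearing-complement shows the number of valid $\sigma'$ is even, and then summing over $\varepsilon_1\in\{0,1\}$ (and discarding the at most one value of $z$ that drops below $1$, which one checks never breaks parity because that boundary case forces the corresponding entry to be a forced $1$ or forced $0$ consistently) keeps the count even.

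The main obstacle I anticipate is bookkeeping the boundary of the staircase: for rows near the top ($x$ close to $1$) some shifts $z_j=x-y_0+y_j$ become $\le 0$ and the corresponding matrix entry is structurally zero rather than given by $a_k$, so the naive pairing via Lemma \ref{Elementary1} could be thrown off by one term. I would handle this by treating the entries with $z_j\le 0$ as genuine zeros from the outset and checking that the pairing of Lemma \ref{Elementary1} restricts cleanly to the surviving index set — concretely, that the unique clearing subset never maps a ``present'' column to an ``absent'' one for $x$ in this range, which follows from $x-y_0\le 1\le 2^{\la_1}$ so that adding any nonempty $\sigma\subseteq\{2^{\la_1},\dots,2^{\la_\nu}\}$ keeps $z_j\ge 1$. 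The remaining degenerate case $i_0=0$, $y_0=0$ (so $x=1$ is the only row in this block and $y_t=2^{\la_\nu}+\cdots+2^{\la_1}$) is checked directly: the valid columns are those $\sigma$ avoiding $S$, and Lemma \ref{Elementary1} again pairs them up, giving an even number of units in $R_1$.
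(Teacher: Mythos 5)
Your overall plan is the paper's plan: write the entries of a row in this block as $a_k(x+\sigma,y_t)$ with $\sigma$ running over the subset sums of $\{2^{\la_1} \dts 2^{\la_{\nu}}\}$, split according to whether $2^{\la_1}$ occurs in $\sigma$, and use the uniqueness statement of Lemma \ref{Elementary1} on each half; the paper concludes each row has exactly $0$ or exactly $2$ units. However, the execution as written has concrete problems. The parameterization is off: with $x:=y_0-(h-1)$ for the $h$-th row, the arguments are $x+\sigma$ (because $y_j=y_0+\sigma_j$), not $(x-y_0)+\sigma$, and $x\geq 0$ for every row $R_1 \dts R_{y_0+1}$. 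Hence no entry of these rows lies ``above the staircase,'' and your entire discussion of shifts $z_j\le 0$, of discarding a column, and of checking that the pairing ``never breaks parity'' addresses a non-existent boundary problem built on that algebra slip (your row range $1\le x\le y_0+1$ is also off by one; it should be $0\le x\le y_0$, including the row whose first entry is the structural $1=a_k(0,y_t)$). Under your shifted, mostly negative arguments the subsequent digit analysis cannot even be carried out.

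Two further gaps: you never handle the low-digit half of the forbidden set in Lemma \ref{formula}. The needed dichotomy is that if $x$ has a $2$-digit $\le 2^{\la_1-1}$ then \emph{every} entry of the row vanishes (adding sums of powers $\ge 2^{\la_1}$ does not alter bits below $\la_1$), and otherwise the low digits never obstruct any entry; the vague appeal to ``carries out of the low bits'' does not supply this reduction, which is what licenses applying Lemma \ref{Elementary1} only to the set $\{2^{\la_2} \dts 2^{\la_{\nu}}\}$. Finally, the parity mechanism is mis-stated: for each fixed $\varepsilon_1\in\{0,1\}$ there is \emph{exactly one} admissible $\sigma'$ (Lemma \ref{Elementary1} when $x+\varepsilon_1 2^{\la_1}$ has a digit in $\{2^{\la_2} \dts 2^{\la_{\nu}}\}$, plus the easy separate check that otherwise only $\sigma'=0$ works, a case the lemma as stated does not cover and which the paper treats by its Cases 1a, 1b, 2), so the row has exactly two units; the claim that ``the number of valid $\sigma'$ is even'' for fixed $\varepsilon_1$ is false and is not what the lemma gives. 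With these repairs your outline coincides with the paper's proof, so the approach is right but the write-up would not go through as stated.
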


\begin{proof}
Consider $h\in \{1 \dts y_0 +1\}$, $R_h = (a_k(y_0-(h-1),y_t) \dts a_k(y_t - (h-1),y_t))$ and set $x:= y_0 -(h-1) \geq 0$. According to Lemma \ref{2-base of y_1,...,y_t} we get that 
\begin{equation*}
R_h = (a_k(x,y_t), a_k(x+2^{\la_{I^1_{k_1}}} +\cdots +2^{\la_{I^1_1}} ,y_t),...,a_k(x+2^{\la_{I^t_{k_t}}} +\cdots +2^{\la_{I^t_1}},y_t)),   
\end{equation*}
where $\{I^1 \dts I^t\}  = \bigcup\limits_{k = 1}^{\nu}\{I \in I(\nu,k):   I$ is strictly increasing\}. 
We may also assume that $x$ does not have a 2-digit $\leq 2^{\la_1-1}$, otherwise from Lemma \ref{formula},  $R_h = (0 \dts 0)$, and the desired result follows immediately. Now there are some cases we need to distinguish.\ 

 \textbf{Case 1}. $x$ has a 2-digit in the set $\{2^{\la_2} \dts 2^{\la_{\nu}}\}$. \ 

  Then from Lemma \ref{Elementary1} there exists a unique sequence $b_1 <\cdots<  b_s$, such that $b_i \in \{\la_2 \dts \la_{\nu}\}$ and $x + 2^{b_s} +\cdots + 2^{b_1}$ does not have a 2-digit inside the set $\{2^{\la_2} \dts 2^{\la_{\nu}}\}$. Particularly, from Lemma \ref{formula}, $a_k(x+ 2^{b_s} +\cdots + 2^{b_1},y_t) = 1$.

\textbf{(Case 1a)}. $x + 2^{\la_1}$ does not have a 2-digit in the set $\{2^{\la_2} \dts 2^{\la_{\nu}}\}$.  \ 

  Lemma \ref{formula} implies that $a_k(x+2^{\la_1},y_t) = 1$. Now notice that $2^{\la_1} \neq 2^{b_s} +\cdots + 2^{b_1}$ and so we have at least two units in the row $R_h$. We claim that we have exactly two units. Indeed, consider a sequence  $2^{c_1} <\cdots<  2^{c_r}$, such that $c_i \in \{\la_1 \dts \la_{\nu}\}$ and $a_k(x+2^{c_r} +\cdots + 2^{c_1},y_t) = 1$. Then, $x + 2^{c_r} +\cdots + 2^{c_1}$ does not have a 2-digit in the set $\{2^{\la_2} \dts 2^{\la_{\nu}}\}$. In case $c_1  \neq  \la_1$, then uniqueness in Lemma \ref{Elementary1} forces $x+2^{c_r} +\cdots + 2^{c_1} = x+2^{b_s} +\cdots + 2^{b_1}$. In case, $c_1 = \la_1$, we claim that $r = 1$, otherwise from (Case 1a) assumption, the elements $2^{c_2} \dts 2^{c_r} \in \{2^{\la_2} \dts 2^{\la_{\nu}}\}$ are 2-digits of the element $x + 2^{c_r} +\cdots + 2^{c_1} = (x + 2^{\la_1}) + 2^{c_r} +\cdots + 2^{c_2}$, which leads to contradiction. \ 

  \textbf{(Case 1b)}. $x + 2^{\la_1}$  have a 2-digit in the set $\{2^{\la_2} \dts 2^{\la_{\nu}}\}$. \ 

 Lemma \ref{Elementary1} implies that there exists a unique sequence $a_1 <\cdots<  a_r$, such that $a_i \in \{\la_2 \dts \la_{\nu}\}$ and $x + 2^{\la_1} + 2^{a_r} +\cdots + 2^{a_1}$ does not have a 2-digit inside the set $\{2^{\la_2} \dts 2^{\la_{\nu}}\}$.  Particularly, from Lemma \ref{formula}, $a_k(x+ 2^{\la_1} + 2^{a_r} +\cdots + 2^{a_1},y_t) = 1$. Now notice that $2^{\la_1} + 2^{a_r} +\cdots + 2^{a_1} \neq 2^{b_s} +\cdots + 2^{b_1}$, so we have at least two units in the row $R_h$. Similarly to the (Case 1a),  we can prove that we have exactly two units.\

 \textbf{Case 2}. $x$ does not have a 2-digit in the set $\{2^{\la_2} \dts 2^{\la_{\nu}}\}$. \  This case is easier with similar treatment as in Case 1.
\end{proof}

\subsubsection{Rows $R_{y_0 +2} \dts R_{y_{t-1}+1}$, contains even number of units.}\label{subsection y0+2-yt-1+1} \ 

 Consider a row  $R\in \{R_{y_0 +2} \dts R_{y_{t-1}+1}\}$. According to (\ref{finalmatrix}),  there exists $i \in \{1 \dts t-2\},  y_i,y_{i+1}, h$, such that $y_i < h \leq y_{i+1}$ and $R = (a_k(y_{i+1}-h,y_t) \dts a_k(y_t-h,y_t))$ (where we excluded the zero's left of the term $a_k(y_{i+1}-h,y_t)$). Furthermore, by Lemma \ref{2-base of y_1,...,y_t} there exist sequences $b^i_1<\cdots<  b^i_{k_i}$, $b^{i+1}_1 <\cdots<  b^{i+1}_{k_{i+1}}$, such that $b^i_j,b^{i+1}_j \in \{ \la_1 \dts \la_{\nu}\}$, and $y_i = i_0 + 2^{b^i_{k_i}} +\cdots + 2^{b^i_1}, \ y_{i+1} = i_0 +2^{b^{i+1}_{k_{i+1}}} +\cdots + 2^{b^{i+1}_1}$. Moreover, $2^{b^i_{k_i}} +\cdots + 2^{b^i_1} < h-i_0 \leq 2^{b^{i+1}_{k_{i+1}}} +\cdots + 2^{b^{i+1}_1}$ and set $x:= h-i_0 > 0$.  Lemma \ref{2-base of y_1,...,y_t} implies that, $w\in R$ if and only if there exists sequence $a_1 <\cdots<  a_k$, such that $a_i \in \{\la_1 \dts \la_{\nu}\}, \ 2^{a_k} +\cdots +2^{a_1} \geq 2^{b^{i+1}_{k_{i+1}}} +\cdots + 2^{b^{i+1}_1}$ and $w = a_k(2^{a_k} +\cdots + 2^{a_1} - x,y_t)$. 

\begin{remark}
Contrary to the rows $R_1 \dts R_{y_0+1}$, where the arbitrary row element took the form $ a_k((i_0 - h) +2^{a_k} +\cdots +2^{a_1} ,0,y_t)$, where $i_0 - h > 0$, in the rows  $R_{y_0+2} \dts R_{y_{t-1} +1}$ we encounter a scenario where   the arbitrary row element is of the form $ a_k(2^{a_k} +\cdots +2^{a_1}-(h-i_0),0,y_t)$, where $h-i_0 >0$. As a result, Lemma \ref{Elementary1} does not apply in the same way as in rows $R_1 \dts R_{y_0+1}$, requiring necessary adjustments.
\end{remark}

  From Lemma \ref{2-base of y_1,...,y_t} we deduce that, for every $j \in \{i+1 \dts t\}$, there exist a sequence $d_1 <\cdots<  d_r$, such that $d_s\in \{\la_1 \dts \la_{\nu}\}$ and $y_t -h = y_j -h + 2^{d_r} +\cdots + 2^{d_1}$. As a consequence, the following are equivalent 
\begin{enumerate}

    \item 
$y_t-h$ has a 2-digit $\leq 2^{\la_1-1}$.

    \item 
$y_j - h$ has a 2-digit $\leq 2^{\la_1-1}$, for all $j \in \{i+1 \dts t\}$.

    \item
There exists $j_0 \in \{i+1 \dts t\}$ such that $y_{j_0} - h$ has a 2-digit $\leq 2^{\la_1-1}$.
\end{enumerate}

  From this point forward, we may assume that $y_t-h$ and  thus  $y_{i+1}-h \dts y_t-h$ do  not contain a 2-digit $\leq 2^{\la_1-1}$. This assumption arises because, in the contrary case,  Lemma \ref{formula} implies $R =  (0 \dts 0)$, which have zero-even number of units. \

  Summarizing the preceding discussion combined with Lemma \ref{formula}, we arrive at the following corollary

\begin{corollary}\label{B_R}
The number of units in row R is equal to following cardinality 
{\small\begin{align*}
|&B_R:= \{2^{b_s} +\cdots + 2^{b_1} : s\geq 1, \ b_1 <\cdots<  b_s, \ b_i \in \{\la_1 \dts \la_{\nu}\}, \ 2^{b_s} +\cdots + 2^{b_1} \geq \\
&2^{b^{i+1}_{k_{i+1}}} +\cdots + 2^{b^{i+1}_1} \textit{and} \ 2^{b_s} +\cdots + 2^{b_1} - x \  \textit{does not have  2-digit}\in \{2^{\la_2} \dts 2^{\la_{\nu}}\} \}|. \    
\end{align*}}
\end{corollary}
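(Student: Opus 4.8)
The plan is to translate the (already established) combinatorial description of the row $R$ into the counting of the set $B_R$, using only Lemma \ref{2-base of y_1,...,y_t} and Lemma \ref{formula}. Recall that a row $R \in \{R_{y_0+2} \dts R_{y_{t-1}+1}\}$ has entries indexed by the columns of \eqref{finalmatrix} corresponding to the variables $c_{r_{y_{i+1}}} \dts c_{r_{y_t}}$, and that by the discussion preceding the corollary, $w$ is an entry of $R$ precisely when there is a strictly increasing sequence $a_1 <\cdots< a_k$ with $a_j \in \{\la_1 \dts \la_\nu\}$, $2^{a_k}+\cdots+2^{a_1} \geq 2^{b^{i+1}_{k_{i+1}}}+\cdots+2^{b^{i+1}_1}$, and $w = a_k(2^{a_k}+\cdots+2^{a_1}-x, y_t)$, where $x = h - i_0 > 0$. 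So the first step is to observe that the assignment $(a_1 \dts a_k) \mapsto 2^{a_k}+\cdots+2^{a_1}$ is injective on such sequences (distinct subsets of $\{\la_1 \dts \la_\nu\}$ give distinct sums, since these are distinct powers of $2$), hence the entries of $R$ are in bijection with the set $\{2^{a_k}+\cdots+2^{a_1} : a_j \in \{\la_1\dts\la_\nu\},\ 2^{a_k}+\cdots+2^{a_1} \geq 2^{b^{i+1}_{k_{i+1}}}+\cdots+2^{b^{i+1}_1}\}$, and each column contributes the single value $a_k(\,\cdot\, - x, y_t)$.

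The second step is to count, among these entries, how many equal $1$. By Lemma \ref{formula}, $a_k(m, y_t) = 1$ if and only if $m$ has no $2$-digit in $\{2^{\la_2}\dts 2^{\la_\nu}\}\cup\{2^c : c \leq \la_1 - 1\}$. Applying this with $m = 2^{a_k}+\cdots+2^{a_1} - x$: since we have already reduced (by the paragraph preceding the corollary) to the case where $y_t - h$, and hence each $y_j - h$ for $j \in \{i+1 \dts t\}$, has no $2$-digit $\leq 2^{\la_1-1}$, every admissible difference $2^{a_k}+\cdots+2^{a_1} - x$ automatically has no $2$-digit below $2^{\la_1}$ (it is a difference of two numbers both $\equiv$ each other modulo $2^{\la_1}$, or more precisely it equals $(y_j - h) + (\text{a sum of powers } \geq 2^{\la_1})$ for the appropriate $j$; this is exactly the three-way equivalence listed before the corollary). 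Hence the only remaining condition for $a_k(2^{a_k}+\cdots+2^{a_1}-x, y_t) = 1$ is that $2^{a_k}+\cdots+2^{a_1} - x$ has no $2$-digit in $\{2^{\la_2}\dts 2^{\la_\nu}\}$. Therefore the number of units in $R$ equals the number of sums $2^{a_k}+\cdots+2^{a_1}$ with $a_j \in \{\la_1\dts\la_\nu\}$, $2^{a_k}+\cdots+2^{a_1} \geq 2^{b^{i+1}_{k_{i+1}}}+\cdots+2^{b^{i+1}_1}$, and $2^{a_k}+\cdots+2^{a_1} - x$ having no $2$-digit in $\{2^{\la_2}\dts 2^{\la_\nu}\}$ — which is precisely $|B_R|$ as defined. (One should note that in $B_R$ the condition $s \geq 1$ excludes the empty sum; this matches the fact that the zero entries to the left of $a_k(y_{i+1}-h, y_t)$ in \eqref{finalmatrix} were already excluded, and $2^{b^{i+1}_{k_{i+1}}}+\cdots+2^{b^{i+1}_1} > 0$ forces every admissible sum to be nonzero anyway.)

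The argument is essentially a bookkeeping identification, so there is no deep obstacle; the one point requiring care is making sure the reduction step is applied correctly — namely that the hypothesis "$y_t - h$ has no $2$-digit $\leq 2^{\la_1 - 1}$" really does license dropping the $\{2^c : c \leq \la_1-1\}$ part of the Lemma \ref{formula} condition for \emph{every} entry of $R$, not just the last one. This is handled by the three-way equivalence stated just before the corollary, which propagates the "no low $2$-digit" property from $y_t - h$ to all $y_j - h$, $j \in \{i+1\dts t\}$, and then to every admissible $2^{a_k}+\cdots+2^{a_1}-x$ lying between them. Once that is in place, the corollary follows immediately by combining the bijection of Step 1 with the characterization of units from Step 2.
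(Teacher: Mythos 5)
Your proposal is correct and follows essentially the same route as the paper, which obtains the corollary precisely by summarizing the discussion preceding it (the parametrization of the entries of $R$ by subset sums via Lemma \ref{2-base of y_1,...,y_t}, the standing reduction that $y_{i+1}-h \dts y_t-h$ have no $2$-digit $\leq 2^{\la_1-1}$, and the criterion of Lemma \ref{formula}). Your bookkeeping of the bijection and of why the low-digit condition may be dropped matches the paper's intended argument.
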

    
 To proceed now with the study of cardinality $|B_R|$, it is necessary to distinguish between the cases where  $2^{b^i_{k_i}} +\cdots + 2^{b^i_1} < x <2^{b^{i+1}_{k_{i+1}}} +\cdots + 2^{b^{i+1}_1}$ or $x = 2^{b^{i+1}_{k_{i+1}}} +\cdots + 2^{b^{i+1}_1}$. Starting with the first case \ 

\begin{enumerate}
    \item 
$x = 2^{c_k} +\cdots + 2^{c_1}$, where $c_1 <\cdots<  c_k \leq b^{i+1}_{k_{i+1}}$.
    \item 
Consider $M \in \{1 \dts k\}$, such that $c_M \notin \{\la_1 \dts \la_{\nu}\}$ and $M$ = maximum with this property.
\end{enumerate}
Notice that there exists $j \in \{1 \dts k\}$ such that $c_{j}\notin \{\la_1 \dts \la_{\nu}\}$; otherwise, we could find $a \in \{1 \dts t\}$ such that $y_i < y_a = x < y_{i+1}$, leading  to a contradiction. \ 
For subsequent use, consider the following element
\begin{equation}
y_M= \begin{cases}
2^{c_1}, \ &  M = 1 \\
\sm_{\substack{c_1 \leq \la \leq c_M \\ \la \neq c_2 \dts c_M}}2^{\la} = \widehat{2^{c_M}}+\cdots +\widehat{2^{c_2}} +\cdots + 2^{c_1}, \ & M \geq 2 
\end{cases}.   
\end{equation}

\begin{lemma}\label{2-digit,epsilon,delta}
For $j\in \{1,2\}$ set $\Lambda_j = \{2^{\la_i}: i \in\{j \dts \nu\}, \  \la_i <c_M\}$. Then the following holds

\begin{enumerate}

    \item 
Suppose that $y_M$ has a 2-digit in $\Lambda_2$, then there exists a unique sequence $\delta_1 <\cdots<  \delta_m$, such that $\delta_i \in \Lambda_2$ and $y_M + 2^{\delta_m} +\cdots + 2^{\delta_1}$ do not have a 2-digit in $\Lambda_2$. The same result applies for $\Lambda_1$ instead, and in this instance, denote the sequence as $\epsilon_1 <\cdots<  \epsilon_n$.

    \item 
$\max{\{y_M + 2^{\delta_m} +\cdots + 2^{\delta_1}, y_M + 2^{\epsilon_n} +\cdots + 2^{\epsilon_1}\}} < 2^{C_M + 1}$.

    \item
Suppose that $y_M$ has  $2^{\la_1}$ as a 2-digit, and another 2-digit in $\Lambda_2$. Then $2^{\delta_m} +\cdots + 2^{\delta_1} \neq 2^{\epsilon_n} +\cdots + 2^{\epsilon_1}.$ 
\end{enumerate}
\end{lemma}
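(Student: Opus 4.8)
Looking at Lemma \ref{2-digit,epsilon,delta}, I need to prove three statements about the element $y_M$ and sequences of $2$-digits from the sets $\Lambda_1 \supseteq \Lambda_2$.

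\textbf{Proof proposal.}

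The plan is to reduce all three parts to Lemma \ref{Elementary1}, which provides, for any integer $x$ having some $2$-digit inside a prescribed set of powers $\{2^{a_1},\dots,2^{a_k}\}$, a \emph{unique} minimal-type sequence $b_1<\cdots<b_s$ drawn from $\{a_1,\dots,a_k\}$ that "clears" those digits from $x$ by carrying. For part (1), I would apply Lemma \ref{Elementary1} directly, once with the power set $\Lambda_2$ and once with $\Lambda_1$, to the integer $y_M$; the hypotheses of part (1) (that $y_M$ has a $2$-digit in $\Lambda_2$, resp.\ in $\Lambda_1$) are exactly what is needed to invoke the lemma, and the output sequences $\delta_1<\cdots<\delta_m$ and $\epsilon_1<\cdots<\epsilon_n$ are then the asserted sequences, with uniqueness inherited from Lemma \ref{Elementary1}. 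The only subtlety is to note $\Lambda_2 \subseteq \Lambda_1$, so that the $\epsilon$-sequence also lies in a superset of what the $\delta$-sequence uses; this will matter in part (3).

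For part (2), I would track the ``carry'' process explicitly: adding $2^{\delta_m}+\cdots+2^{\delta_1}$ to $y_M$ only produces carries that propagate upward among the $2$-digits indexed by $\Lambda_2$, all of which are powers $2^{\la_i}$ with $\la_i < c_M$; the highest bit that can be set after the carries is therefore at most $2^{c_M}$, with a possible final carry into position $c_M+1$ at worst — here I should recall the definition of $y_M$ (a sum of powers $2^\la$ with $c_1 \le \la \le c_M$, minus the $c_2,\dots,c_M$ terms when $M\ge 2$, or just $2^{c_1}$ when $M=1$), which already bounds $y_M < 2^{c_M+1}$, and carrying within powers $< c_M$ cannot push the sum to $2^{c_M+1}$ or beyond unless a carry reaches position $c_M$, which still leaves the total below $2^{c_M+1}$. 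The same bound holds verbatim for the $\epsilon$-sequence since $\Lambda_1$ also consists only of powers $2^{\la_i}$ with $\la_i < c_M$. So $\max\{y_M + 2^{\delta_m}+\cdots+2^{\delta_1},\, y_M + 2^{\epsilon_n}+\cdots+2^{\epsilon_1}\} < 2^{c_M+1}$, which is the claim (reading $C_M = c_M$).

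For part (3), the hypothesis is that $y_M$ has both $2^{\la_1}$ and some further $2$-digit in $\Lambda_2$ as bits. I would argue by contradiction: suppose $2^{\delta_m}+\cdots+2^{\delta_1} = 2^{\epsilon_n}+\cdots+2^{\epsilon_1}$. Since $\delta_i \in \Lambda_2$ and $\Lambda_1 = \Lambda_2 \cup \{2^{\la_1}\}$ (as $2^{\la_1} \in \Lambda_1 \setminus \Lambda_2$ — this uses $\la_1 < c_M$, which holds because $y_M$ has $2^{\la_1}$ as a digit and all digits of $y_M$ are $< 2^{c_M+1}$, indeed $\le 2^{c_M}$, but one must check $\la_1 \ne c_M$, which follows since $c_M \notin \{\la_1,\dots,\la_\nu\}$ by choice of $M$), the equality would force the $\epsilon$-sequence to avoid $\la_1$ and hence also be a valid ``clearing'' sequence for $y_M$ relative to $\Lambda_2$; by uniqueness in Lemma \ref{Elementary1} applied to $\Lambda_2$, it would then be \emph{the} $\delta$-sequence. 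But then $y_M + 2^{\epsilon_n}+\cdots+2^{\epsilon_1} = y_M + 2^{\delta_m}+\cdots+2^{\delta_1}$ must have no $2$-digit in $\Lambda_2$, in particular the bit at position $\la_1$ — still set in $y_M$ and untouched by a sum avoiding $\la_1$ and lower carries (here I use that $\la_1$ is the \emph{smallest} exponent appearing, so no carry from below can reach it) — remains $1$, so $2^{\la_1} \notin \Lambda_2$ must be used... contradiction with $2^{\la_1}$ being a digit of the cleared element; more carefully, clearing relative to $\Lambda_1$ \emph{must} involve $\la_1$ since it is a digit of $y_M$ lying in $\Lambda_1$, forcing $\epsilon_1 = \la_1$ and hence $2^{\epsilon_n}+\cdots+2^{\epsilon_1} \ne 2^{\delta_m}+\cdots+2^{\delta_1}$ because the right side has no $2^{\la_1}$ term and no carry can manufacture or cancel it at the bottom.

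\textbf{Main obstacle.} The delicate point is part (3): one must carefully justify that any $\Lambda_1$-clearing sequence for $y_M$ is forced to use the exponent $\la_1$ (because $2^{\la_1}$ is a genuine digit of $y_M$ at the bottom of all relevant positions, so it cannot be removed by carrying from below nor ignored), whereas no $\Lambda_2$-clearing sequence does; this structural asymmetry, together with the uniqueness in Lemma \ref{Elementary1}, is what prevents the two sums from coinciding. Getting the bookkeeping of carries right — especially the claim that no carry from positions below $\la_1$ interferes — is the part that needs the most care, and it relies essentially on $\la_1 = \min\{\la_1,\dots,\la_\nu\}$ and on $y_M$ having no $2$-digit $< 2^{\la_1 - 1}$ in the regime where $R$ is not the zero row (an assumption imported from the surrounding discussion).
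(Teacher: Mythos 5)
Your proposal is correct and takes essentially the same route as the paper's (very terse) proof: part (1) is exactly the application of Lemma \ref{Elementary1} to $y_M$ with the sets $\Lambda_2$ and $\Lambda_1$, part (2) is the same size estimate the paper draws from Remark \ref{Remark-inequality}(1) (all exponents involved are $<c_M$, and $y_M<2^{c_M}$ in the relevant case $M\geq 2$, so the totals stay below $2^{c_M+1}$), and part (3) rests on the very observation the paper records, namely that $\epsilon_1=\la_1$ while no $\delta_i$ equals $\la_1$, so the two sums of distinct powers of $2$ cannot coincide.
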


\begin{proof}
(1) is a consequence of Lemma \ref{Elementary1}, (2) stems from Remark \ref{Remark-inequality} (1), and (3) becomes evident upon observing that $\epsilon_1 = \la_1$. 
\end{proof}

\begin{lemma}\label{x<b(i+1)}
If  $2^{b^i_{k_i}} +\cdots + 2^{b^i_1} < x < 2^{b^{i+1}_{k_{i+1}}} +\cdots + 2^{b^{i+1}_1}$, then $R$ has an even number of units.
\end{lemma}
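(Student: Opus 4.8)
\textbf{Proof plan for Lemma \ref{x<b(i+1)}.}

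The plan is to compute $|B_R|$ explicitly by partitioning the set $B_R$ of Corollary \ref{B_R} according to which $2$-digits below $2^{c_M+1}$ the candidate sum $2^{b_s}+\cdots+2^{b_1}$ carries, and to show that the resulting partition pairs up elements two at a time. Recall from the setup that $x = 2^{c_k}+\cdots+2^{c_1}$ with $c_1 < \cdots < c_k \leq b^{i+1}_{k_{i+1}}$, that $M$ is maximal with $c_M \notin \{\la_1,\dots,\la_\nu\}$, and that $y_M$ is the ``complementary'' element defined just before Lemma \ref{2-digit,epsilon,delta}. The key structural fact I would isolate first is that subtracting $x$ and checking the $2$-digit condition ``no $2$-digit in $\{2^{\la_2},\dots,2^{\la_\nu}\}$'' only interacts with the low part of $2^{b_s}+\cdots+2^{b_1}$ — the digits at positions $> c_M$ are forced (they must match those of $x$ above $c_M$, since $c_M$ is the top ``bad'' digit of $x$ and every $b_j \geq c_M$ that matters lies in $\{\la_1,\dots,\la_\nu\}$), so the whole count reduces to counting admissible low parts, which is exactly where $y_M$ enters: an admissible element of $B_R$ corresponds to a sum $z$ of powers from $\{\la_1,\dots,\la_\nu\}$ with $z \geq$ (the fixed low threshold coming from $b^{i+1}$) and $y_M + z$ having no $2$-digit in $\Lambda_2 = \{2^{\la_i} : i \geq 2,\ \la_i < c_M\}$.

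Next I would invoke Lemma \ref{2-digit,epsilon,delta}. If $y_M$ has no $2$-digit in $\Lambda_2$, then $z = 0$ already works, and I would pair each admissible $z$ with $z \oplus 2^{\la_1}$ (toggling the $\la_1$-bit): by Lemma \ref{formula} this toggle preserves admissibility because $2^{\la_1} \notin \Lambda_2$, and it is an involution without fixed points, giving an even count — this is the ``Case 1a'' mechanism from Subsection \ref{Subsection R1-Ry0+1} transplanted to the present rows. If instead $y_M$ has a $2$-digit in $\Lambda_2$, I would use the unique sequences $\delta_1 < \cdots < \delta_m$ (for $\Lambda_2$) and $\epsilon_1 < \cdots < \epsilon_n$ (for $\Lambda_1$) supplied by Lemma \ref{2-digit,epsilon,delta}(1): the two ``minimal corrections'' $2^{\delta_m}+\cdots+2^{\delta_1}$ and $2^{\epsilon_n}+\cdots+2^{\epsilon_1}$ are the two base points, they are distinct when $2^{\la_1}$ is a $2$-digit of $y_M$ (Lemma \ref{2-digit,epsilon,delta}(3)) and they differ exactly in the $\la_1$-slot otherwise, so again toggling $2^{\la_1}$ defines a fixed-point-free involution on $B_R$. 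Lemma \ref{2-digit,epsilon,delta}(2) guarantees all these corrections stay below $2^{c_M+1}$, so they do not disturb the forced high digits and the threshold inequality $2^{b_s}+\cdots+2^{b_1} \geq 2^{b^{i+1}_{k_{i+1}}}+\cdots+2^{b^{i+1}_1}$ is automatically respected (the high part alone already exceeds it, since $x < y_{i+1}$ forces the top digit of the constructed element to sit at $c_M$ or above in the right place — this is the point where I would lean on $2^{b^i_{k_i}}+\cdots+2^{b^i_1} < x$ to rule out degenerate small $z$).

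The main obstacle I anticipate is the bookkeeping that ensures the involution ``toggle $2^{\la_1}$'' genuinely maps $B_R$ to itself and has no fixed point — concretely, one must check that $z$ and $z \oplus 2^{\la_1}$ both satisfy $z \geq 2^{b^{i+1}_{k_{i+1}}}+\cdots+2^{b^{i+1}_1}$ and that neither $y_M + z$ nor $y_M + (z \oplus 2^{\la_1})$ acquires a forbidden $2$-digit in $\{2^{\la_2},\dots,2^{\la_\nu}\}$, including the carries that a $\pm 2^{\la_1}$ can propagate upward past $\la_1$. Handling these carries is precisely what the uniqueness clauses in Lemma \ref{Elementary1} and Lemma \ref{2-digit,epsilon,delta}(1) are for: a carry out of position $\la_1$ lands on a digit which, if it were in $\Lambda_2$, would have been absorbed into $\delta$ or $\epsilon$ already, and if it is not in $\{\la_1,\dots,\la_\nu\}$ at all it is harmless by Lemma \ref{formula}. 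Once these compatibility checks are in place, $|B_R|$ is a disjoint union of $2$-element orbits, hence even, and the lemma follows.
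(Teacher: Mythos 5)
Your overall strategy (reduce to counting $B_R$, force the high digits, pair up the admissible low parts) points in the same direction as the paper, but the pairing you propose is not correct: toggling the $2^{\la_1}$-bit is in general \emph{not} an involution of $B_R$. The admissibility test involves the subtraction $(2^{b_s}+\cdots+2^{b_1})-x$, and since $x$ itself may carry the digit $2^{\la_1}$ (the case $c_1=\la_1$), adding or removing $2^{\la_1}$ from the candidate sum changes the borrow pattern and can create or destroy forbidden digits at positions in $\{\la_2 \dts \la_{\nu}\}$; Lemma \ref{formula} controls digits of the \emph{difference}, and the difference does not transform by a simple bit-toggle. Concretely, take $\la_1=0$, $\la_2=1$, $\la_3=4$, $x=2^2+2^0=5$ (so $c_1=\la_1$, $c_M=c_2=2$, $M=2$), with $y_i-i_0=3$ and $y_{i+1}-i_0=16$. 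Then $y_M=2^1+2^0$, the $\delta$-correction is $2^{\la_2}=2$ and the $\epsilon$-correction is $2^{\la_1}=1$, and one checks directly that $B_R=\{17,18\}$: indeed $17-5=12$ and $18-5=13$ have no digit in $\{2,16\}$, while $16-5=11$ and $19-5=14$ do. The correct pairing here is $17\leftrightarrow 18$, i.e.\ swapping $2^{\la_1}$ for $2^{\la_2}$, whereas your toggle sends $17\mapsto 16\notin B_R$ and $18\mapsto 19\notin B_R$. This also shows that your assertion that the $\delta$- and $\epsilon$-corrections ``differ exactly in the $\la_1$-slot'' fails: carries out of position $\la_1$ make them differ in higher slots as well. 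The paper's pairing is instead $\{z_{\la_d,M}+2^{\la_d}+2^{\delta_m}+\cdots+2^{\delta_1},\; z_{\la_d,M}+2^{\la_d}+2^{\epsilon_n}+\cdots+2^{\epsilon_1}\}$ over the pivot set $D$, and distinctness of the two members comes from Lemma \ref{2-digit,epsilon,delta}(3), not from a bit-toggle.

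Beyond the wrong involution, the substantive content of the paper's proof is asserted rather than established in your plan: that the digits above $c_M$ are forced (the cancellation $S_1=S_2$ and $b_{s_0}\notin\{c_1 \dts c_k\}$, the paper's property (2)) requires a genuine argument via Lemma \ref{2-digits-1}, as does the claim that the threshold $2^{b_s}+\cdots+2^{b_1}\geq 2^{b^{i+1}_{k_{i+1}}}+\cdots+2^{b^{i+1}_1}$ is automatic in the reverse inclusion (the paper proves $x<S$ and then uses the structure of the $y_j$ from Lemma \ref{2-base of y_1,...,y_t}). You also do not treat the degenerate branches of the case tree ($c_1<\la_1$, where $B_R=\emptyset$ by Lemma \ref{2-digits-1}(3) and the standing convention; the $M=1$ cases; and the variants where $y_M$ has no digit in $\Lambda_2$, where the admissible low parts are $\{0,2^{\epsilon_n}+\cdots+2^{\epsilon_1}\}$ or $\{0,2^{\la_1}\}$ rather than a toggle orbit). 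To repair the proposal you would need to replace the toggle by the $\delta/\epsilon$-pairing attached to each pivot $\la_d\in D$ and then carry out the classification of $B_R$ essentially as the paper does.
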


\begin{proof}
For technical reasons, there are twelve cases we need to distinguish. Some of these cases lead to contradictions, while the rest are treated similarly. Below, we  display the case tree\ 

($M>1$ or $M =1$) $\longrightarrow$ ($c_1=\la_1$ or $c_1 > \la_1$ or $c_1 < \la_1$) $\longrightarrow$
($y_M$ have or doesn't have 2-digit in $\Lambda_2$). \\

 \fbox{ \textbf{Case.} $M>1, \ c_1 = \la_1$ and $y_M$ have a 2-digit in $\Lambda_2$}

We may assume $B_R\neq \emptyset$, otherwise it is clear. Pick an element $2^{b_s} +\cdots + 2^{b_1} \in B_R$ and  consider $s_0 \in \{1 \dts s\}$ such that $b_{s_0} > c_M$ and $s_0$ is the minimum with such a property. Note that such $s_0$ exists; otherwise $b_i < c_M$ for all $i= 1 \dts s$ (the strict inequality comes from the fact that $c_M \notin \{\la_1 \dts \la_{\nu}\}$) and thus $2^{b_s} +\cdots + 2^{b_1} < 2^{c_M} \leq x < 2^{b^{i+1}_{k_{i+1}}} +\cdots + 2^{b^{i+1}_1}$, leading to contradiction.  Now we may compute
\begin{align*}
&2^{b_s} +\cdots + 2^{b_1} - x = \\
&=2^{b_s} +\cdots + 2^{b_{s_0+1}} + (2^{b_{s_0}} - 2^{c_1}) - (2^{c_k} +\cdots + 2^{c_2}) + (2^{b_{s_0-1}}+\cdots + 2^{b_1})   \\
&=\sm_{\la = s_0 +1}^s2^{b_{\la}} - \sm_{\substack{M+1\leq i \leq k:\\ c_i\geq b_{s_0}}}2^{c_i} \hspace{0.4cm}  +  \hspace{0.4cm} \sm_{\mathclap{\substack{c_M < \la \leq b_{s_0}-1: \\ \la \notin \{c_i: M+1 \leq i \leq k, \  c_i < b_{s_0}\}}}}2^{\la} \hspace{0.3cm} + \hspace{0.3cm} y_M +  \sm_{\la = 1}^{s_0-1}2^{b_{\la}}.
\end{align*}
Note that if the conditions for an index within a sum are empty, then we mean that the corresponding sum is zero. Let us denote the third sum by $\Tilde{y}_M$; $\Tilde{y}_M+y_M = 2^{b_{s_0}-1} +\cdots + \sm_{M+1 \leq i \leq k}\widehat{2^{c_i}}+\cdots + \widehat{2^{c_M}}+\cdots +\widehat{2^{c_2}} +\cdots + 2^{c_1}$. Below we present some further observations

\begin{enumerate}

    \item 
$\Tilde{y}_M + y_M + \sm_{\la = 1}^{s_0-1}2^{b_{\la}} < 2^{b_{s_0}}$.

    \item 
$b_{s_0} \notin \{c_1 \dts c_k\}$ and $\sm_{\la = s_0 +1}^s2^{b_{\la}} = \sm_{\substack{M+1\leq i \leq k:\\ c_i\geq b_{s_0}}}2^{c_i} = \sm_{\substack{M+1\leq i \leq k: \\ c_i> b_{s_0}}}2^{c_i}$.

    \item
The element $\Tilde{y}_M$ does not contain a 2-digit in the set $\{2^{\la_2} \dts 2^{\la_{\nu}}\}$.  

    \item
$s_0 > 1$ and $\sm_{\la = 1}^{s_0 -1}2^{b_{\la}} \in \{2^{\delta_m} +\cdots + 2^{\delta_1}, 2^{\epsilon_n} +\cdots + 2^{\epsilon_1}\}$.    

\end{enumerate}

\textit{Proof of} $(1)$. From definition of $s_0$ and the fact that $c_M \notin \{\la_1 \dts \la_{\nu}\}$, we get that $b_1 <\cdots<  b_{s_0-1} < c_M$. Consequently it is easy to observe  that $y_M + \sm_{\la = 1}^{s_0-1}2^{b_{\la}} < 2^{c_M + 1} \leq 2^{b_{s_0}}$. Moving forward, we consider two cases. In case where $b_{s_0}-1 = c_M$, then $\Tilde{y}_M = 0$ and the result follows. In case $b_{s_0}-1 > c_M$, then $\Tilde{y}_M + y_M + \sm_{\la = 1}^{s_0-1}2^{b_{\la}} \leq \sm_{\la = c_M +1}^{b_{s_0}-1}2^{\la} + y_M  + \sm_{\la = 1}^{s_0-1}2^{b_{\la}} < \sm_{\la = c_M +1}^{b_{s_0}-1}2^{\la} + 2^{c_M+1} = 2^{b_{s_0}}$. \\ 

\textit{Proof of} $(2)$. For convenience denote the sums by $S_1,S_2,S_3$  respectively. Let's begin by making the following observation, $\{\la : s_0 + 1 \leq \la \leq s\} = \emptyset$ \ if and only if $\{i: M+1 \leq i \leq k, \  c_i \geq b_{s_0}\} = \emptyset$. Indeed, suppose the first set is empty and the second is non-empty, then 
\begin{align*}
0 < 2^{b_s} +\cdots + 2^{b_1} - x &=  - \sm_{\substack{M+1\leq i \leq k:\\ c_i\geq b_{s_0}}}2^{c_i} + \Tilde{y}_M + y_M + \sm_{\la = 1}^{s_0 - 1}2^{b_{\la}} \\
&< - \sm_{\substack{M+1\leq i \leq k:\\ c_i\geq b_{s_0}}}2^{c_i} +  2^{b_{s_0}} \leq 0.
\end{align*}
this leads to contradiction. Now suppose the second set is empty and the first is non-empty. Then $2^{b_s} +\cdots + 2^{b_1} - x = \sm_{\la = s_0 +1}^s2^{b_{\la}} + \Tilde{y}_M + y_M + \sm_{\la = 1}^{s_0 - 1}2^{b_{\la}}$. Since the first set is non-empty, (1) compels the element  $2^{b_s} +\cdots + 2^{b_1} - x$ to have  2-digits in the set $\{\la_2 \dts \la_{\nu}\}$, contradicting  the assumption that $2^{b_s} +\cdots + 2^{b_1} \in B_R$. Consequently, $S_1 = 0  \ \Longleftrightarrow \ S_2 = 0$. Lets assume now that both of the sets are non-empty and thus $S_1,S_2 \neq 0$. Suppose for a contradiction that $S_1 \neq S_2$\ 

Case $S_1 > S_2$.  Then by Lemma \ref{2-digits-1}, $S_1 - S_2 = \sm_{\la \geq b_{s_0}}s_{\la}2^{\la}$ (base 2 representation) and $S_1-S_2$ has a 2-digit in the set $\{2^{b_{s_0+1}} \dts 2^{b_s}\}\cup\{2^{c_i}: M+1 \leq i \leq k, \ c_i \geq b_{s_0}\} \subseteq \{2^{\la_2} \dts 2^{\la_{\nu}}\}$ (the inclusion $"\subseteq"$ follows from the definition of $c_M$ and the fact that $c_1 \geq \la_1$). Combining the earlier statements with property (1) implies that  the element $2^{b_s} +\cdots + 2^{b_1} - x = S_1 - S_2 + \Tilde{y}_M + y_M + \sm_{\la = 1}^{s_0 - 1}2^{b_{\la}}$ has a 2-digit in the set $\{2^{\la_2} \dts 2^{\la_{\nu}}\}$,  contradicting the assumption that $2^{b_s} +\cdots + 2^{b_1} \in B_R$. \

Case $S_1 < S_2$. Then by Lemma \ref{2-digits-1}, $S_2-S_1$ has a 2-digit $2^x \geq 2^{b_{s_0}}$. Hence, $0 < 2^{b_s} +\cdots + 2^{b_1} - x = -(S_2 - S_1) + \Tilde{y}_M + y_M + \sm_{\la = 1}^{s_0 - 1}2^{b_{\la}} < -2^x + 2^{b_{s_0}} \leq 0$, leading to contradiction.\ 

 As a result $S_1 = S_2$. It remains to prove that $b_{s_0} \notin \{c_1 \dts c_k\}$. We recall that $b_{s_0} > c_M > \cdots > c_1$. In case $\{i: M+1 \leq i \leq k, \  c_i \geq b_{s_0}\} = \emptyset$, then it is straightforward. In different case, utilizing the uniqueness of the base 2 representation in the equality $\sm_{\la = s_0 +1}^s2^{b_{\la}} = \sm_{\substack{M+1\leq i \leq k:\\ c_i\geq b_{s_0}}}2^{c_i}$, we conclude that $b_{s_0} \notin \{c_{M+1} \dts c_k\}$.\ 

 \textit{Proof of} (3). Applying (2), we obtain  $2^{b_s} +\cdots + 2^{b_1} - x =  \Tilde{y}_M + y_M + \sm_{\la = 1}^{s_0 - 1}2^{b_{\la}}$, where $y_M + \sm_{\la = 1}^{s_0-1}2^{b_{\la}} < 2^{c_M + 1}$, and $2^{b_s} +\cdots + 2^{b_1} \in B_R$. Thus, $\Tilde{y}_M$ must not contain a 2-digit in the set $\{2^{\la_2} \dts 2^{\la_{\nu}}\}$.\\

 \textit{Proof of} (4). Assume $s_0 =1$, then  $\sm_{\la = 1}^{s_0 -1}2^{b_{\la}} = 0$ and $2^{b_s} +\cdots + 2^{b_1} - x = \Tilde{y}_M + y_M$. This implies that the element $b:= 2^{b_s} +\cdots + 2^{b_1} - x$ contains 2-digit in $\Lambda_2$ (see case assumption for $y_M$), which leads to contradiction. Consequently,  $b = \Tilde{y}_M + y_M + \sm_{\la = 1}^{s_0 -1}2^{b_{\la}}$, where $y_M + \sm_{\la = 1}^{s_0 -1}2^{b_{\la}} < 2^{C_M +1}$ and $b$ does not have a 2-digit in $\Lambda_2$. These conditions combined with $(3)$, forces $y_M + \sm_{\la = 1}^{s_0 -1}2^{b_{\la}}$ to also not have  a 2-digit in $\Lambda_2$. According to this  we distinguish the following two cases \ 

In case $\la_1 \notin \{b_1 \dts b_{s_0-1}\}$. Then $\{b_1 \dts b_{s_0-1}\} \subseteq \Lambda_2$ and Lemma \ref{2-digit,epsilon,delta} implies $\sm_{\la = 1}^{s_0 -1}2^{b_{\la}} = 2^{\delta_m} +\cdots + 2^{\delta_1}$. \ 

In case $\la_1 \in \{b_1 \dts b_{s_0-1}\}$. Then $\la_1 = b_1$ and by case assumption $c_1  = \la_1$ we get that  $y_M + \sm_{\la = 1}^{s_0 -1}2^{b_{\la}}$ does not have  a 2-digit in $\Lambda_1$. Consequently,   Lemma \ref{2-digit,epsilon,delta} implies  $\sm_{\la = 1}^{s_0 -1}2^{b_{\la}} = 2^{\epsilon_n} +\cdots + 2^{\epsilon_1}$. \\

 Now, we proceed by defining the following elements and set 
\begin{enumerate}
    \item[$a)$] 
$\Tilde{y}_{\la_d,M}:=\hspace{0.4cm}\sm_{\mathclap{\substack{c_M < \la \leq \la_d-1, \\ \la \notin \{c_i : M+1 \leq i \leq k, \ c_i < \la_d\}}}}2^{\la}$,  \ $z_{\la_d,M} = \sm_{\substack{M+1 \leq i \leq k: \\ c_i > \la_d}}2^{c_i}$, $d\in \{2 \dts \nu\}$.
    \item[$b)$]
$D:= \{d \in\{2 \dts \nu\} : \  \la_d > c_M, \ \la_d \notin \{c_1 \dts c_k\} \  \textit{and} \ \Tilde{y}_{\la_d,M} \ \textit{does not }$ $\textit{have a 2-digit}\in \{2^{\la_2} \dts 2^{\la_{\nu}}\} \}.$ 
\end{enumerate}
 $D$ is non-empty, since for every $2^{b_s} +\cdots + 2^{b_1} \in B_R$, by $(2), (3)$ we get that $b_{s_0} \in D$. \    

 \textbf{Claim}. $B_R = \bigcup\limits_{d \in D} \{z_{\la_d,M} + 2^{\la_d}  + 2^{\delta_m} +\cdots + 2^{\delta_1}, \ z_{\la_d,M}  + 2^{\la_d} + 2^{\epsilon_n} +\cdots + 2^{\epsilon_1} \}$.    
 
Note that the first sum is zero if the index conditions  are empty. Specifically we  have a disjoint union  of sets with two distinct elements (see Lemma \ref{2-digit,epsilon,delta}), thus $|B_R| = 2|D|$, which proves the lemma. \\  

 \textit{Proof of Claim}.  The inclusion $"\subseteq"$ is immediate from $(2), (3), (4)$. Conversely, consider $d\in D$ to prove $S:= \sm_{\substack{M+1 \leq i \leq k: \\ c_i > \la_d}}2^{c_i} + 2^{\la_d} + 2^{\delta_m} +\cdots + 2^{\delta_1} \in B_R$ (similarly the other element). It is easy to see that $\delta_1 <\cdots<  \delta_{m} < \la_d$ and $\{\delta_1 \dts \delta_m, \la_d, c_{M+1} \dts c_k\} \subseteq \{\la_1 \dts \la_{\nu}\}$. It remains  to prove   that $S \geq 2^{b^{i+1}_{k_{i+1}}} +\cdots + 2^{b^{i+1}_1}$ and the element $S - x$  does not have  2-digit in $\{2^{\la_2} \dts 2^{\la_{\nu}}\}$. Since $d\in D$, we get that:
\begin{align*}
x = 2^{c_k} +\cdots + 2^{c_1} &= \sm_{\substack{M+1 \leq i \leq k: \\ c_i > \la_d}}2^{c_i} +  \sm_{\substack{M+1 \leq i \leq k: \\ c_i < \la_d}}2^{c_i} + 2^{c_M} +\cdots + 2^{c_1} \\
&< \sm_{\substack{M+1 \leq i \leq k: \\ c_i > \la_d}}2^{c_i} + 2^{\la_d} \leq S.
\end{align*}
Hence, $h-i_0 = x < S$  and $y_i = i_0 + 2^{b^i_{k_i}} +\cdots + 2^{b^i_1} < h < i_0 + S \in \{y_1 \dts y_{t}\}$ (see Lemma \ref{2-base of y_1,...,y_t}). Thus, $i_0 + S \geq y_{i+1}$ and so $S \geq 2^{b^{i+1}_{k_{i+1}}} +\cdots + 2^{b^{i+1}_1}$. Furthermore, similar  to the computations  at the start (for $\la_d$ instead of $b_{s_0}$) we have
\begin{align*}
S-x &=  \sm_{\substack{M+1 \leq i \leq k: \\ c_i > \la_d}}2^{c_i} - \sm_{\substack{M+1 \leq i \leq k: \\ c_i \geq \la_d}}2^{c_i}  + \Tilde{y}_{\la_d,M} + y_M + 2^{\delta_m} +\cdots + 2^{\delta_1} \\
&=  \Tilde{y}_{\la_d,M} + y_M + 2^{\delta_m} +\cdots + 2^{\delta_1},
\end{align*}
where from (1), (2) of Lemma \ref{2-digit,epsilon,delta} and the fact that $d\in D$, the element $S-x$ must not have a 2-digit in the set $\{2^{\la_2} \dts 2^{\la_{\nu}}\}$.  \ 

The  case  $M>1, \ c_1 = \la_1$ and $y_M$ have a 2-digit in $\Lambda_2$, has been fully proved.  \

In the remaining cases where $M\geq 1, c_1 \geq \la_1$ it is easy to see that the properties $(1), (2), (3)$ still hold. However, the satisfaction of property (4) and, consequently, the representation of $B_R$ as a disjoint union, varies across cases (with the set $D$ remaining unchanged).  Let's specifically discuss the alterations in property $(4)$. For convenience set $S_0 := \sm_{\la = 1}^{s_0 -1}2^{b_{\la}}$.
\begin{enumerate}
    
    \item 
If $M>1, c_1 = \la_1$ and $y_M$ does not have a 2-digit in $\Lambda_2$. Then $s_0 \geq 1$ and $S_0 \in \{2^{\epsilon_n} +\cdots + 2^{\epsilon_1}, 0\}$.

    \item 
If $M>1, c_1 > \la_1$ and $y_M$ have a 2-digit in $\Lambda_2$. Then $s_0 >2$ and  $S_0 \in \{2^{\delta_m} +\cdots + 2^{\delta_1}, 2^{\delta_m} +\cdots + 2^{\delta_1} + 2^{\la_1}\}$.  

    \item 
If $M\geq1$, $c_1 > \la_1$ and $y_M$ does not have a 2-digit in $\Lambda_2$. Then $s_0 \in \{1,2\}$ and $S_0 \in \{0,2^{\la_1}\}$. \\

\end{enumerate}

 \fbox{ \textbf{Case.} $M>1, \ c_1 < \la_1$ and $y_M$ arbitrary} \ 

We claim that $B_R  = \emptyset$. Indeed assuming that $B_R \neq \emptyset$, pick an element $2^{b_s} +\cdots + 2^{b_1} \in B_R$. Then $x = 2^{c_k} +\cdots + 2^{c_1} < 2^{b_s} +\cdots + 2^{b_1}$, where $c_1 < \la_1 \leq b_1$. On the other hand, Lemma \ref{2-digits-1} (3), forces the element $2^{b_s} +\cdots + 2^{b_1} - x = (2^{b_s} +\cdots + 2^{b_1} + i_0) - h$, to have $2^{c_1} (\leq 2^{\la_1-1})$ as a 2-digit. This come in contrary with the convention we made above Corollary \ref{B_R}. \\ 

 \fbox{ \textbf{Case.} $M=1, \ c_1 \leq \la_1$ and $y_M$ arbitrary} \ 

The case $M = 1$ and $c_1 = \la_1$, contradicts the condition  $c_M \notin \{\la_1 \dts \la_n\}$. Regarding the case $M= 1, c_1 < \la_1$,  we can demonstrate that $B_R = \emptyset$ similar to the case $M> 1, c_1 < \la_1$.\\

 \fbox{ \textbf{Case.} $M=1, \ c_1 > \la_1$ and $y_M$ have a 2-digit in $\Lambda_2$} \ 

Then $y_M = 2^{c_1}$ and definition of $M$ implies $c_1 = c_M \notin \{\la_1 \dts \la_{\nu}\}$, which comes in contrary with the case assumption.
\end{proof}

\begin{lemma}\label{Elementary2}
Consider  two sequences  $\la_1 <\cdots<  \la_{\nu}$ $(\nu \geq 2)$, $\la_1 \leq a_1 <\cdots< a_n$, such that $a_i \in \{\la_1 \dts \la_{\nu}\}$ and $ 2^{a_n} +\cdots + 2^{a_1} < 2^{\la_1} +\cdots + 2^{\la_{\nu}}$. Then, there is a unique sequence $b_1 <\cdots< b_m$ such that $b_i \in \{\la_1 \dts \la_{\nu}\}, \  2^{a_n} +\cdots + 2^{a_1} < 2^{b_1} +\cdots + 2^{b_m}$ and the element $(2^{b_1} +\cdots + 2^{b_m})-(2^{a_n} +\cdots + 2^{a_1})$ does not have any 2-digit inside the set $\{2^{\la_2} \dts 2^{\la_{\nu}}\}$.
\end{lemma}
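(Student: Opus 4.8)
The plan is to pin down the required $B$ as the support of the immediate successor of $a := 2^{a_1}+\cdots+2^{a_n}$ inside the set of subset-sums of $S := \{\la_1,\dots,\la_\nu\}$. Write $A := \{a_1,\dots,a_n\}\subseteq S$ and $N := 2^{\la_1}+\cdots+2^{\la_\nu}$. Since $a<N$ we have $A\subsetneq S$, so $k := \min\{\, i : \la_i\notin A\,\}$ is defined. First I would check that the set
\[
B := \bigl(A\setminus\{\la_1,\dots,\la_{k-1}\}\bigr)\cup\{\la_k\}
\]
does the job: by definition of $k$ we have $\la_1,\dots,\la_{k-1}\in A$ and $\la_k\notin A$, so $b := \sum_{\la\in B}2^{\la} = a + 2^{\la_k} - (2^{\la_1}+\cdots+2^{\la_{k-1}})$; as $2^{\la_1}+\cdots+2^{\la_{k-1}} < 2^{\la_{k-1}+1}\le 2^{\la_k}$ (an empty sum when $k=1$), this gives $b>a$, which is the required inequality, and in fact $b$ is the least subset-sum of $S$ that exceeds $a$.

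For the ``avoidance'' part I would use the telescoping identity
\[
2^{\la_k}-\sum_{j=1}^{k-1}2^{\la_j} \;=\; 2^{\la_1} + \sum_{j=2}^{k}\Bigl(2^{\la_j}-2^{\la_{j-1}+1}\Bigr),
\]
together with the fact that each bracket $2^{\la_j}-2^{\la_{j-1}+1} = 2^{\la_{j-1}+1}\bigl(2^{\la_j-\la_{j-1}-1}-1\bigr)$ has binary expansion supported on $\{\la_{j-1}+1,\dots,\la_j-1\}$, an interval disjoint from $S$; since these supports, together with $\{\la_1\}$, are pairwise disjoint, the binary expansion of $b-a$ is supported on $\{\la_1\}\cup\bigcup_{j=2}^{k}\{\la_{j-1}+1,\dots,\la_j-1\}$, which misses $\{\la_2,\dots,\la_\nu\}$ entirely. (Lemma \ref{Elementary1} gives a second, carry-free, way to see this.)

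The harder half is uniqueness. Suppose $C\subseteq S$ with $c := \sum_{\la\in C}2^{\la} > a$ and $c-a$ has no $2$-digit in $\{2^{\la_2},\dots,2^{\la_\nu}\}$; I must show $C=B$. Let $\la_m$ be the largest position at which the binary expansions of $c$ and $a$ differ; since $c>a$ one has $\la_m\in C\setminus A$ and $0<c-a<2^{\la_m+1}$, so $c-a$ automatically avoids $2^{\la_{m+1}},\dots,2^{\la_\nu}$ and the genuine constraint is that it avoid $2^{\la_2},\dots,2^{\la_m}$. I would then peel positions off from the top: writing $c-a = 2^{\la_m} + \sum_{\la\in Q}2^{\la} - \sum_{\la\in P}2^{\la}$ with $P := A\cap\{\la_1,\dots,\la_{m-1}\}$ and $Q := C\cap\{\la_1,\dots,\la_{m-1}\}$, a short borrow analysis shows that if $\sum_{\la\in Q}2^\la\ge\sum_{\la\in P}2^\la$ (in particular if $m=1$) then either $m=1$ and $c = a+2^{\la_1}=b$, or $c-a$ already carries the forbidden $2$-digit $2^{\la_m}$; whereas if $\sum_{\la\in Q}2^\la<\sum_{\la\in P}2^\la$ then, letting $\la_p$ be the top position where $P$ and $Q$ differ, $c-a = 2^{\la_m}-(\text{something supported on }\{0,\dots,\la_p\})$, which has all of the block $\{\la_p+1,\dots,\la_m-1\}$ in its support, and that block contains $\la_{p+1}$ unless $p=m-1$. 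Iterating this down the positions forces $\la_1,\dots,\la_{m-1}\in A$ and $\la_1,\dots,\la_{m-1}\notin C$, hence $m=k$ and $C = (A\setminus\{\la_1,\dots,\la_{k-1}\})\cup\{\la_k\} = B$.

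I expect the borrow-propagation bookkeeping in that last step to be the main obstacle: one has to organise the finitely many sub-cases according to the sign of $\sum_{\la\in Q}2^\la-\sum_{\la\in P}2^\la$ at each level and according to whether the current top bit sits on an element of $S$ or strictly between two of them. Lemma \ref{Elementary1} and Lemma \ref{2-digits-1} are the natural tools for tracking the carries and borrows cleanly at each stage; apart from that the argument is a straightforward, if somewhat lengthy, descending induction.
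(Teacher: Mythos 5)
Your argument is correct, and it takes a genuinely different route from the paper's. You define the candidate uniformly as $B=(A\setminus\{\la_1,\dots,\la_{k-1}\})\cup\{\la_k\}$, where $k$ is the least index with $\la_k\notin A$ — the binary increment of $A$ performed inside $S=\{\la_1,\dots,\la_\nu\}$ — and verify avoidance by a carry-free telescoping identity; this one formula subsumes what the paper handles by cases (the case $\la_1<a_1$, and for $\la_1=a_1$ its situations (1) and (2), where the answer is $(\la_{s_0},a_{s_0},\dots,a_n)$ or $(\la_{n+1})$; your $B$ specialises to exactly these). For uniqueness the paper inducts on $n$: it first shows the top entry of any competing sequence must equal $a_n$ (using Lemma \ref{2-digits-1} and the auxiliary Lemma \ref{2-digit-3}) and then strips it; you instead fix the largest bit $\la_m$ at which $c$ and $a$ differ and walk downwards, the borrow analysis at each stage forcing the next $\la_i$ into $A\setminus C$ until $C=B$. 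I checked the iteration you defer as bookkeeping: the relevant invariant is that the lowest set bit of $2^{\la_m}-2^{\la_{m-1}}-\cdots-2^{\la_{m-j+1}}$ sits at $\la_{m-j+1}$, so your dichotomy on the sign of $\sum_{\la\in Q}2^{\la}-\sum_{\la\in P}2^{\la}$ over the remaining positions applies verbatim at every stage, the forbidden digit being $\la_{m-j+1}$ (which lies in $\{\la_2,\dots,\la_\nu\}$ until the final step), and the process terminates with $m=k$ and $C=B$ exactly as you assert — so there is no genuine gap, only routine repetition of your first step. (Your aside that $b$ is the least subset-sum of $S$ exceeding $a$ is true but not needed.) What your route buys is a case-free construction and uniqueness without Lemma \ref{2-digit-3}; what the paper's induction on $n$ buys is that each individual step is a short computation, at the price of the case tree on situations (1)/(2).
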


For proof of above Lemma \ref{Elementary2} see appendix.

\begin{corollary}\label{x=b(i+1)}
If  $2^{b^i_{k_i}} +\cdots + 2^{b^i_1} < x = 2^{b^{i+1}_{k_{i+1}}} +\cdots + 2^{b^{i+1}_1}$, then $R$ contains exactly two  units.
\end{corollary}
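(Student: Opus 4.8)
The plan is to use Corollary \ref{B_R}, which identifies the number of units in $R$ with the cardinality $|B_R|$, and then to show directly that $B_R$ has exactly two elements. Throughout we work under the standing reduction that $y_t - h$, hence each of $y_{i+1}-h \dts y_t - h$, has no $2$-digit below $2^{\la_1}$ (otherwise $R = (0 \dts 0)$ and there is nothing to prove); also $\nu \geq 2$, since the very existence of a row $R$ among $R_{y_0 +2} \dts R_{y_{t-1}+1}$ forces $i \in \{1 \dts t-2\}$, hence $t \geq 3$ and $2^{\nu}-1 = t \geq 3$.

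First I would exhibit two elements of $B_R$. The element $x$ itself belongs to $B_R$: by hypothesis $x = 2^{b^{i+1}_{k_{i+1}}} +\cdots + 2^{b^{i+1}_1}$ is a sum of distinct powers $2^{\la_j}$ with $k_{i+1}\geq 1$, we trivially have $x \geq x$, and $x - x = 0$ has no $2$-digit in $\{2^{\la_2} \dts 2^{\la_{\nu}}\}$. For the second element I would apply Lemma \ref{Elementary2} to the sequence $a_1 <\cdots<  a_n := b^{i+1}_1 <\cdots<  b^{i+1}_{k_{i+1}}$, so that $2^{a_n} +\cdots + 2^{a_1} = x$. Its hypotheses hold: the $b^{i+1}_j$ lie in $\{\la_1 \dts \la_{\nu}\}$, so $\la_1 \leq b^{i+1}_1$; and $x = h - i_0 \leq y_{i+1} - i_0 < y_t - i_0 = 2^{\la_1} +\cdots + 2^{\la_{\nu}}$, where the strict inequality is precisely where the restriction $i+1 \leq t-1$ enters. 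Lemma \ref{Elementary2} then produces a unique sequence $b_1 <\cdots<  b_m$ in $\{\la_1 \dts \la_{\nu}\}$ with $z := 2^{b_1} +\cdots + 2^{b_m} > x$ such that $z - x$ has no $2$-digit in $\{2^{\la_2} \dts 2^{\la_{\nu}}\}$; both defining conditions of $B_R$ hold, so $z \in B_R$, and $z \neq x$ since $z > x$.

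Second, I would show these are the only two. Let $w = 2^{b_s} +\cdots + 2^{b_1} \in B_R$ be arbitrary, so $w$ is a sum of distinct $2^{\la_j}$, $w \geq x = 2^{b^{i+1}_{k_{i+1}}} +\cdots + 2^{b^{i+1}_1}$, and $w - x$ has no $2$-digit in $\{2^{\la_2} \dts 2^{\la_{\nu}}\}$. If $w = x$ we recover the first element. If $w > x$, then the exponent sequence of $w$ satisfies exactly the three clauses characterizing the unique sequence of Lemma \ref{Elementary2} (for the $a$-sequence $b^{i+1}$), so $w = z$. Hence $B_R = \{x, z\}$, $|B_R| = 2$, and by Corollary \ref{B_R} the row $R$ contains exactly two units.

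The only point requiring genuine care — and the step I expect to be the main obstacle — is checking that the hypotheses of Lemma \ref{Elementary2} are really in force, in particular the strict inequality $x < 2^{\la_1} +\cdots + 2^{\la_{\nu}}$ (which is why this phenomenon is asserted only for the rows up to $R_{y_{t-1}+1}$) and the exact matching of ``$w \in B_R$ with $w > x$'' against the defining conditions of that lemma. Everything else is a direct unwinding of the description of $B_R$ furnished by Corollary \ref{B_R}.
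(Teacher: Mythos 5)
Your proof is essentially the paper's own argument in a slightly different wrapper: both come down to Lemma \ref{Elementary2}. You count the set $B_R$ of Corollary \ref{B_R} (with $x$ itself accounting for the diagonal entry $a_k(0,y_t)=1$ and the unique larger element supplied by Lemma \ref{Elementary2} giving the second unit), whereas the paper reads the row directly off (\ref{finalmatrix}) as $(1,a_k(y_{i+2}-y_{i+1},y_t),\dots,a_k(y_t-y_{i+1},y_t))$ and invokes the same lemma; since Corollary \ref{B_R} is exactly the codification of that row description, the two routes coincide. Your checks of the hypotheses of Lemma \ref{Elementary2} ($\nu\ge 2$, the exponents lying in $\{\la_1,\dots,\la_\nu\}$, and the strict inequality $x<2^{\la_1}+\cdots+2^{\la_\nu}$ coming from $i+1\le t-1$) and the uniqueness step forcing $B_R=\{x,z\}$ are all correct.

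One justification, however, is backwards and should be repaired. You dismiss the case where $y_t-h$ has a $2$-digit below $2^{\la_1}$ with ``otherwise $R=(0,\dots,0)$ and there is nothing to prove'': if $R$ really were the zero row, the corollary's conclusion (exactly two units) would be \emph{false}, not vacuously true, so this case must be excluded, not conceded. Fortunately it cannot occur in the present situation: here $h=y_{i+1}$, so $y_{i+1}-h=0$ has no $2$-digit at all, and by the equivalence of conditions (1)--(3) stated just before Corollary \ref{B_R} (or directly, because each $y_j-h$ with $j\ge i+1$ is a difference of sums of powers $2^{\la_s}$, hence by Lemma \ref{2-digits-1}(2) has no $2$-digit below $2^{\la_1}$), the standing reduction underlying Corollary \ref{B_R} holds automatically. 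With that one-line substitution your proof is complete.
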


\begin{proof}
According to (\ref{finalmatrix}), $R = (1,a_k(y_{i+2}-y_{i+1},y_t) \dts a_k(y_{t}-y_{i+1},y_t))$, where Lemma \ref{2-base of y_1,...,y_t} implies that  $\{y_{i+2} \dts y_t\} = \{i_0 + 2^{b_k} +\cdots + 2^{b_1} : k \geq 1, \ b_1 <\cdots<  b_k, \ b_i\in \{ \la_1 \dts \la_{\nu}\}$ and $2^{b_k} +\cdots + 2^{b_1} >  2^{b^{i+1}_{k_{i+1}}} +\cdots + 2^{b^{i+1}_1}\}$. Therefore, by Lemma \ref{Elementary2}, $R$ contains exactly two units.
\end{proof}

\begin{corollary}
Rows $R_{y_0 +2} \dts R_{y_{t-1}+1}$, contains even number of units.
\end{corollary}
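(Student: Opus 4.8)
The plan is to assemble the corollary from the two cases already isolated in the excerpt: for any row $R$ among $R_{y_0+2}\dts R_{y_{t-1}+1}$, its position in the matrix (\ref{finalmatrix}) determines an index $i\in\{1\dts t-2\}$ and an integer $h$ with $y_i<h\leq y_{i+1}$, so that $R=(a_k(y_{i+1}-h,y_t)\dts a_k(y_t-h,y_t))$. Setting $x:=h-i_0>0$, the two-fold dichotomy is $2^{b^i_{k_i}}+\cdots+2^{b^i_1}<x<2^{b^{i+1}_{k_{i+1}}}+\cdots+2^{b^{i+1}_1}$ versus $x=2^{b^{i+1}_{k_{i+1}}}+\cdots+2^{b^{i+1}_1}$ (the inequality $x>2^{b^i_{k_i}}+\cdots+2^{b^i_1}$ is forced by $h>y_i$). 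Lemma \ref{x<b(i+1)} handles the first alternative, giving an even number of units; Corollary \ref{x=b(i+1)} handles the second, giving exactly two units, which is even. So the corollary is literally the disjunction of these two statements applied to each row.

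First I would recall, as a preliminary reduction already made in the text just below Corollary \ref{B_R}, that we may assume $y_t-h$ (and hence $y_{i+1}-h\dts y_t-h$) has no $2$-digit $\leq 2^{\la_1-1}$, since otherwise Lemma \ref{formula} makes $R=(0\dts 0)$, which trivially has an even (zero) number of units. Then I would state: fix $R\in\{R_{y_0+2}\dts R_{y_{t-1}+1}\}$, extract $i,h$ and $x=h-i_0$ as above, and observe that exactly one of the two cases of the dichotomy holds. In the strict-inequality case, invoke Lemma \ref{x<b(i+1)}; in the equality case, invoke Corollary \ref{x=b(i+1)}. In both cases $R$ has an even number of units, completing the proof.

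I do not expect any genuine obstacle here: all the real work — the twelve-case analysis inside Lemma \ref{x<b(i+1)} and the uniqueness argument from Lemma \ref{Elementary2} feeding Corollary \ref{x=b(i+1)} — has already been carried out. The only thing to be careful about is bookkeeping: making sure that every row with index in the stated range genuinely falls under one of the two cases (i.e. that the block structure of (\ref{finalmatrix}) places $h$ strictly above $y_i$ and at most $y_{i+1}$ for the appropriate $i\leq t-2$), and that the degenerate possibility $R=(0\dts 0)$ is accounted for. This is a matter of reading off the indexing of (\ref{finalmatrix}) correctly rather than proving anything new.

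\begin{proof}
Fix a row $R\in\{R_{y_0+2}\dts R_{y_{t-1}+1}\}$. By the shape of the matrix (\ref{finalmatrix}), there exist $i\in\{1\dts t-2\}$ and an integer $h$ with $y_i<h\leq y_{i+1}$ such that $R=(a_k(y_{i+1}-h,y_t)\dts a_k(y_t-h,y_t))$ (with the zero entries to the left of $a_k(y_{i+1}-h,y_t)$ suppressed). Put $x:=h-i_0$; since $h>y_i$ and $y_i=i_0+2^{b^i_{k_i}}+\cdots+2^{b^i_1}$, we have $x>2^{b^i_{k_i}}+\cdots+2^{b^i_1}$, while $h\leq y_{i+1}$ gives $x\leq 2^{b^{i+1}_{k_{i+1}}}+\cdots+2^{b^{i+1}_1}$. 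Hence exactly one of the following holds:
\[
2^{b^i_{k_i}}+\cdots+2^{b^i_1}<x<2^{b^{i+1}_{k_{i+1}}}+\cdots+2^{b^{i+1}_1}
\quad\text{or}\quad
x=2^{b^{i+1}_{k_{i+1}}}+\cdots+2^{b^{i+1}_1}.
\]
In the first case Lemma \ref{x<b(i+1)} shows that $R$ contains an even number of units. In the second case Corollary \ref{x=b(i+1)} shows that $R$ contains exactly two units, which is again even. (If $y_t-h$ has a $2$-digit $\leq 2^{\la_1-1}$ then Lemma \ref{formula} gives $R=(0\dts 0)$, which has zero units, an even number, so this degenerate possibility is covered as well.) Since $R$ was an arbitrary row among $R_{y_0+2}\dts R_{y_{t-1}+1}$, all these rows contain an even number of units.
\end{proof}
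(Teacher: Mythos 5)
Your proposal is correct and matches the paper's argument: the paper likewise dispatches this corollary by citing the setup at the start of the subsection (which produces $i$, $h$ and $x=h-i_0$ with $2^{b^i_{k_i}}+\cdots+2^{b^i_1}<x\leq 2^{b^{i+1}_{k_{i+1}}}+\cdots+2^{b^{i+1}_1}$) together with Lemma \ref{x<b(i+1)} for the strict-inequality case and Corollary \ref{x=b(i+1)} for the equality case. Your extra remark about the degenerate row $R=(0\dts 0)$ is the same reduction already made in the text above Corollary \ref{B_R}, so nothing differs in substance.
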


\begin{proof}
It is immediate from the discussion at the start of the   subsection \ref{subsection y0+2-yt-1+1}, employing Lemma \ref{x<b(i+1)} and Corollary \ref{x=b(i+1)}.
\end{proof}

\subsubsection{Rows $R_{y_{t-1}+2} \dts R_{y_t}$, contains even number of units.}\label{Subsection Ryt-1+2-Ryt}

\begin{corollary}
Rows $R_{y_{t-1}+2} \dts R_{y_t}$, does not contain any units.    
\end{corollary}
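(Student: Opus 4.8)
The plan is to read off from the matrix (\ref{finalmatrix}) exactly which entries can occur in the rows $R_{y_{t-1}+2} \dts R_{y_t}$ and to check that each of them is $0$; in fact these rows turn out to contain no units whatsoever.

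First I would record the shape of an arbitrary row. For $1\le h\le y_t$, the entry of $R_h$ in the column of $c_{r_{y_j}}$ is $a_k(y_j-(h-1),y_t)$ when $y_j-(h-1)\ge 0$ and is $0$ otherwise; this is precisely how (\ref{finalmatrix}) is assembled, the coefficients $\epsilon(y_t-y_j,y_t)$ all being equal to $1$ by Lemma \ref{2-digits-2}. Hence, as soon as $h\ge y_{t-1}+2$, every column $c_{r_{y_j}}$ with $j\le t-1$ contributes $0$: indeed $y_j\le y_{t-1}\le h-2$ forces $y_j-(h-1)\le -1<0$. So the only entry of $R_h$ that could be nonzero is the last one, $a_k(y_t-(h-1),y_t)$, and since $h\le y_t$ its argument $x:=y_t-(h-1)=y_t-h+1$ ranges exactly over $1\le x\le y_t-y_{t-1}-1$ as $h$ runs over $\{y_{t-1}+2 \dts y_t\}$.

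Next I would compute $y_t-y_{t-1}$. By Lemma \ref{2-base of y_1,...,y_t}, $y_t=i_0+2^{\la_1}+\cdots+2^{\la_\nu}$ with $\la_1<\cdots<\la_\nu$, and the $y_j$ with $j<t$ are exactly the integers $i_0+2^{\la_{j_1}}+\cdots+2^{\la_{j_r}}$ with $\{j_1<\cdots<j_r\}\subseteq\{1 \dts \nu\}$ and $r<\nu$. The second largest of these is obtained by dropping only the smallest term $2^{\la_1}$ from the full sum (adding any omitted term back only increases a subsum), so $y_{t-1}=i_0+2^{\la_2}+\cdots+2^{\la_\nu}$ and therefore $y_t-y_{t-1}=2^{\la_1}$. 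Consequently, for every $h$ with $y_{t-1}+2\le h\le y_t$ we get $1\le x\le 2^{\la_1}-1$, hence $0<x<2^{\la_1}$, so the binary expansion of $x$ involves a digit $2^e$ with $e\le\la_1-1$; Lemma \ref{formula} then yields $a_k(x,y_t)=0$. Thus every entry of each of the rows $R_{y_{t-1}+2} \dts R_{y_t}$ vanishes, which proves the corollary and, together with subsections \ref{Subsection R1-Ry0+1} and \ref{subsection y0+2-yt-1+1}, completes the proof of Proposition \ref{EvenNumberof1} (1).

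The argument presents no real obstacle: all the substantive work has already been done in Lemmas \ref{2-base of y_1,...,y_t} and \ref{formula}, and what remains is the bookkeeping identity $y_t-y_{t-1}=2^{\la_1}$ together with the trivial observation that every positive integer below $2^{\la_1}$ carries a binary digit strictly below $2^{\la_1}$. The only point worth a word is the degenerate case $\la_1=0$ (equivalently $y_t-y_{t-1}=1$), in which the index range $\{y_{t-1}+2 \dts y_t\}$ is empty and there is nothing to prove.
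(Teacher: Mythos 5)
Your proof is correct and follows essentially the same route as the paper: reduce each row $R_h$ with $y_{t-1}+2\le h\le y_t$ to the single potentially nonzero entry $a_k(x,y_t)$ with $0<x<y_t-y_{t-1}=2^{\la_1}$ (using Lemma \ref{2-base of y_1,...,y_t} to identify $y_{t-1}=i_0+2^{\la_\nu}+\cdots+2^{\la_2}$), and then kill it with Lemma \ref{formula} since such an $x$ has a binary digit below $2^{\la_1}$. Your bookkeeping is in fact slightly cleaner than the paper's (whose stated range for $h$ has a harmless off-by-one), and your remark on the degenerate case $\la_1=0$ is a welcome extra.
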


\begin{proof}
Consider a row $R\in \{R_{y_{t-1}+2} \dts R_{y_t}\}$. Based on (\ref{finalmatrix}) there exists $h$ such that $y_{t-1}+1 < h < y_t$ and $R = (a_k(y_t-h,y_t))$ (where we excluded the zero's left of the term $a_k(y_t-h,y_t)$). Furthermore, from Lemma \ref{2-base of y_1,...,y_t} we get that $y_{t-1} = i_0 + 2^{\la_{\nu}} +\cdots + 2^{\la_2}, \ y_t = i_0 + 2^{\la_{\nu}} +\cdots + 2^{\la_1}$. We may express $h$ in the form $h = y_{t-1} + x$ for some non negative integer $x$, where  $y_{t-1}+1 < h < y_t $, moreover $1<x < 2^{\la_1}$. Therefore, $a_k(y_t-h,y_t) = a_k(2^{\la_1}-x,y_t) = 0$ (the last equality follows from Lemma \ref{formula}). 
\end{proof}

\section{The case $\chK=2, a=$\textit{even}, $b=$\textit{odd}}\label{Section,a=even,b=odd}
This case is similar to the one where $b$ was an even integer, and thus we will highlight only the modifications we need to make.

\begin{lemma}\label{odd-Equations123} Consider $a$ even, $b$ odd and set $\gamma = a-b$. Then the following are equivalent
\begin{enumerate}
    \item 
Coefficients $ \{c_{b_2 \dts b_{d+2}}: (b_2 \dts b_{d+2})  \in B(\la,\mu)\} \subseteq K$ satisfy equations (\ref{Equations1}), (\ref{Equations2}), (\ref{Equations3}).
    \item

\begin{enumerate}

    \item 
For  $s \in \{b-\min\{b,d\} \dts  b\}$, the set $\{c_{b_2 \dts b_{d+2}}: (b_2 \dts b_{d+2}) \in \Blm_s\} $ is singleton, and we denote its unique element by $c_{b-s}$. Particularly,  $\{c_{b_2 \dts b_{d+2}}: (b_2 \dts b_{d+2}) \in \Blm\} = \{c_{b-s}: s \in \{b-\min\{b,d\} \dts  b\}\}$.

    \item 
$c_{q, 0,b_4 \dts b_{d+2}} = c_{q-1,1,b_4 \dts b_{d+2}}$, for all $q, b_4 \dts b_{d+2}$ such that, $b-\min\{b,d-1\} \leq q \leq b$, $q = $odd, $0 \leq b_s \leq 1 \ \forall s \geq 4$  and $\sm_{i  = 4}^{d+2}b_i  = b -q$.

    \item
For all $t, r$  such that $1 \leq t \leq \min\{b,d\}, \  t \leq r \leq \min\{b,d\}$, we have
\begin{equation}\label{odd-Equations1-3}
\sm_{0 \leq j \leq t }\tbinom{\gamma + r + t - j }{t-j}\tbinom{r}{j}c_{r-j} = 0 . \\     
\end{equation}
\end{enumerate}    
\end{enumerate}
\end{lemma}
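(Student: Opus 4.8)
The plan is to mirror the proof of Lemma \ref{Equations123}, adapting each of the three equivalences to the case $b$ odd, $a$ even. As before, I would prove the implication $(1)\Rightarrow(2)$, the reverse being entirely analogous. Throughout, the disjoint union (\ref{DisUnion1}), $\Blm=\bigcup_{s=b-\min\{b,d\}}^{b}\Blm_s$, is the organizing tool, and $\chK=2$ together with parity information on $\gamma=a-b$ (now odd) will be invoked repeatedly via Corollary \ref{LucasCor1}.

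First I would treat equations (\ref{Equations3}): exactly as in Lemma \ref{Equations123}, they assert that within each block $\Blm_s$ all coefficients coincide, which is precisely statement $(2)(a)$; here there is nothing new, so the constants $c_{b-s}$ are well defined. Next, for equations (\ref{Equations2}): in the even-$b$ case these forced $c_{b_2\dots b_{d+2}}=0$ whenever $b_2$ is even, but that argument used $b$ even. With $b$ odd the same computation — expanding via the disjoint union (\ref{DisUnion}) and reducing mod $2$ — instead produces the relations $c_{q,0,b_4\dots b_{d+2}}=c_{q-1,1,b_4\dots b_{d+2}}$ for $q$ odd in the stated range, i.e. statement $(2)(b)$. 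This is the same identity (\ref{Eqodd2}) that already appeared in the proof of Corollary \ref{Hom,a-odd}, so I would point to that derivation and simply record how the parity of $b$ changes the conclusion. Finally, for equations (\ref{Equations1}): as in Lemma \ref{Equations123}, using (\ref{Equations3}) the left-hand side of (\ref{Equations1-2}) collapses to $\sum_{j=0}^{t}\binom{\gamma+r+t-j}{t-j}\binom{r}{j}c_{r-j}$, where the binomial $\binom{r}{j}$ counts the tuples $(k_3,\dots,k_{d+2})$ with $0\le k_s\le r_s$ and $k=j$. This is exactly (\ref{odd-Equations1-3}); the crucial difference from the even-$b$ case is that now we may \emph{not} discard the odd-$r$, odd-$t$, or odd-$j$ terms, because $\gamma$ is odd and Corollary \ref{LucasCor1}(1) no longer annihilates $\binom{\gamma+r+t-j}{t-j}$ — hence the sum in (\ref{odd-Equations1-3}) runs over all $0\le j\le t$ and over all $1\le t\le r\le\min\{b,d\}$ without parity restrictions, giving statement $(2)(c)$.

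For the reverse implication $(2)\Rightarrow(1)$ I would run the same reduction backwards: $(2)(a)$ is literally (\ref{Equations3}); given $(2)(a)$, the identity $(2)(b)$ repackages (\ref{Equations2}) (again using $\chK=2$ and the disjoint union (\ref{DisUnion})); and given $(2)(a)$, the collapsed form of (\ref{Equations1-2}) is exactly (\ref{odd-Equations1-3}), so $(2)(c)$ returns (\ref{Equations1}). I expect the main obstacle to be bookkeeping rather than conceptual: one must check carefully that the combinatorial identification of $\left|\{(k_3,\dots,k_{d+2}):0\le k_s\le r_s,\ k=j\}\right|$ with $\binom{r}{j}$ still goes through verbatim (it does, since it is purely about counting $0$–$1$ vectors and is insensitive to the parity of $b$), and that in passing between (\ref{Equations2}) and $(2)(b)$ the index range $b-\min\{b,d-1\}\le q\le b$ with $q$ odd is exactly what the reduction yields when $b$ is odd — the shift by one in the parity of the pivoted coordinate is the only genuinely new point, and it is precisely the phenomenon already witnessed in (\ref{Eqodd2}).
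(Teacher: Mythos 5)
Your proposal is correct and follows essentially the same route as the paper, which omits an explicit proof of this lemma and simply invokes the argument of Lemma \ref{Equations123} with the necessary modifications for $b$ odd. You identify exactly those modifications: part (a) is unchanged, part (b) is the relation already derived as (\ref{Eqodd2}) in the proof of Corollary \ref{Hom,a-odd} (whose derivation from Lemma \ref{LemmaEquations2} uses only the parity of $b$), and part (c) is the collapsed form of (\ref{Equations1}) with no parity pruning since $\gamma$ is now odd and Corollary \ref{LucasCor1}(1) no longer applies.
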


We may observe that equations $2)b)$ from  Lemma \ref{odd-Equations123} are equivalent to equations $c_s = c_{s+1}$ for all $s$ such that, $0 \leq s \leq \min\{b,d-1\}$ and $s=$even. Moreover, since $\gamma = a-b, b$  is odd,  the last equations (equivalently  equations $2)b)$) are special case of equations (\ref{odd-Equations1-3})  for $r = s+1$ and $t=1$ (where $s+1 \leq b$, because $s=even$ and $b=odd$). \

Moving forward, set $l' = \min\{b,d\}$ and  define the linear system of equations $\Tilde{E}_k := \{\Tilde{E}_{i,k}\}_{i = 1}^{l'}$, where  $\Tilde{E}_{i,k} := \left\{ \sm_{j = 0}^sa_{\gamma}(s-j,i)\epsilon(j,i)c_{i-j} = 0 \right\}_{s = 1 }^i$  and $a_{\gamma}(x,y) := \tbinom{\gamma + y + x}{x}, \ \epsilon(x,y):= \tbinom{y}{x}$. Also set $j_0$ to be the maximum integer in the interval $[0,l']$ that the $\gamma = a-b$ constitutes a 2-complement of it.  Similar to the case where $b$ was an even integer we have the following proposition

\begin{proposition}\label{odd-CharacterSystem}
Consider $a$ even and $b$ odd. Let $\Phi \in \Hom_S(D(\la),\D(\mu))$, then $\Phi \in \im\pi_{\D(\la)}^{*} (\simeq \Hom_S(\D(\la),\D(\mu)))$ if and only if there exists coefficients $c_{0} \dts c_{l'} \in K$ such that 
\begin{enumerate}
    \item 
$\Phi = \sm_{i = 0}^{l'}c_{i}F_{i}$. 
    \item
The coefficients $c_{0} \dts c_{l'} \in K$ satisfies the equations $\Tilde{E}_k = \{\Tilde{E}_{i,k}\}_{i = 1}^{l'}$.
\end{enumerate}
Moreover, system $\Tilde{E}_k$ is equivalent to $c_{j_0} = \cdots = c_{j_{m'}}$ and $c_i = 0$, for all $i \in \{0 \dts l'\}\setminus\{j_0 \dts j_{m'}\}$ where $\{j_0 < \dots < j_{m'}\} = \Hlmm = \{h \in \mathbb{N} :h \leq \min\{b,d\}$ and $h$ contains the base 2 representation of $j_0$\}.
\end{proposition}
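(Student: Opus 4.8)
The plan is to mirror, almost line by line, the development already carried out in the even/even case, so that Proposition \ref{odd-CharacterSystem} becomes a translation of Proposition \ref{CharacterSystem} together with the Part II analysis of the system $E_k$. First I would record that by Lemma \ref{odd-Equations123} a homomorphism $\Phi$ lies in $\im\pi_{\D(\la)}^{*}$ exactly when it can be written with coefficients indexed by $s\in\{b-\min\{b,d\}\dts b\}$ (condition $2)a)$), the ``even'' relations now read $c_s=c_{s+1}$ for $s$ even (condition $2)b)$, which I note is just the $r=s+1,\,t=1$ instance of \eqref{odd-Equations1-3}), and the remaining relations are \eqref{odd-Equations1-3}. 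The computation establishing part $(1)$ of the Proposition is then identical in spirit to \eqref{Oppositecomputation}: using the disjoint union $\Blm=\bigcup_{s}\Blm_s$, the singleton property $2)a)$, and the definition of $F_h$ in Definition \ref{Maintableaux}, one collapses the double sum over $\Blm$ to $\sum_{i=0}^{l'}c_iF_i$; the relations $2)b)$ are absorbed into \eqref{odd-Equations1-3}. Conversely, given such a $\Phi$ and coefficients satisfying $\Tilde E_k$, one defines $c_{b_2\dts b_{d+2}}$ via $c_{b-b_2}$ (now \emph{without} the parity restriction that forced ``even'' coefficients to vanish) and checks $2)a),2)b),2)c)$, hence \eqref{Equations1},\eqref{Equations2},\eqref{Equations3}.

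Next I would reduce \eqref{odd-Equations1-3} to the clean system $\Tilde E_k$ exactly as \eqref{Equations1-3} was reduced to $E_k$ via \eqref{Equation1-4}: here, though, there is no parity filtering, so $r$ runs over all of $\{1\dts l'\}$ (not just odd values) and the reindexing $r_i=2i+1$ is replaced by the identity index $i$. Using Lucas (Theorem \ref{Lucas}) and Corollary \ref{LucasCor1} together with $\gamma=a-b$, the coefficients $\tbinom{\gamma+r+t-j}{t-j}\tbinom{r}{j}$ in \eqref{odd-Equations1-3} become $a_\gamma(s-j,i)\epsilon(j,i)$ after the obvious substitution, giving the stated form of $\Tilde E_{i,k}$. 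The matrix $\Tilde A_{i,k}$ then has literally the shape of \eqref{BaseMatrix} with $a_k,c_{r_j}$ replaced by $a_\gamma,c_j$ and $l$ by $l'$, so the periodicity/pivot analysis goes through verbatim.

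For the final clause — that $\Tilde E_k$ is equivalent to $c_{j_0}=\cdots=c_{j_{m'}}$ with all other $c_i$ vanishing — I would invoke the Part II machinery applied to $\Tilde A_k$ in place of $A_k$: Lemma \ref{2complement} and Lemma \ref{pivotalLemma} show that the equations $\{\Tilde E_{j,k}\}_{j=1}^{j_0}$ force $c_0=\cdots=c_{j_0-1}=0$ with $c_{j_0}$ free, where $j_0$ is the largest integer $\le l'$ of which $\gamma$ is a $2$-complement; Lemma \ref{Equations-E_k-1} and Corollary \ref{Equations-E_k-2} then kill all $c_i$ with $i\notin\{j_0\dts j_{m'}\}$; Corollary \ref{2-base-i_0,...,i_1} (in particular part $(2)$, with Lemma \ref{2-digits-2}) identifies $\{j_1<\cdots<j_{m'}\}$ as exactly the integers $j\le l'$ with $j>j_0$ that contain the base $2$ representation of $j_0$, i.e. the set $\Hlmm$ of Definition \ref{MainDefinition}$ii)$; and finally Proposition \ref{EvenNumberof1}, whose ``even number of units in each row of \eqref{finalmatrix}'' statement depends only on $j_0$ being a $2$-complement of $\gamma$ and not on the parity of $b$, yields $c_{j_0}=\cdots=c_{j_{m'}}$. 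I anticipate that the only genuine obstacle is bookkeeping: one must verify that dropping the parity constraint on $r$ (and hence working with the full index set $\{0\dts l'\}$ rather than the odd-indexed $\{r_0\dts r_l\}$) does not disturb any of the Lucas-theorem reductions or the combinatorial lemmas of Part II — concretely, that every place in Sections 7–8 where the identity $r_i=2i+1$ or the evenness of $\gamma=2k$ was used either still holds with $\gamma=a-b$ arbitrary or can be bypassed, since here $\gamma$ need not be even. Once that check is made, the proof is a routine transcription, and I would present it as such, citing the even/even arguments rather than repeating them.
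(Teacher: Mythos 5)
Your proposal is correct and follows essentially the same route as the paper, whose own proof simply states that the system $\Tilde{E}_k$ is treated exactly like $E_k$ with $\gamma$ in place of $k$ (the reduction via Lemma \ref{odd-Equations123} and the collapse to $\sum_i c_i F_i$ being the direct analogue of Proposition \ref{CharacterSystem}, and the Part II machinery applying verbatim since none of Lemmas \ref{2complement}--\ref{formula} or Proposition \ref{EvenNumberof1} uses parity of the translation parameter). The only small remark is that the Lucas-type halving step you mention is not even needed here: equations \eqref{odd-Equations1-3} are already literally of the form $\Tilde{E}_{i,k}$ after renaming $(r,t)$ as $(i,s)$.
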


\begin{proof}
The way we treat the system $\Tilde{E}_{k}$ it is exactly the same with the system $E_k$, with $\gamma$ instead of $k$.   
\end{proof}

\section{Some remarks.}

Not all homomorphisms in the space $\Hom_S(\D(a,b,1^d), \D(a+d,b))$ that are non-zero are necessarily compositions of Carter-Payne homomorphisms. We will provide a family of examples, but in order  to proceed with this demonstration, it is necessary to recall the following result

\begin{theorem}(see \cite[Theorem 3.1]{maliakas2021homomorphisms})\label{MDhooks}
Let k be an infinite field of characteristic $p>0$ and $\la, h \in \Lnr$, such that $\la = (\la_1 \dts \la_m), \ \la_m \neq 0$, $h = (a,1^d)$ and $m \geq d+2$. Additionally, set $q = \max\{i : \la_i \geq 2\}$. Assuming that $q\leq d$ and $p$ does not divide the integer $\la_q +d + 2 -q$, then $\Hom_S(\D(\la),\D(h)) = 0$. \ 
\end{theorem}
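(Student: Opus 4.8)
The plan is to reduce the statement to the combinatorial presentation of $\D(\la)$ and a rank count for a linear system of binomials mod $p$. By Theorem~\ref{BAWpresentation} a homomorphism $\D(\la)\to\D(h)$ is exactly an $S$-map $\psi\colon D(\la)\to\D(h)$ with $\psi\circ\bx_{\la}=0$. Since $D(\la)$ is cyclic on $v_{\la}=1^{(\la_1)}\otimes\cdots\otimes n^{(\la_n)}$ (Proposition~\ref{Cycle}), $\psi$ is determined by $\xi:=\psi(v_{\la})$, which lies in the weight space $\D(h)_{\la}$; conversely, every $\xi\in\D(h)_{\la}$ arises from such a $\psi$ (the hook analogue of Proposition~\ref{Base}). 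If $\la\not\unlhd h$ then $\D(h)_{\la}=0$ and there is nothing to prove, so assume $\la\unlhd h$; in particular $\la_1\le a$. The condition $\psi\circ\bx_{\la}=0$ is equivalent, because every summand $D(\la(s,t))$ of the domain of $\bx_{\la}$ is cyclic and $\bx_{\la,s,t}$ is $S$-linear, to the equations $\psi\bigl(\bx_{\la,s,t}(v_{\la(s,t)})\bigr)=0$, where $v_{\la(s,t)}$ is the canonical generator of $D(\la(s,t))$; and this generator maps under $\bx_{\la,s,t}$ to the element of $D(\la)$ gotten from $v_{\la}$ by turning $t$ of the $e_{s+1}$'s in its $(s{+}1)$-st factor into $e_s$'s, i.e.\ to $E_{s,s+1}^{(t)}v_{\la}$, where $E_{s,s+1}^{(t)}$ is the $t$-th divided power of the raising operator of the Schur algebra. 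Hence a homomorphism corresponds to a vector $\xi\in\D(h)_{\la}$ with $E_{s,s+1}^{(t)}\xi=0$ for all admissible $s,t$.

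The next step is to make this concrete in the semistandard basis $\{[S]:S\in\mathrm{SST}_{\la}(h)\}$ of $\D(h)$ (Theorem~\ref{BAWbase}). Since $q=\max\{i:\la_i\ge 2\}$, we have $\la=(\la_1,\dots,\la_q,1^{m-q})$ with $\la_1,\dots,\la_q\ge 2$; a semistandard tableau of hook shape $h=(a,1^{d})$ and content $\la$ has first column a strictly increasing word $1=u_1<u_2<\cdots<u_{d+1}$ — the entry $1$ is forced into the corner — and first row weakly increasing of length $a$ beginning with $1$. Because $\la$ has $m\ge d+2$ nonzero parts while the column accommodates only $d+1$ of the values $1,\dots,m$, some value $\ge 2$ is pushed into the first row. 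Writing $\xi=\sum_{S\in\mathrm{SST}_{\la}(h)}c_S[S]$, expanding each $E_{s,s+1}^{(t)}\xi=0$ by the action on the basis, and straightening via Theorem~\ref{BAWbase}, one obtains a linear system in the $c_S$; I would fix a total order on $\mathrm{SST}_{\la}(h)$ refining the dominance order of the first-column words, chosen so that the relations selected below become unitriangular in it (straightening only produces strictly smaller tableaux, so such an order exists).

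The crucial point is one good equation. Among the relations I would use those with $t=1$ and $s$ in the range $q\le s\le m-1$ — the ones that lift a single box out of a length-$1$ row of $\la$, read together so as to transport the bottom box of the first column of $\la$ up past row $q$. Expanding the resulting identity in $\D(h)$ and straightening, the coefficient of the distinguished leading tableau is a product of binomial coefficients which, by Theorem~\ref{Lucas} and Corollary~\ref{LucasCor1}, reduces mod $p$ to $\la_q+d+2-q$ (this integer records the $\la_q$ boxes of row $q$ of $\la$, the $d+1-q$ single-box rows of $h$ between rows $q$ and $d+1$, and the displacement itself). Since $p\nmid\la_q+d+2-q$ by hypothesis, this pivot is a unit, forcing the leading $c_S$ to vanish; feeding this back and descending through the chosen order on $\mathrm{SST}_{\la}(h)$ — each step using the analogous relation, whose pivot is again a nonzero multiple of $\la_q+d+2-q$ by the same Lucas computation — kills every $c_S$. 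Therefore $\xi=0$ and $\Hom_S(\D(\la),\D(h))=0$.

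I expect the real difficulty to be in the last two paragraphs: choosing the order on $\mathrm{SST}_{\la}(h)$ that makes the selected family of relations triangular, and carrying out the straightening carefully enough to verify that each pivot is a nonzero multiple of $\la_q+d+2-q$ modulo $p$ — precisely the kind of $\binom{m}{n}\bmod p$ bookkeeping collected in Section~2, including keeping track of which tableaux the divided raising operator produces and which of them straighten to $0$. The auxiliary reductions (the case $\la\not\unlhd h$, the degenerate case $q=1$ in which $\la$ is itself a hook, and the boundary values of $s$ and $t$ in $\bx_{\la,s,t}$) should be comparatively routine.
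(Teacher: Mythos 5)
First, a point of comparison: this paper does not prove the statement at all — it quotes it from \cite[Theorem 3.1]{maliakas2021homomorphisms} — so your proposal has to stand on its own. Your first two paragraphs are fine and are the standard route (the same one the cited source, and Sections 4--8 of this paper in the two-row case, follow): a homomorphism is a weight-$\lambda$ vector of $\Delta(h)$, i.e.\ a combination $\sum_T c_T\phi_T$ over $T\in\mathrm{SST}_\lambda(h)$, subject to the vanishing of $\Phi$ on the generators of $\mathrm{Im}\,\bx_\lambda$. The problem is the third paragraph, which contains the entire arithmetic content and is asserted rather than proved: no order on $\mathrm{SST}_\lambda(h)$ is exhibited, no straightening is carried out, and the claim that every pivot is a unit multiple of $\lambda_q+d+2-q$ is essentially a restatement of the theorem.

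Worse, the specific plan is not just unsubstantiated — the family of relations you select ($t=1$, $q\le s\le m-1$) is genuinely insufficient, so no choice of order can make it triangular with nonzero pivots. Take $\lambda=(2,2,1,1)$, $h=(4,1,1)$, so $q=d=2$, $m=4$, $\lambda_q+d+2-q=4$. Here $\mathrm{SST}_\lambda(h)$ has three elements, indexed by the column sets $\{2,3\},\{2,4\},\{3,4\}$; write $c_{23},c_{24},c_{34}$. The relation $(s,t)=(3,1)$ lands in a one-dimensional weight space and, after straightening, reads $c_{23}+c_{24}=0$ (the third tableau's contribution dies because its first column acquires a repeated entry), while $(s,t)=(2,1)$ reads $2c_{24}+c_{34}=0$ (the factor $2$ coming from $2\cdot 2=2\cdot 2^{(2)}$ in the divided power algebra). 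These are \emph{all} the scalar equations your selected family produces, and they admit a one-parameter solution space over every field; no pivot congruent to $4$ ever appears. The relations with $s=1<q$ are indispensable: $(s,t)=(1,1)$ contributes, after straightening with signs, equations of the form $2c_{23}=0$, $2c_{24}=0$, $c_{23}-c_{24}+3c_{34}=0$, and only the combined system forces $c=0$ when $p\nmid 4$. In other words, the quantity $\lambda_q+d+2-q$ emerges from the interaction of relations with $s<q$ (and, in general, $t>1$) with the ones you kept — from the determinant of the whole system — not as a single straightening coefficient of one well-chosen relation. A correct proof, as in the cited paper and as in the two-row analysis of this paper, has to set up and solve the full linear system over all admissible $(s,t)$, which is exactly the bookkeeping your sketch defers.
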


 Note that, in the case  where  $\la = (1 \dts 1)$ define $\la_q = 0$ and $ q =0$.

\begin{remark}
In its original statement,  Theorem \ref{MDhooks} provides a complete characterization of the space $\Hom_S(\D(\la),\D(h))$. However we only require the special case. 
\end{remark}

\begin{example}\label{NonCarter-Payne}
Consider $\la = (a,2,1^{d})\in \Lnr$, where $d\geq 3$ is an odd integer. Then, none of the non-zero elements of    $\Hom_S(\D(a,2,1^d), \D(a+d,2))$ is composition of Carter-Payne homomorphisms. \end{example}

\begin{proof}
We may assume from Theorem \ref{Mainresult} that $a$ is even. Let $0 \neq f \in \Hom_S(\D(\la),\D(\mu))$ and suppose there exist Carter-Payne homomorphisms say $\phi_1 \dts \phi_k$ such that $f = \phi_k \circ \cdots \circ \phi_1$. We need to distinguish some cases  based on the specific type of Carter-Payne homomorphism that  the map $\phi_1$ might represent \\

 \textbf{Case 1}. $\phi_1 : \D(a,2,1^d) \longrightarrow\D(a+1,2,1^{d-1})$. Then $k\geq 2, \ a+1$ is an odd integer and  by Theorem \ref{Mainresult} the map $\phi_n \circ \cdots \circ \phi_2: \D(a+1,2,1^{d-1}) \longrightarrow \D(a+d,2)$, is zero. Consequently $f =0$. \

 \textbf{Case 2}. $\phi_1 : \D(a,2,1^d) \longrightarrow\D(a,3,1^{d-1})$, where $3 \leq a$. Now, from the row removal theorem (see \cite{kulkarni2006ext} Proposition 1.2), we obtain 
\begin{align*}
\Hom_S(\D(a,2,1^d), \D(a,3,1^{d-1}))\simeq \Hom_{S'}(\D(2,1^d), \D(3,1^{d-1})) = 0,    
\end{align*}
where the equality follows by Theorem~\ref{MDhooks}. Consequently $f =0$.\

 \textbf{Case 3}. $\phi_1 : \D(a,2,1^d) \longrightarrow\D(a+1,1^{d+1})$.  Then $k\geq 2$ and there are two cases for the map $\phi_2$. It can be either of the form $\phi_2:\D(a+1,1^{d+1})\longrightarrow \D(a+2,1^d)$  or  $\phi_2:\D(a+1,1^{d+1}) \longrightarrow \D(a+1,2,1^{d-1})$. By Theorem \ref{MDhooks} the first form is zero, and as for the second form, it necessary that $k\geq 3$ and  by Theorem \ref{Mainresult} we get that the map $\phi_k \circ \cdots \circ \phi_3 : \D(a+1,2,1^{d-1}) \longrightarrow \D(a+d,2)$ is zero. In either case we deduce that $f = 0$.\ 

 \textbf{Case 4}. $\phi_1 : \D(a,2,1^d) \longrightarrow \D(a,2,2,1^{d-2})$. From the row removal theorem again we obtain 
\begin{align*}
\Hom_S(\D(a,2,1^d), \D(a,2,2,1^{d-2})) \simeq \Hom_{S'}(\D(1^d), \D(2,1^{d-2})) = 0,    
\end{align*}
where the equality follows from Theorem~\ref{MDhooks}.  Consequently $f =0$. \

 In each of the aforementioned cases, the conclusion is that $f = 0$. This contradicts the initial assumption.
\end{proof}

Our next remark concerns the map $\psi:= \sm_{T \in \st} \phi_T : D(\la) \longrightarrow \D(\mu)$, where $\mu = (\mu_1,\mu_2)$ and $\mu_2 \leq \la_1$. An alternate version of $\psi$ regarding Specht modules instead of Weyl modules can be found in the work of Lyle (see \cite{lyle2013homomorphisms}). The map $\psi$ has also been studied in  \cite{maliakas2023homomorphisms}, as shown in the Theorem \ref{M-D2023}. In Corollary \ref{M-D2023-C} we provide an equivalent version of Theorem \ref{M-D2023} in our concept.

\begin{theorem}\label{M-D2023}(see \cite[Theorem 3.1]{maliakas2023homomorphisms})
Let $K$ be an infinite field of characteristic $p>0$ and let $n\geq r$ be positive integers. Let $\la,\mu\in \Lambda^{+}(n,r)$ be partitions such that $\la  = (\la_1 \dts \la_m)$ and $\mu = (\mu_1,\mu_2)$, where $\la_m\neq 0$, $m\geq 2$ and $\mu_2 \leq \la_1 \leq \mu_1$. Set $\psi =\sm_{T\in \st}\phi_T$, then the following are equivalent,
\begin{enumerate}
    
    \item 
$\im\bx_{\la} \subseteq \ker\psi$. (Hence the map $\psi : D(\la)\longrightarrow \D(\mu)$ induces a non zero map $\D(\la) \longrightarrow \D(\mu)$).

    \item
$p^{l_p(\min\{\la_2,\mu_1-\la_1\})} \mid \la_1-\mu_2+1$ and $p^{l_p(\la_{i+1})} \mid \la_i+1$, for all $i\in \{2 \dts m-1\}$.

\end{enumerate}
Recall that $l_p(y)$ is the least   integer $i$ such that $p^i > y$.

\end{theorem}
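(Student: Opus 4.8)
The plan is to identify condition~(1) with the vanishing of $\psi$ on a finite generating set of $\im\bx_\la$, to evaluate $\psi$ there in the semistandard basis of $\D(\mu)$, and to translate the resulting binomial identities into the divisibility conditions of~(2) by Lucas's theorem.

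First, by Theorem~\ref{BAWpresentation} the map $\psi\colon D(\la)\to\D(\mu)$ induces a (necessarily unique) $S$-map $\D(\la)\to\D(\mu)$ exactly when $\im\bx_\la\subseteq\ker\psi$. Each summand $D(\la(s,t))$ of the domain of $\bx_\la$ is cyclic (Proposition~\ref{Cycle}), generated by $v_{s,t}=1^{(\la_1)}\otimes\cdots\otimes s^{(\la_s+t)}\otimes(s+1)^{(\la_{s+1}-t)}\otimes(s+2)^{(\la_{s+2})}\otimes\cdots$, so $\im\bx_\la$ is the $S$-submodule of $D(\la)$ generated by the elements $w_{s,t}:=\rectangle{5pt}{9pt}_{\la,s,t}(v_{s,t})$ with $1\le s\le m-1$ and $1\le t\le\la_{s+1}$. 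Since $\ker\psi$ is a submodule, (1) is equivalent to $\psi(w_{s,t})=0$ for all these $s,t$. Unwinding the box map --- the relevant component of the comultiplication of the single divided power $e_s^{(\la_s+t)}$ is the single term $e_s^{(\la_s)}\otimes e_s^{(t)}$, and the product of divided powers of the distinct letters $e_s,e_{s+1}$ is the monomial $e_s^{(t)}e_{s+1}^{(\la_{s+1}-t)}$ --- gives
\[
w_{s,t}=1^{(\la_1)}\otimes\cdots\otimes(s-1)^{(\la_{s-1})}\otimes s^{(\la_s)}\otimes\bigl(s^{(t)}(s+1)^{(\la_{s+1}-t)}\bigr)\otimes(s+2)^{(\la_{s+2})}\otimes\cdots ,
\]
the $(s+1)$-st tensor factor being $e_s^{(t)}e_{s+1}^{(\la_{s+1}-t)}\in D_{\la_{s+1}}V$.

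Next I would compute $\psi(w_{s,t})=\sum_{U\in\st}\phi_U(w_{s,t})$ in the basis $\{[T]:T\in SST(\mu)\}$ of Theorem~\ref{BAWbase}. For a fixed semistandard $U$, applying $\phi_U$ comultiplies each tensor factor of $w_{s,t}$ into its row-$1$/row-$2$ parts prescribed by $U$; because $DV$ is a bialgebra this splits $e_s^{(t)}e_{s+1}^{(\la_{s+1}-t)}$ as a binomial-weighted sum over the sub-distributions of the $t$ copies of $s$, and multiplying up the two rows produces further binomial factors from collisions $e_s^{(a)}e_s^{(b)}=\tbinom{a+b}{a}e_s^{(a+b)}$. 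Hence $\phi_U(w_{s,t})$ is an explicit binomial-weighted sum of elements $[U']$ with $U'$ row-semistandard --- in general \emph{not} column-strict --- of weight $\la(s,t)$; one then straightens every such $[U']$ into the semistandard basis using the relations of Theorem~\ref{BAWbase} and, summing over $U\in\st$, collects the coefficient of each semistandard basis element of $\D(\mu)$. Here it matters that $\mu$ has only two rows: the letter $1$ of $\la$ is forced into row~$1$ of every tableau, so the $s=1$ relations behave differently from those with $s\ge2$, and the straightening takes place against the $\mu_2$ boxes of the second row. Carrying out this bookkeeping, and using a Chu--Vandermonde summation $\sum_{j}\tbinom{a+j}{j}\tbinom{b+t-j}{t-j}=\tbinom{a+b+t+1}{t}$ to collapse the binomial weights, one finds that $\psi(w_{s,t})$ equals a single binomial coefficient times a nonzero element of $\D(\mu)$: for $2\le s\le m-1$ the coefficient is $\tbinom{\la_s+1}{t}$ (the hypothesis $\mu_2\le\la_1$ leaves enough room for the letters $\ge3$, so the whole range $1\le t\le\la_{s+1}$ is relevant), whereas for $s=1$ it is $\tbinom{\la_1-\mu_2+1}{t}$ and $\psi(w_{1,t})$ vanishes automatically once $t>\mu_1-\la_1$ (the $\la_1+t$ copies of the letter $1$ do not fit in the $\mu_1$ cells of the first row of $\mu$), so only $1\le t\le\min\{\la_2,\mu_1-\la_1\}$ imposes a constraint.

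Finally, the arithmetic. By Lucas's theorem (Theorem~\ref{Lucas}) and the definition $l_p(y)=\min\{i:p^i>y\}$, for a positive integer $N$ one has $\tbinom{N}{t}\equiv0\pmod p$ for \emph{every} $t$ with $1\le t\le L$ if and only if $p^{l_p(L)}\mid N$: the least positive $t$ with $\tbinom{N}{t}\not\equiv0$ is $p^{v}$, where $p^{v}$ is the exact power of $p$ dividing $N$, so no such $t$ lies in $[1,L]$ precisely when $p^{v}>L$. Applying this with $(N,L)=(\la_1-\mu_2+1,\min\{\la_2,\mu_1-\la_1\})$ and with $(N,L)=(\la_i+1,\la_{i+1})$ for $2\le i\le m-1$ converts ``$\psi(w_{s,t})=0$ for all $s,t$'' into exactly the conjunction of conditions~(2). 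The main obstacle is the evaluation in the previous paragraph: keeping track of the straightening of the non-semistandard $[U']$ inside $\D(\mu)$, organising the sum over $U\in\st$, and pinning down the Vandermonde-type identity that collapses the coefficients to the single binomials $\tbinom{\la_s+1}{t}$ and $\tbinom{\la_1-\mu_2+1}{t}$, together with the determination of the effective range of $t$ in the $s=1$ case, which accounts for the $\min$ in~(2).
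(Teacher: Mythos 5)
A preliminary remark: this paper contains no proof of Theorem \ref{M-D2023} --- it is quoted from \cite{maliakas2023homomorphisms} and only used (see the remark following it and Corollary \ref{M-D2023-C}) --- so your argument can only be compared with the method of the cited source; your skeleton (restrict $\psi$ to the cyclic generators $w_{s,t}$ of the summands of $\im\bx_{\la}$, expand in the semistandard basis via Proposition \ref{Base} and Remark \ref{remarkphi}, and finish with a Lucas-type criterion) is indeed the natural route and is the same kind of analysis this paper carries out for its special case in Sections 4--8. The reduction of (1) to the finitely many conditions $\psi(w_{s,t})=0$, the description of $w_{s,t}$, and the observation that $\psi(w_{1,t})=0$ automatically for $t>\mu_1-\la_1$ (the whole weight space of $\D(\mu)$ vanishes there) are all correct.

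The problem is that the evaluation of $\psi(w_{s,t})$ --- which is the entire mathematical content of the theorem --- is left as ``carrying out this bookkeeping one finds'', and the formula you assert as its outcome is false. For $\la=(2,2,2)$, $\mu=(4,2)$, $s=t=2$ one has $w_{2,2}=1^{(2)}\otimes 2^{(2)}\otimes 2^{(2)}$ and $\st$ consists of the three tableaux with rows $(1122,33)$, $(1123,23)$, $(1133,22)$; a direct computation of the three $\phi_T(w_{2,2})$ gives contributions $1,4,1$, hence $\psi(w_{2,2})=6\,[U]=\tbinom{4}{2}[U]$, where $U$ is the semistandard tableau with rows $1122$ and $22$. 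Your claimed coefficient $\tbinom{\la_2+1}{2}=3$ is wrong, and the error is not cosmetic: $3$ and $6$ differ mod $2$, so your asserted equivalence ``$\psi(w_{s,t})=0\iff p\mid\tbinom{\la_s+1}{t}$'' fails for individual $(s,t)$. The correct coefficient is $\tbinom{\la_s+t}{t}$ (and $\tbinom{\la_1-\mu_2+t}{t}$ for $s=1$), i.e. $\psi\circ\bx_{\la}$ restricted to $D(\la(s,t))$ equals this binomial times $\sum_{U\in SST_{\la(s,t)}(\mu)}\phi_U$; the theorem survives only because the two families $\{\tbinom{N+t}{t}\}_{1\le t\le L}$ and $\{\tbinom{N+1}{t}\}_{1\le t\le L}$ happen to vanish simultaneously mod $p$ precisely when $p^{l_p(L)}\mid N+1$, but then the arithmetic lemma you need is the no-carries (Kummer/Lucas) criterion for $\tbinom{N+t}{t}$, not the statement you give for $\tbinom{N}{t}$ with $N=\la_s+1$. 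So the proposal has the right architecture, but the central identity (including the straightening against the second row and the precise ``Vandermonde collapse'', plus the nonemptiness of $SST_{\la(s,t)}(\mu)$ in the stated ranges, which is what makes the multiplying element nonzero) is neither proved nor even correctly stated; that is a genuine gap, since proving exactly this identity is what \cite{maliakas2023homomorphisms} does.
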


\begin{remark}
In its original statement, Theorem \ref{M-D2023}   mentions only that $(2)$ implies $(1)$ (see also \cite[Lemma 3.4]{maliakas2023homomorphisms}). However the proof provided in \cite{maliakas2023homomorphisms} also covers the implication $(1) \Longrightarrow (2)$.
\end{remark}

\begin{corollary}\label{M-D2023-C}
Let $K$ be an infinite field of characteristic $2$ and let $n\geq r$ be positive integers. Consider the partitions $\la = (a,b,1^d) \in \Lambda^{+}(n,r), \ \mu = (a+d, b) \in \Lambda^{+}(2,r)$, where $b,d \geq 2$. Set $\psi =\sm_{T\in \st}\phi_T$, then the following are equivelant
\begin{enumerate}
    
    \item 
$\im\bx_{\la} \subseteq \ker\psi$. (Hence the map $\psi : D(\la)\longrightarrow \D(\mu)$ induces a non zero map $\D(\la) \longrightarrow \D(\mu)$).

    \item
$a$ is even, $b$ is odd and $2^{l_2(\min\{b,d\})}\mid a-b+1$.

    \item
$a$ is even, $b$ is odd and $j_0= 0$ is the maximum integer in the interval $[0,\min\{b,d\}]$ that the $\gamma = a-b$ constitutes a 2-complement of it.
\end{enumerate}
\end{corollary}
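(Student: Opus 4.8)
The plan is to derive $(1)\Leftrightarrow(2)$ by specializing Theorem~\ref{M-D2023}, and then to prove $(2)\Leftrightarrow(3)$ by a short computation with base~$2$ expansions.

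For $(1)\Leftrightarrow(2)$ I would apply Theorem~\ref{M-D2023} to $\la=(a,b,1^d)$ (so $m=d+2\geq 4$ and $\la_m=1\neq 0$) and $\mu=(a+d,b)$: since $\la$ is a partition we have $b\leq a$, and $a\leq a+d$, so the hypothesis $\mu_2\leq\la_1\leq\mu_1$ holds, and the map $\psi$ in the corollary is literally the map occurring in that theorem. With $p=2$ one has $\min\{\la_2,\mu_1-\la_1\}=\min\{b,d\}$ and $\la_1-\mu_2+1=a-b+1$, so the first divisibility in Theorem~\ref{M-D2023}(2) reads $2^{l_2(\min\{b,d\})}\mid a-b+1$. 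For $2\leq i\leq m-1$ we have $\la_{i+1}=1$, hence $l_2(\la_{i+1})=1$, so the conditions $2^{l_2(\la_{i+1})}\mid\la_i+1$ are automatic when $i\geq 3$ (there $\la_i=1$) and amount to $2\mid b+1$, i.e.\ $b$ odd, when $i=2$. Thus Theorem~\ref{M-D2023}(2) is equivalent to ``$b$ odd and $2^{l_2(\min\{b,d\})}\mid a-b+1$''; since $l_2(\min\{b,d\})\geq 1$, divisibility by $2$ together with $b$ odd forces $a$ even, and conversely $(2)$ obviously implies Theorem~\ref{M-D2023}(2). Hence $(1)\Leftrightarrow(2)$. (Alternatively, $(1)\Leftrightarrow(3)$ can be read off from Proposition~\ref{odd-CharacterSystem} once one checks that the semistandard tableaux of shape $\mu$ and weight $\la$ are exactly the $T_{h,\sigma}$, so that $\psi=\sm_{h=0}^{\min\{b,d\}}F_h$ has all coefficients equal to $1$; but the route through Theorem~\ref{M-D2023} is cleaner and handles the non-relevant parity cases uniformly.)

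For $(2)\Leftrightarrow(3)$ I would note that both conditions contain ``$a$ even, $b$ odd'', so assume these; then $\gamma=a-b$ is odd and, by the convention in Definition~\ref{2-complement}, $\gamma$ is a $2$-complement of $0$, so $j_0$ is well defined and $j_0=0$ iff $\gamma$ is a $2$-complement of no $i\in\{1,\dots,L\}$, where $L:=\min\{b,d\}$. The computational heart is the reformulation: for $i\geq 1$ with leading $2$-digit $2^m$ (so $2^m\leq i<2^{m+1}$), unwinding Definition~\ref{2-complement} shows that $\gamma$ is a $2$-complement of $i$ exactly when the $m+1$ lowest binary digits of $\gamma+i$ are all $1$, i.e.\ exactly when $2^{m+1}\mid\gamma+i+1$. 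Put $N:=l_2(L)$, so $2^{N-1}\leq L<2^N$; I then prove $j_0=0\iff 2^N\mid\gamma+1$. If $2^N\nmid\gamma+1$, let $v\geq 1$ be the largest integer with $2^v\mid\gamma+1$ (so $v\leq N-1$, and $v\geq 1$ since $\gamma+1$ is even); writing $\gamma+1=2^v u$ with $u$ odd, the integer $i:=2^v$ lies in $[1,L]$ (as $2^v\leq 2^{N-1}\leq L$), has leading $2$-digit $2^v$, and $\gamma+2^v+1=2^v(u+1)$ is divisible by $2^{v+1}$ because $u+1$ is even; hence $\gamma$ is a $2$-complement of $2^v$ and $j_0\geq 2^v\geq 1$. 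Conversely, if $\gamma$ is a $2$-complement of some $i\in[1,L]$ with leading $2$-digit $2^m$, then $m\leq N-1$ and $2^{m+1}\mid\gamma+i+1$; were $2^N\mid\gamma+1$ we would get $2^{m+1}\mid\gamma+1$, hence $2^{m+1}\mid i$, so $i$ would be a positive multiple of $2^{m+1}$ that is $<2^{m+1}$, which is impossible. Therefore $j_0=0\iff 2^{l_2(\min\{b,d\})}\mid a-b+1$, which is $(2)\Leftrightarrow(3)$.

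The main obstacle I anticipate is the second equivalence: one must translate the combinatorial ``$2$-complement'' bookkeeping of Definition~\ref{2-complement} into the clean modular statement $2^{m+1}\mid\gamma+i+1$ and keep track of the degenerate cases (the $i=0$ convention, and the partition constraint $a\geq b$, which simultaneously guarantees the hypothesis of Theorem~\ref{M-D2023} and ensures $a-b+1\geq 1$). Once this dictionary is in place, everything else is elementary arithmetic of binary expansions.
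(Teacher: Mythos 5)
Your argument is correct, but it connects the three conditions differently from the paper. You share with the paper the step $(1)\Leftrightarrow(2)$ via Theorem~\ref{M-D2023} (your verification of its hypotheses and of the reduction of the divisibility conditions to ``$b$ odd and $2^{l_2(\min\{b,d\})}\mid a-b+1$'', with $a$ even forced automatically, is exactly what the paper's terse computation asserts). Where you diverge is the third condition: the paper proves $(1)\Leftrightarrow(3)$ using its own machinery --- it writes $\psi=\sm_{i=0}^{l'}F_i$, invokes Theorem~\ref{Mainresult} to force $a$ even, uses the linear independence of the $F_i$ together with Proposition~\ref{CharacterSystem} to exclude $b$ even, and then reads off $j_0=0$ from Proposition~\ref{odd-CharacterSystem} --- whereas you prove $(2)\Leftrightarrow(3)$ by pure arithmetic of binary expansions, translating ``$\gamma$ is a 2-complement of $i$ with leading digit $2^m$'' into the congruence $2^{m+1}\mid\gamma+i+1$ and then showing $j_0=0$ iff $2^{l_2(\min\{b,d\})}\mid\gamma+1$ via the 2-adic valuation of $\gamma+1$ (your two directions, choosing $i=2^{v_2(\gamma+1)}\leq 2^{l_2(L)-1}\leq L$ in one and subtracting congruences in the other, are both sound, and the $i=0$ convention is handled by $\gamma$ being odd). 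Your route is more elementary and self-contained for that equivalence, needing only Definition~\ref{2-complement} and no representation theory beyond Theorem~\ref{M-D2023}; the paper's route, by contrast, exhibits how $\psi$ sits inside its classification of $\Hom_S(\D(\la),\D(\mu))$ (Propositions~\ref{CharacterSystem} and~\ref{odd-CharacterSystem}), which is arguably the point of the remark, but is logically heavier. Either decomposition of the three-way equivalence is complete, so there is no gap.
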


\begin{proof}
According to Theorem \ref{M-D2023} combined with the definition of $\la,\mu$ we have
\begin{align*}
(1) \ \Longleftrightarrow \  2^{l_2(\min\{b,d\})}\mid a-b+1, \ 2\mid b+1  \ \Longleftrightarrow \ (2).    
\end{align*}
It remains to prove that $(3) \Longleftrightarrow (1)$. Initially $\im\bx_{\la} \subseteq \ker\psi \Longleftrightarrow \psi \in \im\pi_{\D(\la)}^{*}$ (see exact sequence (\ref{Relations})).

($(3) \Longrightarrow \psi \in \im\pi_{\D(\la)}^{*}$). From Definition \ref{Maintableaux}, $(\ref{F_h})$ and disjoint union (\ref{DisUnion1}) we have  that $\psi = \sm_{i = 0}^{l'}F_i, \ l' = \min\{b,d\}$. Set $c_0 = \dots = c_{l'} = 1$. Then  from assumption $(3)$ and Proposition \ref{odd-CharacterSystem} we have that $\psi \in \im\pi_{\D(\la)}^{*}$.
    
($\psi \in \im\pi_{\D(\la)}^{*} \Longrightarrow (3)$). Since $Im\bx_{\la} \subseteq ker\psi$, then the map $\psi: D(\la) \longrightarrow \D(\mu)$ induces a non zero map $\D(\la) \longrightarrow \D(\mu)$, and so from Theorem \ref{Mainresult}, $a$ must be even. From Definition \ref{Maintableaux} and Proposition \ref{Base} we have that the set $\{F_i: i = 0 \dts l'\}$ is linear independent $(*)$. Assume that $b$ is even, then from Proposition \ref{CharacterSystem}   there exist coefficients $c_{r_0} \dts c_{r_{l}} \in K$ (for $r_0 \dts r_l$, see Definition \ref{MainDefinition}) such that
\begin{align*}
\sm_{i =0}^{l'}F_i = \psi = \sm_{i = 0}^{l}c_{r_i}F_{r_i},  
\end{align*}
Above relations combined with $(*)$  leads to contradiction. Consequently, $b$ must be odd. Since $\sm_{i =0}^{l'}F_i = \psi \in \im\pi_{\D(\la)}^{*}$, then from $(*)$ and Proposition \ref{odd-CharacterSystem}, the coefficients $c_0 = \dots = c_{l'} = 1$ must satisfies the equations $\Tilde{E}_k$, thus $j_0 = 0$. 
\end{proof}


\section{Appendix}

\begin{proof}(\textbf{Lemma \ref{2complement}}).
(1). Follows immediately from Corollary \ref{LucasCor1} (3). \

(2). For $i =  h$ or $h = 0$ its trivial. Suppose, that $1 \leq h< i$. Now consider the base 2 representations, $i = \sm_{\la = 0}^mi_{\la}2^{\la}, \ h = \sm_{\la = 0}^nh_{\la}2^{\la}, \ i-h = \sm_{\la = 0}^{m'}a_{\la}2^{\la}$ $(a_{m'} = i_m = h_n = 1)$, where $n,m' \leq m$.
It is sufficient to show $h_{\la} \leq i_{\la}$ for all $\la = 0 \dts n$, for then by Theorem \ref{Lucas}, it is clear that $\epsilon(h,i) = 1$. Since $k$ is a 2-complement of $i,i-h$, we obtain 
\begin{equation*}
\sm_{\la \geq m+1}t_{\la}2^{\la} + \sm_{\substack{0\leq \la \leq m \\ i_{\la} = 0}}2^{\la} = k =   \sm_{\la \geq m'+1}t_{\la}'2^{\la} + \sm_{\substack{0\leq \la \leq m' \\ a_{\la} = 0}}2^{\la},    
\end{equation*}
for some $t_{\la}, t_{\la}' \in \{0,1\}$. Based on the aforementioned relations and the uniqueness of base 2 representations, we can conclude that, $a_{\la} = 0 \ \Longleftrightarrow \ i_{\la} = 0$ for all $\la \in \{0 \dts m'\}$. Using this, we may compute that $i-h = \sm_{\la = 0}^{m'}a_{\la}2^{\la} = \sm_{\substack{0\leq \la \leq m' \\ a_{\la} = 1}}2^{\la} = \sm_{\substack{0\leq \la \leq m' \\ i_{\la} = 1}}2^{\la}$, equivalently $h = \sm_{\la \geq m'+1}i_{\la}2^{\la}$ and so  $h_{\la} \leq i_{\la}$ for all $\la = 0 \dts n$.\ 

(3).   For $i= 0$ it is trivial. Let $i \geq 1$ and consider the  base 2 representations $i = \sm_{\la  = 0}^mi_{\la}2^{\la}$ $(i_m = 1)$, $i + k  = \sm_{\la = 0}^nt_{\la}2^{\la}$, where $n \geq m, \ t_{\la}\in \{0,1\}$. Since $k$ is not a 2-complement of i, there must exist a $\la_0 \in \{0 \dts m\}$ such that $t_{\la_0} = 0$. Furthermore, $1 \leq 2^{\la_0} \leq i $ and  it is clear that from Theorem \ref{Lucas} $a_k(2^{\la_0},i) = 1$.
\end{proof}

\begin{proof}(\textbf{Lemma \ref{2-digits-1}}).
Suppose $x> 0$. Statements (1), (2), (3) can be simultaneously proven through induction on $t = \max\{m,n\}$. For $t = 1$, we get $m = n =1$, and  $b_1 > a_1$. So $x = 2^{b_1} - 2^{a_1} = 2^{b_1-1} +\cdots + 2^{a_1}$, thus, satisfies   statements (1), (2) and (3). Suppose now $t\geq 2$. For technical reasons there are some cases we need to distinguish, particularly those are $(m =1, \  n\geq 2)$, \ $(m \geq 2, \ n =1$)  and $(m\geq 2, \ n \geq 2)$. All of them can be treated similarly; thus, we will focus at the case $m,n\geq 2$. Given that $x>0$, we can assume $b_m \geq a_n$. \ 
 
If $b_m = a_n$, then $x = (2^{b_{m-1}}+\cdots + 2^{b_1})  - (2^{a_{n-1}} +\cdots + 2^{a_1})$, where $max\{m-1,n-1\} \leq t$. Consequently, statements (1),(2) and (3) follow through the induction process. \ 

If $b_m > a_n$, $a_1 = b_1$. Similarly as before, (1),(2) follow through the induction process. \ 

If  $b_m > a_n$, $a_1 < b_1$. Then 
\begin{align*}
x &= \sm_{ a_1 \leq  \la \leq b_m-1}2^{\la} + (2^{b_{m-1}} +\cdots + 2^{b_1}) - (2^{a_n} +\cdots + 2^{a_2}) \\
&= \sm_{\substack{a_1 < \la \leq b_m-1 \\ \la\neq a_2 \dts a_n}}2^{\la} + 2^{b_{m-1}}+\cdots + 2^{b_1} + 2^{a_1} = \sm_{\la \geq \min\{a_1 + 1, b_1\}}h_{\la}2^{\la} + 2^{a_1},     
\end{align*}
for some $h_{\la} \in \{0,1\}$. Therefore, (1),(2) and (3) follows. \ 

If  $b_m > a_n$, $a_1 > b_1$. As before, we can get, $x = \sm_{\la \geq \min\{a_1 , b_2\}}h_{\la}2^{\la} + 2^{b_1}$, and so $(1), (2)$ follows.
\end{proof}

\begin{proof}(\textbf{Lemma \ref{2-digits-2}}).
From Theorem \ref{Lucas}, it suffices to prove only the implication $(\Longrightarrow)$. Assume the contrary, namely, that $s$ does not contain the base 2 representation of $t$, to arrive at a contradiction. Then there exists $\la_1 \in \{ 0 \dts n\}$, such that $t_{\la_1} = 1, \ s_{\la_1} = 0$. In fact, we assume the minimum such $\la_1$, and thus $0\leq s_{\la} - t_{\la}$, for all $\la < \la_1$. Now, we proceed with the computation
\begin{equation*}
s-t = \sm_{\la  = \la_1 + 1}^ms_{\la}2^{\la} - \sm_{\la = \la_1 +1 }^nt_{\la}2^{\la} + 2^{\la_1} + \sm_{\la = 0}^{\la_1 - 1}(s_{\la} - t_{\la})2^{\la}.    
\end{equation*}
In case one of the following happens,  $\la_1 + 1 >m, \ \la_1 + 1 > n, \  \la_1-1 < 0$, we mean that the corresponding sum is zero. We set $y = \sm_{\la  = \la_1 + 1}^ms_{\la}2^{\la} - \sm_{\la = \la_1 +1 }^nt_{\la}2^{\la}$ and examine the following cases \ 

If $y < 0$. Then, by Lemma \ref{2-digits-1}, the element $-y> 0$, has a 2-digit $2^x$, where $x \geq \la_1 +1$. Therefore, $0 < s- t \leq -2^x + 2^{\la_1} + \sm_{\la = 0}^{\la_1 - 1}(s_{\la} - t_{\la})2^{\la} < 0$,  a contradiction.  

If $y > 0$. Then according to Lemma \ref{2-digits-1}, the element $y> 0$, can be expressed in a base 2 form $y = \sm_{\la \geq \la_1 +1}y_{\la}2^{\la}$. Combining this with the previous fact that $0 \leq s_{\la} - t_{\la} \leq 1$ for all $\la < \la_1$, we can deduce that $2^{\la_1}$ is 2-digit of $s-t$. However, $2^{\la_1}$ is not a 2-digit of $s$, and so  Theorem \ref{Lucas} implies $\epsilon(s-t,s) = 0$, a contradiction. \ 
The case $y= 0$, similarly  to the case $y > 0$, also leads to contradiction. 
\end{proof}

\begin{proof}(\textbf{Lemma \ref{Elementary1}}).
We will use induction on $k$. Consider the minimum $s_0 \in \{1 \dts k\}$, such that $2^{a_{s_0}}$ is a 2-digit of $x$, specifically, $2^{a_{s_0}}$ is not a 2-digit of $x + 2^{a_{s_0}}$. For $k = 1$, it's clear. Let us assume that $k \geq 2$ and the result holds for every $m \leq k-1$. We examine the following two cases \ 

\textbf{Case 1}. $x+2^{a_{s_0}}$ does not have 2-digit in the set $\{2^{a_i}: i\neq s_0\}$. \ 

 Set $s = 1$ and $b_1 = a_{s_0}$. Then, $x + 2^{b_1}$ does not have a 2-digit in the set  $\{2^{a_1} \dts 2^{a_k}\}$. It remains to prove uniqueness. Assume the existence of another such sequence, say $2^{c_1} <\cdots<  2^{c_r}$. We claim that $c_1 = a_{s_0}$. Indeed, if $c_1 < a_{s_0}$, observe by assumption on $s_0$, that $2^{c_1}$ is a 2-digit of $x+ 2^{c_r} +\cdots + 2^{c_1}$, where $c_1 \in \{a_1 \dts a_k\}$. If $c_1 > a_{s_0}$, observe that that $2^{a_{s_0}}$ is a 2-digit of $x+ 2^{c_r} +\cdots + 2^{c_1}$. Both of the o previous cases leads to contradiction and so $c_1 = a_{s_0}$. Consequently, $x + 2^{c_r} +\cdots + 2^{c_1} = (x+ 2^{a_{s_0}}) + 2^{c_r} +\cdots + 2^{c_2}$, where left hand side does not have a 2-digit in the set $\{2^{a_1} \dts 2^{a_k}\}$, and $x+ 2^{a_{s_0}}$ does not have a 2-digit in the set $\{2^{a_i}: i\neq s_0\}$. This forces $r = 1$ and $c_1 = a_{s_0}$. \

\textbf{Case 2}. $x+2^{a_{s_0}}$  have a 2-digit in the set $\{2^{a_i}: i\neq s_0\}$. \

 Set $y = x+2^{a_{s_0}}$ and consider the sequence $2^{a_1} <\cdots<  \widehat{2^{a_{s_0}}} <\cdots<  2^{a_k}$, (the $s_0$-term is omitted). From induction hypothesis there exists unique sequence $b_1 <\cdots<  b_s$ such that $b_i \in \{a_i : i \neq s_0\}$ and $y + 2^{b_t} +\cdots + 2^{b_1}$ does not have a 2-digit inside the set $\{2^{a_i}: i \neq s_0\}$. Now notice that $a_{s_0} < b_1$, for if we had $a_{s_0} > b_1$, then from the assumption on $s_0$, $2^{b_1}$ is a 2-digit of  $x + 2^{a_{s_0}} + 2^{b_s} +\cdots + 2^{b_1}$, which leads to contradiction. Inequalities $a_{s_0} < b_1 <\cdots<  b_s$ combined with the fact that $2^{a_{s_0}}$ is 2-digit of $x$, implies that $2^{a_{s_0}}$ is not a 2-digit of  $x  + 2^{b_s} +\cdots + 2^{b_1} + 2^{a_{s_0}}$. Moreover   $x  + 2^{b_s} +\cdots + 2^{b_1} + 2^{a_{s_0}}$ does not have a 2-digit in set $\{2^{a_1} \dts 2^{a_k}\}$. Uniqueness of sequence $( a_{s_0}, b_1 \dts b_s)$ remains to be proved.  Assume the existence of another such sequence lets say $2^{c_1} <\cdots<  2^{c_r}$. Similarly to previous case, we have $c_1 = a_{s_0}$. Hence, the element $y + 2^{c_r} +\cdots + 2^{c_2}$, does not have a 2-digit in the set $\{2^{a_i}: i \neq s_0\}$. Induction hypothesis, forces $(b_1 \dts b_s) = (c_2 \dts c_r)$.
\end{proof}

\begin{proof}(\textbf{Lemma \ref{Elementary2}}).
We are  splitting the proof in the following two cases $\la_1 = a_1$ and $\la_1 < a_1$. \ 

Starting with the more demanding case $\la_1 = a_1$, we employ induction o $n$. For $n = 1$, set $m = 1, \ b_1 = \la_2$. The  element $2^{b_1} - 2^{a_1} = 2^{\la_2-1} +\cdots + 2^{\la_1}$   does not have any 2-digit inside the set $\{2^{\la_2} \dts 2^{\la_{\nu}}\}$. As for the uniqueness, assume the existence of another such sequence lets say
$c_1 <\cdots<  c_k$. If $k > 1$ or ($k = 1$ and $c_1 > \la_2$), then it is easy see that the element $(2^{c_k} +\cdots + 2^{c_1})-2^{a_1}$ has 2-digits inside the set $\{2^{\la_2} \dts 2^{\la_{\nu}}\}$, which leads to contradiction.  \ 

 Now assume $n \geq 2$. Since  $2^{a_n} +\cdots + 2^{a_1} < 2^{\la_1} +\cdots + 2^{\la_{\nu}}$,  exactly one of the following situations is occurring

\begin{enumerate}

    \item 
There exists $s_0 \in \{2 \dts n\}$ such that $a_1 = \la_1 \dts a_{s_0-1} = \la_{s_0-1}$ and $a_{s_0} \neq \la_{s_0}$. Specifically, $a_{s_0} > \la_{s_0}$ (because $a_{s_0} \in \{\la_1 \dts \la_{\nu}\}$).
    
    \item 
$a_1 = \la_1 \dts a_n = \la_n$ and $n < \nu$.  
\end{enumerate}

We initiate by proving the existence of the sequence in each situation.\ 

If situation (1) occurs. By setting $(b_1 \dts b_m) = (\la_{s_0}, a_{s_0} \dts a_n)$, we compute that
\begin{align*}
(2^{b_m} +\cdots + 2^{b_1})-(2^{a_n} +\cdots + 2^{a_1}) &= 2^{\la_{s_0}}- 2^{\la_1} - (2^{\la_{s_0-1}} +\cdots + 2^{\la_2})  \\
&=\sm_{\substack{\la_1 \leq \la \leq \la_{s_0}-1: \\ \la \neq \la_{2} \dts \la_{s_0-1}}}2^{\la},    
\end{align*}
which does not contain 2-digit in the set $\{2^{\la_2} \dts 2^{\la_{\nu}}\}$. \ 

If situation (2) occurs. Set $m = 1$ and $b_1 = \la_{n+1}$. Then $2^{\la_{n+1}} - (2^{a_n} +\cdots + 2^{a_1}) = \sm_{\substack{\la_1 \leq \la \leq \la_{n+1}-1: \\ \la \neq \la_{2} \dts \la_{n}}}2^{\la}$, which does not contain 2-digit in the set $\{2^{\la_2} \dts 2^{\la_{\nu}}\}$. \\

 It remain to prove the uniqueness in its situation. Assume the existence of another such sequence lets say $c_1 <\cdots<  c_t$. We have to distinguish the following cases 

 \textbf{Case}. Situation (1) occurs for the sequence $a_1 <\cdots<  a_n$.  We initiate by observing that $t \geq 2$ and $c_t = a_n$.  \  
 
 Indeed, if we had $t = 1$, then by (1) the element $2^{c_1} -(2^{a_n} +\cdots + 2^{a_1}) = \sm_{\substack{\la_1 \leq \la \leq c_1-1: \\ \la \neq a_{2} \dts a_n}}2^{\la}$,  must have $2^{\la_{s_0}}$ as a 2-digit, which leads to contradiction. On the other hand, now assume $c_t \neq a_n$. Then $c_t > a_n$ and $(2^{c_t} +\cdots + 2^{c_1})-(2^{a_n} +\cdots + 2^{a_1})  = \sm_{\substack{\la_1 \leq \la \leq c_t: \\ \la \neq a_{2} \dts a_n, c_t}}2^{\la} + \sm_{\la = 1}^{t-1}2^{c_{\la}}$, the last expression, according to Lemma \ref{2-digit-3} have a 2-digit in the set $A= \{2^{a_2} \dts 2^{a_n},2^{c_t}, 2^{c_1} \dts 2^{c_{t-1}}\}$. If $\la_1 < c_1$, then $A \subseteq \{2^{\la_2} \dts 2^{\la_n}\}$. If $\la_1 = c_1 = a_1$, then by Lemma \ref{2-digits-1} the element $(2^{c_t} +\cdots + 2^{c_1})-(2^{a_n} +\cdots + 2^{a_1}) = (2^{c_t} +\cdots + 2^{c_2})-(2^{a_n} +\cdots + 2^{a_2})$ contains a 2-digit in the set $\{2^{\la_2} \dts 2^{\la_n}\}$.  Both cases comes in contrary  with the assumption on sequence $c_1 <\cdots<  c_t$. Consequently $c_t = a_n$. \ 

 As a result, $(2^{c_t} +\cdots + 2^{c_1})-(2^{a_n} +\cdots + 2^{a_1})  =  (2^{c_{t-1}} +\cdots + 2^{c_1})-(2^{a_{n-1}} +\cdots + 2^{a_1})$, where we can apply the induction step. However, to proceed with the induction, it is necessary to determine whether situation (1) or (2) occurs for the sequence $a_1 <\cdots<  a_{n-1}$.\ 

If $s_0 \leq n-1$. Then also situation (1) occurs for the sequence $a_1 <\cdots<  a_{n-1}$, and so the induction step implies $(c_1 \dts c_{t-1}) = (\la_{s_0}, a_{s_0} \dts a_{n-1})$. \ 

If $s_0 = n$.   Then situation (2) occurs  for the sequence $a_1 <\cdots<  a_{n-1}$, and so the induction step implies  $t-1 = 1, \ c_1 =  \la_{n-1+1} = \la_n$. Consequently, $t = 2, \ s_0 = n$ and $(c_1 \dts c_t) = (\la_{s_0}, a_{s_0} \dts a_n)$. \\

 \textbf{Case}. Situation (2) occurs for the sequence $a_1 <\cdots<  a_n$. Since $a_1 = \la_1 \dts a_n = \la_n$  and $2^{c_t} + \cdots +2^{c_1} > 2^{a_n} +\cdots + 2^{a_1}$, we get that $c_t > a_n = \la_n$. Furthermore, assuming that $t \geq 2$ we compute that  $(2^{c_t} +\cdots + 2^{c_1})-(2^{a_n} +\cdots + 2^{a_1})  = \sm_{\substack{\la_1 \leq \la \leq c_t: \\ \la \neq a_{2} \dts a_n, c_t}}2^{\la} + \sm_{\la = 1}^{t-1}2^{c_{\la}}$, which by Lemma \ref{2-digit-3} leads to contradiction. Hence, $t = 1$. Similarly, we can prove $c_1 = \la_{n+1}$, thereby confirming the uniqueness. \\ 

 It remains to examine case $\la_1 < a_1$, which appears to be easier; therefore, we provide a brief sketch. The requiring sequence is $(b_1 \dts b_m) = (\la_1,a_1 \dts a_n)$. As for the uniqueness, let's consider another such sequence, say $c_1 <\cdots<  c_t$. Then, by employing Lemma \ref{2-digits-1} we demonstrate the uniqueness.
\end{proof}

\begin{lemma}\label{2-digit-3}
Consider two sequences $b_0 < b_1 <\cdots<  b_s$, \ $b_0 \leq c_1 <\cdots<  c_r  < b_s$, then the element $\sm_{\substack{b_0 \leq \la \leq b_s: \\ \la \neq b_1 \dts b_s}}2^{\la} + \sm_{\la = 1}^r2^{c_{\la}}$, have a 2-digit in the set $\{2^{b_1} \dts 2^{b_s}, 2^{c_1} \dts 2^{c_r}\}$.
\end{lemma}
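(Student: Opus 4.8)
The plan is to track the binary carry propagation when we add $\sm_{\la = 1}^r 2^{c_\la}$ to the "gap" number $g := \sm_{\substack{b_0 \leq \la \leq b_s:\\ \la \neq b_1 \dts b_s}} 2^{\la}$. The point is that $g$, by construction, has $2$-digit $1$ in exactly those positions $\la$ with $b_0 \le \la < b_s$ and $\la \notin \{b_1 \dts b_{s-1}\}$ (note $b_s$ itself is excluded, and every position below $b_0$ is $0$), and $2$-digit $0$ in positions $b_1 \dts b_{s-1}$ and all positions $\ge b_s$. I would argue that after adding a nonempty sum of powers $2^{c_1} \dts 2^{c_r}$ with $b_0 \le c_1 < \cdots < c_r < b_s$, the resulting integer must have a $1$ somewhere in $\{b_1 \dts b_s\} \cup \{c_1 \dts c_r\}$.

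First I would dispose of the case $r = 0$ only formally (the statement assumes $r \ge 1$, so there is at least one $c_\la$). For $r \ge 1$, consider the smallest index $c_1$. If the $2$-digit of $g$ in position $c_1$ is $0$, then since all lower positions of $g$ in the range are unaffected by adding $2^{c_1}$ (and the higher $c_i$ only touch positions $> c_1$), position $c_1$ of the total becomes $1$, and $c_1 \in \{c_1 \dts c_r\}$, so we are done. Otherwise position $c_1$ of $g$ is $1$; adding $2^{c_1}$ produces a carry. The carry then ripples upward through the consecutive block of $1$'s of $g$ starting at $c_1$. Because $g$ has $1$'s exactly on $[b_0, b_s) \setminus \{b_1 \dts b_{s-1}\}$, this block of consecutive $1$'s terminates either at some $b_i - 1$ (for some $i$ with $1 \le i \le s-1$, where $g$ has a $0$ at position $b_i$) or at position $b_s - 1$ (where $g$ has a $0$ at position $b_s$ and all positions $\ge b_s$). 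In the first case the carry lands a $1$ at position $b_i \in \{b_1 \dts b_s\}$; in the second case it lands a $1$ at position $b_s \in \{b_1 \dts b_s\}$. Either way, after the carry we have a $1$ in a position of $\{b_1 \dts b_s\}$, and I must then check that adding the remaining powers $2^{c_2} \dts 2^{c_r}$ (all with $c_j < b_s$) does not destroy this particular $1$.

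The main obstacle is this last verification: a subsequent addition of $2^{c_j}$ could, in principle, carry into and clear the bit I just produced. To handle it cleanly, I would instead argue more robustly: process the additions of $2^{c_1} \dts 2^{c_r}$ one at a time and track the invariant that the partial sum, restricted to positions $< b_s$, always has total binary weight that is, say, not identically matching $g$; more precisely, I would observe that the final integer $N := g + \sm 2^{c_\la}$ satisfies $N \ge g + 2^{c_1} > g$, and compare binary representations. Since $N > g$ and all the "free" low positions below $b_0$ contribute nothing, the first position (reading from the top) where $N$ and $g$ differ must be a position where $N$ has a $1$ and $g$ has a $0$; such positions are exactly $\{b_1 \dts b_{s-1}\} \cup \{b_s\} \cup \{\text{positions} \ge b_s\} $ together with positions below $b_0$ — but $N$ and $g$ agree below $b_0$ when $c_1 \ge b_0$, and by Lemma \ref{2-digits-1}(2) applied to $N - g = \sm 2^{c_\la} > 0$ the lowest differing bit is $\ge \min\{c_1, b_0\} = b_0$ while also $N - g < 2^{c_r + 1} \le 2^{b_s}$, forcing the top differing position to lie in $\{b_1 \dts b_s\} \cup \{c_1 \dts c_r\}$ as required. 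I would phrase the final write-up around this comparison via Lemma \ref{2-digits-1}, which is the slickest route and avoids an explicit carry bookkeeping.
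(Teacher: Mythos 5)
Your final argument is correct, but it proceeds along a genuinely different route from the paper: the paper proves Lemma \ref{2-digit-3} by induction on $r$ (essentially peeling off the terms $2^{c_\lambda}$ one at a time), which is exactly the carry-tracking strategy you started with and rightly abandoned because a later addition can clear a bit produced earlier. Your replacement is a one-shot comparison of binary expansions: with $g:=\sum_{b_0\leq \lambda\leq b_s,\ \lambda\neq b_1,\dots,b_s}2^{\lambda}$ and $N:=g+\sum_{\lambda=1}^{r}2^{c_\lambda}$, you use (i) $N>g$, (ii) $N\equiv g \pmod{2^{b_0}}$, so the two expansions agree in positions below $b_0$, and (iii) $N-g<2^{c_r+1}\leq 2^{b_s}$, hence $N<2^{b_s+1}$, so both expansions vanish in positions above $b_s$; the most significant position where $N$ and $g$ differ therefore lies in $[b_0,b_s]$, carries a $1$ in $N$ and a $0$ in $g$, and the only zeros of $g$ in that range are the positions $b_1,\dots,b_s$. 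This is clean, avoids all carry bookkeeping and induction, and in fact yields a slightly stronger conclusion than the statement: the element has a $2$-digit already in $\{2^{b_1},\dots,2^{b_s}\}$, not merely in the union with the $2^{c_i}$. Two small polish points: the appeal to Lemma \ref{2-digits-1}(2) is unnecessary and slightly misplaced -- the agreement below $b_0$ follows simply from $2^{b_0}\mid N-g$, and the upper bound is elementary arithmetic -- and the first half of your write-up (the abandoned carry argument) should be deleted rather than kept as motivation, since the comparison argument stands on its own.
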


\begin{proof}
By induction on $r$.   
\end{proof}

\section*{Acknowledgments}

The author expresses his hearty thanks to Professor Mihalis Maliakas for suggesting the study of certain Hom-spaces. Also he acknowledges support of the Department of Mathematics, University of Athens.

\bibliographystyle{plain}
\bibliography{citation}

\end{document}